\documentclass[noamsfonts,a4paper]{amsart}
\usepackage[margin=2cm]{geometry}
\usepackage[bitstream-charter,cal,sfscaled=false]{mathdesign}
\DeclareFontShape{T1}{mdbch}{m}{up}{<->ssub*mdbch/m/n}{}
\usepackage{hyperref}
\usepackage{tikz-cd}
\tikzcdset{arrow style=tikz,diagrams={>={Straight Barb[scale=0.625]}}}

\theoremstyle{plain}
\newtheorem{theorem}{Theorem}[section]
\newtheorem{proposition}[theorem]{Proposition}
\newtheorem{lemma}[theorem]{Lemma}
\newtheorem{corollary}[theorem]{Corollary}
\theoremstyle{definition}
\newtheorem{definition}[theorem]{Definition}

\title[Non-separable Kasparov product]{Analytic Kasparov product for non-separable C$^*$-algebras}
\author{Ikhan Choi}
\address{Graduate School of Mathematical Sciences\\The University of Tokyo\\3-8-1 Komaba Meguro-ku Tokyo 153-8914, Japan}
\email{choi@ms.u-tokyo.ac.jp}
\subjclass[2020]{19K35, 46L80, 46L85, 46L08}
\keywords{KK-theory, Kasparov product, Kasparov technical theorem, non-separable C$^*$-algebras, Hilbert C$^*$-modules, approximate Kasparov bimodules, freely complementable Hilbert modules, augmented Hilbert bimodules, para-unital C$^*$-algebras}

\begin{document}

\begin{abstract}
We develop an analytic framework for the explicit construction of Kasparov products on uncountably generated Hilbert modules over non-separable C$^*$-algebras by extending the classical Kasparov bimodule picture.
In particular, this yields a well-defined functor from the category of general C$^*$-algebras whose restriction to separable C$^*$-algebras coincides with classical KK-theory and which satisfies several properties expected of non-separable KK-theory.
A key idea of the extension is to replace a single Fredholm operator on a Kasparov bimodule with a net and to encode the directed system of separable C$^*$-subalgebras.
We also introduce the notion of freely complementable Hilbert modules and develop cropping techniques to extend Kasparov's technical theorem beyond the countably generated setting.
It naturally gives rise to a sufficiently broad class of C$^*$-algebras that provides a non-commutative analogue of paracompactness and includes all $\sigma$-unital algebras.
\end{abstract}

\maketitle
\tableofcontents

\section{Introduction}

\subsection{Background}
Kasparov's bivariant K-theory, usually called KK-theory and originally constructed by Kasparov in \cite{MR582160}, is a fundamental framework for the classification theory of C$^*$-algebras and Connes' non-commutative geometry.
An abstract and concise definition of the classical KK-theory was obtained by Higson \cite{MR869779}, which characterizes KK-theory as the initial additive functor $\mathrm{C^*Alg}_\mathrm{sep}\to\mathrm{KK}_\mathrm{sep}$ that is homotopy invariant, stable, and split exact, from the category of separable C$^*$-algebras and $*$-homomorphisms to an additive category.
The target category is often called the Kasparov category, and the morphism groups in it generalize both K-theory and K-homology by letting either its domain or codomain be the complex scalar field.
Its universal property supports KK-theory as the most satisfactory realization of a non-commutative additive motive for C$^*$-algebras, and the existence of triangulated structure on the Kasparov category which is now well-known after Meyer-Nest \cite{MR2193334} has been gradually driving a homotopy theoretic approach to bivariant K-theory of C$^*$-algebras.
While many surprising results on those category-theoretic descriptions of KK-theory are being discovered currently by means of homotopy theory, the analytic pictures of KK-theory remain widely used by operator algebraists and non-commutative geometers due to its computability in applications.

The classical Kasparov picture for KK-theory, defined successfully only with separability conditions, has Kasparov bimodules as its analytic cycles to define the morphism groups.
A Kasparov bimodule is defined as a Hilbert bimodule and a Fredholm operator on it, which we call a Kasparov operator in this paper.
The composition in the Kasparov category is concretely given by an analytic construction called the Kasparov product of two Kasparov bimodules, motivated by the index theory.
Recently, many important sources of non-separable C$^*$-algebras in the KK-theory contexts has arisen, including Paschke duality and coarse geometric applications.
The separability assumption of C$^*$-algebras in the classical KK-theory has been regarded as a golden rule to avoid technical issues that are deeply analytic, and the direct construction of general theory for non-separable C$^*$-algebras has hardly been challenged.
One of the main obstructions is that the separability and $\sigma$-unitality are inevitable when we apply the Kasparov technical theorem in the construction of Kasparov product.
There are also many analytic problems other than the technical theorem regarding Kasparov's stabilization and the existence of Grassmann connections for example, and this makes removing separability from KK-theory a more difficult question.

In fact, there have already been two major extensions of KK-theory to non-separable C$^*$-algebras, both of which use categorical constructions rather than finding analytic cycles, based on the elementary fact that every C$^*$-algebra is an inductive limit of separable C$^*$-subalgebras.
According to the shape theory, a standard way to study homotopy theory of topological spaces with uncountably large or locally ill-behaved structures, whose homotopy types cannot be detected by standard simplices, is to interpret a general topological space as a projective system of amenable topological spaces.
In this viewpoint, it is natural to generalize KK-theory to non-separable C$^*$-algebras by inductive limits via Gelfand duality, and the initial definition was by Skandalis in \cite{MR953916}, which is essentially same as the ordinary categorical ind-completion of the Kasparov category.
The other approach is using $\infty$-categorical methods, first introduced by Land-Nikolaus \cite{MR3788857} and investigated more systematically by Bunke and Bunke-D\"unzinger in \cite{MR4895460}, \cite{MR5059731}, and \cite{bunke2026productskketheory}.
They exhibited a stable $\infty$-categorical enhancement of the separable Kasparov category, and defined KK-theory by its $\infty$-categorical ind-completions.
These two approaches are not equivalent at all outside the category of separable C$^*$-algebras.
However, both categorical approaches have a drawback that computations of KK-groups are generally difficult for applications.

\subsection{Contributions}
In this paper, we address an analytic question for non-separable KK-theory.
One central idea of designing cycles for non-separable KK-theory is to attach a net $(F_i)_i$ of Kasparov operators on a Hilbert bimodule, instead of a single Kasparov operator.
This net is required to satisfy a condition of the form ``for every separable C$^*$-subalgebra $A_0\subset A$ there is $i_0$ such that whenever $i\ge i_0$ we have...''.

We also need to deal with Hilbert modules not countably generated, so in order to overcome the technical theorem and the connection existence, we introduce a property for Hilbert modules called the free complementability.
This property lets Hilbert modules behave more likely projective modules, which is automatic for countably generated Hilbert modules by the Kasparov stabilization theorem.
Then, we develop a cropping technique, which crops out a countably generated complemented Hilbert submodule, containing the data of interactions between certain separable C$^*$-subalgebras of the left action and Kasparov operators, from the original large Hilbert module to apply the classical technical theorem.
It allows us in the trivial complementary large part to construct a separating operator manually by a projection to finally state the extended technical theorem.

\begin{definition}[Freely complementable augmented Hilbert bimodules]
Let $A$ and $B$ be C$^*$-algebras.
An \emph{augmented Hilbert bimodule} over $(A,B)$ is a Hilbert module $E$ over the unitization $\widetilde B$ together with a $*$-homomorphism $A\to B(E)$ such that $AE\subset EB$.
It is called \emph{freely complementable} if there is a complemented embedding $E\to H\otimes\widetilde B$ as Hilbert modules over $\widetilde B$ for a Hilbert space $H$.
\end{definition}

\begin{lemma}[Cropping lemma]
Let $E$ be a graded freely complementable Hilbert module over a unital C$^*$-algebra.
For a $\sigma$-unital C$^*$-subalgebra $J_0\subset K(E)$ and a separable C$^*$-subalgebra $A_0\subset B(E)$, there exists a countably generated complemented graded Hilbert submodule $S\subset E$ over $\widetilde B$ such that $J_0\subset K(S)$ and $A_0$ is reduced by $S$.
\end{lemma}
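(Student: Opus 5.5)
We build $S$ inside the ambient free module and then pull it back to $E$. Fix a complemented embedding $E\subset H\otimes C$, where $C$ is the unital coefficient algebra (written $\widetilde B$ in the statement). Let $P\in B(H\otimes C)$ be the projection onto $E$, and regard $B(E)=P\,B(H\otimes C)\,P$.

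"$A_0$ is reduced by $S$" means that the projection $Q$ onto $S$ commutes with $A_0$, so that $A_0S\subset S$ and $A_0S^\perp\subset S^\perp$. If $A_0$ is self-adjoint, which we may arrange by enlarging it, it suffices to have $A_0S\subset S$ with $S$ complemented. For the grading, we may likewise enlarge $A_0$ by the grading operator $\gamma$; requiring invariance under $\gamma$ then makes $S$ graded.

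The construction is a countable bootstrapping.

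\textbf{Step 1: the starting module.} Since $J_0$ is $\sigma$-unital, it has a strictly positive element $h$. The closure $\overline{hE}=\overline{J_0E}$ is countably generated, because $h$ is a limit of finite-rank combinations and we may collect countably many vectors $\xi_n\in E$ with $J_0=\overline{\mathrm{span}}\{\theta_{\xi_n,\xi_m}\}$. Moreover $J_0\subset K(S)$ holds as soon as $S$ is complemented and contains all the $\xi_n$.

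\textbf{Step 2: closing up.} Choose a countable dense subset $\{a_k\}$ of the separable algebra $A_0$. Define $S_0$ as the closed submodule generated by the $\xi_n$, and inductively $S_{j+1}$ as the closed submodule generated by $S_j$ together with $a_kS_j$ for all $k$. Then $S_\infty=\overline{\bigcup_j S_j}$ is countably generated and invariant under $A_0$.

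\textbf{Step 3: complementability.} This is the main obstacle: a countably generated submodule of $E$ need not be complemented. This is exactly where free complementability enters; the idea is to replace submodules by modules of the form $H_0\otimes C$ that are visibly complemented.

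Each generator $\eta\in H\otimes C$ is a norm limit of finite sums $\sum e_i\otimes c_i$. So there is a separable subspace $H_0\subset H$ with $\eta\in H_0\otimes C$, and $H_0\otimes C$ is complemented in $H\otimes C$ by $p_{H_0}\otimes 1$.

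We then interleave a second closing-up with the one in Step 2. At each stage, enlarge to a separable $H_j$ such that $H_j\otimes C$ contains the current generators, together with $Pe\otimes1$ and $a_k(\cdot)$ applied to a countable dense set of $H_{j-1}\otimes C$, viewed through $P$. A cleaner way to organize this is to choose the separable subspace $H_\infty\subset H$ so that the projection $R=p_{H_\infty}\otimes1$ commutes with $P$, with $\gamma$, and with all $a_k$, where each $a_k$ is viewed as $Pa_kP$ on $H\otimes C$.

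Commuting with $R$ can be forced by an elementary separable closure argument. For an adjointable $T$ on $H\otimes C$ and separable $H'$, the set $T(H'\otimes 1)$ lies in $H''\otimes C$ for some separable $H''$. Iterating over the countably many operators $P,\gamma,a_k,a_k^*$ yields $H_\infty$ such that $H_\infty\otimes C$ is invariant under all of them and under their adjoints.

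Invariance under a self-adjoint family, together with complementedness of $H_\infty\otimes C$, gives commutation with $R$. One point needs care: invariance of $H_\infty\otimes1$ must extend to invariance of $H_\infty\otimes C$. This follows because the operators are right $C$-linear.

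\textbf{Step 4: the submodule $S$.} Set $S=RPE=R(E)$, whose projection is $Q=RP=PR$. Then:
\begin{itemize}
\item $S$ is complemented in $E$, with complement $(1-R)E$.
\item $S$ is countably generated, being a summand of $H_\infty\otimes C$.
\item $S$ is graded, and $A_0$ is reduced by $S$, since $R$ commutes with $\gamma$ and with $A_0$.
\item $J_0\subset K(S)$, provided we also put the $\xi_n$ into $H_\infty\otimes C$ at the start. Then $R\xi_n=\xi_n$, so $Q\theta_{\xi_n,\xi_m}Q=\theta_{\xi_n,\xi_m}$.
\end{itemize}

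I expect the delicate point to be verifying that an adjointable $T$ maps $H'\otimes1$ into some $H''\otimes C$ with $H''$ separable. This uses the fact that each $T(e\otimes1)$ is a single vector in $H\otimes C$ and therefore has separable "support" in $H$.
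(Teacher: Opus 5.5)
Your proposal is correct and is essentially the paper's proof. The paper likewise fixes a graded complemented embedding $E\subset\ell^2(\mathcal{I},\widetilde B)$ with projection $P$ and places $J_0E$ inside $\ell^2(\mathcal{I}_0,\widetilde B)$ for a countable $\mathcal{I}_0$. It then iterates using separability of $C^*(P,\gamma,A_0)$ to get a countable $\mathcal{I}_\infty$ whose coordinate projection $P_\infty$ commutes with $P$, $\gamma$ and $A_0$, and sets $S:=P_\infty E$. You close up separable subspaces $H_\infty\subset H$ instead of countable index sets, which is the same argument, and your Step 2 is not needed.

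One small fix: the equality $J_0=\overline{\mathrm{span}}\{\theta_{\xi_n,\xi_m}\}$ is false in general, since the right side is separable and a $\sigma$-unital $J_0$ need not be. You only need $\overline{J_0E}\subset H_\infty\otimes C$, which gives $QjQ=j$ for all $j\in J_0$ and hence $J_0\subset QK(E)Q=K(S)$.
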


\begin{theorem}[Technical theorem]
Let $B$ and $C$ be C$^*$-algebras.
Let $E_1$ be a graded freely complementable Hilbert module over the standard unitization $\widetilde B$ of $B$, $E_2$ be a graded freely complementable augmented Hilbert bimodule over $(B,C)$, and $E_{12}:=E_1\otimes_BE_2$.
Consider
\begin{enumerate}
\item
a norm compact subset $\Delta_1\subset B(E_1)\otimes1$,
\item
a norm compact subset $\Delta_2\subset B(E_{12})$,
\item
a separable C$^*$-subalgebra $J_1\subset B(E_1)\otimes 1$ which is graded and $J_1E_{12}\subset E_{12}$ is complemented,
\item
a separable C$^*$-subalgebra $J_2\subset B(E_{12})$,
\end{enumerate}
such that
\[[\Delta_1,J_1]\subset J_1,\qquad[\Delta_2,J_1]\subset K(E_{12}),\qquad J_1J_2\subset K(E_{12}).\]
Then, there exists an even positive contraction $M\in B(E_{12})$ such that
\[[\Delta_1\cup\Delta_2,M],\ MJ_1,\ (1-M)J_2\subset K(E_{12}).\]
\end{theorem}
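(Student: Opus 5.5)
The plan is to reduce to the classical Kasparov technical theorem on a countably generated piece cropped out of $E_{12}$ by the Cropping lemma, and to handle the large complementary part by hand with a projection.

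First I would check that $E_{12}$ is freely complementable over $\widetilde C$. Complemented embeddings $E_1\to H_1\otimes\widetilde B$ and $E_2\to H_2\otimes\widetilde C$ give a complemented embedding $E_{12}\to H_1\otimes E_2\to H_1\otimes H_2\otimes\widetilde C$. I would also check that it is graded, so that the Cropping lemma applies to $E_{12}$.

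Next I collect the countable data:
\begin{itemize}
\item the separable algebra $A_0\subset B(E_{12})$ generated by countable dense subsets of $\Delta_1$ and $\Delta_2$, by $J_1$, by $J_2$, and by the grading operator;
\item the $\sigma$-unital algebra $J_0\subset K(E_{12})$ generated by the compact operators $[\delta,j]$ for $\delta\in\Delta_2$ and $j\in J_1$, together with $J_1J_2$.
\end{itemize}
Since $J_1$ and $J_2$ are separable and $\Delta_2$ is compact, $J_0$ is separable. The Cropping lemma then gives a countably generated complemented graded submodule $S\subset E_{12}$ that reduces $A_0$, hence reduces $\Delta_1$, $\Delta_2$, $J_1$ and $J_2$ by norm continuity, and satisfies $J_0\subset K(S)$. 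Let $P$ be the even projection onto $S$. Then $P$ commutes with all the data, and every operator $T$ in it splits as $T=PTP+(1-P)T(1-P)$.

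On $S$, which is countably generated over a $\sigma$-unital algebra, I apply the classical technical theorem in its general form, with compact sets, separable algebras and a derivation condition. The inputs are the $\sigma$-unital algebra $PJ_1P+K(S)$, the separable algebra $PJ_2P$, and the compact set $P(\Delta_1\cup\Delta_2)P$. The hypotheses I need are:
\begin{itemize}
\item $PJ_1PJ_2P\subset K(S)$ and $[P\Delta_2P,PJ_1P]\subset K(S)$, both of which follow from $J_0\subset K(S)$;
\item $[P\Delta_1P,PJ_1P]\subset PJ_1P$, which follows from $[\Delta_1,J_1]\subset J_1$.
\end{itemize}
This produces an even positive contraction $M_S$ on $S$ with $[P(\Delta_1\cup\Delta_2)P,M_S]$, $M_SPJ_1P$ and $(1-M_S)PJ_2P$ all in $K(S)$.

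On the complement $(1-P)E_{12}$, I use the hypothesis that $J_1E_{12}$ is complemented. Its projection $Q$ commutes with $J_1$ and with $P$. I expect $Q$ to commute with $\Delta_1$ as well, because $\Delta_1J_1\subset J_1+J_1\Delta_1$ shows that $\Delta_1$ preserves $J_1E_{12}$ (and similarly for adjoints), with $Q$ even. I then set
\[M:=M_S\oplus(1-P)(1-Q).\]
On the complement, $MJ_1=0$ exactly. Also $(1-M)J_2=Q(1-P)J_2$, and this should be compact because $J_1J_2\subset K$ and $Q$ lies in the strict closure of $J_1$. This is where the work of the proof lies.

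\textbf{Main obstacle.} The main obstacle is justifying these complementary pieces, that is, showing that $Q(1-P)J_2$ and $[\Delta_2,Q](1-P)$ are compact rather than merely strictly compact. This is exactly why the cropping must be chosen carefully. I would try to enlarge $S$ so that $Q$ is essentially absorbed into it. To do this I would include in $J_0$ the elements $e_nJ_2$ and $[\Delta_2,e_n]$ for a countable approximate unit $(e_n)$ of $J_1$. The aim is that $(1-P)J_1J_2=0$ and $(1-P)[\Delta_2,J_1]=0$ modulo nothing, so that both pieces vanish identically on the complement. I would then verify this vanishing using the reducing property of $S$ together with $J_0\subset K(S)$.
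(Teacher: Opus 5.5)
Your proposal is correct and is essentially the paper's proof. Like the paper, you crop a countably generated reducing summand $S$ with $[\Delta_2,J_1]\cup J_1J_2\subset K(S)$, apply the classical technical theorem to $(P(\Delta_1\cup\Delta_2),C^*(PJ_1,K(S)),PJ_2)$, and add the projection $(1-P)(1-Q)$ on the complement.

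Your ``main obstacle'' is already resolved by your original choice of $J_0$, so no enlargement is needed. Since $1-P$ annihilates $K(S)$ exactly, taking the strict limit $e_n\to Q$ along an even approximate unit of $J_1$ gives $(1-P)QJ_2=0$ and $(1-P)[\Delta_2,Q]=0$. This is precisely how the paper concludes. The only difference is that the paper puts $Q$ into the algebra reduced by $S$ rather than deducing $[P,Q]=0$ as you do.
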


Another direction to make a further generalization of countably generated Hilbert modules is to leave the ``fiber at infinity'' freely.
We define our underlying bimodule over a pair of C$^*$-algebras $(A,B)$ for the cycle as a Hilbert module $E$ over the unitization $\widetilde B$ of $B$, satisfying $AE\subset EB$.
We will call this an augmented Hilbert bimodule, and we can prove the functoriality of the new theory thanks to this.
The ``fiber at infinity'' $E\otimes_{\widetilde B}\mathbb{C}=E/EB$ need not vanish, but it causes subtle obstructions in several situations, as we will discuss in detail.
After defining these new notions, we define an abelian group $KK^\mathrm{fc}(A,B)$, which we suggest to call it a freely complementable KK-group, as the set of equivalence classes of approximate Kasparov bimodules, similarly to the classical KK-theory.

\begin{theorem}
Let $A_0$, $A$, $B$, and $C$ be C$^*$-algebras with $A_0$ separable.
The following statements hold.
\begin{enumerate}
\item
The collection of equivalence classes of approximate Kasparov bimodules over $(A,B)$, denoted by $KK^\mathrm{fc}(A,B)$, is a set and carries a natural abelian group structure.
\item
There is a group homomorphism
\[KK^\mathrm{fc}(A,B)\otimes KK^\mathrm{fc}(B,C)\to KK^\mathrm{fc}(A,C)\]
satisfying the associativity so that it defines an additive category $\mathrm{KK}^\mathrm{fc}$.
\item
There is a canonical functor $\mathrm{C^*Alg}\to\mathrm{KK}^\mathrm{fc}$ from the category of C$^*$-algebras that is homotopy invariant, completely stable, and additive.
\item
If we denote by $KK(A_0,B)$ the set of equivalence classes of classical Kasparov bimodules over $(A_0,B)$, then we have group isomorphisms
\[KK^\mathrm{fc}(A_0,B)\cong KK(A_0,B)\cong\operatorname{colim}_{B_0\subset_\mathrm{sep}B}KK(A_0,B_0).\]
In particular, it extends the classical KK-theory of separable C$^*$-algebras.
\end{enumerate}
\end{theorem}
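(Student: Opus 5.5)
The four parts use different techniques, so I treat them in order: (1) and (3) are formal, (2) rests on the Technical theorem, and (4) is a separable reduction combined with a colimit argument.

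\textbf{Part (1).} The main issue is set-theoretic size. Each approximate Kasparov bimodule is built on a freely complementable module $E\subset H\otimes\widetilde B$. The operator net $(F_i)_i$ only needs to be tested against separable subalgebras $A_0\subset A$. So I will bound the cardinality of $H$, and of the index set of the net, by a cardinal depending only on $A$ and $B$: for example, replace the directed set by the set of separable C$^*$-subalgebras of $A$, reindexing the net cofinally. Every cycle is then unitarily equivalent to one whose underlying data lie in a fixed set, and the equivalence classes form a set. The abelian group structure is given by direct sum. The inverse is given by reversing the grading (with $F_i$ replaced by $-F_i$). The relation $x\oplus(-x)\sim 0$ follows from the usual rotation homotopy, applied uniformly in $i$; this is legitimate because the homotopy is given by operator-norm continuous formulas.

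\textbf{Part (2).} This is the heart of the theorem. Given $(E_1,(F_i))$ over $(A,B)$ and $(E_2,(G_j))$ over $(B,C)$, I form $E_{12}=E_1\otimes_BE_2$, which is again freely complementable and augmented. For each separable $A_0\subset A$ and each pair of indices, I apply the Technical theorem with the following choices:
\begin{itemize}
\item $J_1$ is generated by $K(E_1)$-elements coming from $A_0$ and the $F_i$, tensored with $1$;
\item $J_2$ is generated by $A_0$-compatible combinations of connection operators and $G$-terms;
\item $\Delta_1$ and $\Delta_2$ contain $F_i\otimes1$, a chosen $G$-connection, and generators of $A_0$.
\end{itemize}
The $G$-connection is supplied via the complemented embedding in $H\otimes\widetilde B$, using the Grassmann connection on the free part. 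This yields $M$. I then set
\[
F_{(A_0,i,j)}=M^{1/2}(F_i\otimes1)+(1-M)^{1/2}G',
\]
indexed by the directed set of triples. I will check the approximate Kasparov product conditions following Connes–Skandalis, and show independence of choices by a homotopy that again uses the Technical theorem for a path. Associativity will be obtained by the usual criterion that identifies products up to operator homotopy.

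I expect part (2) to be the main obstacle, specifically making $J_1E_{12}$ complemented and guaranteeing compatibility as $A_0$ grows. I would first try enlarging $J_1$ via the Cropping lemma to a corner $K(S)$ with $S$ complemented and countably generated.

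\textbf{Part (3).} A $*$-homomorphism $\varphi:A\to B$ is sent to $(\widetilde B,\varphi,0)$, with $\widetilde B$ regarded as an augmented bimodule; the fibre at infinity is harmless because the Kasparov conditions only involve $A$. Functoriality follows from the product of such cycles being the module composition. Homotopy invariance holds by definition of the equivalence relation. Complete stability uses the Morita bimodule $H\otimes\widetilde B$, which is freely complementable. Additivity follows from direct sums.

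\textbf{Part (4).} For separable $A_0$, the net condition reduces to a single cofinal index, so a single operator suffices. Cropping, via the Cropping lemma applied to the separable algebra generated by $A_0$ and $F$, compresses any cycle to a countably generated complemented submodule; the complement carries a degenerate cycle. This gives a bijection with classical cycles up to homotopy, so $KK^\mathrm{fc}(A_0,B)\cong KK(A_0,B)$. The colimit isomorphism comes from a further argument: the data of a countably generated cycle and of a homotopy involve only countably many elements of $B$, so each is defined over some separable $B_0\subset B$. This proves surjectivity and injectivity of the canonical map from $\operatorname{colim}_{B_0\subset_\mathrm{sep}B}KK(A_0,B_0)$ to $KK(A_0,B)$.
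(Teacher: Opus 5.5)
Your proposal has genuine gaps, and the most concrete one is in (1).

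\textbf{Inverses in (1).} In this framework a Kasparov operator on an augmented module must satisfy $C^*(F)E\subset EB$, and an operator homotopy must satisfy this at every $t$; that is what lets it live on $\mathrm{cyl}\,E$. Your rotation $t\mapsto\begin{pmatrix}c_tF_i&s_t\\ s_t&-c_tF_i\end{pmatrix}$ on $E\oplus\overline E$ has the identity in its off-diagonal entries. So for every $t>0$ it violates this condition unless $E\subset EB$, and its endpoint is not even a Kasparov operator, let alone a degenerate one. The obstruction is the fibre at infinity $E/EB$, not norm continuity. Replacing the identity by elements of $B$ does not obviously help, because degeneracy demands $\Sigma(F,A_0)=\{0\}$ exactly.

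The paper runs your rotation only after the non-degenerate reduction $E\mapsto AE$. That reduction needs $A$ para-unital so that $AE$ is freely complementable (the corollary on additive inverses for para-unitals). For general $A$, the paper instead proves $[\mathrm{id}_B]+[\overline{\mathrm{id}}_B]=0$ in $KK^\mathrm{fc}(B,B)$ by a homotopy on $(H\otimes C([0,1],B)^\sim)^{\boxplus2}$. In it the left action $p(t)\otimes b$ escapes to infinity, and the operators have off-diagonal entries $p(t)\otimes e_j(t)$ built from continuous approximate units of separable $B_j\subset B$. It then takes $[\overline{\mathrm{id}}_B]\circ[\varphi]$ as the inverse. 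So in the paper the group structure in (1) is only available after the product in (2).

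Your size argument is also not a unitary equivalence. A cycle on a large module must be shown \emph{homotopic} to one on $\ell^2(\kappa^{\aleph_0},\widetilde B)$. This is done by cropping off a complement that is degenerate as an approximate Kasparov bimodule, so the cropping must respect all connecting operator homotopies in the tails; that is why the bound is $\kappa^{\aleph_0}$.

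\textbf{The product in (2).} You name the real obstacle but leave it open. The product net must itself satisfy the Kasparov family condition: for $l\ge l_0$ you need an operator homotopy from $F_{12l_0}$ to $F_{12l}$ under the smaller subalgebra, mediated at every point. The paper supplies three ingredients you lack.
\begin{itemize}
\item Separable fiberings $(E_{10},p_1,B_0)$ taken along one fixed coisometry $p_1$, with $J_1=C^*(\Theta_{\xi,\eta}\otimes1:\xi,\eta\in E_{10})$. You need all of $K(E_{10})\otimes1$, not just compacts ``coming from $A_0$ and the $F_i$''. This makes $J_1E_{12}$ complemented, and the Grassmann connection $p_1(1\otimes G)p_1^*$ is then a $G$-connection on every fibering along $p_1$ whose $B_0$ it essentially commutes with.
\item A finite down set reduction via Olsen--Zame, so that the fibering and the index $k_j$ chosen at stage $j$ cover the connecting paths from all predecessors $j_0\le j$.
\item Convexity of the set of mediators, used to interpolate between $M_{l_0}$, an auxiliary mediator for the path in the second factor, and $M_l$.
\end{itemize}
Invariance under degeneracy moves and associativity also need specific arguments rather than ``the usual criterion''. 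The former uses the escaping lemma. The latter uses a tuned product with $F_{12}=\widehat F_{1j}$ off a complemented corner, and conditions (i)--(iii) on the second fibering, before a three-level application of the technical theorem.

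\textbf{Part (4).} Injectivity is asserted rather than proved. The fc relation is a zig-zag of homotopies and degeneracy moves through uncountably generated augmented cycles. You must first collapse it to a single homotopy after adding degenerate cycles (homotopy reduction, using the gluing lemma for free complementability and the escaping lemma), and only then crop. Surjectivity also needs the non-degenerate reduction to reach $E\subset EB$ after cropping.
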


Other than the universal property, KK-theory has many other interesting and convenient computational devices such as the six-term exact sequences and the external tensor products, and it is natural to ask if freely complementable KK-theory also has these.
Unfortunately it seems to be impossible to prove in a full generality due to the free complementability condition, nevertheless we can obtain partial results by introducing a suitable class of C$^*$-algebras called para-unital C$^*$-algebras, which is sufficiently broad in the sense that it includes all $\sigma$-unital C$^*$-algebras and the algebra $K(H)$ of compact operators on any non-separable Hilbert space $H$.

\begin{definition}[Para-unital C$^*$-algebras]
Let $A$ be a C$^*$-algebra.
A projection $p\in M(A)$ in the multiplier algebra of $A$ is called \emph{$\sigma$-unital} if the corner $pAp$ is $\sigma$-unital.
We say $A$ is \emph{para-unital} if it admits a mutually orthogonal family $p_i\in M(A)$ of $\sigma$-unital projections whose sum $\sum_ip_i$ strictly converges to the unit of $M(A)$.
\end{definition}

The name comes from the fact that a commutative C$^*$-algebra is para-unital if and only if its spectrum is paracompact, and it follows an analogy of the duality that a commutative C$^*$-algebra is unital or $\sigma$-unital if and only if its spectrum is compact or $\sigma$-compact, respectively.
This property is in fact equivalent to being a freely complementable Hilbert module over its one-point unitization.
The free complementability and para-unitality provide an operator algebraic explanation for the reason why paracompact spaces are useful in bundle theory, and reveal several ill-behaved structures of generally uncountably generated Hilbert modules and non-separable C$^*$-algebras.
Now then, we can summarize our next main results as follows.
The rigorous and precise statements of the two main theorems will be given in the paper.

\begin{theorem}
Let $\mathrm{C^*Alg}_\mathrm{para}\to\mathrm{KK}_\mathrm{para}^\mathrm{fc}$ be the restriction of the functor in Theorem 1.4 (3) to para-unital C$^*$-algebras.
\begin{enumerate}
\item
It is strict symmetric monoidal onto its image, with respect to either the minimal or the maximal tensor product.
\item
It is split exact, and completely additive in the first argument.
\end{enumerate}
\end{theorem}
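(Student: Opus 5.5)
The plan is to prove the three assertions separately, after one structural lemma that does most of the work: \emph{if $B$ is para-unital, then every augmented Hilbert bimodule over $(A,B)$ is equivalent, at the level of approximate Kasparov bimodules, to one whose underlying module is of the form $EB$. Moreover, $EB$ is freely complementable over $\widetilde B$ whenever $E$ is.}

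For this lemma I take the family $p_i\in M(B)$ from Definition 1.4. Since $pBp$ is $\sigma$-unital for each $i$, the Kasparov stabilization theorem makes each $E p_i B$ complemented in $H\otimes\widetilde B$. Summing over the mutually orthogonal $p_i$, using strict convergence of $\sum_i p_i$, should give a complemented embedding of $EB$. The fiber at infinity $E/EB$ is then split off as a degenerate summand, using $AE\subset EB$. I also record that para-unitality is preserved by minimal and maximal tensor products, using the projections $p_i\otimes q_j$. The point is that $p_iAp_i\otimes q_jBq_j$ is $\sigma$-unital, since a tensor product of strictly positive elements is strictly positive. So the objects of $\mathrm{C^*Alg}_\mathrm{para}$ are closed under $\otimes$.

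For (1), I define the external product of $(E,(F_i)_i)$ over $(A_1,B_1)$ and $(E',(F'_j)_j)$ over $(A_2,B_2)$ as follows. First pass to $EB_1$ and $E'B_2$ by the lemma, so that $EB_1\otimes E'B_2$ is a genuine Hilbert module over $B_1\otimes B_2$. Its free complementability comes from $(H\otimes\widetilde B_1)\otimes(H'\otimes\widetilde B_2)\subset (H\otimes H')\otimes\widetilde{B_1\otimes B_2}$, which is available because $B_1$ and $B_2$ are para-unital. The operator net is indexed by the product directed set and given by the usual formula
\[F_i\otimes1+\bigl(\sqrt{1-F_i^2}\otimes F'_j\bigr),\]
with the graded tensor sign convention. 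Equivalently, I would realize the product as the Kasparov product $(x\otimes1_{A_2})\cdot(1_{B_1}\otimes y)$ in Theorem 1.3 (2). Verifying the approximate Kasparov conditions is then local: a separable $A_0\subset A_1\otimes A_2$ lies in $A_{1,0}\otimes A_{2,0}$ for separable pieces.

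Associativity, unit ($\mathbb C$), and the symmetry given by the flip then reduce to the classical statements via Theorem 1.3 (4), or to direct isomorphisms of cycles. Strictness holds because $(A\otimes B)\otimes C=A\otimes(B\otimes C)$ is an equality on objects.

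For (2), complete additivity in the first argument means $KK^\mathrm{fc}(\bigoplus_k A_k,B)\cong\prod_k KK^\mathrm{fc}(A_k,B)$. I take direct sums of cycles, since a direct sum of freely complementable modules is freely complementable. The key observation is that a separable $A_0\subset\bigoplus_k A_k$ meets only countably many summands, up to approximation. So the net condition for $\bigoplus_k F^{(k)}_i$ can be arranged by reindexing over finite subsets of $k$ times the product of the index sets.

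For split exactness of $0\to J\to A\rightleftarrows A/J\to0$, I follow the Cuntz quasihomomorphism argument. Given the splitting $s$, I build the inverse map $KK^\mathrm{fc}(D,A)\to KK^\mathrm{fc}(D,J)$ by restricting a cycle over $A$ along $J$ and subtracting the part pulled back through $s$. Here the technical theorem (Theorem 1.2) is applied with $J$ para-unital, so that $EJ$ is freely complementable over $\widetilde J$ by the lemma. The cropping lemma (Lemma 1.1) localizes the separable data to a countably generated piece where the classical argument applies.

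I expect the main obstacle to be the structural lemma, namely removing the fiber at infinity while keeping free complementability. In particular, I must show that the complemented embeddings of the $Ep_iB$ assemble into a single complemented embedding. The projections onto the pieces must sum strictly, and the net $(F_i)_i$ must be compressed compatibly. My first attempt would be to crop each $Ep_iB$ with Lemma 1.1, using the separable data seen by each $A_0$, and to define the embedding blockwise on an orthogonal decomposition $H=\bigoplus_i H_i$.
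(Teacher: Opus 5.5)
\textbf{Complete additivity.} This is the most serious gap. Your observation that a separable $A_0\subset\bigoplus_kA_k$ meets only countably many summands does not address the real obstruction. (Reindexing over finite sets of summands also cannot cover those countably many.) The Kasparov family condition requires \emph{norm} continuous operator homotopies on $\bigoplus_kE_k$. An orthogonal sum $t\mapsto\bigoplus_kG_k(t)$ of uniformly bounded norm continuous paths is in general only strictly continuous. Nothing in an arbitrary approximate Kasparov bimodule bounds the moduli of continuity of its connecting paths uniformly in $k$.

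The paper first proves the Universally Lipschitz Kasparov family lemma, and only then sums. That lemma passes to self-adjoint unitaries, stabilizes to $E^{\oplus\infty}\oplus\overline E{}^{\oplus\infty}$, and replaces arbitrary connecting paths by rotations and conjugations of total Lipschitz constant at most $7\pi/3$.

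You also give no injectivity argument. If every restriction to $A_j$ is null, the witnesses are zig-zags of homotopy and degeneracy moves of varying lengths, and these cannot be added over uncountably many $j$. The paper collapses each zig-zag to a single homotopy using the operator homotopy reduction for para-unital $B$. That reduction is a Kasparov product with Kasparov's Toeplitz-type cycle $\omega$ over $(C([0,1],B),C([0,1],B))$, and it needs zero fibre at infinity, hence the non-degenerate reduction. The paper then applies the Lipschitz lemma to these homotopies and sums them.

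\textbf{Part (1).} The formula $F_i\otimes1+\sqrt{1-F_i^2}\otimes F'_j$ is not Kasparov in general. If $F_i^2=1$ it reduces to $F_i\otimes1$, and $[F_i\otimes1,a_1\otimes a_2]=[F_i,a_1]\otimes a_2$ is not compact when $a_2$ is not. So the product has to be the Kasparov product of $x\otimes A_2$ and $B_1\otimes y$, which is the paper's fallback too. The real content is then the interchange law:
\begin{itemize}
\item this product must coincide with the Kasparov product of $A_1\otimes y$ and $x\otimes B_2$;
\item $-\otimes D$ must preserve Kasparov products.
\end{itemize}
This cannot be reduced to the classical case through Theorem 1.4(4), which only treats separable first variables; $KK^\mathrm{fc}(A,B)$ is not determined by the groups $KK^\mathrm{fc}(A_0,B)$. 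The paper applies the technical theorem with synchronized separable fiberings of both factors. This yields one net $M_i^{1/2}(F'_{1i}\otimes1)+(1-M_i)^{1/2}(1\otimes F'_{2i})$ that is simultaneously both Kasparov products.

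Strictness is not an equality of objects either. You must check $[\varphi_1\otimes\varphi_2]=[\varphi_1]\otimes[\varphi_2]$, which holds because $\varphi_1\otimes\varphi_2$ is a Kasparov product of $\varphi_1\otimes B_2$ and $A_1\otimes\varphi_2$.

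\textbf{Structural lemma.} This is argued incorrectly, though it is repairable.
\begin{itemize}
\item $E/EB$ need not be a direct summand of $E$: for non-unital $B$, the submodule $B\subset\widetilde B$ has zero orthogonal complement. The fix is a rotation homotopy of the left action, as in the paper's non-degenerate reduction on $\mathrm{cyl}(E\oplus AE)$, not a splitting.
\item $Ep_iB$ need not be countably generated, so Kasparov stabilization does not apply to it.
\item There is no embedding of $\widetilde B_1\otimes\widetilde B_2$ into $(B_1\otimes B_2)^\sim$. The paper gets free complementability of external tensor products from a decomposition into countably generated summands instead.
\end{itemize}

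\textbf{Split exactness.} Your sketch is right in spirit. In the paper's notation $0\to B\to D\to A\to0$, it amounts to the cycle $\rho=(B\boxplus B,\ d\mapsto d\boxplus\sigma(\pi(d)))$ with the flip operator. A rotation homotopy then gives $[\sigma]\circ[\pi]+[\iota]\circ[\rho]=[\mathrm{id}_D]$. No cropping is needed there.
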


\subsection{Future directions}

The first natural question is on how to remove the para-unital conditions.
The split exactness of the classical KK-theory is strongly related to the fact that the natural $*$-homomorphism $qA\to A$ is a KK-equivalence for separable $A$.
However, as we will see in the non-para-unitality result of $qA$ in Proposition \ref{Proposition: Non-para-unital Cuntz construction}, it seems impossible to obtain the full split exactness including non-para-unital ideals if we want to keep fixing a single underlying Hilbert module for a cycle.
Rather than considering a net of Hilbert modules, it might be more realistic to develop a theory of para-unital or $\sigma$-unital replacement of split or semi-split exact sequences.

The most important problem on non-separable KK-theory would be the construction of a comparison functor from $\mathrm{KK}^\mathrm{fc}$ to an $\infty$-categorically enhanced KK-theory, at least for the restriction on the category of para-unital C$^*$-algebras.
Our theory only has the functors $\mathrm{KK}^\mathrm{fc}\to\mathrm{KK}^\mathrm{Ska}\leftarrow\mathrm{KK}$, where $\mathrm{KK}^\mathrm{Ska}$ is just the ordinary categorical ind-completion of the separable Kasparov category, and $\mathrm{KK}$ is the homotopy category of $\infty$-categorical ind-completion in the sense of either Land-Nikolaus \cite{MR3788857} or Bunke-D\"unzinger \cite{bunke2026productskketheory}.
This question basically asks a universal property problem for $\mathrm{KK}^\mathrm{fc}$.
The current definition of approximate Kasparov bimodule mimics indeed a topological model of the homotopy limit for the space of Kasparov operators in special cases, but it essentially lacks the homotopy coherence data.

We also expect several potential applications of the research in the sequel, including equivariant theory for non-$\sigma$-compact locally compact groupoids, and categorical explanation of KK-theory with real coefficients using the colimits along II$_1$ factors of Antonini-Azzali-Skandalis \cite{MR3419768}.
For example in the equivariant KK-theory, because the $\sigma$-compactness of structure groupoid is necessary due to the same reason as the separability of the first variable in the technical theorem, one of the further questions we may ask would be whether the idea of approximate cycles can be applied to construct the equivariant KK-theory for general locally compact group or groupoids.

This paper was personally and initially motivated by the successful extension of Tomita-Takesaki theory to non-separable von Neumann algebras using weights and Hilbert algebras, with no direct relations with KK-theory.
The objectives together with guiding principles of this paper are in line with Wulff's work on E-theory \cite{wulff2022generalizedasymptoticalgebrasmathrmetheory}.

As the roadmap of this paper, after the preliminaries on Hilbert modules and the classical KK-theory in Section 2, we develop augmented Hilbert bimodules and freely complementable Hilbert modules in Section 3 and 4.
In Section 5, we define approximate Kasparov bimodules and prove reduction theorems with respect to the homotopy relation.
Section 6 constructs the analytic Kasparov product, and Section 7 is devoted to prove desirable properties of KK-theory.

\subsection*{Notations}
Throughout this paper, we impose no separability, $\sigma$-unitality, or countable generation assumption unless explicitly stated.
The unitization of a C$^*$-algebra $B$ is a unital C$^*$-algebra, denoted by $\widetilde B$ or $B^\sim$, together with a split exact sequence $0\to B\to\widetilde B\to\mathbb{C}\to0$.
This definition is same even if $B$ is unital.
We only consider the complex scalars in any linear structures.

For a Hilbert module $E$ over a C$^*$-algebra $B$, the C$^*$-algebra of compact operators and adjointable operators will be denoted by $K(E)$ and $B(E)$, and we will write $Q(E):=B(E)/K(E)$ for its Calkin algebra.
We often omit the C$^*$-algebra $B$ in this notation because it does not affect the structure of these algebras.

For a $*$-homomorphism $\varphi:A\to B$ between C$^*$-algebras, we will use the same notation $\varphi:=(\widetilde B,0)$ to denote the canonically associated Kasparov bimodule, but it might mean $\varphi:=(B,0)$ only when $B$ is para-unital, which will be precisely given depending on its context.
We will also use greek letters except $\xi,\eta,\zeta$ for approximate Kasparov bimodules although they are not associated with $*$-homomorphisms, for example as $\varphi:=(E_\varphi,F_\varphi)$ over $(A,B)$.
We also sometimes use the same notation to denote the left action $\varphi:A\to B(E_\varphi)$ if it has no ambiguity, but this notation will be basically omitted such as $F_\varphi a:=F_\varphi\varphi(a)\in B(E_\varphi)$ or $a\xi:=\varphi(a)\xi$ for $a\in A$ and $\xi\in E_\varphi$.
Their homotopy classes will be denoted by brackets as $[\varphi]$.

For a Hilbert bimodules $E_1$ and $E_2$ over pairs of C$^*$-algebras $(A,B)$ and $(B,C)$ respectively, $T_{\xi_1}\in B(E_2,E_{12})$ for $\xi_1\in E_1$ is defined by $T_{\xi_1}\zeta_2:=\xi_1\otimes\zeta_2$, whose adjoint $T_{\xi_1}^*\in B(E_{12},E_2)$ is given by $T_{\xi_1}^*(\zeta_1\otimes\zeta_2)=\langle\xi_1,\zeta_1\rangle\zeta_2$, and we also write $\Theta_{\xi_1,\eta_1}\in K(E_1)$ for $\eta_1\in E_1$, where $\Theta_{\xi_1,\eta_1}\zeta_1:=\xi_1\langle\eta_1,\zeta_1\rangle$, for all $\zeta_1\in E_1$ and $\zeta_2\in E_2$, with $\Theta_{\xi_1}:=\Theta_{\xi_1,\xi_1}$.

Every grading will be a $\mathbb{Z}/2\mathbb{Z}$-grading, and every C$^*$-algebra will be assumed ungraded, except the C$^*$-algebras associated with graded Hilbert modules.
The grading operator that transforms odd part to its additive inverse will be denoted by $\gamma$ or the overline.
When we want to explicitly write down the even and odd parts of a graded module $E$, we will use a boxed direct sum $E_+\boxplus E_-$ to mean that its left side $E_+$ is even and its right side $E_-$ is odd.
The parity $\pm$ will be at subscript.
The matrix notation with parentheses means ordinary categorical direct sums, but bracketed matrix will be always for direct sum decompositions by the grading.
The commutator $[a,b]$ for $a,b$ in a graded algebra will always mean the graded commutator.

\subsection*{AI disclosure statement}

The research direction, the definitions of augmented Hilbert bimodules, freely complementable Hilbert modules, approximate Kasparov bimodules, and the initial strategy for constructing the Kasparov product were developed by the author.
GPT-5.6 and GPT-6.0 were used for language editing and mathematical assistance, including identifying gaps and suggesting repairs, simplifying the original arguments especially in Section 6.3, and investigating the necessity of para-unitality assumptions in Section 7.
In particular, other than the construction of Kasparov products, substantial parts of the proofs of Lemma \ref{Lemma: Non-zero-divisor property}, Proposition \ref{Proposition: Non-para-unital Cuntz construction}, Lemma \ref{Lemma: Compact operators on external tensor products}, and Theorem \ref{Theorem: General additive inverses} originated in suggestions generated by GPT-5.6, and Lemma \ref{Lemma: Free complementability of gluing}, Lemma \ref{Lemma: Universally Lipschitz Kasparov family} by GPT-6.0.
AI assistance also contributed to the revision of arguments elsewhere in the paper.
The author has checked all arguments presented in this paper and takes full responsibility for its mathematical correctness.

\section*{Acknowledgement}
The author is deeply grateful to his advisor Yasuyuki Kawahigashi for his continual guidance and generous support throughout the author's doctoral studies.
The author thanks Shintaro Nishikawa, Yuezhao Li, and Yosuke Kubota for fruitful discussions with kind encouragements.
This work was supported by JSPS KAKENHI Grant Number JP26KJ0751.

\section{Preliminaries}

\subsection{Hilbert modules}

A Hilbert module over a possibly non-unital C$^*$-algebra $B$ is a right module $E$ over $B$ together with a sesqui-linear map $\langle\cdot,\cdot\rangle:E\times E\to B$ satisfying $\langle\xi,\xi\rangle\ge0$ but $|\xi|^2:=\langle\xi,\xi\rangle=0$ implies $\xi=0$, $\langle\eta,\xi b\rangle=\langle\eta,\xi\rangle b$, $\langle\eta,\xi\rangle^*=\langle\xi,\eta\rangle$, for all $\xi,\eta\in E$ and $b\in\widetilde B$, and finally the norm given by $\|\xi\|:=\|\langle\xi,\xi\rangle\|^{\frac12}$ for $\xi\in E$ is complete.
A prototypical example is a Hermitian bundle over a compact Hausdorff space, but a Hilbert module can have a fiber of infinite rank and model a vector bundle that may not be locally trivial in general.
Whenever $B$ is included in a larger C$^*$-algebra $B'$ as a closed ideal, every Hilbert module over $B$ can be naturally considered as a Hilbert module over $B'$.
In this sense we can equivalently describe a Hilbert module $E$ over $B$ just as a Hilbert module $E$ over $\widetilde B$ such that $E\subset\overline{EB}$, which means ``vanishing at infinity'' when we regard the unitization as adjoining an infinity.

Let $E$ be a Hilbert module over a C$^*$-algebra $B$.
Then, we can associate a C$^*$-algebra $K(E)$, generated by the operators of the form $\Theta_{\xi,\eta}:E\to E$ such that $\Theta_{\xi,\eta}\zeta:=\xi\langle\eta,\zeta\rangle$ for $\xi,\eta,\zeta\in E$, in which elements are called compact operators.
Its multiplier algebra $B(E):=M(K(E))$ can be identified with the algebra of linear operators $T:E\to E$ called adjointable operators, meaning that it admits another operator $T^*:E\to E$ such that $\langle\eta,T\xi\rangle=\langle T^*\eta,\xi\rangle$ for all $\xi,\eta\in E$.
Since the structures of $K(E)$ and $B(E)$ do not depend on the C$^*$-algebra of the right action, we will not write as $K_B(E)$ nor $B_B(E)$.

For a C$^*$-algebra $K$, there is a natural topology on the multiplier algebra $M(K)$ called the strict topology, which generated by semi-norms of the form $M(K)\to\mathbb{R}_{\ge0}:x\mapsto\|xa\|+\|ax\|$ with $a\in K$.
The norm closed unit ball of $M(K)$ and $M(K)$ itself can be seen as the completion of the closed unit ball of $K$ and $K$ itself with respect to the strict topology, and any linear operator from $M(K)$ strictly continuous on the unit ball is strictly continuous by \cite{MR290117}.
If $K$ is a closed ideal of another C$^*$-algebra $L$, then the multiplier algebras naturally reverse the direction to have a $*$-homomorphism $M(L)\to M(K)$.
However, the inclusion $K\to M(L)$ is rarely extended to $M(K)\to M(L)$ strictly continuously.

A Hilbert bimodule or a correspondence over a pair of C$^*$-algebras $(A,B)$ is a Hilbert module $E$ over $B$ equipped with a $*$-homomorphism $\varphi:A\to B(E)$ called the left action.
Even if $\varphi$ is not faithful, we will just write $a\xi:=\varphi(a)\xi\in E$ for $a\in A$ and $\xi\in E$.
By the Cohen factorization theorem, we have $AE=\overline{AE}$ and $EB=\overline{EB}$.
We always have $K(AE)=AK(E)A$, and there is a one-to-one correspondence between Hilbert submodules of $E$ and hereditary C$^*$-subalgebras of $K(E)$.
A Hilbert submodule $S\subset E$ is called complemented if $E=S\oplus\{\eta\in E:\langle\eta,\xi\rangle=0\text{ for }\xi\in S\}$.
In this case we can find a projection $P\in B(E)$ such that $S=PE$.
The left action $A\to B(E)$ is strictly continuous in the sense that it is extended to $M(A)\to B(E)$ strictly continuously if and only if $AE$ is complemented in $E$.
In particular, the Hilbert bimodule $E$ is non-degenerate, i.e.~$E=AE$, then we naturally have a unital strictly continuous $*$-homomorphism $M(A)\to B(E)$.
In general, we have neither canonical choice of $B(AE)\to B(E)$ nor $B(E)\to B(AE)$ unless $AE\subset E$ is complemented, so the complementedness of submodules becomes important when we induce adjointable operators on Hilbert submodules.

A Hilbert module $E$ over a C$^*$-algebra $B$ is called countably generated if there is a countable subset $E_0\subset E$ such that $E=\overline{E_0B}$.
We may replace $E_0$ to be a separable closed linear subspace in its definition.
It is well-known that $E$ is countably generated if and only $K(E)$ is $\sigma$-unital, and this implies that countable generatedness does not depend on the C$^*$-algebra of right action, namely $E$ is countably generated over $B$ if and only if over $\widetilde B$.
The Kasparov stabilization theorem states that if $E$ is countably generated, then we have an unitary equivalence $E\oplus\ell^2(\mathbb{N})\otimes B\cong\ell^2(\mathbb{N})\otimes B$, which is frequently called the standard Hilbert module over $B$.
In the classical KK-theory, it is important to see that a $\sigma$-unital C$^*$-algebra $B$ can be identified with a complemented Hilbert submodule of $\ell^2(\mathbb{N})\otimes\widetilde B$ over $\widetilde B$.

If we have Hilbert bimodules $E_1$ and $E_2$ over pairs of C$^*$-algebras $(A,B)$ and $(B,C)$ respectively, the internal tensor product $E_1\otimes_BE_2$ is defined by the ``separation and completion'' of the algebraic tensor product $E_1\odot E_2$ with respect to a $C$-valued sesqui-linear form defined by $\langle\eta_1\otimes\eta_2,\xi_1\otimes\xi_2\rangle:=\langle\eta_2,\langle\eta_1,\xi_1\rangle\xi_2\rangle$ for $\xi_1,\eta_1\in E_1$ and $\xi_2,\eta_2\in E_2$.
We have $\xi_1b\otimes\xi_2=\xi_1\otimes b\xi_2$ in $E_1\otimes_BE_2$ for all $\xi_1\in E_1$, $\xi_2\in E_2$, and $b\in B$.
We always have a strictly continuous unital $*$-homomorphism $B(E_1)\to B(E_1\otimes_BE_2)$ so that $E_1\otimes_BE_2$ is a Hilbert bimodule over $(A,C)$.
However, there is not canonical choice of an operator on $E_1\otimes_BE_2$ for a given adjointable operator on $E_2$ because the left action of $B$ does not commute with the operator.
Observing the left action $B\to B(E_2)$ always has a unital extension $\widetilde B\to B(E_2)$, we can see $E_1\otimes_BE_2=E_1\otimes_{\widetilde B}E_2$.
We can consider a locally large category of C$^*$-algebras with non-degenerate Hilbert bimodules as its morphisms, where the composition is the internal tensor product.

Fix $*\in\{\mathrm{min},\mathrm{max}\}$.
The external tensor product of Hilbert bimodules $E_1$ and $E_2$ over pairs of C$^*$-algebras $(A_1,B_1)$ and $(A_2,B_2)$ is the Hilbert bimodule $E_1\otimes_*E_2$ over $(A_1\otimes_*A_2,B_1\otimes_*B_2)$ defined by the completion of the algebraic tensor product $E_1\odot E_2$ with respect to a $B_1\otimes_*B_2$-valued sesqui-linear form given by $\langle\eta_1\otimes\eta_2,\xi_1\otimes\xi_2\rangle:=\langle\eta_1,\xi_1\rangle\otimes\langle\eta_2,\xi_2\rangle$, which is in fact non-degenerate.

Let $X$ be a locally compact Hausdorff space, and let $B$ be a C$^*$-algebra.
Since $C_0(X)$ is a nuclear C$^*$-algebra, there is only one C$^*$-tensor norm on the algebraic tensor product $C_0(X)\odot B$ so that we can identify $C_0(X)\otimes B=C_0(X,B)$ as the completion.
For a Hilbert module $E$ over $C_0(X,B)$ and $x\in X$, the internal tensor product by the evaluation $C_0(X,B)\to B:b\mapsto b(x)$ from right gives a Hilbert module $E_x:=E\otimes_{C_0(X,B)}B=E/C_0(X\setminus\{x\},B)E$ over $B$, which is called the fiber of $E$ at $x$.

Let $E_0$ be a Hilbert module over $B$.
By the similar argument as above using the realizations as the completions of algebraic tensor products, we can identify $C_0(X)\otimes E_0=C_0(X,E_0)$ the exterior tensor product over $C_0(X,B)$, and $K(C_0(X,E_0))=C_0(X,K(E_0))$ with $B(C_0(X,E_0))=C_b(X,B(E_0))$, where $B(E_0)$ is endowed with the strict topology.
This is an extremely useful picture when we want to construct an explicit adjointable operator on a Hilbert module of the form $C_0(X,E_0)$ by a continuous function of adjointable operators.
For $E_1\subset E_0$ a Hilbert submodule over $B$, we can construct $E:=\{\xi\in C([0,1],E_0):\xi(1)\in E_1\}$ a Hilbert module over $C([0,1],B)$, and it is hard to describe $B(E)$ in a closed form.
If a strictly continuous path $[0,1]\to B(E_0):t\mapsto F_t$ satisfies $C^*(F_t)E_1\subset E_1$ for all $t\in[0,1]$, then it defines $F\in B(E)$ in a natural way.

\subsection{Classical KK-theory}

We briefly review the Kasparov picture of the classical KK-theory by Kasparov bimodules.

\begin{definition}[Classical Kasparov bimodules]
Let $A_0$ and $B$ be C$^*$-algebras.
A \emph{classical Kasparov bimodule} over $(A_0,B)$ is a graded countably generated Hilbert module $E$ over $B$ together with a $*$-homomorphism $A_0\to B(E)$ and an odd adjointable operator $F\in B(E)$ such that $[F,a],(F-F^*)a,(F^2-1)a\in K(E)$ for all $a\in A_0$, called a \emph{Kasparov operator} or a \emph{Fredholm operator}.
\end{definition}

The operator $F$ of a classical Kasparov bimodule $(E,F)$ is often called a Fredholm operator, but in this paper we call it a Kasparov operator.

\begin{definition}[Homotopy relation]
Let $A_0$ and $B$ be C$^*$-algebras.
Two classical Kasparov bimodules $(E_0,F_0)$ and $(E_1,F_1)$ over $(A_0,B)$ are called \emph{homotopic} if there is a classical Kasparov bimodule $(E,F)$ over $(A_0,C([0,1],B))$ such that $(E(0),F(0))=(E_0,F_0)$ and $(E(1),F(1))=(E_1,F_1)$, and in this case we say $(E,F)$ is a \emph{homotopy} that \emph{connects} $(E_0,F_0)$ and $(E_1,F_1)$.
An \emph{operator homotopy} connecting $(E_0,F_0)$ and $(E_1,F_1)$ under $A_0$, only defined when $E_0=E_1$, is a norm continuous path $F\in C([0,1],B(E))$ of Kasparov operators connecting $F_0$ and $F_1$ in the sense that $F(0)=F_0$ and $F(1)=F_1$.
An operator homotopy obviously define a classical Kasparov bimodule over $(A_0,C([0,1],B))$.
\end{definition}

To define Kasparov products, we need to define the following conditions for a Kasparov operator on the internal tensor product.
These two notions are repeatedly used through this paper.

\begin{definition}[Positivity and connection conditions]
Let $A$, $B$, and $C$ be C$^*$-algebras.
Let $E_1$ and $E_2$ be graded Hilbert bimodules over $(A,B)$ and $(B,C)$ respectively, and let $E_{12}:=E_1\otimes_BE_2$.
Let $A_0\subset A$ be a C$^*$-subalgebra, and let $E_{10}\subset E_1$ be a closed linear subspace.
Let $F_1\in B(E_1)$ and $F_2\in B(E_2)$ be odd adjointable operators.
For an odd $F_{12}\in B(E_{12})$, we say it is \emph{$F_1$-positive} under $A_0$ if $a^*[F_{12},F_1\otimes1]a\ge0$ in $Q(E_{12})$ for all $a\in A_0$, and is an \emph{$F_2$-connection} on $E_{10}$ if $(F_{12}T_{\xi_1}-T_{\overline\xi_1}F_2)T_{\eta_1}^*,\ (F_{12}^*T_{\xi_1}-T_{\overline\xi_1}F_2^*)T_{\eta_1}^*=0$ in $Q(E_{12})$ for all $\xi_1,\eta_1\in E_{10}$.
\end{definition}

\begin{definition}[Classical Kasparov product]
Let $A_0$, $B_0$, and $C$ be C$^*$-algebras.
Let $(E_1,F_1)$ and $(E_2,F_2)$ be classical Kasparov bimodules over $(A_0,B_0)$ and $(B_0,C)$ respectively.
A \emph{classical Kasparov product} of them is a classical Kasparov bimodule $(E_{12},F_{12})$ over $(A_0,C)$ such that $E_{12}=E_1\otimes_{B_0}E_2$ and $F_{12}$ is an $F_1$-positive $F_2$-connection under $A_0$ and on $E_1$.
\end{definition}

Connections are used to resolve the problem that $1\otimes F_2$ is not well-defined in $B(E_{12})$ for $F_2\in B(E_2)$, which is due to the fact $F_2$ generally does not commute with the left action of $B_0$, and they have original motivations from the connections of differential geometry in the unbounded KK-theory context.
It is not difficult to show the connection condition $(F_{12}T_{\xi_1}-T_{\overline\xi_1}F_2)T_{\eta_1}^*\in K(E_{12})$ is equivalent to the classical definition $F_{12}T_{\xi_1}-T_{\overline\xi_1}F_2\in K(E_2,E_{12})$ if $[F_2,b]\in K(E_2)$ for $b\in\langle E_{10},E_{10}\rangle=C^*(T_{\eta_1}^*T_{\zeta_1}:\eta_1,\zeta_1\in E_{10})$ by taking an approximate unit of it.
On the other hand, the technical condition on positivity for the Kasparov product is in a strong relation with the following theorem.

\begin{theorem}[Classical Connes-Skandalis operator homotopy]
Let $A_0$ and $B$ be C$^*$-algebras.
Let $(E,F_0)$ and $(E,F_1)$ be classical Kasparov bimodules over $(A_0,B)$.
If $a^*[F_0,F_1]a\ge0$ in $Q(E)$ for all $a\in A_0$, then $F_0$ and $F_1$ are connected by a norm continuous path $[0,1]\to B(E):t\mapsto F_t$ such that $(E,F_t)$ is a classical Kasparov bimodule over $(A_0,B)$ for all $t\in[0,1]$.
\end{theorem}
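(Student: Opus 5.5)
The plan is the standard Connes--Skandalis argument. We rotate $F_0$ into $F_1$ along the quarter circle, after first arranging that the relevant anticommutator is positive modulo compacts. Write $\pi:B(E)\to Q(E)$ for the quotient map, and let $D:=C^*(A_0,F_0,F_1)$.

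\textbf{Reductions.} First we normalise both operators. Replacing $F_j$ by $\tfrac12(F_j+F_j^*)$ is an operator homotopy: the straight-line path stays a Kasparov operator because $(F_j-F_j^*)a\in K(E)$. Next, functional calculus gives a self-adjoint odd contraction. Since $(F_j^2-1)a$ and $[F_j,a]$ are compact, the straight-line path from $F_j$ to a suitably truncated $F_j$ is again a path of Kasparov operators. These modifications change $[F_0,F_1]$ only by terms vanishing modulo compacts after multiplication by elements of $A_0$. Hence the hypothesis $a^*[F_0,F_1]a\ge0$ in $Q(E)$ is preserved. So we may assume both $F_j$ are self-adjoint odd contractions, and that the images $\pi(F_j)$ commute with $\pi(A_0)$ and square to $1$ on $\pi(A_0)$.

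\textbf{Step 1: make the anticommutator positive.} Consider the graded commutator $P:=[F_0,F_1]=F_0F_1+F_1F_0$, which is even and self-adjoint. We only know $\pi(a^*Pa)\ge0$ for $a\in A_0$. We apply Kasparov's technical theorem (classical form, with separable $A_0$ and $\sigma$-unital $K(E)$) to the following data:
\begin{itemize}
\item $J_1:=C^*(K(E)\cdot D)$ restricted appropriately, namely the algebra generated by $A_0\cdot(F_j^2-1)$, the commutators $[F_j,A_0]$ and $K(E)$;
\item $J_2:=C^*(P_-)$, where $P_-$ is the negative part of $P$;
\item $\Delta:=\{F_0,F_1\}$ together with a dense subset of $A_0$.
\end{itemize}
The theorem produces an even positive contraction $N$ with $N J_1\subset K(E)$, $(1-N)J_2\subset K(E)$ and $[N,\Delta]\subset K(E)$. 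Positivity modulo compacts on $A_0$ should say precisely that $P_-\cdot a$ lies in the ideal generated by the $J_1$-terms, so this is the setting of the technical theorem.

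We then set $M:=1-N$ and replace $F_j$ by $F'_j:=M^{1/2}F_jM^{1/2}+\ldots$ in such a way that $\pi(F'_j)$ agrees with $\pi(F_j)$ on $A_0$ while $P':=[F'_0,F'_1]\ge0$ in $Q(E)$ outright. Alternatively, and perhaps more robustly, we add a degenerate module $(\widehat H_B,G)$ with $G^2=1$, $G=G^*$. Kasparov stabilisation lets us reduce to the case where $E$ is absorbing, and there we may correct $F_1$ on the "bad" part by $F_0$. Each $F_j\to F'_j$ must itself be an operator homotopy (straight-line path), which follows from the compactness relations above.

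\textbf{Step 2: the rotation.} Once $\pi(P)\ge0$ holds in $Q(E)$, define
\[
F_t:=\cos(\tfrac{\pi t}{2})F_0+\sin(\tfrac{\pi t}{2})F_1 .
\]
Modulo compacts on $A_0$,
\[
F_t^2=\cos^2F_0^2+\sin^2F_1^2+\cos\sin\,P\equiv1+\cos\sin\,P .
\]
This is $\ge1$ and invertible in the corner, so $G_t:=F_t(F_t^2)^{-1/2}$, defined via functional calculus in $\pi(D)$ and lifted, is a norm-continuous path of Kasparov operators. Each $G_t$ commutes with $A_0$ modulo compacts, $G_0=F_0$ and $G_1=F_1$ modulo compacts, and straight-line homotopies close the ends. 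Concatenating the operator homotopies from the reductions, Step 1 and Step 2 gives the required path $t\mapsto F_t$.

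\textbf{Main obstacle.} The hard step is Step 1, passing from the weak positivity $a^*Pa\ge0$ to honest positivity of $\pi(P)$. This is where the technical theorem, and hence separability of $A_0$ and countable generation of $E$, enter. I would first try to follow Skandalis's argument: find a quasi-central $N$ as above, and check that $(1-N)\,P_-\,(1-N)\in K(E)$ while the $N$-part only touches the region where $A_0$ acts by compacts.
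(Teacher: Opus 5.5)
Your Step 1 aims at the wrong target, and that is where the proposal breaks down. You never need $[F_0,F_1]\ge0$ in $Q(E)$. Every Kasparov condition is tested only after multiplying by elements of $A_0$, so positivity modulo the ideal $C(E):=\{x\in B(E):xa,\,ax\in K(E)\text{ for }a\in A_0\}$ of $D(E):=\{x\in B(E):[x,a]\in K(E)\text{ for }a\in A_0\}$ is enough; after your self-adjoint reduction, $F_0,F_1$ become self-adjoint unitaries in $D(E)/C(E)$.

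That positivity follows from the hypothesis directly. Since $\pi([F_0,F_1])$ commutes with $\pi(a)$, you have $\pi(a^*[F_0,F_1]a)=\pi([F_0,F_1])\pi(a^*a)\ge0$. Evaluating at characters of the commutative algebra $C^*(\pi([F_0,F_1]),\pi(a^*a))$ gives $\pi\bigl((-[F_0,F_1])^+\bigr)\pi(a^*a)=0$, hence $(-[F_0,F_1])^+\in C(E)$. This is exactly the fact you half-state (``$P_-\cdot a$ lies in the ideal generated by the $J_1$-terms'', which are all compact) but then do not use.

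Now take $P:=[F_0,F_1]^+\in D(E)$, an even positive operator equal to $[F_0,F_1]$ modulo $C(E)$, and set
\[F_t:=(1+c_ts_tP)^{-1/2}(c_tF_0+s_tF_1),\qquad c_t=\cos\tfrac{\pi t}2,\quad s_t=\sin\tfrac{\pi t}2.\]
This is already the required path. The factor $1+c_ts_tP\ge1$ is invertible in $B(E)$, the path is norm continuous, and its endpoints are exactly $F_0$ and $F_1$. In $D(E)/C(E)$, where $P$ commutes with $F_0$ and $F_1$, you get $F_t=F_t^*$ and $F_t^2=1$. This is the paper's proof. It uses neither the technical theorem, nor separability of $A_0$, nor countable generation of $E$.

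As written, Step 1 would not work anyway. Every generator you put into $J_1$ is already compact, since $a(F_j^2-1)$ and $[F_j,a]$ lie in $K(E)$. So $J_1\subset K(E)$, and $N=1$ satisfies the conclusion of the technical theorem. That gives $M=0$ and no information, and the replacement ``$F_j':=M^{1/2}F_jM^{1/2}+\ldots$'' is never specified.

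There are further problems with the route. The technical theorem with a dense subset of $A_0$ in $\Delta$ requires $A_0$ separable, which the statement does not assume. Your stabilisation alternative would only give a homotopy after adding a degenerate module, not a path on $E$ itself as required.

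Step 2 has a separate defect. The congruence $F_t^2\equiv1+c_ts_tP$ holds only after multiplying by elements of $A_0$, not in $Q(E)$, so $(F_t^2)^{-1/2}$ need not exist. The normalising factor has to be $(1+c_ts_tP)^{-1/2}$, built from a genuinely positive lift $P$ as above.
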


A separation theorem of Kasparov, usually called the technical theorem, is the most technical part of the construction of Kasparov product as the name tells it.

\begin{theorem}[Classical technical theorem]
Let $K$ be a graded $\sigma$-unital C$^*$algebra.
Consider
\begin{enumerate}
\item a norm compact subset $\Delta\subset M(K)$,
\item a $\sigma$-unital C$^*$-subalgebra $J_1\subset M(K)$ that is graded,
\item a $\sigma$-unital C$^*$-subalgebra $J_2\subset M(K)$,
\end{enumerate}
such that $[\Delta,J_1]\subset J_1$ and $J_1J_2\subset K$.
Then, there is an even positive contraction $M\in M(K)$ such that 
\[[\Delta,M],\ MJ_1,\ (1-M)J_2\subset K.\]
\end{theorem}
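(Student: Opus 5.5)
The plan is to follow Higson's approach to the classical technical theorem via quasicentral approximate units. The idea is to build $M$ as a strictly convergent sum $\sum_n d_n(1-u_n)d_n$. Here $(u_n)$ is an approximate unit for $J_1$ and $d_n^2=v_n-v_{n-1}$ comes from an approximate unit $(v_n)$ for $K$. Every commutator error is forced to be norm-summable.

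\emph{Step 1: countable data.} Since $J_1,J_2,K$ are $\sigma$-unital, pick strictly positive elements $h_1\in J_1$ and $h_2\in J_2$. We may take $h_1$ even, replacing it by $\tfrac12(h_1+\gamma(h_1))$, which is still strictly positive because $J_1$ is graded. Since $\Delta$ is norm compact, choose a dense sequence $(\delta_k)$ in $\Delta$; it suffices to control commutators with each $\delta_k$, with errors summable uniformly on finite sets, and then pass to all of $\Delta$ by compactness.

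\emph{Step 2: quasicentral approximate unit in $J_1$.} Since $[\Delta,J_1]\subset J_1$, the maps $\operatorname{ad}\delta_k$ act as bounded maps into $J_1$. The usual convex-combination argument (Arveson/Akemann–Pedersen) then gives a countable increasing even approximate unit $(u_n)$ of $J_1$, obtained by averaging with $\gamma$, such that
\[\|(1-u_n)h_1\|<2^{-n},\qquad \|[\delta_k,u_n]\|<2^{-n}\quad(k\le n).\]
Because $J_1J_2\subset K$ and $J_2J_1=(J_1J_2)^*\subset K$, the elements $u_nh_2$ and $h_2u_n$ lie in $K$.

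\emph{Step 3: quasicentral approximate unit in $K$.} Choose inductively an increasing even approximate unit $(v_n)$ of $K$, with $v_0=0$, quasicentral for the separable set $\{h_1,h_2,u_m,\delta_k\}$, and satisfying for $m\le n$, $k\le n$:
\[\|[v_n,u_m]\|,\ \|[v_n,h_1]\|,\ \|[v_n,\delta_k]\|<2^{-n},\qquad \|(1-v_n)u_mh_2\|<2^{-n}.\]
Set $d_n=(v_n-v_{n-1})^{1/2}$. By norm-continuity of the square root on positive elements, the commutators $[d_n,u_m]$, $[d_n,\delta_k]$ and $[d_n,h_1]$ for $m,k\le n-1$ are also small, say $<\varepsilon_n$ with $\sum_n\varepsilon_n<\infty$. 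Since $\sum d_n^2=1$ strictly, the sum $M:=\sum_n d_n(1-u_n)d_n$ converges strictly to an even positive contraction.

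\emph{Step 4: verification.}
\begin{itemize}
\item \emph{$MJ_1\subset K$.} Moving $h_1$ past $d_n$ costs a summable error, so $Mh_1\equiv\sum_n d_n(1-u_n)h_1d_n$ modulo a norm-convergent sum of elements of $K$. Each term lies in $K$ and has norm $<2^{-n}$, so the whole series is a norm limit of compacts. Hence $Mh_1\in K$, and $MJ_1\subset K$ because $h_1$ is strictly positive in $J_1$.
\item \emph{Commutators with $\Delta$.} For fixed $\delta_k$ we have
\[[\delta_k,d_n(1-u_n)d_n]=[\delta_k,d_n](1-u_n)d_n-d_n[\delta_k,u_n]d_n+d_n(1-u_n)[\delta_k,d_n].\]
Each piece lies in $K$ since $d_n\in K$, and the norms are summable for $n\ge k$, so $[\delta_k,M]\in K$.
\item \emph{$(1-M)J_2\subset K$.} We have $(1-M)h_2=\sum_n d_nu_nd_nh_2$. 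Commuting $d_n$ past $u_n$ modulo summable compact errors turns this into $\sum_n u_n d_n^2h_2$ plus compacts. Rewriting via $u_n=\sum_{m\le n}(u_m-u_{m-1})$ and exchanging the order of summation, which is justified strictly, gives
\[(1-M)h_2\equiv\sum_m (u_m-u_{m-1})(1-v_{m-1})h_2 \pmod K.\]
The $m$-th term equals $(1-v_{m-1})(u_m-u_{m-1})h_2$ up to a small commutator. It lies in $K$, and its norm is $<2^{-m+2}$ by the condition $\|(1-v_n)u_mh_2\|<2^{-n}$, so the series converges in norm inside $K$.
\end{itemize}

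The main obstacle is the bookkeeping in the $(1-M)J_2$ estimate. There one must choose the indices in Step 3 so that $(1-v_n)$ kills $u_mh_2$ for \emph{all} $m\le n$, and arrange that the rearranged strictly convergent series really is norm convergent. If the telescoping does not close exactly as written, I would first try relabelling, pairing $u_{n+1}$ with $d_n$, to absorb the off-by-one. Evenness of $M$ follows since $u_n$ and $v_n$ were chosen even.
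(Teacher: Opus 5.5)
The paper gives no proof of this statement. It quotes it as Kasparov's classical technical theorem, and your argument is the standard Higson proof via quasicentral approximate units. The route is the expected one, and it is correct after one indexing repair, exactly where you suspected trouble.

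In the third bullet you need, at index $n$, two facts:
\begin{enumerate}
\item $[d_n,u_n]$ is small;
\item $d_n$ nearly kills $u_nh_2$. Since $d_n^2=v_n-v_{n-1}\le 1-v_{n-1}$, this means $\|(1-v_{n-1})u_nh_2\|$ is small.
\end{enumerate}
Both are stage-$(n-1)$ conditions involving $u_n$. Step 3 imposes at stage $n$ only the conditions with $m\le n$, so it does not provide them. Two fixes work:
\begin{enumerate}
\item The whole sequence $(u_m)$ is fixed in Step 2 before any $v_n$ is chosen, so you may simply impose the Step 3 estimates for all $m\le n+1$.
\item Alternatively, keep Step 3 as written and take $M:=\sum_n d_n(1-u_{n-1})d_n$ with $u_0:=0$. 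All three bullets then go through with your stated tolerances.
\end{enumerate}
Pairing $d_n$ with $u_{n+1}$, as you propose, shifts the index the wrong way: it would require $(1-v_{n-1})$ to kill $u_{n+1}h_2$.

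Once this is fixed, the telescoping rearrangement is unnecessary. Commute $d_n$ past $h_2$ instead of past $u_n$, at cost $\|[d_n,h_2]\|$. Then use $\|d_ny\|^2=\|y^*d_n^2y\|\le\|(1-v_{n-1})y\|\,\|y\|$ with $y=u_nh_2\in K$. This shows $\sum_n d_nu_nh_2d_n$ converges in norm inside $K$, exactly parallel to your $MJ_1$ bullet.

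Two smaller points.
\begin{enumerate}
\item Smallness of $[d_n,x]$ does not follow from mere norm-continuity of the square root. You need the uniform estimate obtained by approximating $t^{1/2}$ by polynomials on $[0,1]$, which holds for all $0\le a\le 1$ at once. The Step 3 tolerances should then be fixed in advance accordingly, so that $\sum_n\|[d_n,x]\|<\infty$.
\item Since commutators here are graded, $[\delta_k,\cdot\,]$ on $J_1$ is the sum of an ordinary derivation (from the even part of $\delta_k$) and a $\gamma$-twisted derivation (from the odd part). Each part preserves $J_1$ because $J_1$ is graded. Arveson's weak-convergence step still applies, since each bidual extension vanishes at $1$. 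On the even $u_n$ this agrees with the ordinary commutator, which is all you need because $M$ is even.
\end{enumerate}
Finally, monotonicity of $(u_n)$ is never used, so you need not claim it; the convex-combination argument does not obviously provide it.
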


More precisely speaking with the statement of the technical theorem for $K=K(E_{12})$, one can say that there are two points where the separability of C$^*$-algebras in classical KK-theory becomes necessary.
One reason for the separability is that the closed linear span of $\Delta$ must include $A$, and the other reason is that $J_2$ must include $[\widehat F_2,a]$ for all $a\in A$, where $\widehat F_2\in B(E_{12})$ is an $F_2$-connection for $F_2\in B(E_2)$.
Note that the other algebras $B$ and $C$ are not technically required to be separable.

\section{Augmented Hilbert bimodules}

In this section, we define augmented Hilbert bimodules, and investigate their basic categorical properties.
The most important technical issue arising in the analytic construction of KK-theory for general C$^*$-algebras is the separability condition required in the technical theorem of Kasparov.
However, there is another critical and elementary issue not very being talked about, the functoriality, and augmented Hilbert bimodules are introduced in order to resolve this.
In the classical picture of KK-theory, for $\sigma$-unital C$^*$-algebras $A$ and $B$, the canonical Kasparov bimodule induced by a $*$-homomorphism $A\to B$ has the underlying Hilbert module $B$ or $AB$ over $B$ without odd part, where the $*$-homomorphism determines the left action on it.
Without $\sigma$-unitality, this Hilbert module need not be countably generated in general.
Our solution is to set the underlying Hilbert module as the unitization $\widetilde B$, not $B$ itself.
This essentially enables us to study Hilbert modules by setting the C$^*$-algebra of right action to be always unital, but the ``fiber at infinity'' $E\otimes_{\widetilde B}\mathbb{C}$ of a Hilbert module $E$ over $\widetilde B$ has no need to collapse to be zero.
It seems natural to focus on Hilbert modules over unitizations because the module theory over non-unital rings is also known to be notoriously sophisticated even in algebras without topological structures.

\subsection{Bimodule categories of C\texorpdfstring{$^*$}{*}-algebras}
There are also categorical and geometric justifications for the introduction of augmented Hilbert bimodules.
For example, augmented Hilbert bimodules provide the most natural setting for a commutative monoid enriched category of C$^*$-algebras which allows Hilbert bimodules with degenerate left actions as morphisms.
The problem is that degenerate left actions conflict the identity axiom for a category, as the internal tensor product of classical Hilbert bimodules $A$ over $(A,A)$ and $E$ over $(A,B)$ is $A\otimes_AE=AE\ne E$ in general, for C$^*$-algebras $A$ and $B$.
Hilbert bimodules without non-degeneracy of left actions do not even form a category.

Here is one remark that the augmentation of Hilbert bimodules is not technically needed if one only wants to care about para-unital or $\sigma$-unital C$^*$-algebras.
This is because a KK-cycle will be defined only on freely complementable Hilbert modules, and as we will see later in Theorem \ref{Theorem: Non-degenerate reduction}, one can always assume the underlying Hilbert bimodules are non-degenerate over para-unital C$^*$-algebras, which resolves the functoriality issue without augmentation.
Nevertheless, degenerate left actions are extremely useful when we utilize the calculus of degenerate Kasparov bimodules in constructions of homotopies.
Furthermore, augmented Hilbert bimodules not only motivates the definition of freely complementable Hilbert modules in the next section, but also makes possible to construct the Kasparov product including general non-para-unital C$^*$-algebras.

From the geometric side, via the Gelfand duality, recall that a C$^*$-algebra can be regarded as a non-commutative generalization of either a pointed compact Hausdorff space or a locally compact Hausdorff space.
Denote by $\mathrm{C^*Alg}$ and $\mathrm{C^*Alg}^\mathrm{ndg}$ the categories defined such that objects are C$^*$-algebras and morphisms are given by
\begin{align*}
\mathrm{Hom}_\mathrm{C^*Alg}(A,B)&:=\{\text{$*$-homomorphisms }\varphi:A\to B\},\\
\mathrm{Hom}_{\mathrm{C^*Alg}^\mathrm{ndg}}(A,B)&:=\{\text{non-degenerate $*$-homomorphisms }\varphi:A\to M(B)\}
\end{align*}
for C$^*$-algebras $A$ and $B$.
Then, the full subcategories of commutative C$^*$-algebras of the first category $\mathrm{C^*Alg}$ and the second category $\mathrm{C^*Alg}^\mathrm{ndg}$ are equivalent after taking opposites to the category of pointed compact Hausdorff spaces and the category of locally compact Hausdorff spaces, respectively.
The situation becomes more clear if we see that the unitization and forgetful functors provide an adjunction pair $\mathrm{C^*Alg}\leftrightarrows\mathrm{C^*Alg}^\mathrm{unit}\subset\mathrm{C^*Alg}^\mathrm{ndg}$, where $\mathrm{C^*Alg}^\mathrm{unit}$ denotes the category of unital C$^*$-algebras and unital $*$-homomorphisms.
The category $\mathrm{C^*Alg}$ usually has more benefits than $\mathrm{C^*Alg}^\mathrm{ndg}$ in operator K-theory, but the latter is a more natural setting for the study of dynamical systems.
It is well-known that the two categories share the same isomorphisms.
The following definitions of non-degenerate and augmented Hilbert bimodules provide the bimodule enhancements of the categories $\mathrm{C^*Alg}^\mathrm{ndg}$ and $\mathrm{C^*Alg}$ respectively.

\begin{definition}[Non-degenerate and augmented Hilbert bimodules]
Let $A$ and $B$ be C$^*$-algebras.
A \emph{non-degenerate Hilbert bimodule} or a \emph{non-degenerate correspondence} over $(A,B)$ is a Hilbert module $E$ over $B$ together with a non-degenerate $*$-homomorphism $A\to B(E)$.
An \emph{augmented Hilbert bimodule} or an \emph{augmented correspondence} over $(A,B)$ is a Hilbert module $E$ over $\widetilde B$ together with a $*$-homomorphism $A\to B(E)$ such that $A$ vanishes on the fiber of $E$ at infinity in the sense that $AE\subset EB$.
\end{definition}

Let $E_1$ and $E_2$ be augmented Hilbert bimodules over pairs of C$^*$-algebras $(A,B)$ and $(B,C)$ respectively.
To define an internal tensor product, since $E_1$ is a Hilbert module over $\widetilde B$, we use the unital extension $\widetilde B\to B(E_2)$ of the left action $B\to B(E_2)$.
If $E_1\subset E_1B$, then $E_1\otimes_BE_2=E_1\otimes_{\widetilde B}E_2$, but even for the case $E_1\not\subset E_1B$, we will use the notation
\[E_1\otimes_BE_2:=E_1\otimes_{\widetilde B}E_2.\]
Then, by letting the identity augmented Hilbert bimodule over $(A,A)$ be $E_{\mathrm{id}_A}:=\widetilde A$ with the left action $A\to B(E_{\mathrm{id}_A})\cong\widetilde A$ given by the inclusion, we can verify $E_{\mathrm{id}_A}\otimes_AE_1=E_1$ as an augmented Hilbert bimodule.

Now it is immediate to check that the internal tensor product satisfies the axioms of categories, so that we can define $\mathrm{Corr}^\mathrm{ndg}$ and $\mathrm{Corr}^\mathrm{aug}$ be categories of C$^*$-algebras, with morphisms as unitary equivalence classes of non-degenerate and augmented Hilbert bimodules respectively.
Because we are not interested in bicategories but only in ordinary categorical structures, it is important to observe that a morphism class consists of a unitary equivalence class of certain Hilbert bimodules instead of a single Hilbert bimodule.
The direct sum of Hilbert bimodules gives the two categories the enrichment over (large) commutative monoids.
The category $\mathrm{Corr}^\mathrm{ndg}$ has a zero object so that it is semi-additive, but $\mathrm{Corr}^\mathrm{aug}$ does not.
Note that they are not locally small.
One of the most interesting and mysterious part of KK-theory that does not appear in K-theory is the role of $\mathrm{Corr}^\mathrm{ndg}$, for example, Morita equivalent separable C$^*$-algebras are KK-equivalent.
This indicates that KK-theory and Hilbert bimodule theory actually mix the two intuitive pictures of C$^*$-algebras: as non-commutative counterparts of pointed compact spaces and locally compact spaces.
It would be helpful to draw in one's mind functorial relations between the four categories of C$^*$-algebras that can be depicted as the following diagram.
\[\begin{tikzcd}
\mathrm{C^*Alg}\dar&\mathrm{C^*Alg}^\mathrm{ndg}\dar\\
\mathrm{Corr}^\mathrm{aug}\rar&\mathrm{Corr}^\mathrm{ndg}
\end{tikzcd}\]
The horizontal arrow $\mathrm{Corr}^\mathrm{aug}\to\mathrm{Corr}^\mathrm{ndg}$ is the ``non-degenerate reduction'' functor $E\mapsto AE$ for a Hilbert bimodule $E$ over a pair of C$^*$-algebras $(A,B)$.
This can be considered as a left inverse but not a right inverse of the inclusion $\mathrm{Corr}^\mathrm{ndg}\subset\mathrm{Corr}^\mathrm{aug}$, which is not a functor because we set the different identity morphisms in each category.

\subsection{Failure of external tensor products}
External tensor products of Hilbert bimodules define a monoidal category structure on $\mathrm{Corr}^\mathrm{ndg}$ with respect to the either minimal or maximal tensor product of C$^*$-algebras.
Unfortunately, in the category $\mathrm{Corr}^\mathrm{aug}$, on the contrary to $\mathrm{Corr}^\mathrm{ndg}$, further from the lack of zero object, there exists a truly major technical defect that $\mathrm{Corr}^\mathrm{aug}$ is not compatible with external tensor products.
Trying to define an external tensor product of augmented Hilbert bimodules is similar to construct an external tensor product of vector bundles over pointed compact Hausdorff spaces on their smash product, which is absurd.
If we let $E_1$ and $E_2$ be augmented Hilbert bimodules over $(A_1,B_1)$ and $(A_2,B_2)$, then C$^*$-algebraically the obstruction comes from the fact that there is no natural $*$-homomorphism from $\widetilde B_1\otimes\widetilde B_2$ to $(B_1\otimes B_2)^\sim$ whose restriction to $B_1\otimes B_2$ is the canonical inclusion, where the tensor product is either maximal or minimal.

This fallback indeed causes the failure of the standard proof of the Bott periodicity and its consequences on exact sequences when we try to construct KK-theory with augmented Hilbert bimodules.
Most importantly, we cannot naively construct an operator homotopy by a norm continuous path of adjointable operators in an augmented Hilbert bimodule $E$ over a pair $(A,B)$ of C$^*$-algebras, because $C([0,1],E)$ is not a Hilbert module over $C([0,1],B)^\sim$, but over $C([0,1],\widetilde B)$.
To resolve this, we define an ad-hoc cylinder object for augmented Hilbert bimodules such that if $E\subset EB$ then its cylinder is same as the classical case, $C([0,1],E)$, by the internal tensor product.

\begin{definition}[Cylinder of augmented Hilbert bimodules]
Let $A$ and $B$ be C$^*$-algebras.
Let $E$ be an augmented Hilbert bimodule over $(A,B)$.
We define the \emph{cylinder} of $E$ as an augmented Hilbert bimodule
\[\mathrm{cyl}\,E:=E\otimes_BC([0,1],B)^\sim=\{\xi\in C([0,1],E):\xi\otimes1\in C([0,1],E\otimes_B\mathbb{C})\text{ is constant along }[0,1]\}\]
over $(A,C([0,1],B))$.
\end{definition}

\begin{proposition}
Let $A$ and $B$ be C$^*$-algebras.
Let $E$ be an augmented Hilbert bimodule over $(A,B)$.
If $[0,1]\to B(E):t\mapsto F_t$ is a strictly continuous bounded path such that $F_tE\subset EB$ for all $t\in[0,1]$, then there is $F\in B(\mathrm{cyl}\,E)$ such that $F(t)=F_t$ for all $t\in[0,1]$ and $C^*(F)(\mathrm{cyl}\, E)\subset(\mathrm{cyl}\,E)C([0,1],B)$.
If we further have $F_t\in K(E)$ for each $t\in[0,1]$ and the norm continuity of $[0,1]\to K(E):t\mapsto F_t$ in addition, then $F\in K(\mathrm{cyl}\,E)$.
\end{proposition}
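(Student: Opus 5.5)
Let $E_\infty:=E\otimes_B\mathbb{C}=E/EB$ be the fiber at infinity and $\pi:E\to E_\infty$ the quotient map, which is a module map over $\widetilde B\to\mathbb{C}$. The plan has three steps: check that $\mathrm{cyl}\,E$ is exactly the stated subspace of $C([0,1],E)$ and is stable under the pointwise action of $F$; check adjointability and the stated range property; and, separately, prove compactness.

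\textbf{Step 1: definition of $F$.} View $C([0,1],E)$ as a Hilbert module over $C([0,1],\widetilde B)$. By the preliminaries on $C_0(X,E_0)$, a strictly continuous bounded path $t\mapsto F_t$ defines an adjointable operator $\hat F\in C_b([0,1],B(E))=B(C([0,1],E))$, with adjoint given by $t\mapsto F_t^*$. I will set $F:=\hat F|_{\mathrm{cyl}\,E}$.

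For $\xi\in\mathrm{cyl}\,E$, the hypothesis $F_tE\subset EB$ gives $\pi(F_t\xi(t))=0$ for every $t$. Hence $\pi\circ(F\xi)\equiv0$ is constant, and $F\xi\in\mathrm{cyl}\,E$.

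For the adjoint I need $F_t^*E\subset EB$ as well. Since $EB$ is a closed submodule with $EB=\overline{E B}$, the condition $F_tE\subset EB$ means $F_t$ maps into $EB$; I expect to obtain $F_t^*E\subset EB$ by the following argument. The submodule $EB$ is the range of $E$ under an approximate unit $(u_\lambda)$ of $B$ acting on the right. One has $\langle\eta,F_t^*\xi\rangle=\langle F_t\eta,\xi\rangle\in B$ for all $\eta$. Taking $\eta=F_t^*\xi$ gives $|F_t^*\xi|^2\in B$, and an element whose inner square lies in $B$ lies in $EB$, since $\xi=\lim\xi u_\lambda$ when $|\xi|^2\in B$.

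So $\hat F^*$ also preserves $\mathrm{cyl}\,E$, and $F$ is adjointable. The same argument applies to all polynomials in $F,F^*$ without constant term, so $C^*(F)$ maps $\mathrm{cyl}\,E$ into elements $\zeta\in C([0,1],EB)$ with $\pi\circ\zeta=0$. Such $\zeta$ lie in $(\mathrm{cyl}\,E)C([0,1],B)$: the inner product $|\zeta|^2$ lies in $C([0,1],B)$, and the same approximate-unit argument, now using an approximate unit of $C([0,1],B)$, writes $\zeta$ as a limit of $\zeta u_\lambda$.

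\textbf{Step 2: compactness.} Assume now that $t\mapsto F_t\in K(E)$ is norm continuous. By the identification $K(C([0,1],E))=C([0,1],K(E))$, we have $\hat F\in K(C([0,1],E))$.

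The main obstacle is that $\mathrm{cyl}\,E$ is a non-complemented submodule, so restricting a compact operator to it need not give a compact operator on it. To get around this, I will use that $F$ maps into $(\mathrm{cyl}\,E)C([0,1],B)$, which is a Hilbert module over $C([0,1],B)$ sitting inside both $C([0,1],E)$ and $\mathrm{cyl}\,E$. The plan is to approximate $\hat F$ in norm by finite sums of $\Theta_{\xi,\eta}$ with $\xi,\eta\in(\mathrm{cyl}\,E)C([0,1],B)$.

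Concretely, write $F=\lim_\lambda u_\lambda F u_\lambda$, where $u_\lambda$ is an approximate unit of $K\big((\mathrm{cyl}\,E)C([0,1],B)\big)$ viewed inside $K(C([0,1],E))$. This limit holds in norm because $\hat F$ is compact and its range and the range of its adjoint are contained in that submodule. Each $u_\lambda Fu_\lambda$ is then a norm limit of finite-rank operators $\Theta_{\xi,\eta}$ with $\xi,\eta\in\mathrm{cyl}\,E$, and these are compact on $\mathrm{cyl}\,E$. Hence $F\in K(\mathrm{cyl}\,E)$. The step I expect to require the most care is justifying the norm convergence $u_\lambda\hat F u_\lambda\to\hat F$, which uses $K(\overline{EB})$-hereditary arguments applied to compacts whose range and co-range lie in the submodule.
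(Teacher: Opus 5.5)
Your proposal is correct, and it is the direct verification that the paper compresses into ``clear by definition'': you define $F$ pointwise, use that $F_tE\subset EB$ forces $F_t^*E\subset EB$ (via $|F_t^*\xi|^2\in B$) to get adjointability and the range condition, and obtain compactness by placing $\hat F$ in the hereditary subalgebra $K(C([0,1],EB))$ and restricting to $\mathrm{cyl}\,E$. The step you flag does go through by the standard argument: $\hat F\Theta_{\xi,\eta}=\Theta_{\hat F\xi,\eta}$ with $\hat F\xi\in C([0,1],EB)$, together with $\hat F\in K(C([0,1],E))$, puts $\hat F$ in the closed right ideal $\overline{K(C([0,1],EB))\,K(C([0,1],E))}$, so $u_\lambda\hat F\to\hat F$; the same argument applied to $\hat F^*$ gives $\hat Fu_\lambda\to\hat F$.
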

\begin{proof}
Clear by definition.
\end{proof}

\section{Freely complementable Hilbert modules}

For a non-separable or non-$\sigma$-unital extension of the technical theorem, we suggest a property of Hilbert modules, designed to mimic projective modules over unital rings.
To avoid the conflict over other notions of projective Hilbert modules such as in \cite{MR4844140}, we use a new terminology, free complementability.
Although the category of Hilbert modules or Hilbert spaces are not suitable for homological algebra due to the lack of abelian category structures, and indeed freely complementable Hilbert modules do not enjoy any categorical projectivity as a lifting property, but the representability as a direct summand of a free module is strong enough to extend several arguments in KK-theory to non-separable C$^*$-algebras.
One might say that the existence of an operator-valued inner product on a Hilbert module already alludes a sort of projectivity in some sense by the stabilization theorem for countably generated Hilbert modules.

Before entering to the definition, we would like to briefly discuss the technical necessity of free complementability.
Strictly speaking, although they play a central role in our non-$\sigma$-unital extension of the technical theorem, one cannot say it is necessary in the construction of analytic Kasparov product, i.e.~the existence of the theory.
However, they are inevitable for nice properties of the theory.
One important result we could obtain by using free complementability is a sufficiently large class of non-separable C$^*$-algebras in which our theory enjoys desirable properties of KK-theory, as stated in the introduction, which includes all $\sigma$-unital C$^*$-algebras and $C_0(X)$ with paracompact locally compact Hausdorff $X$.

\begin{definition}[Freely complementable Hilbert modules]
Let $\widetilde B$ be a unital C$^*$-algebra.
A Hilbert module over $\widetilde B$ is called \emph{free} if it is unitarily equivalent to $H\otimes\widetilde B$ as a Hilbert module over $\widetilde B$ for a Hilbert space $H$, and \emph{freely complementable} if it is unitarily equivalent to a complemented submodule of a free module over $\widetilde B$.

An augmented Hilbert bimodule over a pair $(A,B)$ of C$^*$-algebras is called \emph{freely complementable} if it is freely complementable as a Hilbert module over the unitization $\widetilde B$ of $B$.
\end{definition}

Every countably generated Hilbert module over a C$^*$-algebra $B$ is freely complementable over $\widetilde B$ by the Kasparov stabilization theorem.
In particular, if $B$ is a $\sigma$-unital C$^*$-algebra, then $H\otimes B$ is freely complementable over $\widetilde B$ for any Hilbert space $H$.
Nevertheless, there exists a C$^*$-algebra that is not freely complementable as a Hilbert module over its unitization, which is exactly an example of C$^*$-algebra that is not para-unital.
We will see this later in this section.

If a graded Hilbert module $E$ over a unital C$^*$-algebra $\widetilde B$ is freely complementable, then it is easy to see that we can find a graded complemented embedding $E\subset H\otimes\widetilde B$ of graded Hilbert modules over $\widetilde B$, where $H$ is a graded Hilbert space such that the even and odd parts have the same dimensions.

\subsection{Cropping lemma}

The following lemma is the most important feature of freely complementable Hilbert modules, and this cropping technique will be used repeatedly in the rest of paper.

\begin{lemma}[Cropping lemma]
Let $\widetilde B$ be a unital C$^*$-algebra.
Let $E$ be a freely complementable graded Hilbert module over $\widetilde B$.
For a $\sigma$-unital C$^*$-subalgebra $J_0\subset K(E)$ and a separable C$^*$-subalgebra $A_0\subset B(E)$, there exists a countably generated complemented graded Hilbert submodule $S\subset E$ over $\widetilde B$ such that $J_0\subset K(S)$ and $A_0$ is reduced by $S$.

If a graded complemented embedding $E\subset H\otimes\widetilde B$ is given with the projection $P\in B(H\otimes\widetilde B)$ onto $E$, then $S$ can be taken by the image of $PP_0$, where $P_0\in B(H\otimes\widetilde B)$ is a projection onto $H_0\otimes\widetilde B$ for a graded Hilbert subspace $H_0\subset H$ such that $PP_0=P_0P$.
\end{lemma}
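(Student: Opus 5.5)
Throughout, fix a graded complemented embedding $E\subset H\otimes\widetilde B$ with projection $P\in B(H\otimes\widetilde B)$ onto $E$. We may assume $P$ is even and the grading on $H\otimes\widetilde B$ comes from a grading of $H$. Extend each $T\in B(E)$ to $PTP\in B(H\otimes\widetilde B)$. The plan is to build a separable graded subspace $H_0\subset H$ by a countable closing-off argument, so that $H_0\otimes\widetilde B$ is invariant under $P$ and under $A_0$ and contains everything $J_0$ sees. We then set $S:=PP_0(H\otimes\widetilde B)$, which proves the second paragraph of the statement directly.

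The basic observation is the following. Every vector $\zeta\in H\otimes\widetilde B$ lies in $H'\otimes\widetilde B$ for some separable $H'\subset H$, since $\zeta$ is a norm limit of finite sums of elementary tensors. Consequently, for any countable family of vectors there is a single separable $H'$ whose free module contains all of them.

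First, $J_0$ is $\sigma$-unital, so pick a strictly positive element $h\in J_0$. Approximate $h$ in norm by finite sums of $\Theta_{\xi,\eta}$ with $\xi,\eta\in E$; this uses countably many vectors. Choose a separable graded $H_1$ (closed under $\gamma$) with all these vectors in $H_1\otimes\widetilde B$. Next fix a countable dense $*$-closed subset $\{a_k\}$ of $A_0$, and let $\mathcal T:=\{P,\,Pa_kP,\,\gamma\}$. Inductively, given a separable graded $H_n$, choose a countable dense subset $\{e_j\}\subset H_n$. Then choose a separable graded $H_{n+1}\supset H_n$ such that $T(e_j\otimes1)\in H_{n+1}\otimes\widetilde B$ for all $T\in\mathcal T$ and all $j$. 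Put $H_0:=\overline{\bigcup_nH_n}$, and let $P_0$ be the projection onto $H_0\otimes\widetilde B$.

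By right $\widetilde B$-linearity we have $T(e\otimes b)=T(e\otimes1)b$. Together with density, this shows that $H_0\otimes\widetilde B$ is invariant under the self-adjoint set $\mathcal T$. Hence $P_0$ commutes with $P$, with each $Pa_kP$, and with $\gamma$. Therefore $Q:=PP_0$ is an even projection in $B(E)$ commuting with $A_0$, so $A_0$ is reduced by $S=QE$. Also $S$ is countably generated, being the image under $Q$ of the countably generated module $H_0\otimes\widetilde B$. For $J_0$, the vectors approximating $h$ lie in $E\cap(H_0\otimes\widetilde B)=S$, so $QhQ=h$ and $h\in QK(E)Q=K(S)$. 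Since $h$ is strictly positive in $J_0$, we have $J_0=\overline{hJ_0h}\subset K(S)$.

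The step needing the most care is the invariance argument: showing that invariance on the countable dense set $\{e_j\otimes1\}$ propagates to all of $H_0\otimes\widetilde B$ and yields genuine commutation $P_0T=TP_0$. This works because $\mathcal T$ is closed under adjoints. A second point to check is that the extension $T\mapsto PTP$ of elements of $B(E)$ does not spoil this commutation; it does not, because $P\in\mathcal T$.
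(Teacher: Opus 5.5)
Your proof is correct and follows essentially the paper's argument: a countable closing-off inside the ambient free module makes $H_0\otimes\widetilde B$ invariant under $P$, $\gamma$, and the compressions of $A_0$, and you then take $S=PP_0E$. The differences are cosmetic. The paper closes off over subsets of a homogeneous orthonormal basis of $H$ rather than over arbitrary separable graded subspaces, and it handles $J_0$ via countable generation of $J_0E$ rather than via a strictly positive element $h$ with $J_0=\overline{hJ_0h}$.
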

\begin{proof}
Fix a graded complemented embedding $E\subset\ell^2(\mathcal{I},\widetilde B)$ of Hilbert module over $\widetilde B$, where $\mathcal{I}$ is a graded set, and let $P\in B(\ell^2(\mathcal{I},\widetilde B))$ be the projection onto $E$, which is even.
Denote by $\gamma\in B(\ell^2(\mathcal{I}))\subset B(\ell^2(\mathcal{I},\widetilde B))$ the grading operator.
Since the Hilbert module $J_0E$ is countably generated over $\widetilde B$ by the $\sigma$-unitality of $J_0\subset K(E)$, there is a countable subset $\mathcal{I}_0\subset\mathcal{I}$ such that $J_0E\subset\ell^2(\mathcal{I}_0,\widetilde B)$.
Now the separability of $C^*(P,\gamma,A_0)\subset B(E)$ implies that there is an non-decreasing sequence of countable subsets $\mathcal{I}_n\subset\mathcal{I}$ such that $C^*(P,\gamma,A_0)\ell^2(\mathcal{I}_n,\widetilde B)\subset\ell^2(\mathcal{I}_{n+1},\widetilde B)$ for $n\ge0$.
Then, for the countable subset $\mathcal{I}_\infty:=\bigcup_n\mathcal{I}_n\subset\mathcal{I}$, we have $C^*(P,\gamma,A_0)\ell^2(\mathcal{I}_\infty,\widetilde B)\subset\ell^2(\mathcal{I}_\infty,\widetilde B)$, so the orthogonal projection in $P_\infty\in B(\ell^2(\mathcal{I}))\subset B(\ell^2(\mathcal{I},\widetilde B))$ onto $\ell^2(\mathcal{I}_\infty)$ commutes with $P$, $\gamma$, and $A_0$ in $B(\ell^2(\mathcal{I},\widetilde B))$.
Since $P_\infty$ commutes with $\gamma$, the subset $\mathcal{I}_\infty$ is graded and $P_\infty$ is even.
Since $P_\infty$ commutes with $P$, the product $PP_\infty$ is then an even projection commuting with $A_0$, which also satisfies $P_\infty J_0=J_0$ by $J_0E\subset\ell^2(\mathcal{I}_0,\widetilde B)\subset\ell^2(\mathcal{I}_\infty,\widetilde B)$.
Therefore, $S:=P_\infty E=P\ell^2(\mathcal{I}_\infty,\widetilde B)$ provides our desired module.
\end{proof}

Kaplansky observed in \cite{MR100017} that a projective module over a unital ring is isomorphic to a direct sum of countably generated projective modules.
An analogous result can be obtained for freely complementable Hilbert modules from the same idea as the cropping lemma.
Note that we do not use the Zorn lemma directly on the families of orthogonal countably generated Hilbert submodules, because the sum of orthogonal projections in a multiplier algebra need not strictly converge to form an enveloping projection.
In other words, we cannot say the sum of mutually orthogonal countably generated complemented Hilbert submodules is also complemented in general, let $B:=C(\{0\}\cup\{n^{-1}:n\in\mathbb{N}\})$ and $E_n:=\mathbb{C}1_{\{n^{-1}\}}\subset B$ with $n\in\mathbb{N}$ for example.

\begin{corollary}[Kaplansky theorem on freely complementable Hilbert modules]
Over a unital C$^*$-algebra, a (graded) freely complementable Hilbert module is unitarily equivalent to a direct sum of (graded) countably generated Hilbert modules.
\end{corollary}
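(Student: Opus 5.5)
Fix a graded complemented embedding $E\subset\ell^2(\mathcal{I},\widetilde B)$ with even projection $P$ and grading $\gamma\in B(\ell^2(\mathcal{I}))$. The idea is a Kaplansky-type exhaustion of the index set $\mathcal{I}$ by countable blocks, organized by transfinite induction rather than Zorn's lemma on families of submodules. The reason is the one noted before the statement: the sum of orthogonal complemented submodules need not be complemented. We avoid this because every projection we sum is of the form $P_{\mathcal{J}}$, the coordinate projection onto $\ell^2(\mathcal{J})$ for $\mathcal{J}\subset\mathcal{I}$. Such a projection lies in $B(\ell^2(\mathcal{I}))\otimes 1$, and sums of disjoint coordinate projections always converge strictly in $B(\ell^2(\mathcal{I},\widetilde B))$, since they converge strongly on $\ell^2(\mathcal{I})$ and $K(\ell^2(\mathcal{I},\widetilde B))=K(\ell^2(\mathcal{I}))\otimes\widetilde B$.

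Call a subset $\mathcal{J}\subset\mathcal{I}$ \emph{closed} if $P_{\mathcal{J}}$ commutes with $P$ and $\gamma$. We make two observations.
\begin{enumerate}
\item An arbitrary union of closed subsets is closed, since $P_{\bigcup\mathcal{J}_\alpha}$ is the strong limit of an increasing net of coordinate projections, and commutation passes to strong limits on $\ell^2(\mathcal{I},\widetilde B)$.
\item Every countable subset $\mathcal{I}_0$ is contained in a countable closed subset. This is exactly the iteration in the proof of the Cropping lemma with $A_0=0$: set $\mathcal{I}_{n+1}$ to be a countable set with $C^*(P,\gamma)\ell^2(\mathcal{I}_n,\widetilde B)\subset\ell^2(\mathcal{I}_{n+1},\widetilde B)$ and take the union. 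This uses that $\ell^2(\mathcal{I}_n,\widetilde B)$ is countably generated, so its image under the separable algebra $C^*(P,\gamma)$ is countably generated and is supported on countably many coordinates.
\end{enumerate}

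Now well-order $\mathcal{I}=\{i_\beta\}_{\beta<\kappa}$ and build an increasing chain of closed sets $\mathcal{J}_\beta$. Put $\mathcal{J}_0=\emptyset$. At limit stages take unions, which are closed by (1). At successor stages let $\mathcal{J}_{\beta+1}$ be the union of $\mathcal{J}_\beta$ with a countable closed set containing $i_\beta$, obtained from (2); it is closed by (1). Then $\mathcal{J}_{\beta+1}\setminus\mathcal{J}_\beta$ is countable. The differences $Q_\beta:=P_{\mathcal{J}_{\beta+1}}-P_{\mathcal{J}_\beta}$ are mutually orthogonal even projections commuting with $P$, and $\sum_\beta Q_\beta=1$ strictly. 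Set $S_\beta:=PQ_\beta\ell^2(\mathcal{I},\widetilde B)$. Each $S_\beta$ is a graded complemented submodule of $E$, countably generated because it is the image of the countably generated module $\ell^2(\mathcal{J}_{\beta+1}\setminus\mathcal{J}_\beta,\widetilde B)$ under $P$.

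Finally, identify $E\cong\bigoplus_\beta S_\beta$. The map $\bigoplus_\beta S_\beta\to E$, $(\xi_\beta)\mapsto\sum_\beta\xi_\beta$, is inner-product preserving because the $S_\beta$ are orthogonal. It is surjective because for $\xi\in E$ we have $\xi=P\xi=\sum_\beta PQ_\beta\xi$, the sum converging in norm since $\sum_\beta Q_\beta=1$ strictly. The same argument gives the equality of norms $\|\sum\langle\xi_\beta,\xi_\beta\rangle\|$ for the direct sum. Gradings are preserved because everything commutes with $\gamma$.

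The main obstacle is the strict convergence of the sum of projections: it is exactly what fails for general families of orthogonal submodules. It is handled by restricting to coordinate projections, which lie in $B(\ell^2(\mathcal{I}))\otimes1$ and for which strict convergence is automatic.
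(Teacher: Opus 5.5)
Your argument is correct and is essentially the paper's proof. The paper partitions $\mathcal{I}$ into countable subsets $\mathcal{I}'$ with $C^*(P,\gamma)\ell^2(\mathcal{I}',\widetilde B)\subset\ell^2(\mathcal{I}',\widetilde B)$ by applying Zorn's lemma to families of such disjoint subsets, using the cropping iteration with $A_0=\{0\}$; you obtain the same kind of partition by transfinite recursion along a well-ordering of $\mathcal{I}$, and both avoid the strict-convergence problem by only summing coordinate projections from $B(\ell^2(\mathcal{I}))$.
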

\begin{proof}
Using the same notations as the proof of the cropping lemma but $A_0=\{0\}$, apply the Zorn lemma on the families of disjoint non-empty countable subsets $\mathcal{I}'$ of $\mathcal{I}$ such that $P\ell^2(\mathcal{I}',\widetilde B)\subset\ell^2(\mathcal{I}',\widetilde B)$, or $C^*(P,\gamma)\ell^2(\mathcal{I}',\widetilde B)\subset\ell^2(\mathcal{I}',\widetilde B)$ for graded version.
\end{proof}

\subsection{Para-unital C\texorpdfstring{$^*$}{*}-algebras}

For useful special cases of freely complementable Hilbert modules, and to find a restriction of our theory on a well-behaved subcategory of C$^*$-algebras, we suggest in this paper a new class of C$^*$-algebras including all $\sigma$-unital algebras, called para-unital C$^*$-algebras, which can be regarded as non-commutative extensions of paracompact locally compact Hausdorff spaces.
Recall that we defined in the introduction a para-unital C$^*$-algebra as a C$^*$-algebra $B$ such that there is a mutually orthogonal family $p_i\in M(B)$ of $\sigma$-unital projections whose sum $\sum_ip_i$ strictly converges to the unit of $M(B)$.

\begin{proposition}
A C$^*$-algebra is para-unital if and only if it is freely complementable as a Hilbert module over its unitization.
\end{proposition}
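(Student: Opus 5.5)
We prove the two implications separately, viewing $B$ as the Hilbert module $B\subset\widetilde B$ over $\widetilde B$. Recall that $K(B)=B$ and $B(B)=M(B)$ for this module. The closed submodules of $B$ correspond to right ideals, and the complemented ones are exactly $pB$ for projections $p\in M(B)$.

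\emph{Para-unital implies freely complementable.} Let $(p_i)$ be a mutually orthogonal family of $\sigma$-unital projections with $\sum_ip_i=1$ strictly in $M(B)$. Each $p_iB$ is a Hilbert module over $\widetilde B$ with $K(p_iB)=p_iBp_i$, which is $\sigma$-unital. Hence $p_iB$ is countably generated.

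By the Kasparov stabilization theorem, applied over $\widetilde B$, there is a complemented embedding $p_iB\subset\ell^2(\mathbb{N})\otimes\widetilde B$. Concretely, $p_iB\oplus\ell^2\otimes\widetilde B\cong\ell^2\otimes\widetilde B$, so $p_iB$ is the image of a projection $Q_i$.

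Strict convergence of $\sum_ip_i$ gives $\xi=\sum_ip_i\xi$ in norm for every $\xi\in B$. Together with orthogonality, this yields a unitary $B\cong\bigoplus_ip_iB$, where the right side is the Hilbert-module direct sum, i.e.\ the norm-completed orthogonal sum.

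The direct sum of the embeddings then gives an embedding
\[\bigoplus_ip_iB\subset\bigoplus_i\ell^2(\mathbb{N})\otimes\widetilde B=\ell^2(\mathcal{I}\times\mathbb{N})\otimes\widetilde B,\]
with projection $\bigoplus_iQ_i$. This is adjointable because it is a bounded block-diagonal family of adjointable operators on an orthogonal direct sum. So $B$ is freely complementable.

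\emph{Freely complementable implies para-unital.} Fix a complemented embedding $B\subset\ell^2(\mathcal{I},\widetilde B)$ with projection $P$. Apply the Zorn argument from the Kaplansky corollary, i.e.\ the cropping lemma with $A_0=0$.

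This gives a partition $\mathcal{I}=\bigsqcup_k\mathcal{I}_k$ into countable subsets with $P\ell^2(\mathcal{I}_k,\widetilde B)\subset\ell^2(\mathcal{I}_k,\widetilde B)$. To get a partition rather than merely a maximal disjoint family, note that any leftover index $j$ can be enlarged to a countable $P$-invariant set disjoint from the others. The invariance of complements follows because $P$ is self-adjoint, so the complement of an invariant set is invariant.

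Let $P_k$ be the projection onto $\ell^2(\mathcal{I}_k)$. Then $\sum_kP_k=1$ strictly in $B(\ell^2(\mathcal{I},\widetilde B))$, since it is a coordinate decomposition of a free module. Each $P_k$ commutes with $P$.

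Restrict to $B=PE$ and set $p_k:=PP_kP\in B(B)=M(B)$. These are mutually orthogonal projections. We have $\sum_kp_k=P=1_{M(B)}$ strictly, because $\sum_kP_k\xi=\xi$ for every $\xi$ in the module, and compact operators on $B$ are limits of finite sums of $\Theta_{\xi,\eta}$. Finally, $p_kB=P\ell^2(\mathcal{I}_k,\widetilde B)$ is countably generated, so $K(p_kB)=p_kBp_k$ is $\sigma$-unital. Hence $B$ is para-unital.

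\emph{Main obstacle.} The delicate step is the identification of strict convergence in $M(B)=B(B)$ with pointwise norm convergence on the module. The module is $B$ regarded over $\widetilde B$, and its compact operators are exactly $B$ acting by left multiplication, so the strict topologies agree. Strict convergence of $\sum p_i$ gives $\sum_ip_ib\to b$ in norm. For the converse we also need $\sum_ibp_i\to b$, which follows by taking adjoints in the self-adjoint setting.

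A second point to check is that the direct sum of the countably many stabilization embeddings, taken over an arbitrary index set, is still complemented. This holds because the block-diagonal projection is adjointable on the Hilbert direct sum.
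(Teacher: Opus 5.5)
Your proof is correct and follows the same route as the paper. In the forward direction you stabilize each countably generated summand $p_iB$ and sum the embeddings; in the converse you use the Kaplansky-type decomposition from the cropping lemma and show the resulting summand projections converge strictly. You also supply details the paper leaves implicit: why a maximal disjoint family of countable $P$-invariant index sets exhausts $\mathcal{I}$, the identification $p_k=PP_kP$, two-sided strict convergence via adjoints, and why $p_kBp_k\cong K(p_kB)$ is $\sigma$-unital.
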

\begin{proof}
The forward direction is clear because a direct sum of countably generated Hilbert modules is freely complementable.
For the converse, let $A$ be a C$^*$-algebra that is freely complementable over $\widetilde A$.
By the Kaplansky theorem, the previous corollary, we can decompose $A$ into the direct sum $A=\bigoplus_iE_i$ of countably generated Hilbert modules over $A$, and let $P_i\in M(A)$ be the corresponding projections to each factor $E_i$.
Then, for every $a\in A$ the sum $\sum_iP_ia$ converges to $a$ in norm by definition of infinite direct sum of Hilbert modules, so the sum $\sum_iP_i$ converges strictly.
\end{proof}

\begin{proposition}
A commutative C$^*$-algebra is para-unital if and only if its spectrum is paracompact.
\end{proposition}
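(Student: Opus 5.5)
The plan is to reduce both directions to the previous proposition, which characterizes para-unitality by a decomposition into corners, and then translate everything through Gelfand duality. Let $A=C_0(X)$ with $X$ locally compact Hausdorff. Then $M(A)=C_b(X)$, and a projection in $M(A)$ is the indicator function $1_U$ of a clopen subset $U\subset X$. The corner is $1_UA1_U=C_0(U)$, and this is $\sigma$-unital exactly when $U$ is $\sigma$-compact. For a family of mutually orthogonal projections $1_{U_i}$, the $U_i$ are pairwise disjoint. The sum $\sum_i1_{U_i}$ converges strictly to $1$ exactly when, for every $f\in C_0(X)$, the net of finite partial sums $\sum_{i\in F}1_{U_i}f$ converges uniformly to $f$. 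I would show that this happens if and only if the $U_i$ cover $X$. One direction is clear. Conversely, if the $U_i$ cover $X$, then for given $f$ and $\varepsilon>0$ the compact set $\{|f|\ge\varepsilon\}$ meets only finitely many of the open sets $U_i$. So para-unitality of $C_0(X)$ is equivalent to $X$ being a disjoint union of clopen $\sigma$-compact subsets.

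It then remains to prove the classical topological fact that a locally compact Hausdorff space $X$ is paracompact if and only if it is a disjoint union of clopen $\sigma$-compact subspaces. For the backward direction, each clopen $\sigma$-compact piece $U$ is locally compact and $\sigma$-compact, hence Lindelöf and regular, hence paracompact. A disjoint union of open paracompact pieces is then paracompact: refine an open cover on each piece and take the union of the refinements, which stays locally finite because the pieces are open and disjoint.

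For the forward direction, which is the standard theorem and the main point, I would do the following.
\begin{enumerate}
\item Cover $X$ by relatively compact open sets $V$ and choose a locally finite open refinement $\mathcal{W}$. Each $W\in\mathcal{W}$ then has compact closure, and by local finiteness each compact set meets only finitely many members of $\mathcal{W}$.
\item Define an equivalence relation on $\mathcal{W}$ by chains: $W\sim W'$ if there are $W=W_0,\dots,W_n=W'$ in $\mathcal{W}$ with $W_{k-1}\cap W_k\neq\emptyset$ for each $k$. Let $U$ be the union of an equivalence class.
\item Show $U$ is $\sigma$-compact. Starting from one member $W_0$, the set of members at chain-distance $\le n$ is finite by induction: the union of finitely many compact closures is compact, so it meets only finitely many members of $\mathcal{W}$. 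Hence the class is countable, and $U$ is a countable union of compact closures.
\item Show $U$ is clopen. Distinct classes have disjoint unions, so $U$ is open, and its complement is the union of the other classes, which is also open.
\end{enumerate}

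The main obstacle is the induction in step (3): each compact closure $\overline W$ must meet only finitely many members of $\mathcal{W}$. This uses local finiteness together with compactness of $\overline W$, covering $\overline W$ by finitely many neighbourhoods that each meet only finitely many members.
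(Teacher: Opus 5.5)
Your proposal is correct and follows the same route as the paper: identify projections of $M(C_0(X))\cong C_b(X)$ with clopen sets, identify orthogonal families with strict sum $1$ with partitions of $X$ into clopen pieces, and then invoke the characterization of paracompact locally compact Hausdorff spaces as disjoint unions of $\sigma$-compact clopen subsets. The paper simply cites that topological fact and the strict-convergence-iff-cover step as well known, whereas you prove both; note only that in step (3), writing $U$ as the union of the compact closures $\overline W$ needs $\overline W\subset U$, which uses that $U$ is closed, i.e.\ step (4).
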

\begin{proof}
A multiplier projection in $M(C_0(X))\cong C_b(X)$ is exactly a clopen subset of $X$, and a direct sum of commutative C$^*$-algebras is exactly a disjoint union of locally compact Hausdorff spaces.
Then, the equivalence follows by the well-known fact that a locally compact Hausdorff space is paracompact if and only if it admits a partition of $\sigma$-compact clopen subsets.
\end{proof}

\begin{proposition}[Non-para-unitality criterion]\label{Proposition: Non-para-unitality criterion}
If $B$ is a non-$\sigma$-unital C$^*$-algebra such that there is $h\in B$ such that $bh\ne0$ for all non-zero $b\in B$, then $B$ is not para-unital.
\end{proposition}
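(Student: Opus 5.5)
Suppose for contradiction that $B$ is para-unital, and fix a mutually orthogonal family $p_i\in M(B)$ of $\sigma$-unital projections with $\sum_ip_i=1$ strictly. The plan is to use the element $h$ to show that only countably many $p_i$ can be non-zero. Then $B$ would be a countable direct sum of $\sigma$-unital corners, hence $\sigma$-unital, contradicting the hypothesis.

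First, strict convergence applied to the single element $h$ (or to $h^*$, using $\|p_ih\|^2=\|h^*p_ih\|$) gives that $\sum_ip_ih$ converges in norm to $h$. Norm convergence of an orthogonal sum of this kind forces $\|p_ih\|\ge\varepsilon$ for only finitely many $i$, for each $\varepsilon>0$. Hence the set $\mathcal{I}_0:=\{i:p_ih\ne0\}$ is countable.

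Next, we show that $p_i=0$ whenever $i\notin\mathcal{I}_0$. Take such an $i$ and any $b\in p_iBp_i$, or more generally $b\in p_iB$. Then $b^*=b^*p_i$, so $b^*h=b^*p_ih=0$. The hypothesis gives $b^*=0$, hence $p_iB=0$, and since $p_i\in M(B)$ this means $p_i=0$. For the argument to go through, the hypothesis has to be applied to $b^*$, which is a non-zero element of $B$ whenever $b$ is. We also need to orient the products correctly: $b^*h=b^*p_ih$ requires $p_i$ to sit on the left of $h$, and that is exactly why we chose $\mathcal{I}_0$ via $p_ih$.

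Finally, $\sum_{i\in\mathcal{I}_0}p_i=1$ strictly with $\mathcal{I}_0$ countable, say $\mathcal{I}_0=\{i_1,i_2,\dots\}$. Pick a countable approximate unit $(u^{(k)}_n)_n$ of each corner $p_{i_k}Bp_{i_k}$. The elements $e_N:=\sum_{k\le N}u^{(k)}_{N}$ then form a sequential approximate unit of $B$. To check this, note that for $b\in B$ we have $p_{i_k}b\approx$ finite sums, and $u^{(k)}_n p_{i_k}b\to p_{i_k}b$. This last convergence holds because $p_{i_k}b\,b^*p_{i_k}$ lies in the corner, so $p_{i_k}b$ is a limit of elements of the form $(\text{corner})\cdot B$. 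A diagonal choice of indices then handles the double limit. Alternatively, and more cleanly, we can produce a strictly positive element $\sum_k2^{-k}h_k$ from strictly positive elements $h_k$ of the corners.

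The only mildly delicate step is this final one: verifying that a countable strict orthogonal sum of $\sigma$-unital corners is $\sigma$-unital. We handle it by checking that $\sum_k2^{-k}h_k$ generates a dense hereditary subalgebra, using that $p_{i_k}B=\overline{h_kB}$.
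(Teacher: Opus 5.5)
Your argument is correct. It uses the same core mechanism as the paper, but you implement it with the defining projections rather than through free complementability.

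The shared mechanism has two parts. A norm-convergent decomposition of $h$ has only countably many non-zero terms. The hypothesis on $h$ then kills every piece that annihilates $h$.

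The paper works as follows:
\begin{enumerate}
\item It assumes $B$ is freely complementable over $\widetilde B$, which is the easy consequence of para-unitality.
\item It takes a coisometry $p\in B(\ell^2(\mathcal{I},\widetilde B),B)$ and expands $h=pp^*h=\sum_i(p\varepsilon_i)(p\varepsilon_i)^*h$.
\item It concludes $p\varepsilon_i=0$ from $(p\varepsilon_i)(p\varepsilon_i)^*h=0$.
\end{enumerate}
The final contradiction then comes for free from a standard fact. If only countably many frame vectors $p\varepsilon_i$ are non-zero, then $B$ is countably generated as a Hilbert module, so $K(B)=B$ is $\sigma$-unital.

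You instead select $\mathcal{I}_0$ via $p_ih$ and use the orientation $b^*h=b^*p_ih$ to get $p_iB=0$, hence $p_i=0$. This is clean. The cost is an extra step done by hand: a countable strict orthogonal sum of $\sigma$-unital corners is $\sigma$-unital.

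Your strictly positive element handles that step. Take $s:=\sum_k2^{-k}h_k$ with $\|h_k\|\le1$. Then $sp_{i_k}=2^{-k}h_k$, so $h_k=2^ks^{1/2}(s^{1/2}p_{i_k})\in\overline{sB}$. Combined with $\overline{h_kB}=p_{i_k}B$ and $\sum_kp_{i_k}b\to b$, this gives $\overline{sB}=B$. The diagonal approximate-unit sketch could also be made rigorous, but only with a tail estimate, so keep the strictly positive version.

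In exchange, your proof is self-contained from the definition. The paper's version proves a formally stronger statement: $B$ is not even freely complementable over its unitization. By the preceding proposition this is equivalent, and it fits the module-theoretic viewpoint used in the rest of the paper.
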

\begin{proof}
Suppose $B$ is freely complementable over $\widetilde B$ so that there is a coisometry $p\in B(\ell^2(\mathcal{I},\widetilde B),B)$ for some set $\mathcal{I}$.
Let $\varepsilon_i\in\ell^2(\mathcal{I})\subset\ell^2(\mathcal{I},\widetilde B)$ be the standard orthonormal basis indexed by $i\in\mathcal{I}$.
Then, $p\varepsilon_i\ne0$ for uncountably many $i\in\mathcal{I}$ since $B$ is not $\sigma$-unital, while we have a norm convergent sum
\[h=pp^*h=p\sum_i\varepsilon_i\langle\varepsilon_i,p^*h\rangle=\sum_ip\varepsilon_i\langle p\varepsilon_i,h\rangle=\sum_i(p\varepsilon_i)(p\varepsilon_i)^*h.\]
It implies that $(p\varepsilon_i)(p\varepsilon_i)^*h=0$ for all but countably many $i$, and the uncountability of $\mathcal{I}$ finds $i$ leading a contradiction $p\varepsilon_i=0$ by the assumption on $h$.
\end{proof}

\begin{corollary}
The C$^*$-algebra $C_0(\beta\mathbb{N}\setminus\{\omega\})$, where $\omega\in\beta\mathbb{N}\setminus\mathbb{N}$ is a free ultrafilter on $\mathbb{N}$, is not para-unital.
\end{corollary}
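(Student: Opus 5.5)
We will apply the non-para-unitality criterion (Proposition \ref{Proposition: Non-para-unitality criterion}) to $B:=C_0(X)$ with $X:=\beta\mathbb{N}\setminus\{\omega\}$. Since $\beta\mathbb{N}$ is compact Hausdorff and $\{\omega\}$ is closed, $X$ is locally compact Hausdorff. Its one-point compactification is $\beta\mathbb{N}$ itself, with $\omega$ as the point at infinity. So $C_0(X)$ is identified with $\{f\in C(\beta\mathbb{N}):f(\omega)=0\}$. Two things must be verified: the existence of a suitable $h$, and the non-$\sigma$-unitality of $C_0(X)$.

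\emph{Construction of $h$.} Let $h_0\in\ell^\infty(\mathbb{N})=C(\beta\mathbb{N})$ be $h_0(n):=1/n$. Its continuous extension to $\beta\mathbb{N}$ vanishes on all of $\beta\mathbb{N}\setminus\mathbb{N}$, because every free ultrafilter contains a cofinite set. In particular $h_0(\omega)=0$, so $h:=h_0|_X\in C_0(X)$.

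Now let $b\in C_0(X)$ be nonzero. Then $\{b\ne0\}$ is a nonempty open subset of $X$. Since $\mathbb{N}$ is open and dense in $\beta\mathbb{N}$, hence in $X$, this set contains some $n\in\mathbb{N}$. Then $(bh)(n)=b(n)/n\ne0$, so $bh\ne0$.

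\emph{Non-$\sigma$-unitality.} A commutative C$^*$-algebra $C_0(X)$ is $\sigma$-unital iff $X$ is $\sigma$-compact, so it suffices to show $X$ is not $\sigma$-compact. Suppose instead that $X=\bigcup_mK_m$ with each $K_m$ compact.

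\begin{enumerate}
\item Each $K_m$ is closed in $\beta\mathbb{N}$ and misses $\omega$. Since the sets $\overline{A}$ with $A\in\omega$ form a clopen neighbourhood base of $\omega$, there is $A_m\in\omega$ with $\overline{A_m}\cap K_m=\emptyset$.
\item Put $B_m:=A_1\cap\dots\cap A_m\in\omega$. These sets are infinite and decreasing, so we can choose distinct $n_m\in B_m$ and set $D:=\{n_m:m\in\mathbb{N}\}$.
\item For each $m$, $D\setminus B_m$ is finite. Hence every free ultrafilter containing $D$ contains every $B_m$.
\item $D$ is infinite, so it carries infinitely many (indeed $2^{\mathfrak c}$) free ultrafilters. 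Choose one, $u$, with $u\ne\omega$.
\item Then $u\in X$ and $u\in\overline{B_m}\subset\overline{A_m}$ for every $m$, so $u\notin K_m$ for every $m$. This contradicts $X=\bigcup_mK_m$.
\end{enumerate}

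The criterion then gives that $C_0(X)$ is not para-unital. The main point requiring care is this diagonal (pseudo-intersection) argument producing a point of $X$ outside every $K_m$. The construction of $h$ is routine once one observes that $\mathbb{N}$ is dense and $h_0$ vanishes on the remainder $\beta\mathbb{N}\setminus\mathbb{N}$.
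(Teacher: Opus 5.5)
Your proof is correct and follows the paper's route. The paper's one-line proof allows either the non-para-unitality criterion or the paracompactness characterization, and you use the criterion, filling in details the paper leaves implicit: the density of $\mathbb{N}$ (which gives the element $h$) and the pseudo-intersection argument showing that $\beta\mathbb{N}\setminus\{\omega\}$ is not $\sigma$-compact.
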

\begin{proof}
One may use one of the two previous propositions.
\end{proof}

Recall that for a C$^*$-algebra $A$, the C$^*$-algebra $qA$, called the Cuntz $q$-construction, is defined as the kernel of the natural $*$-homomorphism $A*A\to A$ induced by the double copy of identity on $A$.
We prove that it is unfortunate that $qA$ is always not para-unital whenever $A$ is non-separable.
Since every split exact sequence together with a section is naturally replaced into a $*$-homomorphism from the Cuntz $q$-construction, this fact is a strong evidence there would be a critical obstruction for freely complementable Hilbert modules compatible to split exact theories for non-separable C$^*$-algebras.
To prove this, we use two lemmas.
The first lemma seems a folklore theorem, however we left a proof.
An easy consequence of this lemma that $qA$ is essential in $A*A$ was obtained in a private conversation with Hiroto Nishikawa using the norm continuity of matrix rotation, but is meaningfully generalized in here by a similar argument.
The second lemma is by Hofmann-Neeb \cite{MR1338245}.
Let $\iota_0,\iota_1:A\to A*A$ be the two canonical inclusions, and let $q:A\to qA$ be a $*$-preserving bounded map such that $q(a):=\iota_0(a)-\iota_1(a)$ for $a\in A$.

\begin{lemma}[Non-zero-divisor property]\label{Lemma: Non-zero-divisor property}
For a C$^*$-algebra $A$, if $b\in A\setminus\{0\}$ and $c\in A*A\setminus\{0\}$, then $q(b)c\ne0$.
\end{lemma}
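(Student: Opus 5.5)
The plan is to argue by contradiction in a concrete representation of $A*A$, using norm continuous rotations of $2\times2$ matrices to move the two copies of $A$ against each other. The relation $q(b)c=0$ must then survive along the whole path of representations, and I expect it to be incompatible with $b\ne0$ and $c\ne0$. I have not checked that the final step closes, and I flag below where it might fail.

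First, suppose $q(b)c=0$. Replacing $c$ by $cc^*$ and $b$ by $b^*b$ does not obviously preserve the relation, so I keep $b$ and $c$ as given, but I record the consequences
\[c^*q(b)^*q(b)c=0\qquad\text{and}\qquad q(b)c\,y=0\ \text{ for all }y\in A*A.\]
Choose a faithful nondegenerate representation $\pi$ of $A*A$ on $H$, and write $\pi_0:=\pi\circ\iota_0$ and $\pi_1:=\pi\circ\iota_1$. By the universal property of the free product, any pair of representations $(\alpha,\beta)$ of $A$ on a common Hilbert space determines a representation $\alpha*\beta$ of $A*A$. On $H\oplus H$ let $R_t$ be the rotation by angle $t$, and define
\[\rho_t:=(\pi_0\oplus\pi_1)*\operatorname{Ad}(R_t)\circ(\pi_1\oplus\pi_0),\qquad t\in[0,\tfrac\pi2].\]
Then $\rho_0=\pi\oplus(\pi_1*\pi_0)=\pi\oplus(\pi\circ\sigma)$, where $\sigma$ is the flip automorphism of $A*A$. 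At the other end, $\rho_{\pi/2}=(\pi_0*\pi_0)\oplus(\pi_1*\pi_1)$, which factors through the fold map $A*A\to A$ in each summand.

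Second, I prove that $t\mapsto\rho_t(y)$ is norm continuous for every $y\in A*A$. For products of generators this is clear, since $R_t$ is norm continuous. For general $y$ it follows by density of the algebraic free product and the uniform bound $\|\rho_t(y)\|\le\|y\|$. The relation $\rho_t(q(b))\rho_t(c)=0$ then holds for all $t$. I also need to know how the factor $\rho_t(q(b))$ behaves along the path:
\begin{itemize}
\item at $t=\pi/2$ it vanishes identically;
\item at $t=0$ it equals $\pi(q(b))\oplus(-\pi(q(b)))$;
\item for intermediate $t$ it is an explicit matrix combination of $\pi_0(b)$ and $\pi_1(b)$ involving $\cos t$ and $\sin t$.
\end{itemize}
In particular, near $t=\pi/2$ the first-order term of $\rho_t(q(b))$ is a multiple of the off-diagonal operator $\bigl(\begin{smallmatrix}0&\pi_0(b)-\pi_1(b)\\ \pi_0(b)-\pi_1(b)&0\end{smallmatrix}\bigr)$, up to sign. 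Meanwhile $\rho_t(c)$ tends in norm to $\rho_{\pi/2}(c)=\pi_0(m(c))\oplus\pi_1(m(c))$, where $m\colon A*A\to A$ is the fold map.

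Third, I extract the contradiction by expanding to first order at $t=\pi/2$. Dividing $\rho_t(q(b))\rho_t(c)=0$ by $(\pi/2-t)$ and letting $t\to\pi/2$ should give an identity of the form
\[(\pi_0(b)-\pi_1(b))\,\pi_i(m(c))=0\ \text{ for } i=0,1,\]
together with similar identities coming from the mixed terms. The aim is to play these off against the freedom in choosing $\pi$. Since $\pi$ is arbitrary, I may instead take $\pi=\pi_0'*\pi_1'$ for any two representations $\pi_0',\pi_1'$ of $A$, in particular $\pi_1'=\operatorname{Ad}(u)\circ\pi_0'$ for a unitary $u$. This should reduce the question to a statement in $A$ itself: if $m(c)\ne0$, the identities should force $b=0$ by choosing $u$ so that $\pi_0'(b)-u\pi_0'(b)u^*$ acts nontrivially on the range of $\pi_0'(m(c))$.

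The main obstacle is the case $m(c)=0$, that is, $c\in qA$, where the first-order term degenerates. To handle it, I would first replace $c$ by $c^*q(a)$ or by $q(a)^*c$ for a suitable $a\in A$, aiming to land in an element with nonzero image under $m$. Alternatively, I would iterate the rotation argument to higher order, using the same norm continuity of matrix rotation that yields essentiality of $qA$ in $A*A$.
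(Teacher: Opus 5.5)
\textbf{There is a genuine gap: your argument never reaches the case $c\in qA$, which is the case the paper actually uses.}

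Your first-order expansion at $t=\pi/2$ is computed correctly. However, all it yields is
\[q(b)\iota_0(m(c))=q(b)\iota_1(m(c))=0,\]
and this says nothing when $m(c)=0$. That case is not peripheral. The lemma is applied in Proposition~\ref{Proposition: Non-para-unital Cuntz construction} with $c=x^*$ for nonzero $x\in qA=\ker m$. So your argument covers only the case that is never needed, and the case you flag as the main obstacle is the whole content of the lemma.

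Your proposed repairs cannot close this gap:
\begin{enumerate}
\item $qA$ is a two-sided ideal, so every product such as $c^*q(a)$, $q(a)^*c$ or $cy$ with $c\in qA$ still lies in $\ker m$. You can never reach an element with nonzero image under $m$.
\item Left multiplication by $q(a)^*$ does not even preserve the relation $q(b)c=0$.
\item A higher-order expansion is unavailable. The map $t\mapsto\rho_t(c)$ is only norm continuous, not differentiable, for $c$ in the closure of the algebraic free product.
\end{enumerate}

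The underlying problem is your choice of path. At $t=\pi/2$ both summands factor through $m$, so elements of $qA$ are invisible there. Expanding at $t=0$ does not help either: there the first-order perturbation of $\rho_t(q(b))$ is the off-diagonal matrix with entries $\pm\pi(q(b))$. That term is killed by the hypothesis $q(b)c=0$, and the zeroth-order diagonal block $-\pi(q(b))$ does not vanish.

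\textbf{The missing idea.} Base the perturbation at a representation that contains a faithful copy of $A*A$ together with an auxiliary summand on which $q(b)$ vanishes. Then couple the two summands by an operator you are free to choose. Concretely:
\begin{enumerate}
\item Take $A*A\subset B(H)$ faithfully and pick $\xi$ with $c\xi\ne0$.
\item Take any representation $\pi'$ of $A$ on $H'$ and a bounded $w\colon H\to H'$ with $(w\iota_1(b)-\pi'(b)w)c\xi\ne0$. For example, take $\pi'$ faithful and $w$ of rank one sending $c\xi$ to a vector not in $\ker\pi'(b)$, in the case $\iota_1(b)c\xi=0$.
\item Set $u_t:=\exp\bigl(t\bigl(\begin{smallmatrix}0&-w^*\\ w&0\end{smallmatrix}\bigr)\bigr)$, $\pi_t(\iota_0(a)):=\iota_0(a)\oplus\pi'(a)$ and $\pi_t(\iota_1(a)):=u_t(\iota_1(a)\oplus\pi'(a))u_t^*$.
\end{enumerate}
Then the $(2,1)$-block of $\pi_t(q(b))$ is $-t(w\iota_1(b)-\pi'(b)w)+O(t^2)$, and the $(2,2)$-block is $O(t)$. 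Using only norm continuity of $t\mapsto\pi_t(c)$ and $\pi_0(c)(\xi,0)=(c\xi,0)$, the $H'$-component of $\pi_t(q(b)c)(\xi,0)$ equals
\[-t(w\iota_1(b)-\pi'(b)w)c\xi+o(t),\]
which is nonzero for small $t>0$. This works with no case distinction on $m(c)$.
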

\begin{proof}
Fixing a faithful representation $A*A\subset B(H)$, choose $\xi\in H$ such that $c\xi\ne0$.
Choose any representation $\pi':A\to B(H')$ together with a bounded linear operator $w:H\to H'$ such that
\[(w\iota_1(b)-\pi'(b)w)c\xi\ne0.\]
Such a choice of $\pi'$ and $w$ is always possible, for example by letting $H=H'$, set $\pi'=0$ and $w=1$ if $\iota_1(b)c\xi\ne0$, and set $\pi'$ and $w$ arbitrarily such that $\pi'(b)\ne0$ and $wc\xi\not\in\ker\pi'(b)\subset H$ if $\iota_1(b)c\xi=0$.

Define a path of unitaries $u_t\in B(H\oplus H')$ for $t\in[0,1]$ by
\[z:=\begin{pmatrix}0&-w^*\\w&0\end{pmatrix},\qquad u_t:=e^{tz},\qquad t\in[0,1],\]
and define a path of representations $\pi_t:A*A\to B(H\oplus H')$ by
\[\pi_t(\iota_0(a)):=\iota_0(a)\oplus\pi'(a),\qquad\pi_t(\iota_1(a)):=u_t(\iota_1(a)\oplus\pi'(a))u_t^*,\qquad a\in A,\ t\in[0,1].\]
Then, we can compute
\[\pi_t(q(b))=\begin{pmatrix}q(b)&-t(\iota_1(b)w^*-w^*\pi'(b))\\-t(w\iota_1(b)-\pi'(b)w)&0\end{pmatrix}+O(t^2),\qquad t\to0,\]
so that the $H'$-component of $\pi_t(q(b)c)(\xi,0)$ is non-zero for a sufficiently small $t>0$, hence $q(b)c\ne0$.
\end{proof}

\begin{lemma}[Hofmann-Neeb]
An epic $*$-homomorphism between C$^*$-algebras is surjective.
\end{lemma}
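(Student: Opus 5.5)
The plan is to argue by contraposition. Let $\varphi:A\to B$ be a $*$-homomorphism and put $D:=\varphi(A)$, a C$^*$-subalgebra of $B$. Assuming $D\ne B$, I will construct a C$^*$-algebra $C$ and two distinct $*$-homomorphisms $\psi_0,\psi_1:B\to C$ with $\psi_0\circ\varphi=\psi_1\circ\varphi$. Then $\varphi$ is not epic. The idea is to realize both maps inside one von Neumann algebra and to separate them by conjugating with a unitary that commutes with $D$ but not with all of $B$.

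First, fix the universal representation $B\subset B(H_u)$, so that the weak closure of $B$ is the enveloping von Neumann algebra $B^{**}$. The key step is to show $B\cap D''=D$, where $D''$ is the double commutant of $D$ in $B(H_u)$, equivalently the weak closure of $D$. Suppose $b\in B\setminus D$. By Hahn--Banach there is $f\in B^*$ with $f|_D=0$ and $f(b)\ne0$. In the universal representation every $f\in B^*$ is a vector functional $\langle\xi,\cdot\,\eta\rangle$, so $f$ is weakly continuous on $B(H_u)$. It therefore vanishes on the weak closure of $D$, which is $D''$ by the bicommutant theorem applied to $D$ acting degenerately or not. A possible degeneracy of $D$ only adds a corner and does not affect the vanishing. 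Hence $b\notin D''$.

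Next, from $b\notin D''=(D')'$ there is some $x\in D'$ with $xb\ne bx$. Since $D'$ is a von Neumann algebra, it is the linear span of its unitaries, so there is a unitary $u\in D'$ with $ubu^*\ne b$. Let $C:=C^*(B\cup uBu^*)\subset B(H_u)$, and define $\psi_0:=\mathrm{id}_B$ and $\psi_1:=\mathrm{Ad}\,u|_B$, both $*$-homomorphisms $B\to C$. For every $a\in A$ we have $u\varphi(a)u^*=\varphi(a)$, so $\psi_0\circ\varphi=\psi_1\circ\varphi$. On the other hand $\psi_1(b)\ne\psi_0(b)$, which contradicts epicness. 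Therefore $D=B$, and $\varphi$ is surjective.

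The main obstacle is the identity $B\cap\overline{D}^{\,w}=D$. The point is that a bounded functional vanishing on $D$ must be normal on the ambient von Neumann algebra. This is exactly why I use the universal representation rather than an arbitrary faithful one. Once that identity is in place, the commutant argument and the construction of $C$ are routine.
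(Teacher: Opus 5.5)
Your proof has a genuine gap: the claim that the weak closure of $D$ equals $D''$ ``whether $D$ acts degenerately or not'' is false. The bicommutant theorem gives $\overline{D}^{\,w}=D''$ only for non-degenerate $D$. In general $D''=\overline{D}^{\,w}+\mathbb{C}(1-p)$, where $p$ is the support projection of $D$ (the strong limit of an approximate unit of $D$), and the normal extension of your Hahn--Banach functional $f$ need not vanish on $1-p$. So the key identity $B\cap D''=D$ fails. If $B$ is unital and $1_B\notin D$, then $1_B=1_{B(H_u)}\in B\cap D''$ but $1_B\notin D$; take $f$ to be a character killing $D$, so $f(1_B)=1$.

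This breaks the separation mechanism itself, not just the exposition. For $D\subset\widetilde D$ with $D$ non-unital, or for $C_0(\mathbb{R}\setminus\{0\})\subset C_0(\mathbb{R})$, one has $B\subset D''$ in the universal representation of $B$. Hence every unitary $u\in D'$ already commutes with all of $B$, and no pair $(\mathrm{id}_B,\mathrm{Ad}\,u|_B)$ separates anything. These inclusions are not epic, but the witnesses must move the ``missing unit'' or ``missing point''. An example is $f\mapsto(f,f(0))$ versus $f\mapsto(f,0)$ from $C_0(\mathbb{R})$ into $C_0(\mathbb{R})\oplus\mathbb{C}$, which is not a pair of the form you construct.

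The repair is the reduction the paper makes at the outset. Replace $\varphi$ by its forced unitization $\widetilde\varphi:\widetilde A\to\widetilde B$, which is still epic. Two maps on $\widetilde B$ agreeing on $\widetilde\varphi(\widetilde A)$ agree on $B$ by epicness of $\varphi$, and on $1$ because $\widetilde\varphi(1)=1$. The image $D+\mathbb{C}1$ contains the unit of $B(H_u)$ for the universal representation of $\widetilde B$, so it acts non-degenerately and its double commutant is its weak closure. Your Hahn--Banach/normality step and your commutant-unitary step then go through verbatim and give $D+\mathbb{C}1=\widetilde B$. Applying the character of $\widetilde B$ that vanishes on $B$ then yields $D=B$.

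With this fix, your argument is the paper's proof run contrapositively. The paper shows every unitary in $\pi(A)'$ lies in $\pi(B)'$, deduces $\pi(A)''=\pi(B)''$ (equal biduals), and concludes by Hahn--Banach.
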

\begin{proof}
Let $\varphi:A\to B$ be an epimorphism in the category $\mathrm{C^*Alg}$.
By taking unitization and quotient by kernel, we may assume it is a unital inclusion.
Let $\pi:B\subset B(H)$ be the universal representation of $B$.
For any unitary $u\in\pi(A)'$, the coincidence $\pi\circ\varphi=(\mathrm{Ad}\,u)\circ\pi\circ\varphi:A\to B(H)$ implies $\pi=(\mathrm{Ad}\,u)\circ\pi:B\to B(H)$ by the assumption that $\varphi$ is epic, which leads $u\in\pi(B)'$.
Then, we get $\pi(A)''=\pi(B)''$ and $A^{**}=B^{**}$, hence it forces $A=B$ by the Hahn-Banach extension.
\end{proof}

\begin{proposition}[Non-para-unital Cuntz construction]\label{Proposition: Non-para-unital Cuntz construction}
If $A$ is a non-separable C$^*$-algebra, then $qA$ is not para-unital.
\end{proposition}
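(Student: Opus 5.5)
The plan is to apply the Non-para-unitality criterion (Proposition~\ref{Proposition: Non-para-unitality criterion}) to $B:=qA$. This requires two things: an element $h\in qA$ that is not a one-sided zero divisor in $qA$, and the non-$\sigma$-unitality of $qA$. The first comes from Lemma~\ref{Lemma: Non-zero-divisor property}. The second comes from the Hofmann--Neeb lemma applied to a separable subalgebra $A_0\subset A$.

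\textbf{Step 1: the element $h$.} Since $A$ is non-separable, $A\ne0$, so we may pick a non-zero self-adjoint $a\in A$ and put $h:=q(a)\in qA$, which is self-adjoint. Let $b\in qA$ be non-zero. Then $b^*\in A*A$ is non-zero, so Lemma~\ref{Lemma: Non-zero-divisor property} gives $q(a)b^*\ne0$. Taking adjoints, $bh=(hb^*)^*\ne0$. This is exactly the hypothesis on $h$ in the criterion.

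\textbf{Step 2: $qA$ is not $\sigma$-unital.} Suppose instead that $qA$ has a strictly positive element $e$.
\begin{enumerate}
\item Since $e\in A*A$ is a norm limit of noncommutative polynomials in countably many elements $\iota_0(a_n),\iota_1(a_n)$, there is a separable C$^*$-subalgebra $A_0\subset A$ such that $e$ lies in the image of the canonical map $\pi:A_0*A_0\to A*A$. This image is closed, being the image of a $*$-homomorphism.
\item Write $e=\pi(x)$. Composing $\pi$ with the fold map $A*A\to A$ gives the fold map of $A_0*A_0$ followed by the injective inclusion $A_0\subset A$. Hence $x\in qA_0$, that is, $e\in\pi(qA_0)$.
\item We show that $A_0\subset A$ is epic. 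Let $\varphi,\psi:A\to D$ be $*$-homomorphisms agreeing on $A_0$, and let $\Phi:=\varphi*\psi:A*A\to D$. On $\pi(A_0*A_0)$ the map $\Phi$ is induced by $(\varphi|_{A_0})*(\varphi|_{A_0})$, which factors through the fold map. So $\Phi$ kills $\pi(qA_0)$, and in particular $\Phi(e)=0$.
\item Because $e$ is strictly positive, $qA$ is the closure of $e(qA)e$, so $\Phi(qA)=0$. Then $\varphi(a)-\psi(a)=\Phi(q(a))=0$ for every $a\in A$, so $\varphi=\psi$.
\item By the Hofmann--Neeb lemma, the epic inclusion $A_0\subset A$ is surjective. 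Thus $A=A_0$ is separable, a contradiction.
\end{enumerate}

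Combining the two steps, the criterion shows that $qA$ is not para-unital. The delicate point is Step~2. One must make sure that $e$ genuinely lies in the image of $qA_0$, and not merely in the image of $A_0*A_0$, so that $\Phi$ vanishes on it. The fold-map argument in item~2 handles this without needing injectivity of $A_0*A_0\to A*A$.
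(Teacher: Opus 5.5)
Your proof is correct and follows the paper's route. The non-zero-divisor lemma supplies the element for the non-para-unitality criterion, and non-$\sigma$-unitality of $qA$ comes from showing that $A_0\subset A$ is epic for a separable $A_0$ capturing a strictly positive element, then invoking Hofmann--Neeb. Your fold-map argument that the strictly positive element lies in the image of $qA_0$ is a slightly more explicit version of the paper's step placing it in the ideal of $qA$ generated by $q(A_0)$.
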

\begin{proof}
If we take a positive non-zero $b\in A$, then $h:=q(b)^*\in qA$ satisfies the condition of the previous proposition (Proposition \ref{Proposition: Non-para-unitality criterion}) by the non-zero-divisor property (Lemma \ref{Lemma: Non-zero-divisor property}) of the Cuntz $q$-construction.
Thus it suffices to prove the non-separability of $A$ implies the non-$\sigma$-unitality $qA$.

Suppose $qA$ is $\sigma$-unital with a strictly positive element $h$, and find a separable C$^*$-subalgebra $A_0\subset A$ such that $h$ belongs to the ideal of $qA$ generated by $q(A_0)=\{q(a):a\in A_0\}$.
No proper ideal contains an approximate unit, so $qA=qAq(A_0)qA$.
If $\varphi_1,\varphi_2:A\to B$ are $*$-homomorphisms such that $\varphi_1|_{A_0}=\varphi_2|_{A_0}$, then the induced $*$-homomorphism $qA\to A*A\to B$ vanishes on $q(A_0)$, and it implies $\varphi_1=\varphi_2$ so that $A_0\subset A$ is an epic inclusion.
Since a epic $*$-homomorphism is surjective, $A=A_0$ is separable.
\end{proof}

\subsection{Subcategories of freely complementable Hilbert bimodules}

Let $\mathrm{Corr}^\mathrm{fc,aug}$ be a class of morphisms in $\mathrm{Corr}^\mathrm{aug}$ consisting of freely complementable augmented Hilbert bimodules, and let $\mathrm{Corr}_\mathrm{para}^\mathrm{fc,ndg}$ be the full subcategory of para-unital C$^*$-algebras in $\mathrm{Corr}^\mathrm{fc,aug}\cap\mathrm{Corr}^\mathrm{ndg}$.

\begin{proposition}
The class $\mathrm{Corr}^\mathrm{fc,aug}$ and $\mathrm{Corr}_\mathrm{para}^\mathrm{fc,ndg}$ define subcategories of $\mathrm{Corr}^\mathrm{aug}$ and $\mathrm{Corr}^\mathrm{ndg}$.
\end{proposition}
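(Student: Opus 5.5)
We need two things: identities lie in the respective classes, and both classes are closed under the internal tensor product. Associativity and the unit laws are inherited from $\mathrm{Corr}^\mathrm{aug}$ and $\mathrm{Corr}^\mathrm{ndg}$.

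\emph{Identities.} For $\mathrm{Corr}^\mathrm{fc,aug}$ the identity is $E_{\mathrm{id}_A}=\widetilde A\cong\mathbb{C}\otimes\widetilde A$, which is free. For $\mathrm{Corr}_\mathrm{para}^\mathrm{fc,ndg}$ the identity is $A$ itself as a non-degenerate bimodule over $(A,A)$. By the proposition characterizing para-unitality, $A$ is freely complementable over $\widetilde A$ precisely when $A$ is para-unital, which is the hypothesis on the objects of the full subcategory.

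\emph{Composition.} Let $E_1$ over $(A,B)$ and $E_2$ over $(B,C)$ be freely complementable, with complemented embeddings $E_1\subset H_1\otimes\widetilde B$ (projection $P_1$) and $E_2\subset H_2\otimes\widetilde C$ (projection $P_2$). Since $E_1\otimes_BE_2=E_1\otimes_{\widetilde B}E_2$ and the internal tensor product with $E_2$ preserves orthogonal direct sums, $E_{12}$ is the image of $P_1\otimes1$ in
\[(H_1\otimes\widetilde B)\otimes_{\widetilde B}E_2\cong H_1\otimes E_2.\]
Here $P_1\otimes 1$ is a projection by strict continuity of $B(E_1)\to B(E_1\otimes E_2)$, applied in the form $B(H_1\otimes\widetilde B)\to B(H_1\otimes E_2)$. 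The module $H_1\otimes E_2$ is in turn complemented in
\[H_1\otimes H_2\otimes\widetilde C\]
via $1\otimes P_2$. A composite of complemented embeddings is complemented, so $E_{12}$ is freely complementable over $\widetilde C$.

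The augmentation condition for the composite is $AE_{12}\subset E_{12}C$. It follows from
\[AE_1\otimes E_2\subset E_1B\otimes E_2=E_1\otimes BE_2\subset E_1\otimes E_2C.\]
The graded case is identical, since all projections can be chosen even. In the non-degenerate case $E_{12}=\overline{AE_1}\otimes E_2=AE_{12}$, so non-degeneracy is preserved, and $E_1\otimes_BE_2$ with $E_2$ over $C$ gives a module over $C$ as usual.

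\emph{Subtle point.} The main thing to check is that in $\mathrm{Corr}^\mathrm{ndg}$ the modules live over $C$ rather than $\widetilde C$, while free complementability is taken over $\widetilde C$. One must confirm that the tensor identifications $(H_1\otimes\widetilde B)\otimes_{\widetilde B}E_2\cong H_1\otimes E_2$ are unitary equivalences of $\widetilde C$-modules. This follows directly from the defining inner products on the algebraic tensor products, by checking on elementary tensors.
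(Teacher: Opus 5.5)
Your proof is correct and follows essentially the same route as the paper: both realize $E_1\otimes_BE_2$ as a direct summand of $H_1\otimes H_2\otimes\widetilde C$ by distributing the internal tensor product over the complemented decompositions. The paper does this in one step with four summands, while you compose two complemented embeddings, which avoids having to equip the complements $E_1'$, $E_2'$ with left actions. You also check the identities explicitly, in particular that $A$ is freely complementable over $\widetilde A$ exactly when $A$ is para-unital, which the paper's proof leaves implicit.
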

\begin{proof}
For $E_1$ and $E_2$ freely complementable augmented Hilbert bimodules over pairs $(A,B)$ and $(B,C)$ of C$^*$-algebras respectively, it suffices to show the internal tensor product $E_1\otimes_BE_2$ freely complementable over $\widetilde C$.
Suppose $E_1\oplus E_1'\cong H_1\otimes\widetilde B$ and $E_2\oplus E_2'\cong H_2\otimes\widetilde C$ as augmented Hilbert bimodules for some $E_1'$ and $E_2'$.
Then,
\[(E_1\otimes_BE_2)\oplus(E_1'\otimes_BE_2)\oplus(E_1\otimes_BE_2')\oplus(E_1'\otimes_BE_2')\cong H_1\otimes H_2\otimes\widetilde C\]
implies that $E_1\otimes_BE_2$ is freely complementable.
\end{proof}

It is well-known to experts that both maximal and minimal external tensor products provide symmetric monoidal structures on the (bi-)category of C$^*$-algebras and non-degenerate Hilbert bimodules, but we leave a sketch of proof.
The following lemma will be used to establish these monoidal structures.
It is briefly discussed in Section 13.5 of \cite{MR1656031} without proof.

\begin{lemma}[Compact operators on external tensor products]\label{Lemma: Compact operators on external tensor products}
Let $B_1$ and $B_2$ be C$^*$-algebras.
Let $E_1$ and $E_2$ be Hilbert modules over $B_1$ and $B_2$ respectively.
Then, $K(E_1)\otimes_*K(E_2)\cong K(E_1\otimes_*E_2)$ for $*\in\{\mathrm{min},\mathrm{max}\}$.
\end{lemma}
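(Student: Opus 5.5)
We will construct the isomorphism concretely via rank-one operators. Define a $*$-homomorphism on the algebraic tensor product by
\[\Theta_{\xi_1,\eta_1}\odot\Theta_{\xi_2,\eta_2}\mapsto\Theta_{\xi_1\otimes\xi_2,\eta_1\otimes\eta_2},\qquad \xi_k,\eta_k\in E_k.\]
The first task is to show that this extends to a well-defined linear and $*$-multiplicative map $\Phi:K(E_1)\odot K(E_2)\to K(E_1\otimes_*E_2)$. For this we avoid formulas in terms of rank-one operators. Instead, for $T_1\in B(E_1)$ and $T_2\in B(E_2)$ we define $T_1\otimes T_2$ on $E_1\odot E_2$ and check that it is adjointable and bounded with respect to the $B_1\otimes_*B_2$-valued inner product. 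Boundedness follows by factoring $T_1\otimes T_2=(T_1\otimes1)(1\otimes T_2)$ and using the standard estimate $\langle T\xi,T\xi\rangle\le\|T\|^2\langle\xi,\xi\rangle$. We then apply this estimate in each tensor leg, viewing $E_1\otimes_*E_2$ as the interior tensor product $E_1\otimes_{B_1}(B_1\otimes_*E_2)$, or symmetrically. This yields a $*$-homomorphism $B(E_1)\odot B(E_2)\to B(E_1\otimes_*E_2)$ that sends $\Theta_{\xi_1,\eta_1}\otimes\Theta_{\xi_2,\eta_2}$ to $\Theta_{\xi_1\otimes\xi_2,\eta_1\otimes\eta_2}$. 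Its restriction to $K(E_1)\odot K(E_2)$ has dense image in $K(E_1\otimes_*E_2)$, because the elementary tensors span a dense subspace.

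Next we extend $\Phi$ to the completion. For $*=\max$, the universal property of the maximal tensor product gives a surjection $K(E_1)\otimes_{\max}K(E_2)\to K(E_1\otimes_{\max}E_2)$. For $*=\min$, we first realize everything spatially. Choose faithful representations $B_k\subset B(H_k)$ and form the Hilbert spaces $E_k\otimes_{B_k}H_k$. Then $K(E_k)$ acts faithfully on $E_k\otimes_{B_k}H_k$, and $E_1\otimes_{\min}E_2$ tensored with $H_1\otimes H_2$ is unitarily equivalent to $(E_1\otimes_{B_1}H_1)\otimes(E_2\otimes_{B_2}H_2)$. Under this unitary, the action of $\Phi(x)$ becomes the spatial tensor action. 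Since the induced representation of $K(E_1\otimes_{\min}E_2)$ on $(E_1\otimes_{\min}E_2)\otimes_{B_1\otimes_{\min}B_2}(H_1\otimes H_2)$ is faithful, $\Phi$ is isometric for the minimal norm.

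Injectivity in the maximal case is the main obstacle, and for it we use the linking-algebra trick. Let $L_k:=K(E_k\oplus B_k)$ be the linking algebra, which contains $K(E_k)$ and $B_k$ as complementary full corners $pL_kp$ and $(1-p)L_k(1-p)$. The maximal tensor product is compatible with passing to corners: for a projection $p\in M(L)$ one has $(p_1\otimes p_2)(L_1\otimes_{\max}L_2)(p_1\otimes p_2)=p_1L_1p_1\otimes_{\max}p_2L_2p_2$. This is because hereditary corners of maximal tensor products are maximal tensor products of the corners, which can be verified by extending representations from the corner to the full algebra via Morita equivalence. Consequently, $L_1\otimes_{\max}L_2$ is a linking algebra between $K(E_1)\otimes_{\max}K(E_2)$ and $B_1\otimes_{\max}B_2$, with off-diagonal corner equal to the completion $E_1\otimes_{\max}E_2$. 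The compact operators on that off-diagonal imprimitivity bimodule then form exactly its left corner, which gives the claimed isomorphism.

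The same linking-algebra argument also handles the minimal case uniformly, since corners of minimal tensor products are trivially minimal tensor products of corners. If this is written carefully, the spatial argument of the second paragraph can be dropped.
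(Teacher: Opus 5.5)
Your proposal is correct and follows essentially the paper's route. Both use the linking algebras $L_k=K(E_k\oplus B_k)$, the compatibility of $\otimes_*$ with the corners cut by $P_1\otimes P_2$ and $Q_1\otimes Q_2$, the identification of the off-diagonal corner $PLQ$ with $E_1\otimes_*E_2$ through the $B_1\otimes_*B_2$-valued inner product, and finally $K(PLQ)=\overline{PLQLP}=PLP$. The one real difference is in the isometry of $P_1L_1P_1\otimes_{\max}P_2L_2P_2\to(P_1\otimes P_2)(L_1\otimes_{\max}L_2)(P_1\otimes P_2)$: the paper obtains it from a contractive retract $L_1\otimes_{\max}L_2\to P_1L_1P_1\otimes_{\max}P_2L_2P_2$ (the tensor product of the two compression maps), whereas you extend representations from the corners by Morita induction; your preliminary construction of $\Phi$ and the spatial argument for the minimal case are indeed redundant.
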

\begin{proof}
Let $L_1$ and $L_2$ be C$^*$-algebras with multiplier projections $P_1\in M(L_1)$ and $P_2\in M(L_2)$.
By the preservation of inclusion for minimal tensor product, and by the universal property for maximal tensor product, for each case $*=\mathrm{min}$ and $*=\mathrm{max}$ we can define a $*$-homomorphism
\[P_1L_1P_1\otimes_*P_2L_2P_2\to(P_1\otimes P_2)(L_1\otimes_*L_2)(P_1\otimes P_2),\]
where $P_1\otimes P_2\in M(L_1\otimes_*L_2)$ is well-defined via the canonical $*$-homomorphism $M(L_1)\otimes_*M(L_2)\to M(L_1\otimes_*L_2)$.
It is clearly isometric for minimal case, but also for the maximal case by the existence of a contractive retract $L_1\otimes_\mathrm{max}L_2\to P_1L_1P_1\otimes_\mathrm{max}P_2L_2P_2$.
By the dense range containing the algebraic tensor product, it is an isomorphism.

Now we prove the original statement.
Recall that we can identify an Hilbert module $E$ over a C$^*$-algebra $B$ with $K(B,E)$ by $E\to K(B,E):\xi\mapsto T_\xi$, and the inner product becomes the multiplication $\langle\eta,\xi\rangle=T_\eta^*T_\xi\in B$.
Define the linking algebras $L_1:=K(E_1\oplus B_1)$ and $L_2:=K(E_2\oplus B_2)$.
Let $P_1,Q_1\in M(L_1)$ and $P_2,Q_2\in M(L_2)$ be projections onto $E_1,B_1$ and $E_2,B_2$ respectively.
Let $L:=L_1\otimes_*L_2$, and $P:=P_1\otimes P_2$, $Q:=Q_1\otimes Q_2$ be projections in $M(L)$.
By the argument in the first paragraph, their corners are computed as
\[PLP\cong P_1L_1P_1\otimes_*P_2L_2P_2\cong K(E_1)\otimes_*K(E_2),\qquad QLQ\cong Q_1L_1Q_1\otimes_*Q_2L_2Q_2\cong B_1\otimes_*B_2.\]
Then, since the norm on $E_1\otimes_*E_2$ is given by $\|\xi_1\otimes\xi_2\|_*^2=\|T_{\xi_1}^*T_{\xi_1}\otimes T_{\xi_2}^*T_{\xi_2}\|_{B_1\otimes_*B_2}=\||T_{\xi_1}\otimes T_{\xi_2}|^2\|_{QLQ}$, we have an isometric isomorphism $E_1\otimes_*E_2\cong K(QLQ,PLQ)\cong PLQ$.
It gives rise to a canonical isometry $K(E_1\otimes_*E_2)=K(PLQ)=PLQLP\subset PLP=K(E_1)\otimes_*K(E_2)$.
Since it has dense range, the proof is complete.
\end{proof}

\begin{proposition}
The categories $\mathrm{Corr}^\mathrm{ndg}$ and $\mathrm{Corr}_\mathrm{para}^\mathrm{fc,ndg}$ are given monoidal categories by external tensor products, with respect to any of either the minimal or maximal tensor product.
In particular, the minimal and maximal tensor products of para-unital C$^*$-algebras is para-unital.
\end{proposition}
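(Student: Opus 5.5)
We first build the monoidal structure on $\mathrm{Corr}^\mathrm{ndg}$, and then show that $\mathrm{Corr}_\mathrm{para}^\mathrm{fc,ndg}$ is closed under it. Objects are tensored by $A_1\otimes_*A_2$. For morphisms $E_1$ over $(A_1,B_1)$ and $E_2$ over $(A_2,B_2)$ we take $E_1\otimes_*E_2$ over $(A_1\otimes_*A_2,B_1\otimes_*B_2)$.

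\textbf{Left action.} By Lemma \ref{Lemma: Compact operators on external tensor products} we have $K(E_1\otimes_*E_2)\cong K(E_1)\otimes_*K(E_2)$. The non-degenerate $*$-homomorphisms $A_j\to M(K(E_j))$ therefore yield $A_1\odot A_2\to M(K(E_1)\otimes_*K(E_2))$. This extends to $A_1\otimes_*A_2$:
\begin{itemize}
\item for $*=\mathrm{max}$ by the universal property;
\item for $*=\mathrm{min}$ by injectivity of the minimal tensor product on the level of faithful representations, i.e.\ $M(K_1)\otimes_\mathrm{min}M(K_2)\to M(K_1\otimes_\mathrm{min}K_2)$.
\end{itemize}
Non-degeneracy follows since $(A_1\odot A_2)(E_1\odot E_2)$ is dense.

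\textbf{Compatibility with internal tensor products.} There is a unitary
\[(E_1\otimes_{B_1}F_1)\otimes_*(E_2\otimes_{B_2}F_2)\cong(E_1\otimes_*E_2)\otimes_{B_1\otimes_*B_2}(F_1\otimes_*F_2),\]
given on elementary tensors by the obvious shuffle. One checks that inner products agree on the algebraic level: both sides give $\langle\eta_1',\langle\eta_1,\xi_1\rangle\xi_1'\rangle\otimes\langle\eta_2',\langle\eta_2,\xi_2\rangle\xi_2'\rangle$. Isometricity and density then give the unitary, and bimodularity is clear. The identities are $A_1\otimes_*A_2$ as a module over itself, and unit, associator and symmetry are the evident shuffle unitaries. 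Since morphisms are unitary equivalence classes, coherence is automatic.

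\textbf{Para-unitality of tensor products.} Let $p_i\in M(B_1)$ and $q_j\in M(B_2)$ be orthogonal families of $\sigma$-unital projections summing strictly to $1$. Then $p_i\otimes q_j\in M(B_1\otimes_*B_2)$ are mutually orthogonal projections.
\begin{itemize}
\item \emph{$\sigma$-unital corners.} By the first paragraph of the proof of Lemma \ref{Lemma: Compact operators on external tensor products}, the corner is $(p_i\otimes q_j)(B_1\otimes_*B_2)(p_i\otimes q_j)\cong p_iB_1p_i\otimes_*q_jB_2q_j$, and a tensor product of strictly positive elements is strictly positive.
\item \emph{Strict convergence.} The finite partial sums of $\sum_{i,j}p_i\otimes q_j$ are contractions. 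They converge to the identity on elementary tensors $b_1\otimes b_2$, because $\|(P\otimes Q)(b_1\otimes b_2)-b_1\otimes b_2\|\le\|Pb_1-b_1\|\|b_2\|+\|Pb_1\|\|Qb_2-b_2\|$ for finite partial sums $P=\sum_{i\in I_0}p_i$ and $Q=\sum_{j\in J_0}q_j$. Uniform boundedness and density then give strict convergence on all of $B_1\otimes_*B_2$, and similarly on the right.
\end{itemize}
Hence $B_1\otimes_*B_2$ is para-unital.

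\textbf{Free complementability.} Since $B_j$ is para-unital, $B_j$ is a complemented summand of $\bigoplus_i p_iB_j$. Each $p_iB_j$ is countably generated over $\widetilde B_j$ (as $K(p_iB_j)=p_iB_jp_i$ is $\sigma$-unital), so the Kasparov stabilization theorem makes $\bigoplus_i p_iB_j$ freely complementable over $\widetilde B_j$. Thus by the preceding Proposition $H\otimes B_j$ is freely complementable over $\widetilde B_j$ for every Hilbert space $H$.

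Now let $E_j$ be non-degenerate and freely complementable over $\widetilde B_j$ with $E_j=E_jB_j$. Then $E_j$ is complemented in $H_j\otimes\widetilde B_j$ with projection $P_j$, and hence complemented in $P_j(H_j\otimes B_j)$. Note that $(H_j\otimes\widetilde B_j)B_j=H_j\otimes B_j$.

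Therefore $E_1\otimes_*E_2$ is complemented, via $P_1\otimes P_2$, in $(H_1\otimes B_1)\otimes_*(H_2\otimes B_2)=H_1\otimes H_2\otimes(B_1\otimes_*B_2)$. This is justified by Lemma \ref{Lemma: Compact operators on external tensor products}, which places $P_1\otimes P_2$ in the multiplier algebra. The latter module is freely complementable over $(B_1\otimes_*B_2)^\sim$ because $B_1\otimes_*B_2$ is para-unital. Hence $E_1\otimes_*E_2$ is freely complementable, and the restricted structure lands in $\mathrm{Corr}_\mathrm{para}^\mathrm{fc,ndg}$.

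\textbf{Main obstacle.} I expect the delicate step to be this last passage between $\widetilde B_j$-modules and $B_j$-modules, since the unitization does not tensor well. It is precisely why non-degeneracy, i.e.\ $E_j=E_jB_j$, and the para-unitality of $B_j$ are needed.
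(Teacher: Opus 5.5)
Your proof is correct. For $\mathrm{Corr}^\mathrm{ndg}$ it matches the paper: left action through $K(E_1\otimes_*E_2)\cong K(E_1)\otimes_*K(E_2)$, the interchange unitary, and shuffle associators and unitors. For the para-unital part, however, you run the logic in the opposite direction.

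The paper reduces everything to one claim: an external tensor product of freely complementable non-degenerate modules is freely complementable. It proves this via the Kaplansky decomposition $E_j=\bigoplus_iS_{ji}$ into countably generated complemented summands. Since $E_j=E_jB_j$, each $S_{1i}\otimes_*S_{2k}$ is a countably generated Hilbert $B_1\otimes_*B_2$-module, so Kasparov stabilization applies summand by summand. Para-unitality of $A_1\otimes_*A_2$ is then read off from the fact that the identity bimodule $A$ is freely complementable exactly when $A$ is para-unital.

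You instead prove para-unitality first, by hand, using the family $p_i\otimes q_j$ and the corner computation from the lemma. You then feed it in: $E_j$ is complemented in $H_j\otimes B_j$, so $E_1\otimes_*E_2$ is complemented in $H_1\otimes H_2\otimes(B_1\otimes_*B_2)$, which is freely complementable because $B_1\otimes_*B_2$ is para-unital.

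The paper's route is more general, since it only needs $E_j=E_jB_j$ and not para-unitality of $B_1,B_2$. But it relies on the Kaplansky corollary, including for the converse ``freely complementable $\Rightarrow$ para-unital'' used to get the para-unitality statement. Yours avoids that decomposition and produces an explicit witnessing family of $\sigma$-unital projections.

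Two small repairs are needed:
\begin{enumerate}
\item The phrase ``complemented in $P_j(H_j\otimes B_j)$'' should say that $P_j$ restricts to a projection on $H_j\otimes B_j=(H_j\otimes\widetilde B_j)B_j$ with range $P_j(H_j\otimes B_j)=E_jB_j=E_j$.
\item For strict convergence of the net of all finite partial sums over $I\times J$, add that rectangles $I_0\times J_0$ are cofinal. Since the partial sums $R_F$ are increasing projections, $\|(1-R_F)x\|\le\|(1-R_{F'})x\|$ whenever $F\supseteq F'$.
\end{enumerate}
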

\begin{proof}
Let $E_1$ and $E_2$ be Hilbert modules over C$^*$-algebras $B_1$ and $B_2$ respectively.
We define the monoidal bifunctor by the external tensor product $\otimes_*$ of non-degenerate Hilbert bimodules, with fixed $*\in\{\mathrm{min},\mathrm{max}\}$.
The left action is defined by a canonical $*$-homomorphism $M(K(E_1))\otimes_*M(K(E_2))\to M(K(E_1)\otimes_*K(E_2))$, which should be constructed separately but not in a difficult way for each case $*=\mathrm{min}$ and $*=\mathrm{max}$.
Since the external tensor product of unitarily equivalent non-degenerate Hilbert bimodules are unitarily equivalent, the monoidal bifunctor is well-defined.
For its functoriality, it is routine to directly and canonically construct a unitary to prove the interchange law.
The monoidal unit is set to be $\mathbb{C}$.
The associator, two unitors, and swapper are same as the category of C$^*$-algebras and $*$-homomorphisms.
Then, the category $\mathrm{Corr}^\mathrm{ndg}$ is a symmetric monoidal category with respect to the minimal or maximal tensor product.

For the subcategory $\mathrm{Corr}_\mathrm{para}^\mathrm{fc,ndg}$, it suffices to show an external tensor product of freely complementable Hilbert modules is freely complementable, but it can be easily proven by the decomposition into countably generated complemented submodules.
\end{proof}

The canonical functors $\mathrm{C^*Alg}\to\mathrm{Corr}^\mathrm{ndg}$ and $\mathrm{C^*Alg}_\mathrm{para}\to\mathrm{Corr}_\mathrm{para}^\mathrm{fc,ndg}$ through the ``non-degenerate reduction'' $E\mapsto AE$ for an augmented Hilbert bimodule $E$ over a pair of C$^*$-algebras $(A,B)$ are monoidal functors, also with respect to either the minimal or maximal tensor product.

Since the identity morphism at an object $A$ in the category $\mathrm{Corr}^\mathrm{ndg}$ is freely complementable if and only if $A$ is para-unital, the object class of $\mathrm{Corr}_\mathrm{para}^\mathrm{fc,ndg}$ contains only para-unital C$^*$-algebras, while the other three bimodule categories have all C$^*$-algebras as objects.
This is why the ``non-degenerate reduction'' functor $\mathrm{Corr}^\mathrm{aug}\to\mathrm{Corr}^\mathrm{ndg}$ does not restrict to the subcategory $\mathrm{Corr}^\mathrm{fc,aug}$.
\[\begin{tikzcd}
\mathrm{C^*Alg}_\mathrm{para}\dar[phantom,sloped,"\subset"]\rar
&\mathrm{Corr}_\mathrm{para}^\mathrm{fc,ndg}\rar[phantom,"\subset"]
&\mathrm{Corr}^\mathrm{ndg}\\
\mathrm{C^*Alg}\rar
&\mathrm{Corr}^\mathrm{fc,aug}\rar[phantom,"\subset"]\uar{(\times)}
&\mathrm{Corr}^\mathrm{aug}\uar
\end{tikzcd}\]
In the above diagram of functors, while the most categorically nice bimodule category for C$^*$-algebras is the semi-additive monoidal category $\mathrm{Corr}^\mathrm{ndg}$, but the most general category to which the Kasparov technical theorem is extended in this paper is $\mathrm{Corr}^\mathrm{fc,aug}$.

\section{Approximate Kasparov bimodules}

Now we define approximate Kasparov bimodules.
After defining equivalence classes between them, we will discuss representatives of approximate Kasparov bimodules we can take in each equivalence class.

\subsection{Kasparov conditions}

\begin{definition}[Support algebras]
Let $A_0$ and $B$ be C$^*$-algebras.
Let $E$ be an augmented Hilbert bimodule over $(A_0,B)$.
We define the \emph{support algebra} of an adjointable operator $F\in B(E)$ under $A_0$ as the C$^*$-subalgebra
\[\Sigma(F,A_0):=C^*([F,a],(F-F^*)a,(F^2-1)a:a\in A_0)\subset B(E).\]
\end{definition}

The above definition is introduced just for convenience in this paper, and it is not a standard terminology.
The notation $\Sigma$ for support algebras will be frequently used to simplify and shorten many proofs in this paper.
By abusing terminologies, we might be able to say that a condition for an odd adjointable operator to be a Kasparov operator is the ``compact support condition''.

\begin{definition}[Kasparov operators]
Let $A_0$ and $B$ be C$^*$-algebras.
Let $E$ be a graded augmented Hilbert bimodule over $(A_0,B)$.
An adjointable operator $F\in B(E)$ is called \emph{Kasparov} under $A_0$ if it is odd with $C^*(F)E\subset EB$ and $\Sigma(F,A_0)\subset K(E)$.
We define a \emph{Kasparov bimodule} over $(A_0,B)$ as a graded freely complementable augmented Hilbert bimodule $E$ over $(A_0,B)$ together with a Kasparov operator $F\in B(E)$.
\end{definition}

We can say then a Kasparov bimodule $(E,F)$ over a pair of C$^*$-algebras $(A_0,B)$ is classical if $E$ is countably generated and it satisfies $E\subset EB$.
Note that if $E\subset EB$, then the condition $C^*(F)E\subset EB$ is automatic.

Let $A$ and $B$ be C$^*$-algebras, and let $\varphi:A\to B$ be a $*$-homomorphism.
We have a canonical Kasparov bimodule associated with $\varphi$ is a Kasparov bimodule $(\widetilde B,0)$ over $(A,B)$, denoted occasionally by the same notation as $\varphi:=(\widetilde B,0)$, where the left action is determined by the $*$-homomorphism $\varphi$.
When $A$ is para-unital, we will also often take a canonical Kasparov bimodule associated with $\varphi$ as $\varphi:=(AB,0)$.

\begin{definition}[Kasparov families]
Let $A_0$ and $B$ be C$^*$-algebras.
Let $E$ be a graded augmented Hilbert bimodule over $(A_0,B)$.
A family $\{F_i\}\subset B(E)$ of adjointable operators is called \emph{Kasparov} under $A_0$ if it is a bounded family of Kasparov operators mutually operator homotopic under $A_0$, which means that every pair of elements in the family is connected by a norm continuous path of Kasparov operators under $A_0$.
\end{definition}

\begin{definition}[Approximate Kasparov bimodules]
Let $A$ and $B$ be C$^*$-algebras.
An \emph{approximate Kasparov bimodule} over $(A,B)$ is a graded freely complementable augmented Hilbert bimodule $E$ over $(A,B)$ together with a bounded net $F_i\in B(E)$ of odd adjointable operators satisfying that for each separable C$^*$-subalgebra $A_0\subset A$ there is $i_0$ such that the family $\{F_i\}_{i\ge i_0}$ is Kasparov under $A_0$.
When the net $F_i$ is constant over indices $i$, we will sometimes say the approximate Kasparov bimodule just as a \emph{Kasparov bimodule}.
\end{definition}

Note that for an approximate Kasparov bimodule $(E,(F_i)_i)$ over a pair of C$^*$-algebras $(A,B)$ if $i,i'\ge i_0$ and $i,i'\ge i_1$, where the indices $i_0$ and $i_1$ are taken depending on two different separable C$^*$-subalgebras $A_0,A_1\subset A$, then the choice of the norm continuous path $F_{ii'}\in C([0,1],B(E))\cap B(\mathrm{cyl}\,E)$ connecting $F_i$ and $F_{i'}$ depends on the choice of the separable algebras $A_0,A_1$.
However, we may always assume $\|F_{ii'}\|\le\max\{1,\|F_i\|,\|F_{i'}\|\}$ by replacing $F_{ii'}$ to $\overline F_{ii'}\in C([0,1],B(E))$ defined such that
\[\overline F_{ii'}(s):=\begin{cases}
(1-5s)F_i+5s(F_i+F_i^*)/2&\text{ if }s\in[0,\frac15],\\
(2-5s)(F_i+F_i^*)/2+(5s-1)g((F_i+F_i^*)/2)&\text{ if }s\in[\frac15,\frac25],\\
g((F_{ii'}+F_{ii'}^*)/2)(5s-2)&\text{ if }s\in[\frac25,\frac35],\\
(4-5s)g((F_{i'}+F_{i'}^*)/2)+(5s-3)(F_{i'}+F_{i'}^*)/2&\text{ if }s\in[\frac35,\frac45],\\
(5-5s)(F_{i'}+F_{i'}^*)/2+(5s-4)F_{i'}&\text{ if }s\in[\frac45,1],
\end{cases}\]
where $g\in C_b(\mathbb{R})$ is an odd function such that $g(s)=s$ if $s\in[0,1]$ and $g(s)=1$ if $s\in[1,\infty)$.

\subsection{Equivalence relations}

Traditionally for C$^*$-algebras $A_0$ and $B$ with $A_0$ separable, the KK-group $KK(A_0,B)$ is defined as the set of classical homotopy classes of classical Kasparov bimodules over $(A_0,B)$.
Here the classical homotopy is nothing but a classical Kasparov bimodule over $(A_0,C([0,1],B))$.
Since we are generally using augmented Hilbert bimodules in the definition of approximate Kasparov bimodules, we can not define a homotopy relation in the same manner to the classical KK-theory, as an approximate Kasparov bimodule over $(A,C([0,1],B))$.
For example, even if $\varphi:A\to B$ is a zero $*$-homomorphism between general C$^*$-algebras, its canonical Kasparov bimodule is defined by $(\widetilde B,0)$, and it has a non-zero fiber at infinity $\widetilde B\otimes\mathbb{C}\cong\mathbb{C}$.
When we restrict the case of separable $A$ and $B$, it does not agree with the classical picture that has zero fiber at infinity because an augmented Hilbert bimodule $E$ over $(A,C([0,1],B))$ for a homotopy has a constant fiber at infinity in the sense that $E\otimes_{C([0,1],B)}\mathbb{C}=E/EC([0,1],B)$ is a single Hilbert space without dependence on $[0,1]$.

In this situation, recall that the homotopy relation of classical Kasparov bimodules is essentially the same as the equivalence relation generated by operator homotopy and addition of degenerate cycles, as discussed in Section 18.5 of \cite{MR1656031} and Theorem 2.2.17 in \cite{MR1124848}.
Following this principle, in order to cancel out the non-trivial fibers at infinity of trivial approximate Kasparov bimodules by an equivalence relation, we define a homotopy relation as an equivalence relation generated by two moves, so called the homotopy moves and the degeneracy moves, where the latter means the addition of degenerate approximate Kasparov bimodules.
They are defined precisely as follows.
First, we define degenerate approximate Kasparov bimodules, and prove a lemma that will be used later.

\begin{definition}[Degenerate approximate Kasparov bimodules]
Let $A_0$, $A$, and $B$ be C$^*$-algebras.
Let $(E,F)$ be a Kasparov bimodule over $(A_0,B)$.
We say $(E,F)$ is \emph{degenerate} or $F$ is \emph{degenerate} under $A_0$ if $\Sigma(F,A_0)=\{0\}$.
An approximate Kasparov bimodule $(E,(F_i)_i)$ over $(A,B)$ is called \emph{degenerate} if for each separable C$^*$-subalgebra $A_0\subset A$ there is $i_0$ such that the family $\{F_i\}_{i\ge i_0}$ is \emph{degenerate} under $A_0$, meaning that every pair of elements in the family is connected by a norm continuous path of degenerate Kasparov operators under $A_0$.
We say an approximate Kasparov bimodule is \emph{trivial} if it has zero left action.
\end{definition}

\begin{lemma}[Escaping to zero left action]\label{Lemma: Escaping to zero left action}
Let $A$ and $B$ be C$^*$-algebras.
Let $(H\otimes\widetilde B,(F_{0i})_i)$ be a degenerate approximate Kasparov bimodule over $(A,B)$, where $H$ is a graded Hilbert space such that even and odd parts have the same infinite dimensions.
Then, there is a degenerate approximate Kasparov bimodule $(H\otimes C([0,1],B)^\sim,(F_i)_i)$ over $(A,C([0,1],B))$ connecting $(H\otimes\widetilde B,(F_{0i})_i)$ and a trivial approximate Kasparov bimodule.
\end{lemma}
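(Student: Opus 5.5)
The plan is an Eilenberg-swindle type argument: push the whole module off to infinity inside $H$ by a continuous path of isometries, so that the left action and the operators fade out in norm on each vector as $t\to1$.

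\textbf{Step 1: the path of isometries.} Since the even and odd parts of $H$ have the same infinite dimension, fix a graded unitary identification $H\cong\ell^2(\mathbb{N})\otimes H'$ with $H'$ graded and the grading on the second factor. Let $S$ be the unilateral shift on $\ell^2(\mathbb{N})$. Between $S^n$ and $S^{n+1}$ there is a norm continuous path of isometries; one way is to interpolate $\mathrm{id}\otimes 1$ and $S\otimes 1$ through $\ell^2(\mathbb{N})\oplus\ell^2(\mathbb{N})$ by rotation matrices, after splitting off the range of $S^n$.

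Reparametrising the $n$-th segment to $[1-2^{-n},1-2^{-n-1}]$ gives even isometries $v_t\in B(H)$ for $t\in[0,1)$ with the following properties:
\begin{itemize}
\item $v_0=1$;
\item $t\mapsto v_t$ is norm continuous on $[0,1)$;
\item $v_t^*\to0$ strongly as $t\to1$.
\end{itemize}
The last property holds because each fixed vector is eventually moved far out along $\ell^2(\mathbb{N})$. I regard all of these as operators on $H\otimes\widetilde B$ via $v_t\otimes1$.

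\textbf{Step 2: the left action on the cylinder.} Let $E:=H\otimes C([0,1],B)^\sim$; by the cylinder definition this is the module of $\xi\in C([0,1],H\otimes\widetilde B)$ with constant fiber at infinity. For $a\in A$ define $\varphi_t(a):=v_t\varphi(a)v_t^*$ for $t<1$ and $\varphi_1(a):=0$. Since $\varphi(a)(H\otimes\widetilde B)\subset H\otimes B$, each $\varphi_t(a)\xi(t)$ lies in $H\otimes B$, so the candidate left action lands in $E\,C([0,1],B)$ and the augmentation condition holds.

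Continuity at $t=1$ follows from
\[\|v_t\varphi(a)v_t^*\xi(t)\|\le\|a\|\,\|v_t^*\xi(1)\|+\|a\|\,\|\xi(t)-\xi(1)\|\to0 .\]
The same estimate applies to adjoints, so the action is by adjointable operators on $E$ and is a $*$-homomorphism.

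\textbf{Step 3: the operators and the Kasparov conditions.} Set $F_i(t):=v_tF_{0i}v_t^*$ for $t<1$ and $F_i(1):=0$.
\begin{itemize}
\item \emph{Target condition.} The Kasparov condition $C^*(F_{0i})(H\otimes\widetilde B)\subset H\otimes B$ forces $F_{0i}$ and $F_{0i}^*$ to map into $H\otimes B$. Hence $F_i(t)$ maps into $E\,C([0,1],B)$, and $F_i(t)\xi(t)\to0$ as in Step 2, so $F_i\in B(E)$ with $C^*(F_i)E\subset E\,C([0,1],B)$.
\item \emph{Degeneracy.} Because $v_t^*v_t=1$, conjugation by $v_t$ is multiplicative on these products. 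For $a\in A_0$ this gives $[F_i(t),\varphi_t(a)]=v_t[F_{0i},a]v_t^*$, and similarly for $(F_i-F_i^*)a$ and $(F_i^2-1)a$. Each vanishes once $i\ge i_0$, by degeneracy of $(F_{0i})$ under $A_0$. At $t=1$ everything is $0$ because the left action is zero there.
\item \emph{Mutual homotopies.} A path of degenerate operators $G_s$ from $F_{0i}$ to $F_{0i'}$ under $A_0$ is transported to $v_tG_sv_t^*$, which is again norm continuous in $s$ and degenerate. This shows $(E,(F_i)_i)$ is a degenerate approximate Kasparov bimodule over $(A,C([0,1],B))$.
\end{itemize}
Its fiber at $0$ is $(H\otimes\widetilde B,(F_{0i})_i)$, and its fiber at $1$ is $(H\otimes\widetilde B,0)$ with zero left action, which is trivial.

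\textbf{Main obstacle.} I expect the delicate point to be the behaviour at $t=1$. The maps $t\mapsto v_tXv_t^*$ are not norm continuous there, only continuous on vectors. So one must check carefully that the pointwise-defined maps really are adjointable operators on the augmented cylinder module, including on its constant fiber at infinity. Two facts should make this work:
\begin{itemize}
\item every operator involved maps into $H\otimes B$, so nothing nonzero reaches the fiber at infinity, and setting $F_i(1):=0$ is compatible with the Kasparov target condition;
\item the estimate in Step 2 gives joint continuity in $t$ for each $\xi\in E$, for the operators and their adjoints.
\end{itemize}
If the target condition did not force $F_{0i}$ into $H\otimes B$, I would instead use a fixed odd self-adjoint unitary $F'$ on $(1-v_tv_t^*)$ as the complement. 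In that case it would have to be checked separately that $F'$ is compatible with the fiber at infinity.
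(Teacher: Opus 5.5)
Your strategy is the paper's: conjugate the left action and the operators by a path of even isometries $v_t$ with $v_0=1$ that pushes everything off to infinity, and set both to zero at $t=1$. Several of your verifications are sound:
\begin{itemize}
\item the target condition (everything maps into $H\otimes B$, so nothing reaches the fiber at infinity);
\item the vanishing estimate at $t=1$;
\item degeneracy via $v_t^*v_t=1$;
\item norm continuity in $s$ of the transported homotopies, uniformly in $t$, since $\|v_t(G_s-G_{s'})v_t^*\|\le\|G_s-G_{s'}\|$.
\end{itemize}

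However, Step~1 asks for something that cannot exist: a \emph{norm} continuous path of isometries with $v_0=1$ and $v_t^*\to0$ strongly. Unitaries form a norm clopen subset of the isometries. Indeed, if $V$ is unitary, $W$ is an isometry and $\|V-W\|<1$, then $V^*W$ is invertible, so $W$ is an invertible isometry, hence unitary; and norm limits of unitaries are unitary. So a norm continuous path of isometries starting at $1$ stays unitary, and then $\|v_t^*\xi\|=\|\xi\|$ for all $t$. Concretely, your first segment from $S^0\otimes1=1$ to $S\otimes1$ does not exist. The rotation trick you have in mind connects $1\oplus S$ to $S\oplus1$ in $2\times2$ matrices, not $1$ to $S$. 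The three properties you list for $v_t$ are therefore mutually inconsistent.

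The repair is what the paper does: take only a strongly$^*$ continuous path. For example, use the translation semigroup on $L^2([0,\infty))\otimes H'$, given by $(v_\tau f)(x):=f(x-\tau)$ for $x\ge\tau$ and $0$ otherwise, reparametrised so that $\tau\to\infty$ as $t\to1$. Its adjoint, the backward translation, is also strongly continuous. Then $t\mapsto v_tXv_t^*$ is no longer norm continuous on $[0,1)$ either; it is only strictly continuous in $B(H\otimes\widetilde B)$. So continuity of $t\mapsto v_tXv_t^*\xi(t)$ on $[0,1)$ must be checked vectorwise by an $\varepsilon/3$ estimate, like the one you used at $t=1$. That estimate uses strong continuity of both $v_t\otimes1$ and $v_t^*\otimes1$ on $H\otimes\widetilde B$, which follows from strong$^*$ continuity on $H$ by density and uniform boundedness. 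With this change the rest of your argument goes through verbatim and coincides with the paper's proof.
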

\begin{proof}
Denote by $\varphi_0:A\to B(H\otimes\widetilde B)$ the left action of the degenerate approximate Kasparov bimodule $(H\otimes\widetilde B,(F_{0i})_i)$.
Choose a ($\sigma$-)strongly continuous path $s:[0,1)\to B(H):t\mapsto s_t$ of even isometries with $s_0=1$ and $s_ts_t^*\to0$ as $t\to1$.
For example, one can construct the path as a continuous shift escaping to the infinity but acting as an identity on $H'$ in a factorization $H\cong L^2([0,\infty))\otimes H'$, where $H'$ is a graded Hilbert space.
Using the same notation $s_t\in B(H)\subset B(H\otimes\widetilde B)$ for $t\in[0,1]$, define $\varphi:A\to C([0,1],B(H\otimes\widetilde B))$ and $F_i\in C([0,1],B(H\otimes\widetilde B))$ by
\[\varphi(a)(t):=s_t\varphi_0(a)s_t^*,\qquad F_i(t):=s_tF_{0i}s_t^*,\qquad t\in[0,1),\]
where the strict topology is endowed on $B(H\otimes\widetilde B)$.
Since they vanish at infinity fiber, they give a left action and an odd adjointable operator on $H\otimes C([0,1],B)^\sim$.
We can easily check they also define a degenerate approximate Kasparov bimodule by computing the support algebra point-wise along $[0,1]$ for degenerate operator homotopies connecting members in degenerate Kasparov families, satisfying the condition of the lemma.
\end{proof}

If a path $F_{0ii'}$ connecting $F_{0i}$ and $F_{0i'}$ in a same Kasparov family is not degenerate, then the above construction $t\mapsto s_tF_{0ii'}s_t^*$ does not give rise to a desired path of paths since the norm continuity fails at $t=1$ as well as the compactness for Kasparov conditions.
This is the main reason why we assume the existence of degenerate paths connecting members of a Kasparov family for a degenerate approximate Kasparov bimodule.

\begin{definition}[Homotopy approximate Kasparov bimodules]
Let $A$ and $B$ be C$^*$-algebras.
We define a \emph{homotopy} between two approximate Kasparov bimodules $(E_0,(F_{0i})_i)$ and $(E_1,(F_{1j})_j)$ over $(A,B)$ to be an approximate Kasparov bimodule $(E,(F_l)_l)$ over $(A,C([0,1],B))$ together with monotone cofinal functions $l\mapsto i(l)$ and $l\mapsto j(l)$ such that $(E(0),F_l(0))$ and $(E(1),F_l(1))$ are unitarily equivalent to $(E_0,(F_{0i(l)})_l)$ and $(E_1,(F_{1j(l)})_l)$.
\end{definition}

\begin{definition}[Homotopy relation]
Let $A$ and $B$ be C$^*$-algebras.
On the class of approximate Kasparov bimodules over $(A,B)$, we define a \emph{homotopy relation} as an equivalence relation generated by the following two moves: for two approximate Kasparov bimodules over $(A,B)$ they are homotopic if
\begin{enumerate}
\item there is a homotopy connecting them, or
\item one is unitarily equivalent to a direct sum of another with a degenerate approximate Kasparov bimodule.
\end{enumerate}
\end{definition}

We also define approximate version of operator homotopies, which are more convenient to check than general homotopies.
Note that in the following definition an operator homotopy is a net of paths of operators which are not just strictly continuous but norm continuous along $[0,1]$.
An operator homotopy also vanishes at infinity, so we will usually regard it as a net of operators on the cylinder Hilbert module.
An important point for the definition of an operator homotopy between approximate Kasparov bimodules is that it automatically satisfies the Kasparov family condition and defines an approximate Kasparov bimodule so that operator homotopic approximate Kasparov bimodules are homotopic.
We also give a slightly modified version of Connes-Skandalis operator homotopy for augmented cycles.
Recall that we defined a Kasparov family such that tails have mutually operator homotopic Kasparov operators under suitably chosen separable C$^*$-subalgebra.

\begin{definition}[Operator homotopies]
Let $A$ and $B$ be C$^*$-algebras.
Let $(E_0,(F_{0i})_i)$ and $(E_1,(F_{1j})_j)$ be approximate Kasparov bimodules over $(A,B)$.
They are called \emph{operator homotopic} if $E:=E_0=E_1$ and there is a bounded net $F_l\in C([0,1],B(E))$ of norm continuous paths satisfying $C^*(F_l(t))E\subset EB$ for all $t\in[0,1]$, such that for every separable C$^*$-subalgebra $A_0\subset A$ there is $l_0$ satisfying that for each $l\ge l_0$ the operator $F_l$ is Kasparov under $A_0$, together with monotone cofinal functions $l\mapsto i(l)$ and $l\mapsto j(l)$ such that $F_l(0)=F_{0i(l)}$ and $F_l(1)=F_{1j(l)}$.
The net $F_l$ is called an \emph{operator homotopy}, and called \emph{normalized} if $\|F_l\|\le\max\{1,\|F_l(0)\|,\|F_l(1)\|\}$.
\end{definition}

\begin{proposition}\label{Proposition: Operator homotopic homotopic}
Operator homotopic approximate Kasparov bimodules are homotopic.
\end{proposition}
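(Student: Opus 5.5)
The plan is to realize an operator homotopy directly as a homotopy in the sense of the definition above, with underlying module the cylinder $\mathrm{cyl}\,E$. Let $(F_l)_l$ be an operator homotopy on $E$ between $(E,(F_{0i})_i)$ and $(E,(F_{1j})_j)$, with cofinal maps $l\mapsto i(l)$ and $l\mapsto j(l)$. Each $F_l\in C([0,1],B(E))$ is norm continuous, hence strictly continuous, and satisfies $F_l(t)E\subset EB$ for all $t$. The Proposition on cylinders therefore produces $\widehat F_l\in B(\mathrm{cyl}\,E)$ with $\widehat F_l(t)=F_l(t)$ and $C^*(\widehat F_l)\,\mathrm{cyl}\,E\subset(\mathrm{cyl}\,E)C([0,1],B)$. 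The following are clear:
\begin{enumerate}
\item $\mathrm{cyl}\,E$ is graded and freely complementable over $C([0,1],B)^\sim$, since it equals $E\otimes_BC([0,1],B)^\sim$ and freely complementable modules are closed under internal tensor products;
\item its fibers at $0$ and $1$ recover $E$ with the operators $F_{0i(l)}$ and $F_{1j(l)}$;
\item the net $(\widehat F_l)_l$ is bounded and odd.
\end{enumerate}

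Next fix a separable $A_0\subset A$ and $l\ge l_0$. I would verify that $\widehat F_l$ is Kasparov under $A_0$ on $\mathrm{cyl}\,E$. For $a\in A_0$, the elements $[F_l(t),a]$, $(F_l(t)-F_l(t)^*)a$ and $(F_l(t)^2-1)a$ are compact for each $t$ and depend norm continuously on $t$. By the second part of the cylinder Proposition, the corresponding generators of $\Sigma(\widehat F_l,A_0)$ lie in $K(\mathrm{cyl}\,E)$. One still has to check that these generators map $E$ into $EB$; this holds because $AE\subset EB$ and $F_l(t)E\subset EB$.

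The main obstacle is the Kasparov family condition for the tail $\{\widehat F_l\}_{l\ge l_1}$: any two members must be joined by a norm continuous path of Kasparov operators on $\mathrm{cyl}\,E$. This is where I expect the work to be. The tail of $(F_{0i})_i$ under $A_0$ supplies a norm continuous path $G$ of Kasparov operators from $F_{0i(l)}$ to $F_{0i(l')}$. To join $\widehat F_l$ to $\widehat F_{l'}$, I would use a two-parameter deformation that contracts the time variable $t$ to $0$, followed by $G$ and then the reverse contraction. Concretely, the path would be $s\mapsto\bigl(t\mapsto F_l((1-s)t)\bigr)$, which reaches the constant path $F_{0i(l)}$; then $s\mapsto G(s)$ held constant in $t$; then the reverse contraction to $\widehat F_{l'}$.

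Each of these paths is norm continuous into $C([0,1],B(E))$. For the contraction this uses the uniform continuity of $F_l$ on the compact interval $[0,1]$. Every point along the way is Kasparov under $A_0$ by the pointwise argument above. Since every operator involved takes values in operators mapping $E$ into $EB$, the cylinder Proposition upgrades all of this to paths in $B(\mathrm{cyl}\,E)$. Boundedness can be secured by the normalization trick described after the definition of approximate Kasparov bimodules if needed. This makes $(\mathrm{cyl}\,E,(\widehat F_l)_l)$ an approximate Kasparov bimodule over $(A,C([0,1],B))$, hence a homotopy, which proves the claim.
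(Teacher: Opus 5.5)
Your proposal is correct and follows essentially the same route as the paper: it reduces to showing that $(\mathrm{cyl}\,E,(F_l)_l)$ is an approximate Kasparov bimodule over $(A,C([0,1],B))$. The Kasparov family condition is then verified by the same square construction you describe: contract $t$ to the endpoint $0$, follow the operator homotopy $F_{0i(l_0)i(l)}$ held constant in $t$, and re-expand. The paper also notes that one could contract to $t=1$ and use $F_{1j(l_0)j(l)}$ instead. Your pointwise verification of the Kasparov conditions via the cylinder proposition, including the check that the relevant operators map $E$ into $EB$, fills in details the paper leaves implicit. The same holds for your implicit choice of the tail index so that $i(l)$ lies in the Kasparov tail of $(F_{0i})_i$ under $A_0$.
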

\begin{proof}
Let $(E,(F_{0i})_i)$ and $(E,(F_{1j})_j)$ be approximate Kasparov bimodules over an ordered pair of C$^*$-algebras $(A,B)$, and let $F_l\in C([0,1],B(E))$ be an operator homotopy between them.
It suffices to show that $(\mathrm{cyl}\,E,(F_l)_l)$ is an approximate Kasparov bimodule over $(A,C([0,1],B))$.
Let $A_0\subset A$ be a separable C$^*$-subalgebra, take $l_0$ such that $F_l$ is Kasparov under $A_0$ for each $l\ge l_0$.
We claim that the family $\{F_l\}_{l\ge l_0}$ is Kasparov under $A_0$.
Fix $l\ge l_0$.
There are operator homotopies $F_{0i(l_0)i(l)},F_{1j(l_0)j(l)}\in C([0,1],B(E))$ under $A_0$ connecting $F_{0i(l_0)}$ and $F_{0i(l)}$, $F_{1j(l_0)}$ and $F_{1j(l)}$ respectively.
If we define $F_{l_0l}\in C([0,1],C([0,1],B(E)))$ as a square with four vertices $(F_{l_0}(0),F_{l_0}(1),F_l(0),F_l(1))$ degenerate to the three paths $F_{l_0}$, $F_l$, and $F_{0i(l_0)i(l)}$ or $F_{1j(l_0)j(l)}$, precisely such that for $(s,t)\in[0,1]^2$ we can write
\[(F_{l_0l}(s))(t):=\begin{cases}
F_{l_0}((1-3s)t)&\text{ if }s\in[0,\frac13],\\
F_{0i(l_0)i(l)}(3s-1)&\text{ if }s\in[\frac13,\frac23],\\
F_l((3s-2)t)&\text{ if }s\in[\frac23,1],
\end{cases}
\qquad\text{ or }\qquad
\begin{cases}
F_{l_0}((1-3s)t+3s)&\text{ if }s\in[0,\frac13],\\
F_{1j(l_0)j(l)}(3s-1)&\text{ if }s\in[\frac13,\frac23],\\
F_l((3-3s)+(3s-2)t)&\text{ if }s\in[\frac23,1],
\end{cases}\]
then it defines an operator homotopy such that $F_{l_0l}(0)=F_{l_0}$ and $F_{l_0l}(1)=F_l$ under $A_0$.
\end{proof}

\begin{theorem}[Connes-Skandalis operator homotopy]\label{Theorem: Connes-Skandalis operator homotopy}
Let $A_0$ and $B$ be C$^*$-algebras.
Let $E$ be a graded augmented Hilbert bimodule over $(A_0,B)$.
For Kasparov operators $F_0,F_1\in B(E)$ under $A_0$, if $a^*[F_0,F_1]a\ge0$ in $Q(E)$ for all $a\in A_0$, then $F_0,F_1$ are operator homotopic under $A_0$.
\end{theorem}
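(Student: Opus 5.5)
We would follow the classical Connes–Skandalis argument, but with two adjustments: it must not use countable generation of $E$, and every intermediate operator must keep the extra condition $C^*(F_t)E\subset EB$ required of Kasparov operators on augmented modules.

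\textbf{Step 1 (normalizing each endpoint).} Using the first and last segments of the five-piece path written after the definition of approximate Kasparov bimodules, we connect each $F_k$ ($k=0,1$) by a norm continuous path to $G_k:=g((F_k+F_k^*)/2)$, where $g$ is the odd truncation function. Each operator on these segments is a linear combination of $F_k$, $F_k^*$ and a continuous functional calculus of them, so $C^*$ of it lies in $C^*(F_k)$. Hence $C^*(\cdot)E\subset EB$ is preserved. The support algebra stays inside $K(E)$ because $[F_k,a]$, $(F_k-F_k^*)a$ and $(F_k^2-1)a$ are compact for $a\in A_0$, and continuous functional calculus preserves compact commutators modulo $K(E)$. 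The new endpoints $G_k$ are self-adjoint odd contractions.

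The positivity hypothesis survives this replacement. Modulo compacts, after multiplying by $a$, we have $G_k\equiv F_k$, since $F_k a\equiv F_k^*a$ and $(F_k^2-1)a\equiv 0$ force $g$ to act as the identity on the relevant part. Thus $a^*[G_0,G_1]a\ge0$ in $Q(E)$, with $G_0^2-1$, $G_1^2-1$ and $[G_0,G_1]$ all commuting with $A_0$ modulo $K(E)$.

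\textbf{Step 2 (the rotation path).} Set
\[
G_t:=\cos(\tfrac{\pi t}{2})G_0+\sin(\tfrac{\pi t}{2})G_1,\qquad t\in[0,1].
\]
This path is norm continuous, odd and self-adjoint, and $C^*(G_t)\subset C^*(F_0,F_1)$. We must check that $C^*(F_0,F_1)E\subset EB$. The set $\{T\in B(E):TE\subset EB\}$ is a closed two-sided ideal, because $EB$ is a closed submodule invariant under adjointables. The generators $F_0$, $F_1$ and their adjoints lie in this ideal (via $C^*(F_k)E\subset EB$), so the whole $C^*$-algebra they generate does too. The commutator $[G_t,a]$ is compact by linearity, and $(G_t-G_t^*)a=0$. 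Finally,
\[
(G_t^2-1)a=\cos^2(G_0^2-1)a+\sin^2(G_1^2-1)a+\cos\sin\,[G_0,G_1]a,
\]
where $[\cdot,\cdot]$ is the graded anticommutator. This expression is only positive modulo compacts, not zero, and this is exactly where the classical proof does extra work.

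\textbf{Step 3 (main obstacle).} The classical proof uses the technical theorem to split $[G_0,G_1]$ off. Alternatively, it uses the functional calculus $G_t(1+\cos\sin X)^{-1/2}$, where $X$ is a positive lift of $[G_0,G_1]$ cut down appropriately. We would try the second route. Set $X:=\frac12[G_0,G_1]$, which is even and self-adjoint, and let $Y:=f(X)$ with $f(s)=\max(s,0)$. Then $X a\equiv Y a$ modulo $K(E)$, because $a^*Xa\ge 0$ in $Q(E)$ and $X$ commutes with $A_0$ modulo compacts; here we use $Q(E)$ being a $C^*$-algebra and taking $a^*a$ approximate arguments. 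Define
\[
\widetilde G_t:=(1+2\cos\sin\,Y)^{-1/2}\,G_t\,(1+2\cos\sin\,Y)^{-1/2}.
\]
This is a norm continuous path, since $Y\ge0$ keeps the inverse bounded. It agrees with $G_0$ and $G_1$ at the endpoints, it is odd because $Y$ is even, and $C^*(\widetilde G_t)E\subset EB$ because $Y\in C^*(F_0,F_1)$ lies in the ideal from Step 2. We then get $(\widetilde G_t^2-1)a\in K(E)$, using that $Y$ almost commutes with $G_t$ and with $a$ modulo compacts.

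The delicate point is precisely that $Y$ commutes with $G_0$, $G_1$ and $A_0$ modulo compacts, only after multiplying by $a$. If this fails without separability or countable generation, we fall back on the technical theorem (classical form, applied to $K(E)$ cropped to a countably generated piece via the Cropping lemma, or to the $\sigma$-unital algebra generated by the relevant compacts) to produce $M$ with the usual separation properties. We then finish with Kasparov's original formula.

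Concatenating the three paths gives the required operator homotopy under $A_0$.
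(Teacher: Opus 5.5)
Your argument is correct and is essentially the paper's proof. The paper also works in $D(E)/C(E)$, where $D(E)$ consists of operators commuting with $A_0$ modulo $K(E)$ and $C(E)$ of those $x$ with $xa,ax\in K(E)$ for $a\in A_0$; it shows $(-[F_0,F_1])^+\in C(E)$ via characters of $C^*([F_0,F_1],a^*a)\subset Q(E)$, and it uses $F(t)=(1+c_ts_tP)^{-1/2}(c_tF_0+s_tF_1)$ for a positive lift $P\in D(E)$ of $[F_0,F_1]$ (your $2Y$ is one such lift). Your Step 1 and your ideal argument for $C^*(\cdot)E\subset EB$ only spell out what the paper compresses into ``we may assume $F_0,F_1$ self-adjoint''.

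Your ``delicate point'' holds automatically. In $D(E)/C(E)$ the images of $G_0,G_1$ are self-adjoint unitaries, so their anticommutator, and hence $Y$, commutes with both. You should therefore drop the technical-theorem fallback. It would not be available here anyway, because $E$ is not assumed freely complementable and $A_0$ is not assumed separable.
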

\begin{proof}
First, we may assume $F_0$ and $F_1$ are self-adjoint.
Consider C$^*$-subalgebras
\[D(E):=\{x\in B(E):[x,a]\in K(E)\text{ for }a\in A_0\},\qquad C(E):=\{x\in B(E):xa,ax\in K(E)\text{ for }a\in A_0\}.\]
Then, $C(E)$ is a closed ideal of $D(E)$, and $F_0,F_1$ are self-adjoint unitaries in $D(E)/C(E)$.

Fix $a\in A_0$.
Then, the assumption implies $(-a^*[F_0,F_1]a)^+=0$ in $Q(E)$.
Since $[F_0,F_1]$ is self-adjoint in $B(E)$, by pure state evaluations in the commutative C$^*$-subalgebra $C^*([F_0,F_1],a^*a)\in Q(E)$, we can write $(-[F_0,F_1])^+a^*a=0$ in $Q(E)$.
This proves $(-[F_0,F_1])^+\in C(E)$ and $[F_0,F_1]\ge0$ in $D(E)/C(E)$.

Choose a positive $P\in D(E)$ such that $P=[F_0,F_1]$ in $D(E)/C(E)$.
Define $F\in C([0,1],B(E))$ using $P\ge0$ by
\[F(t):=(1+c_ts_tP)^{-\frac12}(c_tF_0+s_tF_1),\qquad t\in[0,1],\]
where $c_t:=\cos\frac{\pi t}2$ and $s_t:=\sin\frac{\pi t}2$, and it is an operator homotopy connecting $F_0$ and $F_1$ under $A_0$.
\end{proof}

Now we define freely complementable KK-groups.
We will see that a freely complementable KK-group is indeed an abelian group.
See Section 5.5 and 7.1.

\begin{theorem}[Set theoretic well-definedness]
Let $A$ and $B$ be C$^*$-algebras.
Every approximate Kasparov bimodule over $(A,B)$ is homotopic to an approximate Kasparov bimodule of the form $(\ell^2(\kappa^{\aleph_0},\widetilde B),(F_{1j})_j)$, where $\kappa$ is the density character of $A$, and the net $F_{1j}$ is indexed by the directed set of all separable C$^*$-subalgebras of $A$.
\end{theorem}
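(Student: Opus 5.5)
The plan has two parts. First, shrink the module to one whose size is bounded by $\kappa^{\aleph_0}$, discarding a degenerate piece. Second, absorb what remains into the free module $\ell^2(\kappa^{\aleph_0},\widetilde B)$ using a degenerate swindle. The reindexing of the net will be done along the way.

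\textbf{Step 1: choosing representatives.} Let $(E,(F_i)_i)$ be given, with $E\subset\ell^2(\mathcal{I},\widetilde B)$ a graded complemented embedding with even projection $P$ and grading $\gamma$. For each separable C$^*$-subalgebra $A_0\subset A$, choose an index $i(A_0)$ beyond the $i_0$ of the definition, so that the tail $\{F_i\}_{i\ge i(A_0)}$ is Kasparov under $A_0$. I want $A_0\mapsto i(A_0)$ to be monotone and cofinal. To get this I would replace the directed set of separable subalgebras by pairs $(A_0,i)$ with $i\ge i_0(A_0)$, and then compare the two reindexings by a constant-in-$t$ homotopy on $\mathrm{cyl}\,E$, as in Proposition \ref{Proposition: Operator homotopic homotopic}.

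I expect this monotone cofinal reindexing onto the directed set of separable subalgebras \emph{alone} to be the fiddliest point. If a direct monotone choice is not available, I would exploit that the final net is allowed to be \emph{any} net indexed by separable subalgebras. Concretely, I would define $F_{1A_0}$ as a cropped copy of some $F_{i(A_0)}$, and build a homotopy indexed by pairs whose two ends are cofinal reindexings of the original net and of the new one.

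\textbf{Step 2: cropping.} For each $A_0$, the algebra $J_{A_0}$ generated by $\Sigma(F_{i(A_0)},A_0)$ is separable and contained in $K(E)$, so it is $\sigma$-unital. As in the proof of the cropping lemma, I obtain a countable $\mathcal{I}(A_0)\subset\mathcal{I}$ such that $J_{A_0}E\subset\ell^2(\mathcal{I}(A_0),\widetilde B)$. I then close up the union $\bigcup_{A_0}\mathcal{I}(A_0)$ under the countably generated operator algebras
\[
C^*\bigl(P,\gamma,A,F_{i(A_0)},F_{i(A_0)}^*\bigr),
\]
iterating countably many times. Each step only adds countably many indices per element of a generating family of cardinality at most $\kappa^{\aleph_0}$. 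This gives a graded $\mathcal{I}'\subset\mathcal{I}$ with $|\mathcal{I}'|\le\kappa^{\aleph_0}$, whose projection $P'$ commutes with $P$, $\gamma$, $A$ and all $F_{i(A_0)}$.

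Then $E=S\oplus S^\perp$ with $S:=PP'\ell^2(\mathcal{I},\widetilde B)$, and everything in sight is reduced by this decomposition. On $S^\perp$, the compact operators $\Sigma(F_{i(A_0)},A_0)$ vanish because they live in $K(S)$. Hence the compression of $F_{i(A_0)}$ to $S^\perp$ is degenerate under $A_0$.

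There is one further point. Degeneracy of the approximate bimodule also needs \emph{degenerate paths} between tail members, not merely degenerate endpoints. I would secure this by including in the cropping the compact images of the chosen operator homotopies, so that their support algebras also land in $K(S)$.

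\textbf{Step 3: swindle.} With the reindexed net, $(E,\ldots)$ is then the direct sum of $(S,\ldots)$ and a degenerate approximate bimodule. So by a degeneracy move it is homotopic to $(S,\ldots)$.

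Finally, set $H:=\ell^2(\kappa^{\aleph_0})$, with even and odd parts of equal infinite dimension. Let $S^c:=(1-P)P'\ell^2(\mathcal{I},\widetilde B)$ be the complement of $S$ in $\ell^2(\mathcal{I}',\widetilde B)$, which embeds into $H\otimes\widetilde B$. I add the degenerate bimodule $H\otimes\widetilde B$ with zero left action and an odd self-adjoint unitary; it is degenerate for every $A_0$, trivially including the paths. The Eilenberg swindle then gives
\[
S\oplus H\otimes\widetilde B\;\cong\;S\oplus(S^c\oplus S)^{\oplus\infty}\;\cong\;H\otimes\widetilde B\;=\;\ell^2(\kappa^{\aleph_0},\widetilde B),
\]
as graded modules. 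Transporting the net through this unitary yields the required form $(\ell^2(\kappa^{\aleph_0},\widetilde B),(F_{1j})_j)$.
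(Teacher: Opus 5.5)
Your route is the paper's. First you reindex by $F_{1j}:=F_{0i_j}$ and compare the two nets by an operator homotopy indexed by $\{(i,j):i\ge i_j\}$, built from the connecting paths of the Kasparov families. Monotonicity of $j\mapsto i_j$ is not needed here: for $A_j,A_{j'}\supseteq A_{j_0}$, the operators $F_{0i_j}$ and $F_{0i_{j'}}$ are joined under $A_{j_0}$ through any $F_{0i''}$ with $i''\ge i_j,i_{j'}$. Next you crop at cardinality $\kappa^{\aleph_0}$, closing up under $P$, $\gamma$, $A$ and the chosen connecting paths at rational times, so that the complement is degenerate together with its paths. Finally you apply a degeneracy move and pad with modules that have zero left action.

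The step that fails as written is the padding. You put an odd self-adjoint unitary $U$ on $H\otimes\widetilde B$. In this paper, however, a Kasparov operator must satisfy $C^*(F)E\subset EB$. Since $U^2=1$, you get $C^*(U)(H\otimes\widetilde B)=H\otimes\widetilde B\not\subset H\otimes B$. So neither the padding module nor the transported net $F_{1j}\oplus U$ on $\ell^2(\kappa^{\aleph_0},\widetilde B)$ is an approximate Kasparov bimodule. This condition is exactly what keeps operators off the fiber at infinity, and the cylinder constructions depend on it.

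Take $F=0$ instead. With zero left action $\Sigma(0,A_0)=\{0\}$, so the module is degenerate along constant paths. Then no swindle is needed: add $S^c\oplus\ell^2(\kappa^{\aleph_0},\widetilde B)$ with zero action and zero operator. This gives $\ell^2(\mathcal{I}',\widetilde B)\oplus\ell^2(\kappa^{\aleph_0},\widetilde B)\cong\ell^2(\kappa^{\aleph_0},\widetilde B)$ as graded modules, since $|\mathcal{I}'|\le\kappa^{\aleph_0}$.

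Your displayed isomorphism $H\otimes\widetilde B\cong(S^c\oplus S)^{\oplus\infty}$ also silently needs $\aleph_0\cdot|\mathcal{I}'_\pm|=\kappa^{\aleph_0}$, which can fail when $E$ is small; the extra free summand avoids this. The paper simply adds the complement $(1-P)P_\infty\ell^2(\mathcal{I},\widetilde B)$ with zero left action.
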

\begin{proof}
Let $(E,(F_{0i})_i)$ be an approximate Kasparov bimodule over $(A,B)$.
We first show that we may assume the indexing directed set of the net of Kasparov operators is the directed set of all separable C$^*$-subalgebras of $A$.
Let $j$ be the index parameter, and $A_j\subset A$ be the corresponding the separable C$^*$-subalgebra.
By definition of approximate Kasparov bimodules, for each $j$, we can choose $i_j$ such that $\{F_{0i}\}_{i\ge i_j}$ is a Kasparov family under $A_j$.
If we let $F_{1j}:=F_{0i_j}$, then $(E,(F_{0i})_i)$ and $(E,(F_{1j})_j)$ are easily shown to be operator homotopic.

Now we prove that we can generally set the underlying Hilbert module to be $\ell^2(\kappa^{\aleph_0},\widetilde B)$.
Suppose we have an identification $E=P\ell^2(\mathcal{I},\widetilde B)$ for a projection $P\in B(\ell^2(\mathcal{I},\widetilde B))$ as Hilbert modules over $\widetilde B$, for a graded set $\mathcal{I}$.
We may assume $\aleph_0\le\kappa$ and $\kappa^{\aleph_0}<|\mathcal{I}|$.
Let $\gamma$ be the grading operator.
If we choose suitable operator homotopies $F_{1j_0j}$ connecting $F_{1j_0}$ and $F_{1j}$ under $A_{j_0}$ for each pair $j_0\le j$, since
\[\bigcup_{j_0,j:j_0\le j}\bigcup_{s\in[0,1]\cap\mathbb{Q}}(\Sigma(F_{1j_0j}(s),A_{j_0})\cup C^*(P,\gamma,F_{1j_0j}(s),A))\]
has density characters less than or equal to $\kappa^{\aleph_0}$, as we did in the proof of cropping lemma, then we can find a subset $\mathcal{I}_\infty\subset\mathcal{I}$ with $|\mathcal{I}_\infty|\le\kappa^{\aleph_0}$ such that the projection $P_\infty\in B(\ell^2(\mathcal{I}))$ onto $\ell^2(\mathcal{I}_\infty)$ has $((1-P_\infty)E,((1-P_\infty)F_{1j}(1-P_\infty))_j)$ as a degenerate approximate Kasparov bimodule so that $(E,(F_{1j})_j)$ is homotopic to $(P_\infty E,(P_\infty F_{1j}P_\infty)_j)$.
Then, by adding $(1-P)P_\infty\ell^2(\mathcal{I},\widetilde B)$ with zero left action, we are done.
\end{proof}

\begin{definition}[Freely complementable KK-groups]
Let $A$ and $B$ be C$^*$-algebras.
We define the \emph{freely complementable KK-group} of $(A,B)$ as the set $KK^\mathrm{fc}(A,B)$ of all homotopy classes of approximate Kasparov bimodules over $(A,B)$.
\end{definition}

\subsection{Gluing problems and homotopy reduction}

We defined the homotopy relation for approximate Kasparov bimodules as an equivalence relation generated by two moves.
We can ask if one can reduces the finite steps of zig-zags connecting a pair of homotopic approximate Kasparov bimodules.
For this question, we need first to check whether the gluing approximate Kasparov bimodules is safe to concatenate homotopies.
Fortunately, the resulting underlying augmented Hilbert bimodule is freely complementable.
When concatenating homotopies, since we use the end-point evaluation of a continuous field of C$^*$-algebras, the surjectivity condition in the following lemma is satisfied in this case.

\begin{lemma}[Free complementability of gluing]\label{Lemma: Free complementability of gluing}
Let $A_1$, $A_2$, and $B$ be C$^*$-algebras.
Let $\varphi_1:A_1\to B$ and $\varphi_2:A_2\to B$ be $*$-homomorphisms with $\varphi_1$ surjective.
Let $E_1$ and $E_2$ be freely complementable Hilbert modules over $\widetilde A_1$ and $\widetilde A_2$ respectively together with an identification $E_1\otimes_{A_1}\widetilde B\cong E_2\otimes_{A_2}\widetilde B$.
Then,
\[E:=\{(\xi_1,\xi_2)\in E_1\oplus E_2:\xi_1\otimes1=\xi_2\otimes1\}\]
is a freely complementable Hilbert module over $(A_1\oplus_BA_2)^\sim$.
\end{lemma}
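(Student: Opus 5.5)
The plan is to reduce to the countably generated case by a back-and-forth cropping argument, and then to invoke the Kasparov stabilization theorem over $D:=(A_1\oplus_BA_2)^\sim$. Write $E_B:=E_1\otimes_{A_1}\widetilde B\cong E_2\otimes_{A_2}\widetilde B$, and let $\pi_k:E_k\to E_B$ be the quotient maps. The map $\pi_1$ is surjective because $\varphi_1$ is, so $\widetilde A_1\to\widetilde B$ is surjective. Note also that $D\cong\widetilde A_1\oplus_{\widetilde B}\widetilde A_2$, so $E$ is a right $D$-module with the $D$-valued inner product $(\langle\xi_1,\eta_1\rangle,\langle\xi_2,\eta_2\rangle)$.

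\emph{Step 1 (countably generated case).} Assume $E_1$ and $E_2$ are countably generated. I claim that $E$ is countably generated over $D$, and hence freely complementable over $D$ by the Kasparov stabilization theorem. To see this, take countable generating sets $X_2\subset E_2$ and $X_1\subset E_1$. For each $\xi_2\in X_2$, lift $\pi_2(\xi_2)$ to some $\xi_1\in E_1$ using the surjectivity of $\pi_1$; this gives a pair $(\xi_1,\xi_2)\in E$. Add the pairs $(\xi_1c,0)$ for $\xi_1\in X_1$ and $c$ running over a countable approximate unit of $\ker(\widetilde A_1\to\widetilde B)$. A standard approximation argument then shows that these pairs generate $E$: given $(\eta_1,\eta_2)\in E$, first approximate $\eta_2$ using $X_2$, then correct the first coordinate by an element of $E_1\ker(\widetilde A_1\to\widetilde B)=\ker\pi_1$.

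\emph{Step 2 (compatible decompositions).} In general, fix graded complemented embeddings $E_k=P_k\ell^2(\mathcal I_k,\widetilde A_k)$. Through the identification, $E_B$ sits inside both $\ell^2(\mathcal I_1,\widetilde B)$ and $\ell^2(\mathcal I_2,\widetilde B)$ via the projections $P_k\otimes1$. I will show that $E_1=\bigoplus_\alpha S_\alpha$ and $E_2=\bigoplus_\alpha T_\alpha$ with $S_\alpha,T_\alpha$ countably generated and complemented, and with $\pi_1(S_\alpha)=\pi_2(T_\alpha)$ as complemented submodules of $E_B$ for every $\alpha$. Given this, $E=\bigoplus_\alpha E_\alpha$, where $E_\alpha$ is the gluing of $S_\alpha$ and $T_\alpha$. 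Each $E_\alpha$ is freely complementable by Step 1, and a direct sum of freely complementable modules is freely complementable.

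The decomposition is built by Zorn's lemma on families of disjoint pairs of countable subsets $(\mathcal I_1',\mathcal I_2')$ of the kind below, exactly as in the proof of the Kaplansky corollary. The key point is to produce one such pair containing a prescribed index. For that I run a back-and-forth version of the cropping lemma. Start with a countable $\mathcal J_1^0\subset\mathcal I_1$ and close it under $P_1$, as in the cropping lemma. The image $\pi_1(P_1\ell^2(\mathcal J_1^0,\widetilde A_1))$ is a countably generated submodule of $E_B\subset\ell^2(\mathcal I_2,\widetilde B)$, so it lies in $\ell^2(\mathcal J_2^0,\widetilde B)$ for some countable $\mathcal J_2^0\subset\mathcal I_2$; close $\mathcal J_2^0$ under $P_2$. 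Then go back to $\mathcal I_1$ with the image of $P_2\ell^2(\mathcal J_2^0,\widetilde A_2)$, and alternate $\omega$ times, also closing under the gradings $\gamma_k$. Let $\mathcal J_k:=\bigcup_n\mathcal J_k^n$. The projections onto $\ell^2(\mathcal J_k)$ then commute with $P_k$ and $\gamma_k$, and the resulting summands $S:=P_1\ell^2(\mathcal J_1,\widetilde A_1)$ and $T:=P_2\ell^2(\mathcal J_2,\widetilde A_2)$ satisfy $\pi_1(S)\subset\overline{\pi_2(T)}$ and conversely.

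\emph{Expected obstacle.} The delicate point is this last matching. The fiber submodule $\pi_1(S)$ must be exactly equal to $\pi_2(T)$, and complemented in $E_B$ compatibly on both sides, so that the orthogonal complements also match and the Zorn step can continue on the complements. The difficulty is that the two embeddings of $E_B$, one in $\ell^2(\mathcal I_1,\widetilde B)$ and one in $\ell^2(\mathcal I_2,\widetilde B)$, are related only through the identification $E_1\otimes\widetilde B\cong E_2\otimes\widetilde B$, which need not lift to either $E_1$ or $E_2$. My first attempt will be to include, at every stage of the back-and-forth, the projections $(P_1\otimes1)$ and $U^*(P_2\otimes1)U$ on $E_B$ (with $U$ the identification) in the countable closure. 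This should make the two cropped fiber projections commute and coincide, and I expect the surjectivity of $\varphi_1$ to be needed only in Step 1.
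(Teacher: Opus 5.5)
Step~1 does not work as written. It needs a countable approximate unit of $I:=\ker\varphi_1=\ker(\widetilde A_1\to\widetilde B)$, and $I$ need not be $\sigma$-unital. In the case the lemma exists for (gluing homotopies, $A_1=C([0,1],B)$, $\varphi_1=\mathrm{ev}_1$) one has $I=C_0([0,1),B)$, which is $\sigma$-unital only when $B$ is.

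The lifted pairs alone do not suffice either. Take $E_k=\widetilde A_k$ with the tautological identification, so that $E=D$. The lift $1-e$ of $1\in\widetilde B$, with $0\le e\in I$ of norm one, gives the pair $(1-e,1)$. It does not generate $D$, since $1-e$ is not invertible.

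The claim of Step~1 is nevertheless true, and it can be repaired without any approximate unit:
\begin{enumerate}
\item Write $\lambda(y)\in E_1$ for your lift of $\pi_2(y)$.
\item For each $x\in X_1$ and $k\in\mathbb{N}$, approximate $\pi_1(x)$ within $1/k$ by $\sum_n\pi_2(y_n)b_n$ with $y_n\in X_2$, and lift each $b_n$ to $a_n\in\widetilde A_1$.
\item Since $E_B\cong E_1/\overline{E_1I}$ carries the quotient norm, choose $j_{x,k}\in\overline{E_1I}$ with $\|x-\sum_n\lambda(y_n)a_n-j_{x,k}\|<1/k$, and adjoin the countably many elements $(j_{x,k},0)\in E$.
\end{enumerate}
Now the first coordinates of your family generate $E_1$ over $\widetilde A_1$. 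Hence $\overline{E_1I}$ is the closed span of these first coordinates times $I$. Because $(i,0)\in D$ for every $i\in I$, the submodule $\overline{E_1I}\times0$ lies in the closed $D$-span of the family, and your approximation argument goes through. Lifting the coefficients in $\widetilde A_2$ to $D$ again uses the surjectivity of $\varphi_1$.

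With this repair your route works, and the obstacle you anticipate in Step~2 does not arise. Your back-and-forth already closes under the identification $U$ and $U^*$, regarded as a partial isometry $V$ from $\ell^2(\mathcal I_1,\widetilde B)$ to $\ell^2(\mathcal I_2,\widetilde B)$. So the projections $Q_k$ onto $\ell^2(\mathcal J_k)$ satisfy both $[Q_k,P_k]=0$ and $VQ_1=Q_2V$. Consequently:
\begin{enumerate}
\item $U$ carries $S\otimes_{\widetilde A_1}\widetilde B$ exactly onto $T\otimes_{\widetilde A_2}\widetilde B$. This, not $\pi_1(S)=\pi_2(T)$, is the right statement, since $\pi_2$ need not be onto.
\item The complements match automatically.
\item The union of a Zorn-maximal family, and hence its complement, is invariant under $P_k$, $V$ and $V^*$, so you can restart the back-and-forth inside the complement.
\end{enumerate}
No extra projections such as $U^*(P_2\otimes1)U$ are needed.

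This is a genuinely different proof from the paper's. The paper stabilizes both sides, $E_k\oplus(H\otimes\widetilde A_k)\cong H\otimes\widetilde A_k$. This turns $E\oplus(H\otimes D)$ into a gluing $P_u$ of two free modules along a unitary twist $u\in B(H\otimes\widetilde B)$. It then untwists $P_u\oplus P_{u^*}$ by lifting $u\oplus u^*$ to a unitary over $\widetilde A_1$. That unitary is built from a contractive lift of $u$, obtained on separable blocks of $H$ reducing $u$ via the noncommutative Tietze theorem.

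So in the paper the surjectivity of $\varphi_1$ is used to lift multipliers, whereas in your repaired argument it only lifts elements. You avoid the Tietze theorem and the $2\times2$ unitary lift, at the cost of a two-sided cropping compatible with $U$. In exchange you get the extra information that $E$ is a direct sum of countably generated glued blocks.
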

\begin{proof}
It is a Hilbert module over $(A_1\oplus_BA_2)^\sim$ since $\varphi_1(\langle\eta_1,\xi_1\rangle)=\varphi_2(\langle\eta_2,\xi_2\rangle)$ for $(\xi_1,\xi_2),(\eta_1,\eta_2)\in E$.
We prove it is freely complementable over $(A_1\oplus_BA_2)^\sim$.
With a sufficiently large graded Hilbert space $H$, consider even unitaries $u_1:E_1\oplus(H\otimes\widetilde A_1)\to H\otimes\widetilde A_1$ and $u_2:E_2\oplus(H\otimes\widetilde A_2)\to H\otimes\widetilde A_2$ using the decomposition into countably generated Hilbert submodules and the Kasparov stabilization.
Canonically gluing the domains of $u_1$ and $u_2$ we get $E\oplus(H\otimes(A_1\oplus_BA_2)^\sim)$, but the targets are glued to give a Hilbert module
\[P_u:=\{(\zeta_1,\zeta_2)\in(H\otimes\widetilde A_1)\oplus(H\otimes\widetilde A_2):\zeta_1\otimes1=u(\zeta_2\otimes1)\}\]
over $(A_1\oplus_BA_2)^\sim$, where the unitary twist is given by $u:=(u_1\otimes1)(u_2\otimes1)^*\in B(H\otimes\widetilde B)$.
It suffices to show $P_u$ is freely complementable over $(A_1\oplus_BA_2)^\sim$.
With $u\oplus u^*\in B(H^{\oplus2}\otimes\widetilde B)$, consider the direct sum
\[P_u\oplus P_{u^*}=\{(\zeta_1,\zeta_2)\in(H^{\oplus2}\otimes\widetilde A_1)\oplus(H^{\oplus2}\otimes\widetilde A_2):\zeta_1\otimes1=(u\oplus u^*)(\zeta_2\otimes1)\}.\]
Since $\widetilde B$ is unital, we have a decomposition $H=\bigoplus_iH_i$ into separable Hilbert spaces such that $u$ is reduced by $H_i\otimes\widetilde B$ for each $i$ with the same argument in the cropping lemma.
Observing $M(K(H_i)\otimes\widetilde A_1)\to M(K(H_i)\otimes\widetilde B)$ is surjective by the $\sigma$-unitality of $K(H_i)\otimes\widetilde A_1$ and the non-commutative Tietze extension theorem, we can choose a contractive lift $v\in B(H\otimes\widetilde A_1)$ of $u$ by the sum of lifts taken, for each $i$.
Then,
\[w:=\begin{pmatrix}v&-(1-vv^*)^{1/2}\\(1-v^*v)^{1/2}&v^*\end{pmatrix}\in B(H^{\oplus 2}\otimes\widetilde A_1)\]
is a unitary that lifts $u\oplus u^*$.
Then, $P_u\oplus P_{u^*}\to H^{\oplus2}\otimes(A_1\oplus_BA_2)^\sim:(\zeta_1,\zeta_2)\mapsto(w^*\zeta_1,\zeta_2)$ is a well-defined unitary since $(w^*\zeta_1)\otimes1=(u\oplus u^*)^*(\zeta_1\otimes1)=\zeta_2\otimes1$.
Therefore, $P_u$ is freely complementable over $(A_1\oplus_BA_2)^\sim$, so is $E$.
\end{proof}

However, the nets of Kasparov operators cannot be glued in general because we have not included the data of connecting paths among Kasparov operators in the cycles.
For the constant net cases, we can simplify the homotopy relation as follows.
Observe that except the gluing homotopies in the beginning of the proof, all the other arguments are available also for general approximate Kasparov bimodules.

\begin{proposition}[Homotopy reduction for separables]\label{Proposition: Homotopy reduction for separables}
Let $A_0$ and $B$ be C$^*$-algebras with $A_0$ separable.
Let $\varphi_0$ and $\varphi_1$ be homotopic Kasparov bimodules over $(A_0,B)$.
Then, there are degenerate Kasparov bimodules $\varphi_0'$ and $\varphi_1'$ over $(A_0,B)$ such that $\varphi_0\oplus\varphi_0'$ and $\varphi_1\oplus\varphi_1'$ are connected by a homotopy.
\end{proposition}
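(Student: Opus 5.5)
The homotopy relation is generated by homotopies (move (1)) and additions of degenerate bimodules (move (2)). So $\varphi_0$ and $\varphi_1$ are joined by a finite zig-zag
\[\varphi_0=\psi_0\sim\psi_1\sim\cdots\sim\psi_n=\varphi_1,\]
where each step is either a homotopy or a unitary equivalence $\psi_{k+1}\cong\psi_k\oplus\delta_k$ (or its reverse) with $\delta_k$ degenerate. Because $A_0$ is separable, I would first reduce every net to a constant one. For an approximate Kasparov bimodule over $(A_0,\cdot)$, take $A_0$ itself as the separable subalgebra; some tail is then a Kasparov family, and any member of it is operator homotopic to the whole tail. Hence each intermediate object can be replaced by a single Kasparov bimodule, using Proposition \ref{Proposition: Operator homotopic homotopic}. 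The same device turns each homotopy $(E,(F_l)_l)$ into a single Kasparov bimodule over $(A_0,C([0,1],B))$: pick one $l$ beyond the index where $A_0$ is handled, and attach end-segments built from the Kasparov-family paths at $t=0,1$. So we may assume every move involves genuine (constant) Kasparov bimodules.

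Next I would push all the degenerate summands to the ends. The key observation is that a homotopy $\Phi$ from $\psi$ to $\psi'$ yields a homotopy $\Phi\oplus(\mathrm{cyl}\,\delta)$ from $\psi\oplus\delta$ to $\psi'\oplus\delta$ for any degenerate $\delta$. Here $\mathrm{cyl}\,\delta$ carries the constant path of its operator, which is degenerate under $A_0$ by the Proposition on cylinders. I would induct along the zig-zag, carrying a running direct sum of all the degenerate modules met so far, added on both sides. Concretely, let $\varphi_0':=\bigoplus_k\delta_k$ collect every degenerate summand appearing in the chain. Then each step $\psi_k\sim\psi_{k+1}$ becomes a homotopy between $\psi_k\oplus D_k$ and $\psi_{k+1}\oplus D_{k+1}$ for suitable degenerate $D_k$. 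A degeneracy step becomes the constant homotopy after reshuffling summands by a unitary. Degenerate modules that appear "on the wrong side" (subtracted rather than added) are handled by adding them on both ends: $\varphi_1':=\varphi_0'$ up to rearrangement, with the leftover pieces absorbed into $\varphi_0',\varphi_1'$ respectively. The outcome is a chain of homotopies from $\varphi_0\oplus\varphi_0'$ to $\varphi_1\oplus\varphi_1'$.

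Finally, I would concatenate the homotopies into one. Two homotopies $E^{(1)}$ over $C([0,1],B)$ and $E^{(2)}$ agreeing at the junction fiber are glued to
\[E=\{(\xi_1,\xi_2):\xi_1(1)=\xi_2(0)\},\]
a module over $C([0,1],B)\oplus_BC([0,1],B)\cong C([0,2],B)$, with the glued operator. The Kasparov conditions hold fiberwise, and compactness of the support algebra follows because $K$ of the glued module is the pullback of the $K$'s. Free complementability is exactly Lemma \ref{Lemma: Free complementability of gluing}, with evaluation at an endpoint as the surjective map. One must match the junction fibers up to unitary equivalence; the unitary is absorbed into the identification required in that lemma.

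I expect the main obstacle to be this gluing step together with the augmented setting. The fiber at infinity of the cylinders is constant, so the pullback has to be taken over $(\cdot)^\sim$ compatibly. One must also check $C^*(F)E\subset E\,C([0,2],B)$ after gluing, and verify that the degenerate summands carried along remain freely complementable and degenerate after passing to cylinders.
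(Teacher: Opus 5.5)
Your argument is correct. Its skeleton matches the paper's: use separability of $A_0$ to replace every object and every homotopy in the zig-zag by constant nets, then concatenate homotopies with Lemma \ref{Lemma: Free complementability of gluing}.

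The genuine difference is how you control the degenerate summands.

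\textbf{The paper's route.} It proves a stronger exchange statement, its claim (1): given a homotopy $\varphi_0\sim\varphi_1$ and \emph{arbitrary} degenerate $\varphi_0',\varphi_1'$, pad both with further degenerate or trivial modules so that each lives on $H\otimes\widetilde B$. The escaping lemma (Lemma \ref{Lemma: Escaping to zero left action}) then connects each to the same trivial module $(H\otimes\widetilde B,0)$. So any two degenerate summands become homotopic and can be swapped freely.

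\textbf{Your route.} You never need this. You only add the \emph{same} degenerate summand to both ends of a homotopy via its constant cylinder, which is essentially the paper's claim (2). Your bookkeeping then makes the junction fibers match on the nose. Your phrase ``$\varphi_1':=\varphi_0'$ up to rearrangement'' is vague; the clean version runs as follows. Let $R_k$ be the running degenerate sum. Drop $\delta_k$ from it at a step $\psi_{k+1}\cong\psi_k\oplus\delta_k$, and append $\delta_k$ at a step $\psi_k\cong\psi_{k+1}\oplus\delta_k$. Start from $R_0=\bigoplus\delta_k$ over steps of the first kind and end at $R_n=\bigoplus\delta_k$ over steps of the second kind. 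Then consecutive $\psi_k\oplus R_k$ are related by homotopies or unitary equivalences.

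\textbf{Comparison.} Your version is more elementary: it uses no shift isometries and no stabilization of degenerate modules to a free module. It also produces explicit $\varphi_0',\varphi_1'$ built from the degenerate pieces of the zig-zag. The paper's version gives the more robust fact that degenerate summands on the two sides can be exchanged independently of the shape of the zig-zag. The paper reuses this same escaping mechanism for degeneracy moves in the well-definedness of the Kasparov product.
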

\begin{proof}
Note that two Kasparov bimodules over $(A_0,B)$ connected by a homotopy are connected by a homotopy that is a Kasparov bimodule, so by a homotopy we may assume to mean a Kasparov bimodule over $(A_0,C([0,1],B))$.
By the gluing homotopies (Lemma \ref{Lemma: Free complementability of gluing}), it suffices to prove the following two claims.
\begin{enumerate}
\item
Let $\varphi_0,\varphi_0',\varphi_1,\varphi_1'$ be (approximate) Kasparov bimodules over $(A,B)$.
Suppose $\varphi_0$ is connected to $\varphi_1$ by a homotopy $\varphi$, and $\varphi_0'$ and $\varphi_1'$ are degenerate.
Then, there exists a degenerate Kasparov bimodule $\varphi_0''$ and $\varphi_1''$ over $(A,B)$ such that $\varphi_0\oplus\varphi_0'\oplus\varphi_0''$ is connected to $\varphi_1\oplus\varphi_1'\oplus\varphi_1''$ by a homotopy $\varphi''$.
\item
Let $\varphi_0$, $\varphi_1$, $\varphi_1'$ be (approximate) Kasparov bimodules over $(A,B)$.
Suppose $\varphi_0$ is connected to $\varphi_1$ by a homotopy $\varphi$, and $\varphi_1'$ is degenerate.
Then, there exists a degenerate Kasparov bimodule $\varphi_0'$ over $(A,B)$ such that $\varphi_0\oplus\varphi_0'$ is connected to $\varphi_1\oplus\varphi_1'$ by a homotopy $\varphi'$.
\end{enumerate}

(1)
Since the underlying augmented Hilbert modules of $\varphi_0'$ and $\varphi_1'$ are freely complemented, we can find degenerate Kasparov bimodules $\varphi_0''$ and trivial $\varphi_1''$ with zero Kasparov operator such that the underlying augmented Hilbert modules of $\varphi_0'\oplus\varphi_0''$ and $\varphi_1'\oplus\varphi_1''$ are unitarily equivalent to $H\otimes\widetilde B$ as graded Hilbert bimodules over $\widetilde B$, where $H$ is a graded Hilbert space such that the even and odd parts have the same infinite-dimensions.
By the previous escaping lemma (Lemma \ref{Lemma: Escaping to zero left action}) the Kasparov bimodules $\varphi_0'\oplus\varphi_0''$ and $\varphi_1'\oplus\varphi_1''$ are connected by a homotopy.
Since $\varphi_0$ and $\varphi_1$ are connected by a homotopy, the claim follows by adding homotopies to get $\varphi''$.

(2)
This is clear by letting $\varphi_0':=\varphi_1'$.
\end{proof}

\subsection{Non-degenerate reduction}

In the rest of this section, we investigate some reduction results on approximate Kasparov bimodules, which are about taking nice representatives.
Among these, we first prove results on para-unital C$^*$-algebras.
What we mean by a non-degenerate reduction is that it is allowed to replace the underlying augmented Hilbert bimodule $E$ of an approximate Kasparov bimodule over a pair $(A,B)$ of C$^*$-algebras to $AE$.

\begin{definition}[Non-degenerate Kasparov bimodules]
Let $A$ and $B$ be C$^*$-algebras.
An approximate Kasparov bimodule $(E,(F_i)_i)$ over $(A,B)$ is called \emph{non-degenerate} if the left action is non-degenerate, i.e.~$AE=E$.
\end{definition}

We use a matrix rotation trick, and its well-definedness essentially follows from the property of Grassmann connections, which will be discussed in the next section.
Moreover, using the non-degerate reduction, at least when the first argument is para-unital, we can show the freely complementable KK-group is a group.
It can be shown that it is always a group even for non-para-unital C$^*$-algebras, but it will be proved in Theorem \ref{Theorem: General additive inverses}.

\begin{theorem}[Non-degenerate reduction]\label{Theorem: Non-degenerate reduction}
Let $A$ and $B$ be C$^*$-algebras with $A$ para-unital.
Let $(E_0,(F_{0i})_i)$ be an approximate Kasparov bimodule over $(A,B)$.
Then, there is a homotopic approximate Kasparov bimodule $(AE_0,(F_{1i})_i)$ over $(A,B)$, which is in particular non-degenerate.
\end{theorem}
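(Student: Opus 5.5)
Since $A$ is para-unital, I would first establish that $AE_0$ is a complemented, freely complementable submodule of $E_0$. Write $1=\sum_kp_k$ strictly in $M(A)$ with $p_kAp_k$ $\sigma$-unital. Then $AE_0=\bigoplus_kp_kAE_0$, and each $p_kAE_0=\overline{p_kAp_k\cdot p_kAE_0}$ is a module on which the $\sigma$-unital algebra $p_kAp_k$ acts non-degenerately. The key point is that the left action $A\to B(E_0)$ extends strictly to $M(A)$ on $AE_0$. Concretely, the strictly convergent sum $\sum_kp_k$ in $M(A)$ gives a projection $P=\sum_k\varphi(p_k)$ on $E_0$: the sum converges strictly on $K(E_0)$ because $\varphi(p_k)$ acts as $0$ on $(AE_0)^\perp$-type vectors and as a norm-convergent sum on $AE_0$. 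I would verify that $P$ is an adjointable projection with range $AE_0$. Free complementability of $AE_0$ then follows because $AE_0=PE_0$ is a complemented submodule of the freely complementable $E_0$. The module $AE_0$ is graded and satisfies $AE_0\subset E_0B$. In the remainder I write $E_1:=AE_0$ and $E_0=E_1\oplus E_1^\perp$, where $E_1^\perp=(1-P)E_0$ and $A$ acts by zero on $E_1^\perp$.

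Next I would set $F_{1i}:=PF_{0i}P$ on $E_1$. For each separable $A_0\subset A$, I need to check that $\{PF_{0i}P\}_{i\ge i_0}$ is a Kasparov family. The compact parts $[PFP,a]$ and $(PFP)^2a-a$ reduce to showing that $[F_{0i},P]a\in K(E_0)$ for $a\in A_0$. To get this, I would use that $a$ is approximately supported on finitely many $p_k$, together with a cropping argument (Lemma on cropping, with $A_0$ replaced by $C^*(A_0,F_{0i},\gamma)$) that replaces $P$ by a projection $P'\in\varphi(M(A))$ acting as a unit on $A_0$. Concretely, $A_0\subset\sum_{k\in K_0}p_kAp_k+\ldots$ for a countable set $K_0$. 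Let $q=\sum_{k\in K_0}p_k$. Then $q a=a q=a$ up to the $\sigma$-unital corner, and I would use an approximate unit $(u_n)$ of $qAq$ that is quasicentral for $F_{0i}$ (this is where the separability of $A_0$ is used) to write $[F,q]a=\lim[F,u_n]a$, which lies in $K$. This is the step I expect to be the main obstacle. The issue is that $P$ commutes with $A$ but not with $F_{0i}$, and $[F,P]a$ need not be compact in general without a Grassmann-connection-type argument. I would try first the quasicentral approximate unit argument above, and fall back on replacing $F_{0i}$ by $PF_{0i}P+(1-P)F_{0i}(1-P)$ via a Connes--Skandalis homotopy if that fails.

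Then I would connect $(E_0,F_{0i})$ to $(E_1,F_{1i})\oplus(E_1^\perp,(1-P)F_{0i}(1-P))$ via the matrix rotation trick. I would add the degenerate module $(E_1^\perp\oplus E_1^{\perp\mathrm{op}},\,\text{flip})$, which is degenerate since $A$ acts by zero, and rotate the off-diagonal blocks $PF(1-P)$ and $(1-P)FP$ away by a norm-continuous path. The Kasparov conditions persist along this path because every term multiplied by $a$ lands in $K$ after the previous step, and Theorem (Connes--Skandalis operator homotopy) supplies the operator homotopies needed. Finally, $(E_1^\perp,(1-P)F_{0i}(1-P))$ has zero left action, hence is degenerate up to the self-adjointness and square conditions. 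I would fix these by an operator homotopy to a self-adjoint unitary; since $A$ acts by zero, every path is degenerate. Removing it by a degeneracy move and applying Proposition (operator homotopic implies homotopic) yields the homotopy to $(AE_0,(F_{1i})_i)$.
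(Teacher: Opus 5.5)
\textbf{Gap.} Your proof fails at its first step. Para-unitality of $A$ does \emph{not} make $AE_0$ complemented in $E_0$, and the left action need not extend to a map $M(A)\to B(E_0)$.

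Each $\varphi(p_k)$ is defined only as an operator on $AE_0$. Your argument that $\sum_k\varphi(p_k)$ converges strictly on $E_0$ (acting as zero on ``$(AE_0)^\perp$-type vectors'') already assumes $E_0=AE_0\oplus(AE_0)^\perp$.

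A counterexample with $A$ even $\sigma$-unital:
\begin{itemize}
\item take $A=C_0((0,1])$, $B=C([0,1])$, and $E_0=B\boxplus0$;
\item $E_0$ is a complemented summand of $\widetilde B\cong B\oplus\mathbb{C}$, hence freely complementable;
\item let $A$ act by multiplication and set $F=0$.
\end{itemize}
Then $\Sigma(0,A_0)=C^*(A_0)\subset B=K(E_0)$, so this is a Kasparov bimodule. But $AE_0=C_0((0,1])$ has zero orthogonal complement in $C([0,1])$, so it is not complemented. Your $P$ would have to be multiplication by the indicator of $(0,1]$, which does not preserve $C([0,1])$. The paper warns about exactly this: non-strictly-continuous left actions make $AE$ non-complemented even for separable $A$.

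Consequently the compression $PF_{0i}P$, the splitting $E_0=E_1\oplus E_1^\perp$, and the off-diagonal rotation have nothing to act on. Free complementability of $AE_0$ must also come from elsewhere, namely $AE_0\cong A\otimes_AE_0$ together with closure of free complementability under internal tensor products. Note also that the step you flagged as the main obstacle would be trivial if $P$ existed: $Pa=aP=a$ gives $[F,P]a=(1-P)[F,a]\in K(E_0)$.

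\textbf{The missing idea} is to avoid any projection in $B(E_0)$ altogether.
\begin{itemize}
\item Para-unitality gives a coisometry $p\in B(H\otimes\widetilde A,A)$. This induces a coisometry $p\in B(H\otimes E_0,AE_0)$.
\item Set $F_{1i}:=p(1\otimes F_{0i})p^*\in B(AE_0)$. This is a Grassmann-connection compression that never refers to an embedding of $AE_0$ as a summand of $E_0$.
\item Work on the \emph{external} sum $\mathrm{cyl}(E_0\oplus AE_0)$. Keep $F_{0i}\oplus F_{1i}$ fixed and rotate only the left action by $\begin{pmatrix}c_t^2a&c_ts_ta\\s_tc_ta&s_t^2a\end{pmatrix}$.
\item The off-diagonal entries $a\colon AE_0\to E_0$ and $a\colon E_0\to AE_0$ are adjointable, even though the inclusion $AE_0\subset E_0$ is not.
\item For the Kasparov condition, choose a separable $A_1\supset A_0$ with $p^*A_0\subset H\otimes\widetilde A_1$ before fixing $i_0$. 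Expanding $p^*a=\sum_n\varepsilon_n\otimes a_n$ with $a_n\in\widetilde A_1$ gives $F_{1i}a-aF_{0i}=p\sum_nT_{\varepsilon_n}[F_{0i},a_n]\in K(E_0,AE_0)$.
\end{itemize}
At $t=0$ and $t=1$ the unwanted summand carries zero left action, so it is degenerate and is removed by a degeneracy move.
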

\begin{proof}
Since $A$ is para-unital and $E_0$ is freely complementable, the internal tensor product $AE_0=A\otimes_AE_0$ is also freely complementable, over $\widetilde B$.
Let $p\in B(H\otimes\widetilde A,A)$ be a coisometry, where $H$ is a graded Hilbert space, and consider the induced coisometry $p\in B(H\otimes E_0,AE_0)$ with the same notation.
With $F_{1i}:=p(1\otimes F_{0i})p^*\in B(AE_0)$, we prove that there is an approximate Kasparov bimodule $(E,(F_i)_i)$ over $(A,C([0,1],B))$ defined by rotating only the left action, precisely as
\[E:=\mathrm{cyl}\,(E_0\oplus AE_0),\qquad\varphi(a)(t):=\begin{pmatrix}c_t^2a&c_ts_ta\\s_tc_ta&s_t^2a\end{pmatrix},\qquad F_i:=\begin{pmatrix}F_{0i}&0\\0&F_{1i}\end{pmatrix},\]
where $c_t:=\cos\frac{\pi t}2$ and $s_t:=\sin\frac{\pi t}2$, for $t\in[0,1]$, and $\varphi:A\to B(E)$ denotes the left action.

For a separable C$^*$-subalgebra $A_0\subset A$, define a separable C$^*$-subalgebra $A_1\subset A$ such that $A_0\subset A_1$ and $p^*A_0\subset H\otimes\widetilde A_1$, and take an index $i_0$ satisfying that $\{F_i\}_{i\ge i_0}$ is Kasparov over $(A_1,B)$.
For any $a\in A_0$ and $i\ge i_0$, expanding as $p^*a=\sum_{n=1}^\infty\varepsilon_n\otimes a_n\in H\otimes\widetilde A$ for $a_n\in\widetilde A_1$, or abusing notation to write the left action as $a\in B(E,AE)$ so that $p^*a=\sum_{n=1}^\infty T_{\varepsilon_n}a_n\in B(E,H\otimes E)$, we can check by the compactness of $[F_{0i},a_n]$ that
\[F_{1i}a-aF_{0i}=p(1\otimes F_{0i})p^*a-pp^*aF_{0i}=p\sum_{n=1}^\infty T_{\varepsilon_n}[F_{0i},a_n]\in K(E,AE),\]
thus we can compute the Kasparov condition
\[[F_i,a](t)=\begin{pmatrix}{[F_{0i},a]}c_t^2&(F_{0i}a-aF_{1i})s_tc_t\\(F_{1i}a-aF_{0i})s_tc_t&[F_{1i},a]s_t^2\end{pmatrix}\in K(E),\qquad t\in[0,1],\]
as well as $(F_i-F_i^*)a,(F_i^2-1)a\in K(E)$.
This completes the proof that $(E,(F_i)_i)$ is an approximate Kasparov bimodule.
The homotopy condition is clear by evaluation at $t=0$ and $t=1$.
\end{proof}

\begin{corollary}
Let $A$ and $B$ be C$^*$-algebras with $A$ para-unital.
For a $*$-homomorphism $\varphi:A\to B$, the canonical Kasparov bimodules $(\widetilde B,0)$ and $(AB,0)$ over $(A,B)$ such that the left actions are given by $\varphi$ are homotopic.
\end{corollary}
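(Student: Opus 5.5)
The plan is to apply the non-degenerate reduction (Theorem \ref{Theorem: Non-degenerate reduction}) to the canonical Kasparov bimodule $(E_0,F_0):=(\widetilde B,0)$, viewed as an approximate Kasparov bimodule with constant net, and to check that its output is exactly $(AB,0)$.

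First I would confirm that both objects are legitimate Kasparov bimodules over $(A,B)$.

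\begin{itemize}
\item For $(\widetilde B,0)$: take $F=0$. Then $C^*(F)=\{0\}$, so the condition $C^*(F)E\subset EB$ holds trivially. The support algebra is generated by the elements $-\varphi(a)$. Write $a=bc^*$ by Cohen factorization. Then $\varphi(a)=\Theta_{\varphi(b),\varphi(c)}$ acting on $\widetilde B$, since $\varphi(b),\varphi(c)\in B\subset\widetilde B$. Hence $\Sigma(0,A)\subset K(\widetilde B)$.
\item Also $\varphi(A)\widetilde B\subset B=\widetilde BB$, so the augmentation condition $AE\subset EB$ holds.
\item Free complementability of $\widetilde B$ over $\widetilde B$ is trivial.
\end{itemize}

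Next I would identify $AE_0$. By Cohen factorization, every $a\in A$ can be written as $a=a'a''$, so $\varphi(a)\widetilde B=\varphi(a')\varphi(a'')\widetilde B\subset\varphi(A)B$. Hence $A\widetilde B=\overline{\varphi(A)B}=AB$ as Hilbert modules over $\widetilde B$, with the same left action.

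Now Theorem \ref{Theorem: Non-degenerate reduction} produces a homotopic approximate Kasparov bimodule $(AE_0,(F_{1i})_i)$. Its operators are defined by $F_{1i}=p(1\otimes F_{0i})p^*$, and here $F_{0i}=0$, so $F_{1i}=0$ for every $i$. Thus the reduced bimodule is literally the constant bimodule $(AB,0)$, and the corollary follows.

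For completeness I would also note the following points.

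\begin{itemize}
\item Free complementability of $AB=A\otimes_A\widetilde B$ over $\widetilde B$ uses the para-unitality of $A$. This was already verified at the start of the proof of the theorem.
\item The compactness of $\varphi(a)$ on $AB$ follows from the same $\Theta_{\varphi(b),\varphi(c)}$ argument, since $\varphi(b),\varphi(c)\in AB$.
\end{itemize}

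The only step needing care is the identification $A\widetilde B=AB$ together with the matching of left actions. Once that is in place, the corollary is a direct specialization of the theorem.
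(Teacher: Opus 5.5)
Your proposal is correct and follows the paper's route exactly. The paper gives no separate proof, stating this as an immediate corollary of the non-degenerate reduction theorem applied to $(\widetilde B,0)$, where $A\widetilde B=AB$ and the operators $F_{1i}=p(1\otimes 0)p^*$ built in that proof vanish, so the output is literally $(AB,0)$.
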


\begin{corollary}[Additive inverses for para-unitals]\label{Corollary: Additive inverses for para-unitals}
Let $A$ and $B$ be C$^*$-algebras with $A$ para-unital.
Then, $KK^\mathrm{fc}(A,B)$ admits a canonical abelian group structure by direct sum.
\end{corollary}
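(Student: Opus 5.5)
Since direct sum of approximate Kasparov bimodules is clearly compatible with both generating moves of the homotopy relation, $KK^\mathrm{fc}(A,B)$ is a commutative monoid: the direct sum of two freely complementable graded augmented Hilbert bimodules is freely complementable, the net of Kasparov operators can be indexed by the product directed set (or, by the set theoretic well-definedness theorem, by the separable C$^*$-subalgebras of $A$), and $\varphi\oplus\psi\cong\psi\oplus\varphi$ by the flip unitary. The neutral element is the class of any degenerate approximate Kasparov bimodule, e.g.\ a trivial one with zero operator. This holds by move (2) of the homotopy relation. It remains to produce inverses, and this is where para-unitality enters via Theorem \ref{Theorem: Non-degenerate reduction}.

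Given $(E,(F_i)_i)$ over $(A,B)$, I first replace it by the homotopic non-degenerate representative $(AE,(F_i)_i)$. I then take as candidate inverse $(E^{\mathrm{op}},(-F_i)_i)$, where $E^{\mathrm{op}}$ is $E$ with the grading reversed, and the same left action. Its free complementability is inherited from $E$ after flipping the grading of the ambient free module. The Kasparov family condition is unchanged, since the paths $t\mapsto-F_{ii'}(t)$ witness it. The plan is then the classical rotation: on $E\oplus E^{\mathrm{op}}$, with grading $\gamma\oplus(-\gamma)$, consider
\[G_i(t):=\begin{pmatrix}c_tF_i&s_t\\ s_t&-c_tF_i\end{pmatrix},\qquad c_t:=\cos\tfrac{\pi t}2,\ s_t:=\sin\tfrac{\pi t}2.\]
Each $G_i(t)$ is odd, and for $a$ in a separable $A_0$ and $i\ge i_0$ the operators $[G_i(t),a]$, $(G_i(t)-G_i(t)^*)a$ and $(G_i(t)^2-1)a$ are compact. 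The off-diagonal part commutes with $a$, and $G_i(t)^2-1$ has diagonal entries $c_t^2(F_i^2-1)$ and cross terms $s_tc_t[F_i,1]$-type terms that vanish. Norm continuity in $t$ is clear, and $C^*(G_i(t))(E\oplus E)\subset(E\oplus E)B$ follows from $AE\subset EB$ once $E=AE$ is non-degenerate. More precisely, I will first replace $F_i$ by a compression so that $C^*(F_i)$ preserves $AE$. So $(G_i)_i$ is an operator homotopy, and Proposition \ref{Proposition: Operator homotopic homotopic} gives that $\varphi\oplus\varphi^{\mathrm{op}}$ is homotopic to $(E\oplus E^{\mathrm{op}},G_i(1))$. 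Here $G_i(1)=\begin{pmatrix}0&1\\1&0\end{pmatrix}$ commutes with $A$ and squares to $1$, so this bimodule is degenerate (constant net, trivially degenerate family), hence zero.

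The main obstacle I expect is the augmentation condition $C^*(G_i(t))E\subset EB$ at intermediate $t$. The identity entries $s_t$ do not map into $EB$ unless $E\subset EB$. This is exactly why non-degenerate reduction is needed: after passing to $AE\subset EB$ the identity operator is harmless. I would check carefully that Theorem \ref{Theorem: Non-degenerate reduction} yields operators $F_{1i}=p(1\otimes F_{0i})p^*$ on $AE$ with $C^*(F_{1i})AE\subset AE\cdot B$, which holds because $AE\subset EB$ gives $AE=AE\cdot B$. Commutativity of the group and canonicity, meaning independence of the representative, then follow from the monoid structure, since inverses in a monoid are unique.
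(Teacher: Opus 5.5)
Your proposal is correct and follows the paper's proof essentially verbatim: pass to a non-degenerate representative on $AE$ via Theorem~\ref{Theorem: Non-degenerate reduction}, take $(E_-\boxplus E_+,(-F_i)_i)$ as the inverse, and rotate $\begin{pmatrix}c_tF_i&s_t\\s_t&-c_tF_i\end{pmatrix}$ to the degenerate operator $\begin{pmatrix}0&1\\1&0\end{pmatrix}$. Your extra ``compression'' step is unnecessary: the operators $F_{1i}=p(1\otimes F_{0i})p^*$ from that theorem already act on $AE$, and $AE=(AE)B$ makes the condition $C^*(G_i(t))E\subset EB$ automatic.
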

\begin{proof}
The only non-trivial part is the existence of inverses.
Since $A$ is para-unital, for an approximate Kasparov bimodule $(E,(F_{0i})_i)$ over $(A,B)$ that can be always assumed to satisfy $E\subset EB$ by the non-degenerate reduction (Theorem \ref{Theorem: Non-degenerate reduction}), its inverse can be defined as $(E_-\boxplus E_+,(-F_{0i})_i)$ where $E=E_+\boxplus E_-$, with an operator homotopy $F_i\in C([0,1],(E_+\boxplus E_-)\oplus(E_-\boxplus E_+))$ to a degenerate Kasparov bimodule on the direct sum as
\[F_i(t):=\begin{pmatrix}F_{0i}c_t&s_t\\s_t&-F_{0i}c_t\end{pmatrix},\qquad t\in[0,1],\]
where $c_t:=\cos\frac{\pi t}2$ and $s_t:=\sin\frac{\pi t}2$ for $t\in[0,1]$.
\end{proof}

As a remark, we leave a comment that the argument of non-degenerate reduction is rather necessary in the classical KK-theory than the freely complementable KK-theory.
It is used to prove the identity axiom for the Kasparov product to present a category, which is far from trivial in some sense.
This is because for a classical Kasparov bimodule $(E,F)$ over a pair of separable C$^*$-algebras $(A,B)$ the internal tensor product $A\otimes_AE=AE$ is not complemented in $E$ if the left action is not strictly continuous.
On the other hand, tensoring with the canonical augmented Hilbert bimodule $\widetilde A$ over $A$ does not change the original Hilbert module as $\widetilde A\otimes_AE=E$, so the identity axiom is clear without using non-degenerate reduction in freely complementable theory.

\subsection{Separable reduction}

We next prove that if the first variable of the freely complementable KK-group is separable, then we may assume the cycles are classical Kasparov bimodules.
This implies that our freely complementable KK-theory by approximate Kasparov bimodules coincides with the classical KK-theory by classical Kasparov bimodules when the first variable is separable.
It also shows that the usual hypothesis of the $\sigma$-unitality of second variable is in fact not necessary even in the classical picture of Kasparov, if we admit a restriction that standard Hilbert modules cannot be used.
For these results, see also \cite{MR775997} and compare with Appendix B of \cite{carrion2026classifyinghomomorphismsiunital}.

\begin{theorem}[Separable reduction I]\label{Theorem: Separable reduction I}
Let $A_0$ and $B$ be C$^*$-algebras with $A_0$ separable.
Then, $KK^\mathrm{fc}(A_0,B)$ is realized as the set of classical homotopy classes of classical Kasparov bimodules over $(A_0,B)$.
\end{theorem}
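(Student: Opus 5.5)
The plan is to build a map from classical Kasparov bimodules to $KK^\mathrm{fc}(A_0,B)$ and show it is well defined, surjective and injective. Since $A_0$ is separable, the directed set of separable C$^*$-subalgebras of $A_0$ has a greatest element $A_0$ itself, and the whole argument exploits this.

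\emph{Well-definedness.} A classical Kasparov bimodule $(E,F)$ over $(A_0,B)$ is countably generated, so $E$ is freely complementable over $\widetilde B$ by the Kasparov stabilization theorem. Since $E\subset EB$, the condition $C^*(F)E\subset EB$ is automatic. Hence $(E,F)$, viewed as a constant net, is an approximate Kasparov bimodule. A classical homotopy is a classical Kasparov bimodule over $(A_0,C([0,1],B))$, and in particular a homotopy in the sense of the paper. So classical homotopy classes map to $KK^\mathrm{fc}(A_0,B)$.

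\emph{Surjectivity.} Let $(E_0,(F_{0i})_i)$ be an approximate Kasparov bimodule over $(A_0,B)$. Take $i_0$ for the subalgebra $A_0$; then $\{F_{0i}\}_{i\ge i_0}$ is a single Kasparov family under $A_0$. As in the first step of the set-theoretic well-definedness theorem, the net is operator homotopic to the constant net $F:=F_{0i_0}$: reindex, then connect each $F_{0i}$ to $F$ by the family paths, normalized as in the remark after the definition of approximate Kasparov bimodules. By Proposition \ref{Proposition: Operator homotopic homotopic} we may therefore assume the net is constant.

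Next apply the cropping lemma with $J_0:=\Sigma(F,A_0)$, which is separable and hence $\sigma$-unital, and with the separable algebra $C^*(F,A_0)$. This gives a countably generated complemented graded $S$ that reduces $F$ and $A_0$, with $\Sigma(F,A_0)\subset K(S)$. On $S^\perp$ the compressed operator has vanishing support algebra, so it is a degenerate summand. A degeneracy move thus replaces $(E_0,F)$ by $(S,F|_S)$.

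Finally, $A_0$ is $\sigma$-unital and hence para-unital. Theorem \ref{Theorem: Non-degenerate reduction} therefore replaces $S$ by $A_0S$. This module satisfies $A_0S\subset SB$, and it is countably generated because $K(A_0S)=A_0K(S)A_0$ is $\sigma$-unital. The result is a classical Kasparov bimodule.

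\emph{Injectivity.} This is the delicate part and the step I expect to be the main obstacle. Suppose two classical cycles are equivalent under the relation generated by homotopies and degeneracy moves between approximate Kasparov bimodules. I would run the reduction above on each move.

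For a homotopy $(E,(F_l)_l)$ over $(A_0,C([0,1],B))$, first make it constant by the same operator-homotopy argument. Then crop $E$ as a freely complementable module over $C([0,1],B)^\sim$, again with $J_0=\Sigma(F,A_0)$ and the separable algebra $C^*(F,A_0)$. Then apply non-degenerate reduction. This yields a classical homotopy between the reduced endpoints. The endpoint reductions may differ from the reductions chosen for the given cycles, so I must check two things.
\begin{itemize}
\item Two different croppings of the same cycle are classically homotopic. Take a common enlargement $S_1+S_2\subset S_3$ by cropping again with both projections adjoined; then each $S_k\subset S_3$ differs by a classical degenerate summand.
\item Evaluation at endpoints commutes with cropping and with $A_0(\cdot)$, since the cropping projection is constant in $t$.
\end{itemize}

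For a degeneracy move, crop and reduce the degenerate summand to a classical degenerate Kasparov bimodule. Such a bimodule is classically null-homotopic. Alternatively, Proposition \ref{Proposition: Homotopy reduction for separables} packages exactly this zig-zag shortening.

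The point I would scrutinize most is the fiber at infinity. The homotopy modules only satisfy $E\subset E\,C([0,1],B)^\sim$, and the non-degenerate reduction $A_0E$ must be compatible with evaluation at $0$ and $1$. I would try to verify this directly from the rotation construction in Theorem \ref{Theorem: Non-degenerate reduction}, which works on $\mathrm{cyl}$ of a module over $C([0,1],B)$ fiberwise.
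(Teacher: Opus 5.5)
Your surjectivity argument is the paper's: constant net, crop with $J_0=\Sigma(F,A_0)$ and $C^*(F,A_0)$, discard the degenerate complement, then apply non-degenerate reduction. For injectivity you take a genuinely different route, and it works. The paper first invokes Proposition~\ref{Proposition: Homotopy reduction for separables}, which rests on gluing homotopies (Lemma~\ref{Lemma: Free complementability of gluing}) and the escaping lemma, to collapse the zig-zag into a single homotopy $(E',F')$ between $\varphi_0\oplus\varphi_0'$ and $\varphi_1\oplus\varphi_1'$ with $\varphi_0',\varphi_1'$ degenerate. It then crops $E'$ to a countably generated $S'$ whose endpoints contain the given classical modules $E_0,E_1$. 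It kills the fiber at infinity not by non-degenerate reduction but by passing to $S'_c=C^*(1,\gamma)C^*(F',A_0)S'\subset S'\,C([0,1],B)$, and compares with $E_0$ via the elementary homotopy $\{\xi\in C([0,1],E_0):\xi(1)\in E_{0c}\}$.

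You instead build a retraction $R$ (choose a tail operator, crop, apply $A_0(\cdot)$ with a fixed coisometry $p$) and check invariance move by move. This avoids the gluing lemma entirely. The price is a well-definedness package the paper never needs, because its cropping is forced to contain $E_0$ and $E_1$:
\begin{enumerate}
\item independence of the tail operator: crop along a connecting operator homotopy and reduce, which yields a classical operator homotopy;
\item independence of the cropping;
\item additivity of $R$ under direct sums, needed for degeneracy moves;
\item $R(\varphi)=[\varphi]$ for classical $\varphi$: here $S\subset SB$, so the rotation homotopy of Theorem~\ref{Theorem: Non-degenerate reduction} is itself classical.
\end{enumerate}

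Two points in your sketch need correcting. To get $S_1+S_2\subset S_3$ you must crop with $J_0\supseteq K(\overline{S_1+S_2})$; adjoining $P_1,P_2$ to the separable algebra only makes $S_3$ reduce them. Also, $(P_3-P_k)E$ is degenerate but may have nonzero fiber at infinity. It becomes a classical degenerate summand only after applying $A_0(\cdot)$, which respects the decomposition because $P_3-P_k$ commutes with $A_0$ and $F$.

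The fiber-at-infinity compatibility you flag does hold. Interior tensor products preserve inclusions isometrically, so $(A_0S)(t)=\overline{A_0S(t)}$, with operator $p(1\otimes F(t))p^*$. The paper's $C^*(F',A_0)S'$ device is simply a cheaper way to get zero fiber at infinity there.
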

\begin{proof}
Denote by $KK(A_0,B)$ the set of homotopy classes of classical Kasparov bimodules over $(A_0,B)$.
Then, there is a natural group homomorphism $KK(A_0,B)\to KK^\mathrm{fc}(A_0,B)$.
We claim that it is bijective.

(Surjectivity)
Since $A_0$ is separable, it is clear that every approximate Kasparov bimodule $(E,(F_i)_i)$ is operator homotopic to a Kasparov bimodule $(E,F)$, over $(A_0,B)$.
Since $E$ is freely complementable and $A_0$ is separable, we can crop a countably generated complemented graded Hilbert submodule $S\subset E$ over $\widetilde B$ such that $\Sigma(F,A_0)\subset K(S)$ and $C^*(F,A_0)$ is reduced by $S$.
Let $P\in B(E)$ be the projection onto $S$, then the corner of $(E,F)$ at $1-P$ generates a degenerate Kasparov bimodule, so $(E,F)$ is homotopic to $(S,PFP)$ over $(A_0,B)$.
It implies that we may assume $(E,F)$ is countably generated.
To get $E\subset EB$, apply the non-degenerate reduction (Theorem \ref{Theorem: Non-degenerate reduction}).

(Injectivity)
By the homotopy reduction (Proposition \ref{Proposition: Homotopy reduction for separables}), and since $A_0$ is separable, it suffices to show that if $(E_0\oplus E_0',F_0\oplus F_0')$ and $(E_1\oplus E_1',F_1\oplus F_1')$ are Kasparov bimodules over $(A_0,B)$ such that they are connected by a homotopy $(E',F')$, $(E_0,F_0)$ and $(E_1,F_1)$ are classical, and $(E_0',F_0')$ and $(E_1',F_1')$ are degenerate, then $(E_0,F_0)$ and $(E_1,F_1)$ are connected by a classical homotopy $(E,F)$.

For a complemented embedding $E'\subset\ell^2(\mathcal{I},C([0,1],B)^\sim)$, by choosing suitable countable subsets of $\mathcal{I}$ after evaluating at zero and one of $[0,1]$, we can crop out a countably generated complemented Hilbert submodule $S'\subset E'$ such that $E_0\subset S'(0)$ and $E_1\subset S'(1)$, which are complemented, and $C^*(F',A_0)$ is reduced by $S'$ so that $C^*(F'_0,A_0)$ and $C^*(F_1',A_0)$ are reduced by $S'(0)$ and $S'(1)$ respectively.
If we let
\[S_c':=C^*(1,\gamma)C^*(F',A_0)S'\subset S',\qquad E_{0c}:=C^*(1,\gamma)C^*(F_0,A_0)E_0\subset E_0,\]
then $E_{0c}\subset S_c'(0)$ is complemented, and $(S_c'(0),F'(0))$ is a direct sum of $(E_{0c},F_0)$ and a degenerate classical Kasparov bimodule with the Kasparov operator $F_0'$.
Then, $(E_0,F_0)$, $(E_{0c},F_0)$, $(S_c'(0),F'(0))$ are all connected by classical homotopies, and $(S_c'(0),F'(0))$ is connected by a classical homotopy $(S_c',F')$ to $(S_c'(1),F'(1))$, by symmetry we can find a classical homotopy $(E,F)$ connecting $(E_0,F_0)$ and $(E_1,F_1)$.
\end{proof}

\begin{theorem}[Separable reduction II]
Let $A_0$ and $B$ be C$^*$-algebras with $A_0$ separable.
Then,
\[\operatorname*{colim}_{B_0\subset_\mathrm{sep}B}KK(A_0,B_0)\to KK^\mathrm{fc}(A_0,B)\]
is an isomorphism in the category of abelian groups.
\end{theorem}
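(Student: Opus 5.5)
By Separable reduction I (Theorem \ref{Theorem: Separable reduction I}), $KK^\mathrm{fc}(A_0,B)$ is the group of classical homotopy classes of classical Kasparov bimodules over $(A_0,B)$. So it suffices to compare $\operatorname{colim}_{B_0}KK(A_0,B_0)$ with $KK(A_0,B)$ in the classical picture, with $B$ not assumed $\sigma$-unital.

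First we set up the map. For separable $B_0\subset B_1\subset B$, the inclusions induce maps $KK(A_0,B_0)\to KK(A_0,B_1)\to KK(A_0,B)$ given by $(E,F)\mapsto(E\otimes_{B_0}B_1,F\otimes1)$. These are well-defined because $B_0\to B_1$ is a $*$-homomorphism, so $F\otimes1$ makes sense. They are compatible, so they induce a homomorphism out of the colimit, which is directed since separable subalgebras are closed under generating a joint separable subalgebra. Additivity is clear from direct sums.

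For surjectivity, take a classical Kasparov bimodule $(E,F)$ over $(A_0,B)$. By the cropping lemma we may assume $E$ is a countably generated complemented submodule of $\ell^2(\mathbb{N})\otimes\widetilde B$, cut out by a projection $P$, with $F$ and $A_0$ acting as operators on $\ell^2(\mathbb{N})\otimes\widetilde B$ commuting with $P$ (extend by $0$ on the complement). Choose countable dense subsets of the generators $P\varepsilon_n$, of $F(P\varepsilon_n)$, of $a(P\varepsilon_n)$ for $a$ in a countable dense subset of $A_0$, and of finite-rank approximants of the compact operators $[F,a],(F-F^*)a,(F^2-1)a$. Let $B_0\subset B$ be the separable subalgebra generated by all matrix coefficients of these data and their inner products.

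We then iterate countably many times, closing under the operations $F$, $F^*$, $a$ and the Kasparov conditions, so that $E_0:=\overline{\mathrm{span}}\{\text{these vectors}\}\cdot\widetilde B_0$ is invariant. The result is a countably generated Hilbert $\widetilde B_0$-module $E_0$ with $E_0\otimes_{B_0}B\cong E$, carrying restrictions $F_0$ and the action of $A_0$, and $(E_0,F_0)$ is Kasparov over $(A_0,B_0)$. The compactness conditions descend because the finite-rank approximants $\Theta_{\xi,\eta}$ have $\xi,\eta\in E_0$. We also arrange $E_0\subset E_0B_0$ using $E\subset EB$ and approximate units inside $B_0$.

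This descent, namely a \emph{separable reduction in the coefficient variable}, is the main obstacle. One must check that $F$, defined a priori on $E$, actually maps $E_0$ into $E_0$ rather than into $E_0\otimes B$. This is handled by the closure in the iteration: $F\xi$ for $\xi$ in the countable generating set is adjoined at each step, and its $\widetilde B$-coefficients are put into $B_0$.

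Injectivity uses the same procedure applied to a homotopy. Suppose the images of $x_0\in KK(A_0,B_0)$ and $x_1\in KK(A_0,B_1)$ agree in $KK(A_0,B)$. Push both into a common $B_2$, then use Proposition \ref{Proposition: Homotopy reduction for separables} to get degenerate summands and a single homotopy $(E',F')$ over $(A_0,C([0,1],B))$. As in the injectivity part of Theorem \ref{Theorem: Separable reduction I}, crop it to be countably generated. Then apply the descent above with $C([0,1],B)$ replaced by $C([0,1],B_3)$ for a separable $B_3\supset B_2$ containing the coefficients of $E'$ and $F'$, and also containing those of the degenerate summands and of the unitary identifications at the endpoints. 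This yields a homotopy over $(A_0,C([0,1],B_3))$. Degenerate summands are trivial in $KK(A_0,B_3)$, so $x_0$ and $x_1$ agree already in $KK(A_0,B_3)$, and hence in the colimit.
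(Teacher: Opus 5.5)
Your proposal is correct and follows the paper's strategy. You invoke Separable reduction I to work with classical cycles and classical homotopies, then descend them to a separable coefficient algebra by closing a countable set of vectors under $F$, $F^*$, $A_0$ and the finite-rank data of $\Sigma(F,A_0)$, putting inner products (or coefficients, evaluated pointwise for $C([0,1],B)$) into $B_0$, resp.\ $B_3$.

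The only real difference is at the endpoints. You add generators of $E$ and the coefficients of the endpoint identifications so that the descended data match exactly. The paper avoids this bookkeeping: it connects $(E,F)$ to $(\overline{E_0B},F)$ and, after reducing injectivity to a null-homotopy, $E_c(1)$ to $\overline{E_1B_1}$, in both cases via submodule homotopies with constant operator. Your detour through the homotopy reduction proposition is harmless but unnecessary, since Separable reduction I already gives a single classical homotopy.
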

\begin{proof}

(Surjectivity)
It is enough to show that every element of $KK^\mathrm{fc}(A_0,B)$ belongs to an image of the natural homomorphism $KK(A_0,B_0)\to KK^\mathrm{fc}(A_0,B)$ for a separable C$^*$-subalgebra $B_0\subset B$.
By the previous separable reduction theorem (Theorem \ref{Theorem: Separable reduction I}) any element of $KK^\mathrm{fc}(A_0,B)$ can be represented by a classical Kasparov bimodule, so we fix one $(E,F)$ over $(A_0,B)$.
Choose a separable graded closed linear subspace $V_0\subset E$ such that
\[\Sigma(F,A_0)\subset C^*(\Theta_{\xi,\eta}:\xi,\eta\in V_0),\qquad C^*(F,A_0)V_0\subset V_0,\]
and take a separable C$^*$-subalgebra $B_0\subset B$ such that $\langle V_0,V_0\rangle\subset B_0$ so that we can define a Hilbert module $E_0:=\overline{V_0B_0}$ over $B_0$ generated by $V_0$.
Then, we have $\Sigma(F,A_0)\subset K(E_0)$, and the restrictions of $F$ and $A_0$ are well-defined in $B(E_0)$, so we get a classical Kasparov bimodule $(E_0,F)$ over $(A_0,B_0)$.
Its image in $KK^\mathrm{fc}(A_0,B)$ is represented by a Kasparov bimodule $(E_0\otimes_{B_0}\widetilde B=\overline{E_0B},F)$, and by the assumption that $E$ is classical, we have a homotopy $(\{\xi\in C([0,1],E):\xi(1)\in\overline{E_0B}\},F)$ with constant $F$ connecting $(E,F)$ and $(\overline{E_0B},F)$.
Therefore, the surjectivity follows.

(Injectivity)
Suppose $(E_1,F_1)$ is a classical Kasparov bimodule over $(A_0,B_0)$ for a separable C$^*$-subalgebra $B_0\subset B$ that is homotopic to zero as a classical Kasparov bimodule over $(A_0,B)$.
By the previous first separable reduction (Theorem \ref{Theorem: Separable reduction I}), we have a classical Kasparov bimodule $(E,F)$ over $(A_0,C([0,1],B))$ such that $E(0)=\{0\}$ and $(E(1),F(1))=(\overline{E_1B},F_1)$.
Choose, again as in the surjectivity proof, a separable graded closed linear subspace $V_c\subset E$ such that
\[\Sigma(F,A_0)\subset C^*(\Theta_{\xi,\eta}:\xi,\eta\in V_c),\qquad C^*(F,A_0)V_c\subset V_c,\]
and take a separable C$^*$-subalgebra $B_1\subset B$ such that $\langle V_c(t),V_c(t)\rangle\cup B_0\subset B_1$ for all $t\in[0,1]\cap\mathbb{Q}$ and $V_c(1)\subset\overline{E_1B_1}$.
From $\langle V_c,V_c\rangle\subset C([0,1],B)$ and the above conditions on $V_c$, if we let $E_c:=\overline{V_cC([0,1],B_1)}$, then we can define a classical Kasparov bimodule $(E_c,F)$ over $(A_0,C([0,1],B_1))$, satisfying $E_c(0)=\{0\}$ and $E_c(1)\subset\overline{E_1B_1}$.
Then, the classical homotopy $(E_c,F)$ connects the zero and $(E_c(1),F)$, which is again connected to $(\overline{E_1B_1},F_1)$ by a classical homotopy $(\{\xi\in C([0,1],\overline{E_1B_1}):\xi(0)\in E_c(1)\},F_1)$.
Therefore, $(\overline{E_1B_1},F_1)=(E_1\otimes_{B_0}B_1,F_1\otimes1)$ is classically null-homotopic over $(A_0,B_1)$.
\end{proof}

\begin{corollary}
Let $B$ be a C$^*$-algebra.
Then, $KK^\mathrm{fc}(\mathbb{C},B)\cong K_0(B)$.
\end{corollary}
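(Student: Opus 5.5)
We combine Separable reduction II (with $A_0=\mathbb{C}$) and the classical identification of $KK(\mathbb{C},B_0)$ with $K_0(B_0)$ for separable $B_0$. The plan is to chain
\[KK^\mathrm{fc}(\mathbb{C},B)\cong\operatorname*{colim}_{B_0\subset_\mathrm{sep}B}KK(\mathbb{C},B_0)\cong\operatorname*{colim}_{B_0\subset_\mathrm{sep}B}K_0(B_0)\cong K_0(B).\]

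\textbf{First isomorphism.} This is exactly the previous theorem applied with $A_0=\mathbb{C}$, which is separable. One could also use Separable reduction I to go directly to classical cycles over $(\mathbb{C},B)$ and then invoke Kasparov's identification $KK(\mathbb{C},B)\cong K_0(B)$ for arbitrary $B$. However, that identification is usually stated for $\sigma$-unital $B$, so passing through separable subalgebras is safer.

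\textbf{Second isomorphism.} This is the classical isomorphism $KK(\mathbb{C},B_0)\cong K_0(B_0)$ for separable $B_0$. We must check that it is natural with respect to the inclusions $B_0\subset B_1$. Naturality holds because both sides are functorial in the second variable for $*$-homomorphisms, and the classical isomorphism is a natural transformation. On cycles, it sends $(E,F)$ to the index class of $F_+$, and this commutes with $-\otimes_{B_0}B_1$.

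\textbf{Third isomorphism.} This is the continuity of $K_0$ under directed unions. Every C$^*$-algebra $B$ is the directed union of its separable C$^*$-subalgebras, and $K_0$ commutes with inductive limits whose connecting maps are injective. Concretely, this reduces to two checks:
\begin{enumerate}
\item Any projection or partial isometry in $M_n(\widetilde B)$ witnessing a class or a relation involves only countably many elements of $B$.
\item Hence it lies in $M_n(\widetilde B_0)$ for some separable $B_0\subset B$.
\end{enumerate}
These give surjectivity and injectivity of $\operatorname{colim}K_0(B_0)\to K_0(B)$. Alternatively, one can cite the standard continuity of $K_0$.

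\textbf{Main obstacle.} The only delicate point is the naturality in the second step, in particular matching unitizations. An inclusion $B_0\subset B_1$ induces $\widetilde B_0\to\widetilde B_1$, and we must check that the colimit maps in Separable reduction II are the maps induced by this $*$-homomorphism, namely $E\mapsto E\otimes_{B_0}B_1=\overline{EB_1}$, as used in that proof. Once this is confirmed, the composite isomorphism is canonical and is compatible with the group structures.
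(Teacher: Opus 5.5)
Your proposal is correct and follows the paper's own argument: the paper also derives the corollary from Separable reduction II with $A_0=\mathbb{C}$, combined with the classical identification $KK(\mathbb{C},B_0)\cong K_0(B_0)$ and the filtered cocontinuity of $K_0$ along the directed system of separable C$^*$-subalgebras. Your point about naturality with respect to the connecting maps $E\mapsto\overline{EB_1}$ is what justifies passing to the colimit, and the paper leaves it implicit.
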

\begin{proof}
It follows from the second separable reduction theorem and the filtered cocontinuity of operator K-theory.
\end{proof}

\subsection{Finite down set reduction}

The simplification technique introduced in here has a great importance in the construction of Kasparov product for approximate Kasparov bimodules.
As well as in the resolution of the set-theoretic size issue for the KK-groups, we force the indexing set of Kasparov operators into a fixed form, in which every index has finite down sets.
The idea of considering finite down sets was indirectly influenced by Wulff's simpltotic algebras in non-separable E-theory \cite{wulff2022generalizedasymptoticalgebrasmathrmetheory}.

The key fact for the implementation of finite down sets is that for any C$^*$-algebra $A$ the net of all separable C$^*$-subalgebras of $A$ has a cofinal system indexed by the directed set of all finite subsets of $ K(\ell^2(\mathbb{N}))\otimes\widetilde A$.
We use a result of Olsen-Zame \cite{MR388114} on singly generated C$^*$-algebras.
In fact the finiteness of down sets is superfluous because the countability down sets are sufficient to construct a Kasparov product, but we consider finite subsets because the power set of a countably infinite set is no more a countable set.

\begin{definition}[Finite down set reduction]
Let $A$ and $B$ be C$^*$-algebras.
A \emph{finite down set reduction} of an approximate Kasparov bimodule $(E,(F_{0i})_i)$ over $(A,B)$ is another approximate Kasparov bimodule $(E,(F_{1j})_j)$ together with a cofinal directed system $(A_j)_j$ of the directed set of all separable C$^*$-subalgebras of $A$, such that for each $j$ the set $\{j_0:j_0\le j\}$ is finite and there is a monotone assignment $j\mapsto i_j$ satisfying $F_{1j}=F_{0i_j}$ and $\{F_{0i}\}_{i\ge i_j}$ is Kasparov under $A_j$.
\end{definition}

\begin{lemma}[Finite down set reduction]\label{Lemma: Finite down set reduction}
Let $A$ and $B$ be C$^*$-algebras.
Every approximate Kasparov bimodule over $(A,B)$ admits an operator homotopic finite down set reduction.
\end{lemma}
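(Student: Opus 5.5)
The plan is to build the indexing set from finite subsets and then pull the original net back along a monotone choice of indices. Let $\mathcal{F}$ be the directed set of all finite subsets of $D:=K(\ell^2(\mathbb{N}))\otimes\widetilde A$, ordered by inclusion. Every $j\in\mathcal{F}$ has a finite down set, namely its power set.

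\textbf{Step 1: the cofinal system.} By Olsen-Zame, every separable C$^*$-subalgebra of $A$, and more generally of $\widetilde A$, becomes singly generated after tensoring with $K(\ell^2(\mathbb{N}))$. So a finite subset $j\subset D$ determines a separable C$^*$-subalgebra $C^*(j)\subset D$. I would set $A_j\subset A$ to be the smallest C$^*$-subalgebra containing all matrix coefficients of elements of $j$, projected from $\widetilde A$ to $A$ along the split sequence. Equivalently, $A_j$ is generated by $\{(\omega\otimes\mathrm{id})(x):x\in j,\ \omega\text{ a rank one functional},\ \text{followed by the projection }\widetilde A\to A\}$; choosing countably many matrix units, this is still separable. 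Then $j\mapsto A_j$ is monotone. For cofinality, take a separable $A_0\subset A$. By Olsen-Zame, $K(\ell^2(\mathbb{N}))\otimes\widetilde A_0$ is singly generated by some $x$, and then $A_0\subset A_{\{x\}}$. In fact, once the general matrix-coefficient definition is used, single generation is not even needed: a countable dense subset of $A_0$ fits into one element $\sum_n 2^{-n}e_{nn}\otimes a_n$. The point of this step is only that a single element encodes a separable algebra.

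\textbf{Step 2: the monotone assignment.} For each separable $A_j$ the definition of an approximate Kasparov bimodule gives some $i'_j$ with $\{F_{0i}\}_{i\ge i'_j}$ Kasparov under $A_j$. I would define $i_j$ by recursion on the size of $j$. Since the down set $\{j_0<j\}$ is finite, directedness of the original index set gives an upper bound $i_j\ge i'_j$ and $i_j\ge i_{j_0}$ for all $j_0<j$. This is exactly where finiteness of down sets is used. The assignment is monotone, and because $A_j\subset A_{j'}$ for $j\le j'$ and Kasparov families restrict to subalgebras, $\{F_{0i}\}_{i\ge i_j}$ stays Kasparov under $A_j$. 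I expect one also wants $j\mapsto i_j$ to be cofinal. This follows by enlarging each $i_j$ past a chosen element of a cofinal map: well-order the original index set, or simply use that $\mathcal{F}$ is large enough if needed. If the cardinality is too small, replace $\mathcal{F}$ by finite subsets of $D\times I$ to absorb indices. Set $F_{1j}:=F_{0i_j}$.

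\textbf{Step 3: operator homotopy.} First, $(E,(F_{1j})_j)$ is an approximate Kasparov bimodule, since it is a subnet of tails. For the operator homotopy, index by $l=j$ with $i(l)=i_j$, $j(l)=j$, and take the constant path $F_l(t)=F_{0i_j}$. Given separable $A_0$, choose $j_0$ with $A_0\subset A_{j_0}$. Then for $l\ge j_0$ the operator $F_l$ is Kasparov under $A_0$, so Proposition \ref{Proposition: Operator homotopic homotopic} applies. I expect the main obstacle to be the bookkeeping of cofinality of $j\mapsto i_j$ in Step 2, which the definition of operator homotopy requires. I would resolve it with the $D\times I$ trick: $i_j$ also dominates the finitely many indices of $I$ appearing in $j$.
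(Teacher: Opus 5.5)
Your proof is correct, but it builds the operator homotopy differently from the paper.

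\textbf{The paper's route.} The paper keeps the index set canonical: $\mathcal{J}$ is the set of finite subsets of $K(\ell^2(\mathbb{N}))\otimes\widetilde A$, with $A_j:=e_{11}C^*(j,e_{11})e_{11}\cap A$ and Olsen--Zame giving cofinality. It never needs $j\mapsto i_j$ to be cofinal. Instead it indexes the operator homotopy by $\mathcal{L}:=\{(i,j):i\ge i_j\}$ with the product order. For $l=(i,j)$ it takes a genuine path $F_l$ under $A_j$ from $F_{0i}$ to $F_{0i_j}=F_{1j}$, supplied by the Kasparov family condition for $\{F_{0i}\}_{i\ge i_j}$. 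Both coordinate projections of $\mathcal{L}$ are then automatically monotone and cofinal.

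\textbf{Your route.} You make $(F_{1j})_j$ an honest subnet by forcing $j\mapsto i_j$ to be cofinal, indexing by finite subsets of $D\times I$. After that, constant paths suffice. This enlargement is genuinely needed on your route, so commit to it. Your other suggestions (``$\mathcal{F}$ is large enough'', well-ordering $I$) do not work in general: a monotone map from $\mathcal{F}$ cannot have cofinal image in $I$ when the cofinality of $I$ exceeds $|\mathcal{F}|$.

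\textbf{Comparison.} Your argument is more elementary. Your matrix-coefficient definition of $A_j$, using a single element $\sum_n2^{-n}e_{nn}\otimes a_n$, also removes the need for Olsen--Zame. The paper's reduced index set depends only on $A$, not on the original net. That is convenient later, when several cycles over the same first algebra are indexed by one common directed set, as in the biadditivity and monoidal-structure arguments. With your version you would first have to pass to a common enlargement such as finite subsets of $D\times(I\sqcup I')$.
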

\begin{proof}
Let $(E,(F_{0i})_i)$ be an approximate Kasparov bimodule over $(A,B)$.
Let $\mathcal{J}$ be the directed set of all finite subsets of $K(\ell^2(\mathbb{N}))\otimes\widetilde A$, and define $A_j:=e_{11}C^*(j,e_{11})e_{11}\cap A$ for $j\in\mathcal{J}$, where $A$ is considered to be embedded in the first corner.
Then, $A_j$ is clearly a directed system of separable C$^*$-subalgebras of $A$ such that the set $\{j_0:j_0\le j\}$ is finite for each $j$.
For $A_0\subset A$ any separable C$^*$-subalgebra, by the result by Olsen-Zame \cite{MR388114} that the tensor product of $K(\ell^2(\mathbb{N}))$ and a unital separable C$^*$-algebra is always singly generated, there is $j\in\mathcal{J}$ such that $K(\ell^2(\mathbb{N}))\otimes\widetilde A_0\subset C^*(j)$, which implies $A_0\subset A_j$ so that $A_j$ is a cofinal system.
Now for each $j$ if we take an index $i_j$ such that the family $\{F_{0i}\}_{i\ge i_j}$ is Kasparov under $A_j$ and define $F_{1j}:=F_{0i_j}$, then the proof of existence is complete.
Note that we may assume the assignment $j\mapsto i_j$ is monotone by induction on the size of $j$.

Now we prove the operator homotopy.
Consider the directed set $\mathcal{L}:=\{(i,j):i\ge i_j\}$ with product order.
For $l:=(i,j)\in\mathcal{L}$, since $\{F_{0i}\}_{i\ge i_j}$ is a Kasparov family under $A_j$, there is an operator homotopy $F_l\in C([0,1],B(E))$ under $A_j$ connecting $F_{0i}$ and $F_{0i_j}=F_{1j}$.
Then for given $j_0$, the operator homotopy $F_l$ is also Kasparov under $A_{j_0}$ for all $l\ge l_0:=(i_{j_0},j_0)$.
This implies that $(E,(F_{0i})_i)$ and $(E,(F_{1j})_j)$ are operator homotopic.
\end{proof}

\section{Kasparov product}

\subsection{Technical theorem for non-\texorpdfstring{$\sigma$}{sigma}-unital algebras}

The notion of mediators is standard in the classical KK-theory, but its name is originally introduced in here for convenience.
They are used to construct preferable Kasparov operators on the internal tensor product.

\begin{definition}[Mediators]
Let $A_0$ and $C$ be C$^*$-algebras.
Let $E$ be a graded augmented Hilbert bimodule over $(A_0,C)$.
Consider a norm compact subset $\Delta\subset B(E)$ and separable C$^*$-subalgebras $J_1,J_2\subset B(E)$.
We define a \emph{mediator} for the triple $(\Delta,J_1,J_2)$ as an even positive contraction $M\in B(E)$ such that
\[[\Delta,M],\ MJ_1,\ (1-M)J_2\subset K(E).\]
For $\widehat F_1,\widehat F_2\in B(E)$, we say the triple $(\Delta,J_1,J_2)$ \emph{supports} $(\widehat F_1,\widehat F_2)$ under $A_0$ if
\[A_0\cup\{\widehat F_1,\widehat F_2\}\subset\overline{\mathbb{C}\Delta},\qquad\Sigma(\widehat F_1,A_0)\subset J_1,\qquad\Sigma(\widehat F_2,A_0)\cup\{[\widehat F_1,\widehat F_2]\}\subset J_2.\]
We define a \emph{mediator} for $(\widehat F_1,\widehat F_2)$ as a mediator for some supporting triple of $(\widehat F_1,\widehat F_2)$ under $A_0$.
\end{definition}

\begin{proposition}
Let $A_0$ and $C$ be C$^*$-algebras.
Let $E$ be a graded augmented Hilbert bimodule over $(A_0,C)$.
If $\widehat F_1,\widehat F_2\in B(E)$ are odd adjointable operators with $C^*(\widehat F_1,\widehat F_2)E\subset EC$ such that there is a mediator $M\in B(E)$ for $(\widehat F_1,\widehat F_2)$, then
\[F_{12}:=M^{\frac12}\widehat F_1+(1-M)^{\frac12}\widehat F_2\]
is Kasparov under $A_0$.
\end{proposition}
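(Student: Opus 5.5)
We need to check four things for $F_{12}:=M^{\frac12}\widehat F_1+(1-M)^{\frac12}\widehat F_2$: it is odd, $C^*(F_{12})E\subset EC$, and each of $[F_{12},a]$, $(F_{12}-F_{12}^*)a$, $(F_{12}^2-1)a$ lies in $K(E)$ for $a\in A_0$. The plan is the classical Kasparov computation. All compact perturbations are taken modulo $K(E)$, and the only inputs are the mediator relations $[\Delta,M],MJ_1,(1-M)J_2\subset K(E)$ and the support conditions. Oddness is immediate because $M$ is even and $\widehat F_1,\widehat F_2$ are odd. For the fiber condition, $M^{\frac12}$ and $(1-M)^{\frac12}$ lie in $B(E)$, and $C^*(\widehat F_1,\widehat F_2)E\subset EC$. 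Every word in $F_{12},F_{12}^*$ is a sum of products in which some $\widehat F_k^{(*)}$ appears, and such a product maps $E$ into $B(E)\cdot EC\subset EC$, since adjointable operators preserve $EC$. Passing to closures gives $C^*(F_{12})E\subset EC$.

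The first key step is to upgrade the commutation $[\Delta,M]\subset K(E)$ to $[x,f(M)]\in K(E)$ for every $x\in\overline{\mathbb{C}\Delta}$ and every continuous $f$ on $[0,1]$. Polynomials handle this directly. Uniform approximation of $t^{\frac12}$ and $(1-t)^{\frac12}$ then finishes it, because the map $y\mapsto[x,y]$ is norm continuous and $K(E)$ is closed. In particular $a$, $\widehat F_1$ and $\widehat F_2$ commute with $M^{\frac12}$ and $(1-M)^{\frac12}$ modulo compacts. Similarly, from $MJ_1\subset K(E)$, functional calculus gives $f(M)J_1\subset K(E)$ whenever $f(0)=0$; for example $M^{\frac12}j$ is a limit of $p(M)j$ with $p(0)=0$. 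In the same way $(1-M)^{\frac12}J_2\subset K(E)$.

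With these tools:
\begin{itemize}
\item $[F_{12},a]\equiv M^{\frac12}[\widehat F_1,a]+(1-M)^{\frac12}[\widehat F_2,a]\equiv0$, since $[\widehat F_1,a]\in J_1$ and $[\widehat F_2,a]\in J_2$.
\item $(F_{12}-F_{12}^*)a\equiv M^{\frac12}(\widehat F_1-\widehat F_1^*)a+(1-M)^{\frac12}(\widehat F_2-\widehat F_2^*)a\equiv0$. Here the adjoints are moved past $M^{\frac12}$ using $\widehat F_k^*\in\overline{\mathbb{C}\Delta}^*$; because $\Delta$ need not be self-adjoint, we apply the commutation to $[\widehat F_k^*,M]=-[\widehat F_k,M]^*$ instead.
\item For $(F_{12}^2-1)a$, we expand and commute the $M$-factors to the left, modulo compacts. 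This gives
\[
M\widehat F_1^2+(1-M)\widehat F_2^2+M^{\frac12}(1-M)^{\frac12}[\widehat F_1,\widehat F_2],
\]
where the graded commutator arises as $\widehat F_1\widehat F_2+\widehat F_2\widehat F_1$ for odd operators. Multiplying by $a$, the first two terms reduce to $Ma+(1-M)a=a$ modulo $M(\widehat F_1^2-1)a\in MJ_1$ and $(1-M)(\widehat F_2^2-1)a\in(1-M)J_2$. The cross term is compact because $(1-M)^{\frac12}J_2\subset K(E)$.
\end{itemize}

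The main obstacle is bookkeeping. Each time a function of $M$ is moved past $a$ or $\widehat F_k$, the products involved must land in $K(E)$ as a genuine ideal of $B(E)$. This holds because $K(E)$ is an ideal in $B(E)$, and every error term is a product of a compact with adjointable operators. One point to check is the cross-term ordering $M^{\frac12}(1-M)^{\frac12}[\widehat F_1,\widehat F_2]a$: we need $[\widehat F_1,\widehat F_2]a\in J_2$ or a compact multiple of it. Since $[\widehat F_1,\widehat F_2]\in J_2$, we get $(1-M)^{\frac12}[\widehat F_1,\widehat F_2]\in K(E)$ directly, and then multiply on the right by $a$.
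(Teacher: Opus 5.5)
Your proof is correct and follows the paper's argument: it gives the same three congruences modulo $K(E)$ for $[F_{12},a]$, $(F_{12}-F_{12}^*)a$ and $(F_{12}^2-1)a$, and treats $C^*(F_{12})E\subset EC$ as immediate. You also supply the functional-calculus details the paper leaves implicit, namely $[x,f(M)]\in K(E)$ for $x\in\overline{\mathbb{C}\Delta}$, and $M^{\frac12}J_1,(1-M)^{\frac12}J_2\subset K(E)$.
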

\begin{proof}
Modulo $K(E)$, since $M$ commutes with $A_0$, $\widehat F_1$, and $\widehat F_2$, we can check
\begin{align*}
[F_{12},a]&\equiv M^{\frac12}[\widehat F_1,a]+(1-M)^{\frac12}[\widehat F_2,a]\equiv0,\\
(F_{12}-F_{12}^*)a&\equiv M^{\frac12}(\widehat F_1-\widehat F_1^*)a+(1-M)^{\frac12}(\widehat F_2-\widehat F_2^*)a\equiv0,\\
(F_{12}^2-1)a&\equiv M(\widehat F_1^2-1)a+M^{\frac12}(1-M)^{\frac12}[\widehat F_1,\widehat F_2]a+(1-M)(\widehat F_2^2-1)a\equiv0
\end{align*}
by $\Sigma(\widehat F_1,A_0)\subset J_1$ and $\Sigma(\widehat F_2,A_0)\cup\{[\widehat F_1,\widehat F_2]\}\subset J_2$, so $\Sigma(F_{12},A_0)\subset K(E)$.
The condition $C^*(F_{12})E\subset EC$ is clear.
\end{proof}

\begin{proposition}
Let $A_0$ and $C$ be C$^*$-algebras.
Let $E$ be a graded augmented Hilbert bimodule over $(A_0,C)$.
For $\widehat F_1,\widehat F_2\in B(E)$, the set of all mediators for $(\widehat F_1,\widehat F_2)$ under $A_0$ is convex.
\end{proposition}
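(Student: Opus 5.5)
Since $M$ and $M'$ may be mediators for two different supporting triples, the plan is to produce one triple that supports $(\widehat F_1,\widehat F_2)$ under $A_0$ and for which both $M$ and $M'$ are mediators. Convexity then follows from the defining conditions, which are linear in $M$. So let $M$ be a mediator for a supporting triple $(\Delta,J_1,J_2)$ and $M'$ for $(\Delta',J_1',J_2')$, and fix $\lambda\in[0,1]$. Put $M_\lambda:=\lambda M+(1-\lambda)M'$. It is an even positive contraction, because these properties are preserved under convex combinations.

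\textbf{Common triple.} Take $\Delta'':=\Delta\cap\Delta'$, $J_1'':=J_1\cap J_1'$ and $J_2'':=J_2\cap J_2'$. The set $\Delta''$ is norm compact. Each $J_k''$ is a C$^*$-subalgebra of a separable C$^*$-algebra, so it is separable. For supporting, we need $\Sigma(\widehat F_1,A_0)\subset J_1''$ and $\Sigma(\widehat F_2,A_0)\cup\{[\widehat F_1,\widehat F_2]\}\subset J_2''$; both hold because the same sets lie in $J_1,J_1'$ and in $J_2,J_2'$ respectively.

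\textbf{Main obstacle.} The remaining support condition $A_0\cup\{\widehat F_1,\widehat F_2\}\subset\overline{\mathbb{C}\Delta''}$ need not hold for the plain intersection of $\Delta$ and $\Delta'$. I would fix this by first shrinking each $\Delta$ to a canonical compact set. The condition $[\Delta,M]\subset K(E)$ passes to the closed linear span of $\Delta$ intersected with any bounded set, since the commutator is linear and continuous and $K(E)$ is closed. Also, $A_0$ is separable and $\{\widehat F_1,\widehat F_2\}$ is finite, so $A_0\cup\{\widehat F_1,\widehat F_2\}$ has a closed span generated by a norm compact set. Concretely, choose a dense sequence $a_n$ of the unit ball of $A_0$ and set
\[\Delta_0:=\{0,\widehat F_1,\widehat F_2\}\cup\{a_n/n\}_n.\]
This set is compact, and $A_0\cup\{\widehat F_1,\widehat F_2\}\subset\overline{\mathbb{C}\Delta_0}$. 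Each element of $\Delta_0$ lies in the closed span of $\Delta$ and of $\Delta'$, so both $[\Delta_0,M]\subset K(E)$ and $[\Delta_0,M']\subset K(E)$. Hence $M$ and $M'$ are both mediators for the triple $(\Delta_0,J_1'',J_2'')$, and this triple supports $(\widehat F_1,\widehat F_2)$ under $A_0$.

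\textbf{Conclusion.} For this common triple the conditions are linear in $M$. We get $[\Delta_0,M_\lambda]=\lambda[\Delta_0,M]+(1-\lambda)[\Delta_0,M']\subset K(E)$ and $M_\lambda J_1''\subset K(E)$. For the last condition, write $1-M_\lambda=\lambda(1-M)+(1-\lambda)(1-M')$, which gives $(1-M_\lambda)J_2''\subset K(E)$. Therefore $M_\lambda$ is a mediator for $(\widehat F_1,\widehat F_2)$, and the set of mediators is convex.
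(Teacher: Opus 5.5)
Your proposal is correct and is essentially the paper's own proof. The paper likewise replaces the two supporting triples by the common triple $(\{(n\|a_n\|)^{-1}a_n\}_n\cup\{0,\widehat F_1,\widehat F_2\},\,J_1\cap J_1',\,J_2\cap J_2')$, for which both $M$ and $M'$ are mediators, and then uses that the mediator conditions for a fixed triple are convex in $M$. Two small remarks: your trial intersection $\Delta\cap\Delta'$ can simply be dropped, and the separability of $A_0$ (or of its image in $B(E)$) that you invoke holds automatically, because $A_0\subset\overline{\mathbb{C}\Delta}$ with $\Delta$ norm compact.
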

\begin{proof}
It is easy to see that for a given triple $(\Delta,J_1,J_2)$ the set of all mediators is convex.
If $M$ and $M'$ be mediators for $(\widehat F_1,\widehat F_2)$ under $A_0$, and if they respectively mediate supporting triples $(\Delta,J_1,J_2)$ and $(\Delta',J_1',J_2')$ of $(\widehat F_1,\widehat F_2)$ under $A_0$, then $M$ and $M'$ are both mediators for the triple $(\{(n\|a_n\|)^{-1}a_n\}_n\cup\{0,\widehat F_1,\widehat F_2\},\;J_1\cap J_1',\;J_2\cap J_2')$, where $a_n\in A_0\setminus\{0\}$ is a dense sequence of $A_0$.
This implies that the set of all mediators for $(\widehat F_1,\widehat F_2)$ under $A_0$ is also convex.
\end{proof}

\begin{theorem}[Technical theorem]\label{Theorem: Technical theorem}
Let $B$ and $C$ be C$^*$-algebras.
Let $E_1$ be a graded freely complementable Hilbert module over $\widetilde B$, $E_2$ be a graded freely complementable augmented Hilbert bimodule over $(B,C)$, and $E_{12}:=E_1\otimes_BE_2$.
Consider
\begin{enumerate}
\item
a norm compact subset $\Delta_1\subset B(E_1)\otimes1$,
\item
a norm compact subset $\Delta_2\subset B(E_{12})$,
\item
a separable C$^*$-subalgebra $J_1\subset B(E_1)\otimes 1$ which is graded and $J_1E_{12}\subset E_{12}$ is complemented,
\item
a separable C$^*$-subalgebra $J_2\subset B(E_{12})$,
\end{enumerate}
such that
\[[\Delta_1,J_1]\subset J_1,\qquad[\Delta_2,J_1]\subset K(E_{12}),\qquad J_1J_2\subset K(E_{12}).\]
Then, there exists a mediator $M\in B(E_{12})$ for $(\Delta_1\cup\Delta_2,J_1,J_2)$.
\end{theorem}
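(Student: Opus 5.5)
The plan is to reduce to the classical technical theorem by cropping. The algebra $K(E_{12})$ itself need not be $\sigma$-unital. So I will construct a countably generated complemented graded submodule $S\subset E_{12}$ which carries all the relevant data. On $S$ I apply the classical theorem with $K=K(S)$. On the complement I set $M$ by hand: $M:=0$ on the complement if $J_1$ is what must be killed there, and $M:=1$ if $J_2$ is.

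First, I crop on the $E_1$ side. Since $E_1$ is freely complementable and $\Delta_1$ and $J_1$ are separable, the cropping lemma gives a countably generated complemented graded submodule $S_1\subset E_1$ reducing $C^*(\Delta_1,J_1)$. Tensoring, $S_1\otimes_BE_2$ is complemented in $E_{12}$ and reduces $C^*(\Delta_1,J_1)\otimes 1$, but it is still not countably generated. So I also use the free complementability of $E_{12}$: it is a tensor product of freely complementable modules, by the earlier proposition on $\mathrm{Corr}^\mathrm{fc,aug}$. I then apply the cropping lemma on $E_{12}$ with the following data:
\begin{itemize}
\item[] $A_0:=C^*(\Delta_1,\Delta_2,J_1,J_2,P_{J})$, where $P_J$ is the projection onto the complemented submodule $J_1E_{12}$;
\item[] $J_0:=$ the $\sigma$-unital algebra generated by $J_1J_2$, $[\Delta_2,J_1]$, and an approximate unit's worth of compacts witnessing these products.
\end{itemize}
This yields a projection $P$ onto a countably generated complemented graded $S$ commuting with $A_0$ (in particular with $P_J$), with $J_0\subset K(S)$.

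Next comes the split. On $(1-P)E_{12}$ the reduced algebras satisfy $(1-P)J_1J_2=0$, since $J_1J_2\subset K(S)$. The same holds for $(1-P)[\Delta_2,J_1]=0$, which is exact because those commutators lie in $K(S)$. Put $M':=(1-P)P_J$ on the complement. It is an even projection; evenness holds because $J_1$ is graded, so $P_J$ is even. We have $M'J_1(1-P)=J_1(1-P)$ modulo nothing, so I need $M'$ to kill $J_1$ in the sense $(1-M')$ vs. $M'$ swapped. Precisely, I take $M'':=(1-P)(1-P_J)$. Then $M''J_1=0$, and $(1-M'')J_2(1-P)=P_JJ_2(1-P)$, which lies in the closure of $J_1J_2(1-P)=0$. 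For the commutators, $[\Delta_1,P_J]=0$ because $[\Delta_1,J_1]\subset J_1$ makes $J_1E_{12}$ invariant under $\Delta_1$ and $\Delta_1^*$. Also $[\Delta_2,P_J](1-P)=0$ exactly, by $[\Delta_2,J_1](1-P)=0$ via an approximate unit of $J_1$ converging strictly to $P_J$. On $S$, I apply the classical technical theorem with $K=K(S)$ (which is $\sigma$-unital), $\Delta=P(\Delta_1\cup\Delta_2)P$, $J_1'=PJ_1P+K(S)$ and $J_2'=PJ_2P$. Its hypotheses follow from those of the statement by reduction. This gives $M_S$, and I set $M:=M_S\oplus M''$.

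The main obstacle is the hypothesis $[\Delta,J_1']\subset J_1'$ in the classical theorem. Only $[\Delta_1,J_1]\subset J_1$ is given, while $[\Delta_2,J_1]$ is merely compact. This is why I enlarge $J_1'$ by $K(S)$ (still $\sigma$-unital, after replacing by a separable subalgebra plus an approximate unit). I then must check that $MJ_1'\subset K$ still gives what is wanted. Verifying that the exact vanishings on the complement follow from strict approximation by $K(S)$ is routine but needs the commutation of $P$ with $P_J$.
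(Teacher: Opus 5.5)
Your proposal is correct and is essentially the paper's own proof. The common route is:
\begin{itemize}
\item crop a countably generated graded complemented $S\subset E_{12}$ reducing $C^*(\Delta_1,\Delta_2,J_1,J_2,P_J)$ with $[\Delta_2,J_1]\cup J_1J_2\subset K(S)$;
\item apply the classical theorem to $(P(\Delta_1\cup\Delta_2),\,C^*(PJ_1,K(S)),\,PJ_2)$;
\item set $M:=M_S+(1-P)(1-P_J)$, using an approximate unit of $J_1$ converging strictly to $P_J$ for the exact vanishing on the complement.
\end{itemize}

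Two small remarks. The preliminary cropping on the $E_1$ side is never used and can be dropped. Also, $[\Delta_1,P_J]=0$ should be obtained from the same even approximate-unit argument, which needs only $[\Delta_1,J_1]\subset J_1$, rather than from invariance under $\Delta_1^*$, which is not among the hypotheses.
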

\begin{proof}
Let $P_1\in B(E_{12})$ be the even projection onto $J_1E_{12}$.
Take a graded countably generated complemented Hilbert submodule $S_{12}\subset E_{12}$ over $\widetilde C$ by cropping such that $[\Delta_2,J_1]\cup J_1J_2\subset K(S_{12})$ and $S_{12}$ reduces $C^*(\Delta_1,\Delta_2,J_1,J_2,P_1)$.
Let $P_{12}\subset B(E_{12})$ be the even projection onto $S_{12}$.
Then, $P_1$ commutes with $\Delta_1$ and $J_1$, and $P_{12}$ commutes with $\Delta_1$, $\Delta_2$, $J_1$, $J_2$, and $P_1$.

By applying the classical technical theorem on the triple $(P_{12}(\Delta_1\cup\Delta_2),C^*(P_{12}J_1,K(S_{12})),P_{12}J_2)$, we can find an even $M_{12}\subset B(S_{12})$ with $0\le M_{12}\le P_{12}$ such that
\[[\Delta_1\cup\Delta_2,M_{12}],\ M_{12}J_1,\ (P_{12}-M_{12})J_2\subset K(S_{12}).\]
If we let $M_1:=(1-P_{12})(1-P_1)$ be an even projection, then it easily follows
\[[\Delta_1,M_1]=\{0\},\qquad M_1J_1=\{0\}.\]
On the other hand, since $(1-P_{12})([\Delta_2,J_1]\cup J_1J_2)=\{0\}$, if we consider an approximate unit $e_{1n}\in J_1$ of $J_1$, then $(1-P_{12})e_{1n}\to(1-P_{12})P_1=1-P_{12}-M_1$ strictly as $n\to\infty$, so we also have
\[[\Delta_2,M_1]=\{0\},\qquad(1-P_{12}-M_1)J_2=\{0\}\]
from $(1-P_{12})([\Delta_2,e_{1n}]\cup e_{1n}J_2)=\{0\}$.
Then, $M:=M_{12}+M_1$ defines a mediator for the triple $(\Delta_1\cup\Delta_2,J_1,J_2)$.
\end{proof}

\subsection{Grassmann connections and separable fibering}

\begin{definition}[Grassmann connections]\label{Definition: Grassmann connections}
Let $B$ and $C$ be C$^*$-algebras.
Let $E_1$ be a Hilbert module over $\widetilde B$, $E_2$ be an augmented Hilbert bimodule over $(B,C)$, and $E_{12}:=E_1\otimes_BE_2$.
Let $p_1\in B(H\otimes\widetilde B,E_1)$ be a coisometry for a Hilbert space $H$, which induces a coisometry $p_1\in B(H\otimes E_2,E_{12})$ with the same notation.
We define the \emph{Grassmann connection} of $F_2\in B(E_2)$ along $p_1$ as $\widehat F_2:=p_1(1\otimes F_2)p_1^*\in B(E_{12})$, sometimes denoted by $\widehat F_2^{p_1}:=\widehat F_2$ to emphasize which isometry is used.

If $E_1$ and $E_2$ are graded, and if $F_2$ is odd and $C^*(F_2)E_2\subset E_2C$, we define an \emph{$F_2$-connection} to be an odd $\widehat F_2\in B(E_{12})$ such that $C^*(\widehat F_2)E_{12}\subset E_{12}C$ and $(\widehat F_2T_{\xi_1}-T_{\overline\xi_1}F_2)T_{\eta_1}^*,(\widehat F_2^*T_{\xi_1}-T_{\overline\xi_1}F_2^*)T_{\eta_1}^*\in K(E_{12})$ for $\xi_1,\eta_1\in E_1$.
\end{definition}

\begin{definition}[Separable fibering]
Let $A$ and $B$ be C$^*$-algebras.
Let $E$ be a graded augmented Hilbert bimodule over $(A,B)$.
With respect to a single or a family of pairs $\{(F_i,A_i)\}_i$ such that $F_i\in B(E)$ is Kasparov under separable $A_i\subset A$ for each $i$, a \emph{separable fibering} is a triple $(E_0,p_0,B_0)$ of the following data:
\begin{enumerate}
\item
$E_0\subset E$ is a graded separable closed linear subspace such that for all the given pairs $(F_i,A_i)$ we have
\[\Sigma(F_i,A_i)\subset C^*(\Theta_{\xi,\eta}:\xi,\eta\in E_0),\qquad C^*(F_i,A_i)E_0\subset E_0,\]
\item
$p_0\in B(H\otimes\widetilde B,E)$ is an even coisometry such that there is a graded Hilbert subspace $H_0\subset H$ satisfying
\[\overline{E_0B^\sim}=p_0(H_0\otimes\widetilde B),\]
\item
$B_0\subset B$ is a separable C$^*$-subalgebra such that
\[E_0\widetilde B_0\subset E_0,\qquad\langle E_0,E_0\rangle\subset\widetilde B_0,\qquad p_0^*E_0\subset H_0\otimes\widetilde B_0.\]
\end{enumerate}
\end{definition}

Let $(E_0,\pi,B_0)$ be a separable fibering with respect to a pair $(F_0,A_0)$, in the setting of the above definition.
The inclusions $E_0\widetilde B\subset E_0$ and $\langle E_0,E_0\rangle\subset\widetilde B_0$ in (3) exactly mean that $E_0$ is a Hilbert module over $\widetilde B_0$, and in this case the condition $\Sigma(F_0,A_0)\subset C^*(\Theta_{\xi,\eta}:\xi,\eta\in E_0)$ in (1) is equivalent to $\Sigma(F_0,A_0)\subset K(E_0)$.
Moreover, the condition $C^*(F_0,A_0)E_0\subset E_0$ implies that $E_0$ is an augmented Hilbert bimodule over $(A_0,B_0)$ and the restriction $F_0\in B(E_0)$ is well-defined, satisfying that $C^*(F_0,A_0)E_0\subset E_0B_0$.
The Hilbert module $\overline{E_0B^\sim}$ over $\widetilde B$ generated by $E_0$ in (2) is nothing but the internal tensor product $E_0\otimes_{B_0}\widetilde B$ along the unital inclusion $\widetilde B_0\subset\widetilde B$.

\begin{lemma}
Let $A$, $B$, and $C$ be C$^*$-algebra.
Let $E_1$ be a graded freely complemented augmented Hilbert bimodule over $(A,B)$, and let $E_2$ be an augmented Hilbert bimodule over $(B,C)$.
Let $(E_{10},p_1,B_0)$ be a separable fibering with respect to some Kasparov operators on $E_1$.
Then, $E_{10}\otimes_{B_0}E_2\subset E_1\otimes_BE_2$ is complemented, and the projection in $B(E_1\otimes_BE_2)$ onto $E_{10}\otimes_{B_0}E_2$ commutes with every Grassmann connection of $F_2\in B(E_2)$ along $p_1$.
\end{lemma}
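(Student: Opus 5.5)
The plan is to realize $E_{10}\otimes_{B_0}E_2$ as the image of an explicit projection built from the coisometry $p_1$ and the subspace $H_0$ of the separable fibering. Let $Q_0\in B(H)$ be the orthogonal projection onto $H_0$, and define
\[P:=p_1(Q_0\otimes1)p_1^*\in B(E_1).\]
By condition (2), $p_1(H_0\otimes\widetilde B)=\overline{E_{10}\widetilde B}$. The first key point is that $P$ is a projection with range $\overline{E_{10}\widetilde B}$, and for this we need $Q_0\otimes1$ to commute with $p_1^*p_1$. I would extract this from (2) and (3) as follows. The condition $p_1^*E_{10}\subset H_0\otimes\widetilde B_0$ gives $p_1^*(\overline{E_{10}\widetilde B})\subset H_0\otimes\widetilde B$. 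Hence $p_1^*p_1$ maps $H_0\otimes\widetilde B$ into itself. Since $p_1^*p_1$ is self-adjoint, it then commutes with $Q_0\otimes1$. From this, $P$ is self-adjoint and idempotent, because
\[p_1(Q_0\otimes1)p_1^*p_1(Q_0\otimes1)p_1^*=p_1p_1^*p_1(Q_0\otimes1)p_1^*=P,\]
and its range is $p_1(H_0\otimes\widetilde B)=\overline{E_{10}\widetilde B}$. So $\overline{E_{10}\widetilde B}$ is complemented in $E_1$.

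Next I tensor with $E_2$ over $\widetilde B$. This uses the identification $\overline{E_{10}\widetilde B}=E_{10}\otimes_{B_0}\widetilde B$ noted after the definition of separable fibering, together with associativity of internal tensor products:
\[E_{10}\otimes_{B_0}E_2=(E_{10}\otimes_{B_0}\widetilde B)\otimes_{\widetilde B}E_2.\]
The functor $-\otimes_{\widetilde B}E_2$ sends complemented submodules to complemented submodules, because it carries the projection $P$ to $P\otimes1\in B(E_{12})$ via the strictly continuous unital map $B(E_1)\to B(E_{12})$. Therefore $P\otimes1$ is the projection onto $E_{10}\otimes_{B_0}E_2$. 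Concretely, with $p_1\in B(H\otimes E_2,E_{12})$ the induced coisometry, we have $P\otimes1=p_1(Q_0\otimes1_{E_2})p_1^*$.

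For the commutation, let $\widehat F_2=p_1(1\otimes F_2)p_1^*$. Then
\[(P\otimes1)\widehat F_2=p_1(Q_0\otimes1)p_1^*p_1(1\otimes F_2)p_1^*.\]
The induced operator $p_1^*p_1\in B(H\otimes E_2)$ commutes with $Q_0\otimes1$, since this holds already at the level of $H\otimes\widetilde B$ and passes to the tensor product. The operator $Q_0\otimes1$ also commutes with $1\otimes F_2$. Moving $Q_0\otimes1$ past both gives
\[p_1p_1^*p_1(1\otimes F_2)(Q_0\otimes1)p_1^*=p_1(1\otimes F_2)p_1^*p_1(Q_0\otimes1)p_1^*=\widehat F_2(P\otimes1),\]
where the last step uses $p_1p_1^*p_1=p_1$ and then the commutation of $p_1^*p_1$ with $Q_0\otimes 1$ once more.

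I expect the main obstacle to be the first step: deducing that $p_1^*p_1$ commutes with $Q_0\otimes1$, i.e. that $H_0\otimes\widetilde B$ is invariant under $p_1^*p_1$. If the inclusion $p_1^*E_{10}\subset H_0\otimes\widetilde B_0$ is not by itself enough to give invariance, I would instead argue that $p_1^*$ restricted to the range of $p_1(Q_0\otimes 1)$ lands in $H_0\otimes\widetilde B$. This follows from (2) combined with (3) by density of $E_{10}\widetilde B$. Everything after that is formal.
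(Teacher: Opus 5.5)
Your proof is correct and is essentially the paper's argument. In both, the key step is that $p_1^*p_1$ leaves the $H_0$-part invariant (by (2) together with $p_1^*E_{10}\subset H_0\otimes\widetilde B_0$), hence commutes with $Q_0\otimes1$. Then $p_1(Q_0\otimes1)p_1^*$ is the projection onto $E_{10}\otimes_{B_0}E_2$, and it commutes with $p_1(1\otimes F_2)p_1^*$ because $Q_0\otimes1$ commutes with $1\otimes F_2$. The only difference is cosmetic: you first build the projection in $B(E_1)$ and push it to $E_{12}$ via $T\mapsto T\otimes1$, whereas the paper runs the same commutation argument directly in $B(H\otimes E_2)$.
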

\begin{proof}
Let $E_{12}:=E_1\otimes_BE_2$ and $E_{102}:=E_{10}\otimes_{B_0}E_2$.
We have a coisometry $p_1\in B(H\otimes\widetilde B,E_1)$ by the condition (2), and we use the same notation for the induced coisometry $p_1\in B(H\otimes E_2,E_{12})$.
Let $P_1,P_0\in B(H\otimes E_2)$ be projections onto $p_1^*E_{12}$ and $H_0\otimes E_2$ respectively.
Then, clearly $P_1=p_1^*p_1$.
By the equality of (2) and the last inclusion of (3), we have
\[P_1P_0(H\otimes E_2)=p_1^*p_1P_0(H\otimes E_2)=p_1^*p_1(H_0\otimes E_2)=p_1^*(E_{10}\otimes_{B_0}E_2)\subset H_0\otimes E_2=P_0(H\otimes E_2),\]
so $(1-P_0)P_1P_0=0$ implies that $P_1$ and $P_0$ commute.
Then, by the equality of (2), we have
\begin{align*}
E_{102}&=E_{10}\otimes_{B_0}E_2=E_{10}\otimes_{B_0}\widetilde B\otimes_BE_2=\overline{E_{10}B^\sim}\otimes_BE_2=p_1(H_0\otimes E_2)=p_1P_0(H\otimes E_2)\\
&=p_1P_1P_0(H\otimes E_2)=p_1P_0P_1(H\otimes E_2)=p_1P_0p_1^*p_1(H\otimes E_2)=p_1P_0p_1^*E_{12}.
\end{align*}
This proves that $E_{102}$ is complemented in $E_{12}$, with the support projection $p_1P_0p_1^*\in B(E_{12})$.
Since $P_0$ and $1\otimes F_2$ clearly commute, we can easily see the Grassmann connection $p_1(1\otimes F_2)p_1^*$ commutes with $p_1P_0p_1^*$.
\end{proof}

Separable fiberings are devices for taking appropriate Grassmann connections.
The data of $p_0$ and $B_0$ are not used if a net of connections is already given as in the proof of the uniqueness of Kasparov products, but they are useful when we construct Grassmann connections.
We prove the related results on the existence of separable fiberings and connections in the below.

\begin{lemma}[Existence of separable fiberings]\label{Lemma: Existence of separable fiberings}
Let $A$ and $B$ be C$^*$-algebras.
Let $E$ be a graded freely complementable augmented Hilbert bimodule over $(A,B)$.
Let $(F_n,A_n)_n$ be a sequence of pairs such that $F_n\in B(E)$ is Kasparov under separable $A_n\subset A$.
With respect to this sequence, there exists a separable fibering $(E_0,p_0,B_0)$ with arbitrary $p_0$.
\end{lemma}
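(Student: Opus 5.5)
Fix an arbitrary even coisometry $p_0\in B(H\otimes\widetilde B,E)$ for a graded Hilbert space $H$; one exists because $E$ is freely complementable and graded. The plan is a countable back-and-forth closure argument in the spirit of the cropping lemma. We build increasing sequences of separable graded closed subspaces $V_k\subset E$, countable graded subsets $\mathcal{I}_k$ of an orthonormal basis of $H$ (each closed under the grading) and separable C$^*$-subalgebras $B_k\subset B$. We then take $E_0:=\overline{\bigcup_kV_k}$, $H_0:=\overline{\mathrm{span}}\bigcup_k\mathcal{I}_k$ and $B_0:=\overline{\bigcup_kB_k}$.

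\textbf{Start.} Each $\Sigma(F_n,A_n)$ is a separable subalgebra of $K(E)$, being generated by countably many compact operators. Each of its elements is a norm limit of finite sums of $\Theta_{\xi,\eta}$, so countably many vectors suffice to generate it. Let $V_0$ be the closed span of these vectors over all $n$, together with their images under $\gamma$; then $\Sigma(F_n,A_n)\subset C^*(\Theta_{\xi,\eta}:\xi,\eta\in V_0)$ for every $n$.

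\textbf{Inductive step.} Given $V_k$, $\mathcal{I}_k$ and $B_k$, we enlarge each object in turn.
\begin{enumerate}
\item Put into $V_{k+1}$ the images of a countable dense subset of $V_k$ under a countable dense subset of $\bigcup_nC^*(1,\gamma,F_n,A_n)$, which is separable. This gives $C^*(F_n,A_n)V_k\subset V_{k+1}$.
\item Each $p_0^*\xi$ for $\xi$ in a dense subset of $V_k$ is an element of $H\otimes\widetilde B$, so it has countable support in the basis and countably many coefficients in $\widetilde B$. Add that support to $\mathcal{I}_{k+1}$ (closed under grading), and add those coefficients to $B_{k+1}$.
\item Add to $B_{k+1}$ the inner products $\langle\xi,\eta\rangle$ for $\xi,\eta$ in a dense subset of $V_k$.
\item Add to $V_{k+1}$ the vectors $\xi b$ for $\xi$ dense in $V_k$ and $b$ dense in $B_k$.
\item Add to $V_{k+1}$ the vectors $p_0(\varepsilon_i\otimes1)$ and $p_0(\varepsilon_i\otimes b)$ for $i\in\mathcal{I}_k$ and $b$ dense in $B_k$.
\end{enumerate}
All of these objects stay countable or separable.

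\textbf{Verification.} In the limit, condition (1) holds by the start together with step (1), and $E_0$ is graded. Condition (3) holds by steps (2), (3) and (4): in particular $p_0^*E_0\subset H_0\otimes\widetilde B_0$, because the coefficients of $p_0^*\xi$ lie in $\widetilde B_0$ and its support lies in $H_0$.

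For condition (2) we show both inclusions. First, $p_0(H_0\otimes\widetilde B)$ is the closed $\widetilde B$-span of the vectors $p_0(\varepsilon_i\otimes1)$ with $i\in\bigcup_k\mathcal{I}_k$, and these lie in $E_0$ by step (5); hence $p_0(H_0\otimes\widetilde B)\subset\overline{E_0B^\sim}$. Conversely, for $\xi\in E_0$ we have $\xi=p_0p_0^*\xi$ and $p_0^*\xi\in H_0\otimes\widetilde B$, so $\xi\in p_0(H_0\otimes\widetilde B)$. The right-hand side is a closed right $\widetilde B$-submodule, so $\overline{E_0B^\sim}\subset p_0(H_0\otimes\widetilde B)$. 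Note that $p_0(H_0\otimes\widetilde B)$ is closed because $p_0$ restricted to the range of $p_0^*$ is isometric, although this direction does not actually need closedness.

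\textbf{Main obstacle.} The main subtlety is the interplay between $p_0$ and $B_0$. We need $p_0^*E_0\subset H_0\otimes\widetilde B_0$ while $E_0$ is only a $\widetilde B_0$-module, not a $\widetilde B$-module. This is handled by the zig-zag in steps (2), (4) and (5): the coefficients of $p_0^*\xi$ are fed into $B_0$, and only $\widetilde B_0$-multiples $p_0(\varepsilon_i\otimes b)$ are fed back into $E_0$. We must also check that these vectors do not spoil (3). Their inner products are the entries $\langle\varepsilon_i\otimes b,p_0^*p_0(\varepsilon_j\otimes b')\rangle$, which are added at the next stage via step (3), and their $p_0^*$-coefficients are added via step (2). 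So the induction closes.
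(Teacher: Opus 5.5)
Your argument is correct in outline but takes a different route from the paper.

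\textbf{The paper's route.} It first applies the cropping lemma to the embedding $p_0^*$. This gives a countably generated complemented $S=p_0(H_0\otimes\widetilde B)$ that reduces every $C^*(A_n,F_n)$ and satisfies $\Sigma(F_n,A_n)\subset K(S)$. It then picks a separable graded $V_0\subset S$ that contains generators of $S$ and is invariant under $C^*(A_n,F_n)$. Finally it lets $B_0$ collect the coefficients of $p_0^*V_0$ and sets $E_0:=\overline{V_0\widetilde B_0}$. No back-and-forth is needed there, for three reasons:
\begin{enumerate}
\item $E_0\widetilde B_0\subset E_0$ holds by definition.
\item $\langle\xi,\eta\rangle=\langle p_0^*\xi,p_0^*\eta\rangle$ lies in $\widetilde B_0$ automatically.
\item Closedness of $p_0(H_0\otimes\widetilde B)$ comes for free, because the cropping projection commutes with $p_0^*p_0$.
\end{enumerate}

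\textbf{Your route.} You run a single interleaved closure on vectors, basis indices and coefficients. This is self-contained and avoids the cropping lemma. Your step (3), and the vectors $p_0(\varepsilon_i\otimes b)$ in step (5), are redundant (by the identity in item 2 and by step (4) respectively), but they do no harm.

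\textbf{The mis-justified step.} You do not properly justify that $p_0(H_0\otimes\widetilde B)$ is closed. You have already shown $p_0(H_0\otimes\widetilde B)\subset\overline{E_0\widetilde B}$ and $E_0\widetilde B\subset p_0(H_0\otimes\widetilde B)$. So the equality in condition (2) is \emph{equivalent} to this closedness. Closedness is therefore exactly what the converse inclusion needs, contrary to your parenthetical remark.

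The reason you give, that $p_0$ is isometric on the range of $p_0^*$, does not suffice. With $Q:=p_0^*p_0$ you have $p_0(H_0\otimes\widetilde B)=p_0\bigl(Q(H_0\otimes\widetilde B)\bigr)$. The image of a closed submodule under a projection need not be closed; this fails already for two Hilbert subspaces in generic position.

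\textbf{The fix, from your own construction.}
\begin{enumerate}
\item For $i\in\bigcup_k\mathcal{I}_k$, step (5) gives $p_0(\varepsilon_i\otimes1)\in E_0$.
\item Step (2) gives $p_0^*E_0\subset H_0\otimes\widetilde B$. Hence $Q(\varepsilon_i\otimes1)\in H_0\otimes\widetilde B$.
\item By $\widetilde B$-linearity and continuity, $Q$ leaves $H_0\otimes\widetilde B$ invariant. Since $Q$ is self-adjoint, it commutes with the projection $P_0$ onto $H_0\otimes\widetilde B$.
\item Using $p_0=p_0Q$, you get $p_0(H_0\otimes\widetilde B)=p_0QP_0(H\otimes\widetilde B)$. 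This is the isometric image of the range of the projection $QP_0$, hence closed. It is in fact complemented, with projection $p_0P_0p_0^*$.
\end{enumerate}
This is precisely the commutation the paper inherits from the cropping lemma. With this addition your proof is complete.
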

\begin{proof}
Let $p_0\in B(H\otimes\widetilde B,E)$ be any even coisometry.
Since $\bigcup_n\Sigma(F_n,A_n)\subset K(E)$ generates a $\sigma$-unital C$^*$-subalgebra and $\bigcup_nC^*(A_n,F_n)\subset B(E)$ generates a separable C$^*$-subalgebra, we can crop out a graded countably generated complemented Hilbert submodule $S\subset E$ over $\widetilde B$ with respect to the embedding $p_0^*:E\to H\otimes\widetilde B$ such that $\Sigma(F_n,A_n)\subset K(S)$ and $C^*(A_n,F_n)$ is reduced by $S$ for all $n$.
We can take the projection $E\to S$ to be induced from $B(H)$ so that we may write $S=p_0(H_0\otimes\widetilde B)$ for a graded Hilbert subspace $H_0\subset H$ because the cropping is done by choosing appropriate basis vectors of $H$ as seen in the proof of the cropping lemma.

Choose a graded separable closed linear subspace $V_0\subset S$ containing a generating sequence of $S$ as a Hilbert module over $\widetilde B$, and such that we further have
\[\Sigma(F_n,A_n)\cup\{h\}\subset C^*(\Theta_{\xi,\eta}:\xi,\eta\in V_0),\qquad C^*(A_n,F_n)V_0\subset V_0\]
for all $n$, where $h$ is a strictly positive element of $K(S)$.
Define a separable C$^*$-subalgebra $B_0\subset B$ such that we have $b_i\in\widetilde B_0$ whenever we expand $\xi\in V_0$ into $p_0^*\xi=\sum_{i=1}^\infty\varepsilon_i\otimes b_i\in H\otimes\widetilde B$.

If we define a Hilbert module $E_0:=\overline{V_0B_0^\sim}$ over $\widetilde B_0$, then
\[\Sigma(F_n,A_n)\subset K(E_0),\qquad C^*(F_n,A_n)\subset B(E_0)\]
for all $n$.
Furthermore, we can check $\overline{E_0B^\sim}=\overline{V_0B^\sim}=S=p_0(H_0\otimes\widetilde B)$ and $p_0^*E_0\subset p_0^*S\cap(H\otimes\widetilde B_0)\subset H_0\otimes\widetilde B_0$.
\end{proof}

\begin{lemma}[Existence of connections]
Let $A$, $B$, and $C$ be C$^*$-algebras.
Let $E_1$ and $E_2$ be graded augmented Hilbert bimodules over $(A,B)$ and $(B,C)$ respectively, and let $E_{12}:=E_1\otimes_BE_2$.
Let $(E_{10},p_1,B_0)$ be a separable fibering, with respect to some Kasparov operators on $E_1$.
If an odd adjointable $F_2\in B(E_2)$ commutes with $B_0$ modulo $K(E_2)$ and $C^*(F_2)E_2\subset E_2C$, then the Grassmann connection $\widehat F_2\in B(E_{12})$ of $F_2$ along $p_1$ is indeed an $F_2$-connection on $E_{10}$.
Furthermore, if $F_2$ is norm continuous, where we suppose $E_2$ and $C$ are trivial continuous fields over a compact Hausdorff space, then so is $\widehat F_2$ with a fixed coisometry.
\end{lemma}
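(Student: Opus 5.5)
We verify the three requirements for $\widehat F_2=p_1(1\otimes F_2)p_1^*$ in turn: oddness together with $C^*(\widehat F_2)E_{12}\subset E_{12}C$, the two connection conditions on $E_{10}$, and norm continuity in the parametrized case.

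\emph{Step 1: oddness and vanishing at infinity.} Oddness follows because $p_1$ is even and $1\otimes F_2$ is odd on $H\otimes E_2$. For the second condition, note that $C^*(\widehat F_2)\subset p_1C^*(1\otimes F_2)p_1^*+\mathbb{C}$-free part. By the definition of $C^*$-algebra generated, it suffices to treat polynomials in $\widehat F_2,\widehat F_2^*$ without constant term. Each such monomial has the form $p_1(1\otimes G_1)P_1(1\otimes G_2)\cdots p_1^*$, with $G_k\in\{F_2,F_2^*\}$ and $P_1=p_1^*p_1$, and the rightmost operator factor is some $1\otimes G_k$. Since $G_kE_2\subset E_2C$, the map $1\otimes G_k$ sends $H\otimes E_2$ into $(H\otimes E_2)C$. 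All the other factors are right $\widetilde C$-linear, so the image lies in $E_{12}C$. Taking norm limits gives the claim.

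\emph{Step 2: the connection condition.} Fix $\xi_1,\eta_1\in E_{10}$. Using condition (3) of the separable fibering, expand $p_1^*\xi_1=\sum_n\varepsilon_n\otimes b_n$ with $b_n\in\widetilde B_0$, $\varepsilon_n$ an orthonormal basis of $H_0$, and the sum norm convergent in $H\otimes\widetilde B$. Then $\xi_1=p_1p_1^*\xi_1$ gives
\[
T_{\xi_1}=p_1\sum_nT_{\varepsilon_n}b_n,
\]
where $T_{\varepsilon_n}\in B(E_2,H\otimes E_2)$. The same expansion applies to $\overline\xi_1$, with the signs $\gamma$ absorbed into $\varepsilon_n$ and $b_n$ by grading. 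Hence
\[
\widehat F_2T_{\xi_1}-T_{\overline\xi_1}F_2=p_1\sum_nT_{\varepsilon_n}\bigl(F_2b_n-\overline b_nF_2\bigr)=p_1\sum_nT_{\varepsilon_n}[F_2,b_n].
\]
Each $[F_2,b_n]$ lies in $K(E_2)$ because $F_2$ commutes with $B_0$ modulo compacts, and this passes to $\widetilde B_0$ by adding scalars. The series converges in norm: $\sum T_{\varepsilon_n}b_n$ converges since $\sum b_n^*b_n$ converges, and the right-multiplied terms are bounded by $2\|F_2\|$ times its tail.

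It remains to see that $T_{\varepsilon_n}k$ composed with $T_{\eta_1}^*$ is compact in $K(E_{12})$ for $k\in K(E_2)$. I expect this to be the main subtlety, because $p_1T_{\varepsilon_n}K(E_2)$ is a priori only in $K(E_2,E_{12})$ after multiplication by elements coming from $E_1$. The plan is to use $T_{\eta_1}^*=(\sum_m T_{\varepsilon_m}c_m)^*p_1^*$ with $c_m\in\widetilde B_0$. The problem then reduces to showing that $p_1T_{\varepsilon_n}kc_m^*T_{\varepsilon_m}^*p_1^*$ is compact, and this follows since $T_{\varepsilon_n}kT_{\varepsilon_m}^*\in K(H\otimes E_2)$, which $p_1(\cdot)p_1^*$ maps into $K(E_{12})$ because $p_1$ is adjointable. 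The same argument with $F_2^*$ in place of $F_2$ gives the second condition.

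\emph{Step 3: norm continuity.} If $t\mapsto F_2(t)$ is norm continuous, then $\|1\otimes F_2(t)-1\otimes F_2(s)\|\le\|F_2(t)-F_2(s)\|$. Conjugating by the fixed coisometry $p_1$ is contractive, so $t\mapsto\widehat F_2(t)$ is norm continuous as well.
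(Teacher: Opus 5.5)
Your proposal is correct and is essentially the paper's proof. Both expand $p_1^*\xi_1\in H_0\otimes\widetilde B_0$, write $(\widehat F_2T_{\xi_1}-T_{\overline\xi_1}F_2)T_{\eta_1}^*$ as the norm-convergent series $\sum_np_1T_{\overline\varepsilon_n}[F_2,b_n]T_{\eta_1}^*$ of compact operators, and read off norm continuity from the fixed coisometry; your Step 1 also supplies the check $C^*(\widehat F_2)E_{12}\subset E_{12}C$ that the paper leaves implicit.

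Two small repairs. First, the signs come out right only if $1\otimes F_2$ carries the Koszul sign, $(1\otimes F_2)T_{\varepsilon}=T_{\overline\varepsilon}F_2$. Then both terms carry $T_{\overline\varepsilon_n}$ and the bracket is the ordinary commutator with the even element $b_n$. Your intermediate $F_2b_n-\overline b_nF_2$ is instead an anticommutator on odd $\varepsilon_n$, which the hypothesis does not control. Second, your ``main subtlety'' is not actually there: $p_1T_{\varepsilon_n}k\in K(E_2,E_{12})$ for $k\in K(E_2)$, and composing with the adjointable $T_{\eta_1}^*$ already lands in $K(E_{12})$ without expanding $\eta_1$.
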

\begin{proof}
Let $\xi_1,\eta_1\in E_{10}$, and write $p_1^*\xi_1=\sum_{n=1}^\infty\varepsilon_n\otimes b_n$ with $\varepsilon_n\in H_0$ and $b_n\in\widetilde B_0$ in the coisometry $p_1\in B(H\otimes\widetilde B,E_1)$, where $H_0\subset H$ is an inclusion of graded Hilbert spaces.
Since one can compute
\[\widehat F_{2k}T_{\xi_1}T_{\eta_1}^*(\zeta_1\otimes\zeta_2)=\sum_{n=1}^\infty p_1(\overline\varepsilon_n\otimes F_{2k}b_n\langle\overline\eta_1,\zeta_1\rangle\zeta_2),\qquad T_{\overline\xi_1}F_{2k}T_{\eta_1}^*(\zeta_1\otimes\zeta_2)=\sum_{n=1}^\infty p_1(\overline\varepsilon_n\otimes b_nF_{2k}\langle\overline\eta_1,\zeta_1\rangle\zeta_2)\]
for $\zeta_1\in E_1$ and $\zeta_2\in E_2$, it follows from $[F_2,b_n]\in K(E_2)$ that
\[(\widehat F_2T_{\xi_1}-T_{\overline\xi_1}F_2)T_{\eta_1}^*=\sum_{n=1}^\infty p_1T_{\overline\varepsilon_n}[F_2,b_n]T_{\eta_1}^*\in K(E_{12}).\]
We can get the same for the adjoint $\widehat F_2^*$, so $\widehat F_2$ is an $F_2$-connection on $E_{10}$.
If $F_2$ is odd and norm continuous, then $\widehat F_2$ is odd because $p_{10}$ is even, and the norm continuity is clear by the construction of Grassmann connection $\widehat F_2$.
\end{proof}

\begin{lemma}[Three-step mediation]
Let $A$, $B$, and $C$ be C$^*$-algebras.
Let $E_1$ and $E_2$ be graded freely complementable augmented Hilbert bimodules over $(A,B)$ and $(B,C)$ respectively, and let $E_{12}:=E_1\otimes_BE_2$.
Consider
\begin{enumerate}
\item $\{\widehat F_{1j}\}_j\subset B(E_{12})$ a norm compact family given by $\widehat F_{1j}:=F_{1j}\otimes1$ for each $F_{1j}\in B(E_1)$ Kasparov under $A_j$ with $\bigcup_jA_j$ separable,
\item $(E_{10},p_1,B_0)$ a separable fibering with respect to $\{(F_{1j},A_j)\}_j$,
\item $\{\widehat F_{2k}\}_k\subset B(E_{12})$ a norm compact family such that $\widehat F_{2k}$ is an $F_{2k}$-connection on $E_{10}$ for each $F_{2k}\in B(E_2)$ Kasparov under $B_0$.
\end{enumerate}
Then, there exists a mediator $M\in B(E_{12})$ for $(\widehat F_{1j},\widehat F_{2k})$ under $A_j$ such that the Kasparov operator $F_{12(j,k)}:=M^{\frac12}\widehat F_{1j}+(1-M)^{\frac12}\widehat F_{2k}$ over $(A_j,C)$ is $F_{1j}$-positive under $A_j$ and is an $F_{2k}$-connection over $E_{10}$, for all $j$ and $k$.
\end{lemma}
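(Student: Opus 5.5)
The plan is to follow Kasparov's classical proof of existence of the product, with the paper's Technical theorem (Theorem \ref{Theorem: Technical theorem}) substituted for the classical one. I will choose a single triple $(\Delta_1\cup\Delta_2,J_1,J_2)$ that works simultaneously for all $j$ and $k$, then take one mediator $M$ for it.

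\textbf{The compact sets and the ideal $J_1$.} Fix a dense sequence $a_n$ in the separable algebra $A_\infty:=C^*(\bigcup_jA_j)$. Set
\[\Delta_1:=\{(n\|a_n\|)^{-1}a_n\}_n\cup\{0\}\cup\{\widehat F_{1j}\}_j\subset B(E_1)\otimes1,\qquad \Delta_2:=\{\widehat F_{2k}\}_k.\]
Both are norm compact. Let $J_1$ be the C$^*$-subalgebra of $B(E_1)\otimes1$ generated by $\Theta_{\xi_1,\eta_1}\otimes1$ for $\xi_1,\eta_1$ in a dense sequence of $E_{10}$. It is separable and graded, and it contains every $\Sigma(F_{1j},A_j)\otimes1$, since these lie in $C^*(\Theta_{\xi,\eta}:\xi,\eta\in E_{10})$ by the separable fibering. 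The invariance $C^*(F_{1j},A_j)E_{10}\subset E_{10}$ gives $[\Delta_1,J_1]\subset J_1$, after possibly enlarging $E_{10}$ so that it is also invariant under all of $A_\infty$, which the fibering lemma permits. Moreover $J_1E_{12}=\overline{E_{10}\otimes_{B_0}E_2}$, and the preceding lemma shows this submodule is complemented.

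\textbf{The ideal $J_2$.} Let $J_2$ be generated by $\Sigma(\widehat F_{2k},A_j)$, by the $[\widehat F_{1j},\widehat F_{2k}]$, and by $[\widehat F_{2k},a]$ for all $j,k$ and $a\in A_j$. Separability of $J_2$ follows from norm compactness of the families and separability of $A_\infty$. I then verify the hypotheses $[\Delta_2,J_1]\subset K(E_{12})$ and $J_1J_2\subset K(E_{12})$. Both follow from the connection property on $E_{10}$, as in the classical argument: write $\Theta_{\xi_1,\eta_1}\otimes1=T_{\xi_1}T_{\eta_1}^*$ and commute $\widehat F_{2k}$ past $T_{\xi_1}$, which turns it into $T_{\overline\xi_1}F_{2k}$ modulo $K(E_{12})$. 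Because $F_{2k}$ is Kasparov under $B_0\supset\langle E_{10},E_{10}\rangle$, products with the generators of $J_2$ become compact. For $[\widehat F_{1j},\widehat F_{2k}]$, expand $F_{1j}\Theta_{\xi_1,\eta_1}=\Theta_{F_{1j}\xi_1,\eta_1}$ with $F_{1j}\xi_1\in E_{10}$, and use that a connection for $F_{2k}$ graded-commutes with $K(E_1)\otimes1$ modulo compacts, namely $[\widehat F_{2k},T_{\xi}T_\eta^*]\in K$.

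\textbf{Mediator and conclusion.} The Technical theorem now produces $M$, which is a mediator for every $(\widehat F_{1j},\widehat F_{2k})$ under $A_j$. By the earlier proposition, $F_{12(j,k)}$ is Kasparov under $A_j$.

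For the connection property, multiply by $T_{\eta_1}^*$. Since $MJ_1\subset K$, we get $M^{1/2}(\Theta\otimes1)\in K$, so the first term contributes only compacts. Since $(1-M)J_2\subset K$ and $J_1J_2\subset K$, the factor $(1-M)^{1/2}$ acts as $1$ against $J_1$ modulo compacts. Hence $F_{12}T_{\xi_1}T_{\eta_1}^*\equiv\widehat F_{2k}T_{\xi_1}T_{\eta_1}^*$, and the same holds for adjoints.

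For positivity, compute $a^*[F_{12},\widehat F_{1j}]a$ modulo compacts as
\[2M^{1/2}\widehat F_{1j}^2a^*a+(1-M)^{1/2}a^*[\widehat F_{2k},\widehat F_{1j}]a,\]
using that $M$ commutes with $\Delta$ modulo $K$. The second term is compact because $[\widehat F_{1j},\widehat F_{2k}]\in J_2$, and the remaining fact needed is $(1-M)^{1/2}J_2\subset K$, which holds. The first term is $2M^{1/2}a^*a\ge0$ modulo $M^{1/2}\Sigma(\widehat F_{1j},A_j)\subset MJ_1$-type compacts.

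\textbf{Main obstacle.} I expect the hardest step to be the verification that $[\widehat F_{1j},\widehat F_{2k}]\in J_2$ is harmless, i.e.\ that $J_1[\widehat F_{1j},\widehat F_{2k}]\subset K(E_{12})$. Only $F_{1j}$ restricted to $E_{10}$ is controlled, so I would first reduce to the classical computation on the complemented cropped submodule $E_{10}\otimes_{B_0}E_2$.
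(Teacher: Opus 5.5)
Your proposal is correct and is essentially the paper's own proof. It uses the same $\Delta_1,\Delta_2$, the same $J_1$ generated by $T_{\xi_1}T_{\eta_1}^*$ with $\xi_1,\eta_1\in E_{10}$, essentially the same $J_2$, one application of the extended technical theorem, and the same connection and positivity computations; the paper writes the positive term as $2|\widehat F_{1j}M^{1/4}a|^2$.

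Small corrections, none of which change the argument:
\begin{enumerate}
\item Enlarging $E_{10}$ is not needed, and would not be permitted, since the connection and Kasparov hypotheses are tied to the given $E_{10}$ and $B_0$. The inclusions $A_jE_{10}\subset E_{10}$ for all $j$ already give $C^*(\bigcup_jA_j)E_{10}\subset E_{10}$.
\item $(1-M)^{1/2}$ acts as $1$ on $\widehat F_{2k}T_{\xi_1}T_{\eta_1}^*$ because $MJ_1\subset K(E_{12})$ and $[M,\widehat F_{2k}]\in K(E_{12})$, via $(1-M)^{1/2}-1=-((1-M)^{1/2}+1)^{-1}M$. It does not follow from $(1-M)J_2\subset K(E_{12})$.
\item The connection $\widehat F_{2k}$ graded-commutes modulo compacts only with $J_1$, not with all of $K(E_1)\otimes1$. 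That is all your computation uses.
\item Your ``main obstacle'' is already settled by the computation you sketch, using $F_{1j}^*E_{10}\subset E_{10}$; no further cropping is needed.
\end{enumerate}
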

\begin{proof}
Letting $a_n\in A\setminus\{0\}$ be a dense sequence of a separable C$^*$-algebra $A_0:=C^*(\bigcup_jA_j)\subset A$, define norm compact subsets
\[\Delta_1:=\{(n\|a_n\|)^{-1}a_n\}_n\cup\{0\}\cup\{\widehat F_{1j}\}_j\subset B(E_1)\otimes1,\qquad\Delta_2:=\{\widehat F_{2k}\}_k\subset B(E_{12}),\]
and separable C$^*$-subalgebras
\[J_1:=C^*(\Theta_{\xi_1,\eta_1}\otimes1=T_{\xi_1}T_{\eta_1}^*:\xi_1,\eta_1\in E_{10})\subset K(E_1)\otimes1,\qquad J_2:=C^*(\Sigma(\widehat F_{2k},A_0),[\widehat F_{1j},\widehat F_{2k}]:\forall j,k)\subset B(E_{12}).\]
Then, $J_1E_1=\overline{E_{10}B^\sim}$ is complemented in $E_1$ by the second condition in the definition of separable fiberings, and because $E_{10}$ is graded $J_1$ is also graded.

First, $[\Delta_1,J_1]\subset J_1$ is clear from $\Delta_1E_{10}\subset E_{10}$.
By the assumption that $\widehat F_{2k}$ is an $F_{2k}$-connection on $E_{10}$ and $F_{2k}$ is Kasparov under $B_0$, we obtain $[\Delta_2,J_1]\subset K(E_{12})$ and $J_1J_2\subset K(E_{12})$ by computations.
Apply the technical theorem (Theorem \ref{Theorem: Technical theorem}) to get a mediator $M\in B(E_{12})$ for $(\Delta_1\cup\Delta_2,J_1,J_2)$, supporting all pairs $(\widehat F_{1j},\widehat F_{2k})$ for all $j$ and $k$.

The $F_{1j}$-positivity follows from
\[a^*[F_{12(j,k)},F_{1j}\otimes1]a\equiv a^*M^{\frac14}[\widehat F_{1j},F_{1j}\otimes1]M^{\frac14}a+a^*(1-M)^{\frac12}[\widehat F_{1j},\widehat F_{2k}]a\equiv2|\widehat F_{1j}M^{\frac14}a|^2\ge0\]
modulo $K(E_{12})$ for all $a\in A_j$.
Next, $F_{12(j,k)}$ is an $F_{2k}$-connection on $E_{10}$ by
\[(F_{12(j,k)}T_{\xi_1}-T_{\overline\xi_1}F_{2k})T_{\eta_1}^*\equiv M^{\frac12}\widehat F_{1j}T_{\xi_1}T_{\eta_1}^*+(1-M)^{\frac12}\widehat F_{2k}T_{\xi_1}T_{\eta_1}^*-T_{\overline\xi_1}F_{2k}T_{\eta_1}^*\equiv0\]
modulo $K(E_{12})$ for all $\xi_1,\eta_1\in E_{10}$ together with the similar computation $(F_{12l}^*T_{\xi_1}-T_{\overline\xi_1}F_{2k}^*)T_{\eta_1}^*\equiv0$ for the adjoints, by $M^{\frac12}T_{\xi_1}T_{\eta_1}^*\in M^{\frac12}J_1\subset K(E_{12})$ and $(1-M)^{\frac12}-1=-((1-M)^{\frac12}+1)^{-1}M$.
\end{proof}

\subsection{Construction of Kasparov product}

\begin{definition}[Kasparov product]
Let $A$, $B$, and $C$ be C$^*$-algebras.
Let $(E_1,(F_{0i})_i)$ and $(E_2,(F_{2k})_k)$ be approximate Kasparov bimodules over $(A,B)$ and $(B,C)$ respectively, and let $E_{12}=E_1\otimes_BE_2$.
Their \emph{Kasparov product} is an approximate Kasparov bimodule $(E_{12},(F_{12l})_l)$ over $(A,C)$, together with
\begin{enumerate}
\item a constant $\mathcal{C}>0$,
\item monotone cofinal functions $l\mapsto i(l)$ and $l\mapsto k(l)$,
\item an even coisometry $p_1\in B(H\otimes\widetilde B,E_1)$ for a graded Hilbert space $H$,
\end{enumerate}
which satisfy the following statement.
For every separable C$^*$-subalgebra $A_0\subset A$ there is $l_0$ such that whenever $l\ge l_0$, there is a quadruple $(F_{1l},E_{1l},p_1,B_l)$ such that $(E_{1l},p_1,B_l)$ is a separable fibering with respect to a Kasparov operator $F_{1l}\in B(E_1)$ under $A_0$ with a bound $\|F_{1l}\|\le\mathcal{C}$, satisfying that
\begin{enumerate}
\item
$F_{1l}$ is operator homotopic to $F_{0i(l)}$ under $A_0$,
\item
$F_{12l}$ is $F_{1l}$-positive under $A_0$,
\item
$F_{12l}$ is an $F_{2k(l)}$-connection on $E_{1l}$,
\item
$\{F_{2k}\}_{k\ge k(l)}$ is Kasparov under $B_l$.
\end{enumerate}
\end{definition}

If we take larger $l_0$ for each separable $A_0\subset A$, we may always assume $\{F_{0i}\}_{i\ge i(l_0)}$ is Kasparov under $A_0$.
It is immediate to prove the following propositions on special cases of Kasparov products.

\begin{proposition}[Kasparov product with $*$-homomorphisms]
Let $A$, $B$, and $C$ be C$^*$-algebras.
Let $(E_1,(F_{1j})_j)$ and $(E_2,(F_{2k})_k)$ be approximate Kasparov bimodules over $(A,B)$ and $(B,C)$ respectively, and let $\varphi_1:A\to B$ and $\varphi_2:B\to C$ be $*$-homomorphisms.
Then, $(E_2,(F_{2k})_k)$ over $(A,C)$ is itself a Kasparov product of $\varphi_1$ and $(E_2,(F_{2k})_k)$, and $(E_1\otimes_B\widetilde C,(F_{1j}\otimes1)_j)$ over $(A,C)$ is a Kasparov product of $(E_1,(F_{1j})_j)$ and $\varphi_2$.
\end{proposition}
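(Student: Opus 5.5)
The plan is to verify directly the clauses of the definition of Kasparov product, treating the two cases separately. In both cases one factor is a canonical Kasparov bimodule $(\widetilde B,0)$ or $(\widetilde C,0)$ with zero operator. The key observation is that for such a factor every element of the unitization is compact, $K(\widetilde B)=\widetilde B$. Hence $0$ is Kasparov under any subalgebra: $\Sigma(0,A_0)=C^*(a:a\in A_0)\subset K(\widetilde B)$. For the same reason, $K(E_1)\otimes1\subset K(E_1\otimes_B\widetilde C)$, since $B$ acts on $\widetilde C$ through $\varphi_2$ by compact operators.

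\emph{First case.} Here $E_{12}=\widetilde B\otimes_BE_2=E_2$, and I index the product by $l:=k$ with $F_{12l}:=F_{2k}$, $k(l)=l$, and $i(l)$ constant. I take $\mathcal C=1$, $H=\mathbb{C}$ (even), and $p_1=\mathrm{id}_{\widetilde B}$. Given a separable $A_0\subset A$, choose $l_0$ such that $\{F_{2k}\}_{k\ge l_0}$ is Kasparov under $\varphi_1(A_0)$; this also shows that $(E_2,(F_{2k})_k)$ is an approximate Kasparov bimodule over $(A,C)$. For $l\ge l_0$ the quadruple is $(0,\widetilde B_l,\mathrm{id},B_l)$ with $B_l:=\varphi_1(A_0)$. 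Then:
\begin{enumerate}
\item $(\widetilde B_l,\mathrm{id},B_l)$ is a separable fibering with $H_0=H$, because $\Sigma(0,A_0)\subset C^*(\varphi_1(A_0))\subset C^*(\Theta_{\xi,\eta}:\xi,\eta\in\widetilde B_l)$.
\item Positivity is trivial, since $[F_{12l},0]=0$.
\item Condition (4) is the choice of $l_0$.
\item For the connection condition, $T_\xi$ for $\xi=b\in\widetilde B_l$ is the left action of $b$ on $E_2$. So $(F_{2k}T_b-T_bF_{2k})T_{b'}^*=[F_{2k},b]b'^*$, which is compact because $F_{2k}$ is Kasparov under $B_l$. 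The scalar part of $\widetilde B_l$ commutes exactly, and the adjoints are handled the same way.
\end{enumerate}

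\emph{Second case.} Here $E_{12}=E_1\otimes_B\widetilde C$, and I index by $l:=j$ with $F_{12l}:=F_{1j}\otimes1$, $i(l)=l$, $k(l)$ constant, and $\mathcal C:=\sup_j\|F_{1j}\|$. I take any even coisometry $p_1\in B(H\otimes\widetilde B,E_1)$, which exists by free complementability.
\begin{enumerate}
\item The product is an approximate Kasparov bimodule. Since $K(E_1)\otimes1\subset K(E_{12})$, we get $\Sigma(F_{1j}\otimes1,A_0)=\Sigma(F_{1j},A_0)\otimes1\subset K(E_{12})$. Operator homotopies under $A_0$ tensor with $1$ to operator homotopies. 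Also $C^*(F_{1j}\otimes1)E_{12}\subset E_1B\otimes_B\widetilde C=E_1\otimes\varphi_2(B)\widetilde C\subset E_{12}C$. Free complementability follows from the subcategory proposition.
\item For $A_0$ and $l_0$ with $\{F_{1j}\}_{j\ge l_0}$ Kasparov under $A_0$, set $F_{1l}:=F_{1l}$, so clause (1) holds by the constant homotopy. By Lemma \ref{Lemma: Existence of separable fiberings}, get a separable fibering $(E_{1l},p_1,B_l)$ for $(F_{1l},A_0)$ with the fixed $p_1$.
\item Clause (4) holds since $0$ is Kasparov on $\widetilde C$ under any $B_l$.
\item Positivity: $a^*[F_{1l}\otimes1,F_{1l}\otimes1]a=2a^*(F_{1l}^2\otimes1)a\equiv2a^*a\ge0$ modulo $K(E_{12})$.
\item The connection condition with $F_2=0$ reduces to $(F_{1l}\otimes1)T_\xi T_\eta^*=(F_{1l}\Theta_{\xi,\eta})\otimes1\in K(E_1)\otimes1\subset K(E_{12})$, and similarly for the adjoint.
\end{enumerate}

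The only point needing care is the inclusion $K(E_1)\otimes1\subset K(E_1\otimes_B\widetilde C)$. I would check it on rank-one operators: $\Theta_{\xi,\eta}\otimes1=T_\xi T_\eta^*$, and $T_\xi\in K(\widetilde C,E_{12})$ because $\widetilde C$ is unital, so $T_\xi=\Theta_{\xi\otimes1,1}$. Everything else is bookkeeping of indices. The one thing to keep in mind there is that the quadruple in the definition may depend on $A_0$, which permits the choice $B_l=\varphi_1(A_0)$ in the first case.
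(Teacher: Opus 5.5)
Your proposal is correct: it is the direct check of the four clauses of the definition that the paper only calls ``immediate'', and the natural choices you make work ($p_1=\mathrm{id}_{\widetilde B}$, $E_{1l}=\widetilde B_l$, $B_l=\varphi_1(A_0)$ in the first case; $T_\xi=\Theta_{\xi\otimes1,1}$ giving $K(E_1)\otimes1\subset K(E_1\otimes_B\widetilde C)$ in the second). Two cosmetic fixes: set $\mathcal C:=\max\{1,\sup_j\|F_{1j}\|\}$ so that $\mathcal C>0$, and note that clause (1) in the first case holds trivially since $F_{1l}=0=F_{0i(l)}$.
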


\begin{proposition}[Classical Kasparov product]
Let $A_0$, $B_0$, and $C$ be C$^*$-algebras.
Let $(E_1,F_1)$ and $(E_2,F_2)$ be classical Kasparov bimodules over $(A_0,B_0)$ and $(B_0,C)$ respectively.
If $(E_{12},F_{12})$ is a Kasparov bimodule over $(A_0,C)$ that is a classical Kasparov product, meaning that $E_{12}:=E_1\otimes_{B_0}E_2$ and $F_{12}$ is an $F_1$-positive $F_2$-connection, under $A_0$ and on $E_1$, then it is a Kasparov product.
\end{proposition}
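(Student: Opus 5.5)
We have to check that the classical product $(E_{12},F_{12})$, viewed as a constant net indexed by a one-point set, satisfies the definition of Kasparov product. For the data we take $\mathcal{C}:=\max\{1,\|F_1\|\}$ and constant index functions $l\mapsto i(l)$, $l\mapsto k(l)$. Since $B_0$ need not be $\sigma$-unital, $E_1$ need not be free, so for the coisometry $p_1$ we use that $E_1$ is freely complementable over $\widetilde B_0$. Indeed, $E_1$ is countably generated, so Kasparov stabilization gives an even coisometry $p_1\in B(H\otimes\widetilde B_0,E_1)$ with $H$ a separable graded Hilbert space. For every separable $A_0'\subset A_0$ we then set $F_{1l}:=F_1$. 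Condition (1) then holds trivially, via the constant operator homotopy.

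The main step is to produce a separable fibering $(E_{1l},p_1,B_l)$ with respect to $(F_1,A_0)$ for the fixed $p_1$. The classical product is an $F_2$-connection on all of $E_1$, so we are free to pick $E_{1l}$ inside $E_1$. We apply the existence of separable fiberings (Lemma \ref{Lemma: Existence of separable fiberings}) to the single pair $(F_1,A_0)$ with the prescribed $p_1$. This requires $A_0$ separable, which holds here; if only a separable subalgebra is demanded, we use $A_0'\subset A_0$. The lemma yields $E_{1l}$, $H_0$ and a separable $B_l\subset B_0$. We must also check that $F_1$ is a Kasparov operator in the augmented sense: $C^*(F_1)E_1\subset E_1B_0$ holds because $E_1=E_1B_0$ classically.

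The remaining conditions then follow in order. Condition (2): the hypothesis says $F_{12}$ is $F_1$-positive under $A_0$, and positivity modulo compacts restricts to $A_0'$. Condition (3): $F_{12}$ is an $F_2$-connection on all of $E_1$, in the quotient sense $(F_{12}T_{\xi_1}-T_{\overline\xi_1}F_2)T_{\eta_1}^*\in K(E_{12})$. As remarked after the classical definition, this follows from the classical condition $F_{12}T_{\xi_1}-T_{\overline\xi_1}F_2\in K(E_2,E_{12})$ by multiplying with $T_{\eta_1}^*$, and the same holds for the adjoint. It therefore holds in particular on $E_{1l}\subset E_1$; the requirement $C^*(F_{12})E_{12}\subset E_{12}C$ is automatic since $E_{12}=E_{12}C$. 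Condition (4): $F_2$ is Kasparov under $B_0\supset B_l$, and the constant family is trivially mutually operator homotopic.

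I expect the only delicate point to be the bookkeeping around unitizations. The fibering definition asks for $p_1\in B(H\otimes\widetilde B,E_1)$, where $\widetilde B$ is $\widetilde B_0$ here, and $E_1$ must be regarded as an augmented bimodule over $\widetilde B_0$. Also, if the ambient index set must be directed by separable subalgebras, we reindex the constant net by the directed set of all separable subalgebras of $A_0$.
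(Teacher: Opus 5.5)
Your proposal is correct and is the same routine check the paper intends; the paper gives no written proof and only calls it immediate. You treat both factors and the product as constant nets, fix one stabilization coisometry $p_1\in B(H\otimes\widetilde B_0,E_1)$, take $F_{1l}:=F_1$ with a fibering from the existence lemma for separable fiberings applied to each separable $A_0'\subset A_0$, and read conditions (1)--(4) off the hypotheses. One small correction: the statement does not assume $A_0$ is separable, so "which holds here" is unjustified, but the definition only quantifies over separable subalgebras, so your fallback to $A_0'$ is exactly the required step.
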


We now construct the Kasparov product for approximate Kasparov bimodules.

\begin{theorem}[Existence of Kasparov products]
Let $A$, $B$, and $C$ be C$^*$-algebras.
A Kasparov product of approximate Kasparov bimodules over $(A,B)$ and $(B,C)$ always exists.
\end{theorem}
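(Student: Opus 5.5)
We would build the product net by indexing over finite down sets, so that at each stage only finitely many earlier data have to be kept compatible.

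By the finite down set reduction (Lemma \ref{Lemma: Finite down set reduction}), we replace $(E_1,(F_{0i})_i)$ by an operator homotopic net $(F_{1j})_j$. Here $j$ runs over a directed set $\mathcal{J}$ with finite down sets, $(A_j)_j$ is a cofinal system of separable subalgebras, and $\{F_{0i}\}_{i\ge i_j}$ is Kasparov under $A_j$. We may normalize so that $\|F_{1j}\|\le\mathcal{C}:=\max\{1,\sup_i\|F_{0i}\|\}$, using the truncation trick after the definition of approximate Kasparov bimodules. We fix once and for all an even coisometry $p_1\in B(H\otimes\widetilde B,E_1)$, which exists by free complementability; this is item (3) of the definition.

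By induction on the finite size of down sets, for each $j$ we choose a separable fibering $(E_{1j},p_1,B_j)$ with respect to the countable family $\{(F_{1j_0},A_{j_0})\}_{j_0\le j}$, by Lemma \ref{Lemma: Existence of separable fiberings}. The induction lets us arrange $E_{1j_0}\subset E_{1j}$ and $B_{j_0}\subset B_j$ for $j_0\le j$, by enlarging the countable data fed into the cropping. Having $B_j$, we choose $k_j$ monotonically, again by induction on down sets, so that $\{F_{2k}\}_{k\ge k_j}$ is Kasparov under $B_j$.

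The index set of the product will be $\mathcal{L}:=\{(j,k):k\ge k_j\}$ with the product order; it is directed and $l\mapsto j(l),k(l)$ are monotone cofinal. For $l=(j,k)$, the Grassmann connection $\widehat F_{2k}:=p_1(1\otimes F_{2k})p_1^*$ is an $F_{2k}$-connection on $E_{1j}$ by the existence of connections lemma, because $F_{2k}$ commutes with $B_j$ modulo compacts. We then apply the three-step mediation lemma to the finite (hence compact) family $\{\widehat F_{1j_0}\}_{j_0\le j}$, the fibering $(E_{1j},p_1,B_j)$, and the single operator $\widehat F_{2k}$. This gives a mediator $M_l$, and we set $F_{12l}:=M_l^{1/2}(F_{1j}\otimes1)+(1-M_l)^{1/2}\widehat F_{2k}$. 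This operator is Kasparov under $A_j$, $F_{1j}$-positive under $A_j$, and an $F_{2k}$-connection on $E_{1j}$. The net is bounded by $2\mathcal{C}$.

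For a given separable $A_0$, we choose $j_0$ with $A_0\subset A_{j_0}$ and put $l_0:=(j_0,k_{j_0})$. For $l\ge l_0$ we take the quadruple $(F_{1j(l)},E_{1j(l)},p_1,B_{j(l)})$. Conditions (2)–(4) of the definition are then immediate, since $A_0\subset A_{j(l)}$. Condition (1) holds because $F_{1j(l)}=F_{0i_{j(l)}}$ and $\{F_{0i}\}_{i\ge i_{j_0}}$ is Kasparov under $A_0$; we take $i(l):=i_{j(l)}$.

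\textbf{Main obstacle.} The remaining requirement is that $(E_{12},(F_{12l})_l)$ be an approximate Kasparov bimodule: for $l,l'\ge l_0$, the operators $F_{12l}$ and $F_{12l'}$ must be operator homotopic through Kasparov operators under $A_0$. Free complementability of $E_{12}$ comes from the subcategory proposition of Section 4. For the homotopy, we pass to a common upper bound $l''$. It then suffices to connect $F_{12l}$ to $F_{12l''}$. We join $F_{1j}$ to $F_{1j''}$ and $F_{2k}$ to $F_{2k''}$ by norm-continuous Kasparov paths, which exist from the Kasparov family conditions. These paths have compact range, and by the continuity clause of the existence of connections lemma their Grassmann connections along the fixed $p_1$ are norm continuous. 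We feed the resulting compact families into the three-step mediation lemma, applied to a separable fibering containing $E_{1j''}$ and taken with respect to the whole path. This produces one mediator $M$ for all points of the paths simultaneously, hence a norm-continuous path of Kasparov operators joining $M^{1/2}\widehat F_{1j}+(1-M)^{1/2}\widehat F_{2k}$ to $M^{1/2}\widehat F_{1j''}+(1-M)^{1/2}\widehat F_{2k''}$. To bridge the mediators $M_l,M$ and $M,M_{l''}$, we use convexity of mediators and linear interpolation of $M$, which keeps the operator Kasparov by the mediator proposition. The delicate points are two. First, we must ensure the path of $F_1$'s stays under one separable fibering, which is why we enlarge the fibering rather than use $E_{1j''}$ directly. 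Second, we must ensure $[\widehat F_1,\widehat F_2]$ along the path lies in a separable $J_2$ compatible with $J_1$; this is exactly what the technical theorem's hypotheses demand.
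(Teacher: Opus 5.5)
\paragraph{A genuine gap in the Kasparov-family step.} The gap is in the step you flag as the main obstacle. You take a new separable fibering $(E',p_1,B')$ that contains $E_{1j''}$ and is adapted to the whole path from $F_{1j}$ to $F_{1j''}$. You then ask the three-step mediation lemma for one mediator covering all pairs along both paths.

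That lemma needs every $F_2$ in its family to be Kasparov under the fibering's algebra $B'$. This is what makes the Grassmann connections genuine connections on $E'$, and what gives $[\Delta_2,J_1]\subset K(E_{12})$ and $J_1J_2\subset K(E_{12})$ in the technical theorem.

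But $B'$ is chosen after $k$ and $k''$ are fixed, and it contains at least $B_{j''}$. Meanwhile $F_{2k}$ and the path from $F_{2k}$ to $F_{2k''}$ are only known to be Kasparov under $B_j$: you have $k\ge k_j$, not $k\ge k_{j''}$, and you know nothing about $B'$. So $J_1J_2\not\subset K(E_{12})$ in general, and the mediator $M$ need not exist.

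This is exactly the circularity the construction has to break. The fibering chosen for the $F_1$-data fixes a subalgebra of $B$, and that subalgebra decides which $F_2$'s are admissible. Enlarging the fibering afterwards therefore invalidates $F_2$'s you have already chosen. Your ``second delicate point'' names the required inclusion, but it is precisely what fails here.

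\paragraph{How the paper avoids it.} For each $j$, the paper first fixes operator homotopies $F_{1j_0j}$ under $A_{j_0}$ from $F_{1j_0}$ to $F_{1j}$, for all $j_0\le j$. There are finitely many such paths because down sets are finite; this is the real use of the finite down set reduction. Only then does it take $(E_{1j},p_1,B_j)$ with respect to all these paths, and only after that does it choose $k_j$. The mediator $M_l$ is taken for all pairs $(\widehat F_{1j_0j}(s),\widehat F_{2k})$.

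For $l_0\le l$ the connection then goes in three moves:
\begin{enumerate}
\item move $F_2$ from $F_{2k_0}$ to $F_{2k}$ at the lower level, using $(E_{1j_0},p_1,B_{j_0})$; this is legitimate since $k_0,k\ge k_{j_0}$;
\item switch mediators by convexity;
\item move $F_1$ along the pre-chosen $F_{1j_0j}$, with $F_{2k}$ and $M_l$ fixed.
\end{enumerate}

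\paragraph{An alternative within your set-up.} You could move the two operators one at a time and overshoot in $k$, the device the paper itself uses in its uniqueness proof:
\begin{enumerate}
\item at level $j$, move $F_2$ from $k$ to some $k'''\ge k''$, where $k'''$ is chosen only after the fibering $(E',p_1,B')$ of the $F_1$-path, so that $F_{2k'''}$ is Kasparov under $B'$;
\item move $F_1$ with $F_{2k'''}$ fixed;
\item return from $k'''$ to $k''$ at level $j''$;
\item bridge the mediators by convexity at each corner.
\end{enumerate}
In either approach, a single simultaneous mediator over one enlarged fibering is not available.

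\paragraph{A smaller point.} Your $l\mapsto i(l):=i_{j(l)}$ need not be cofinal, which the definition of Kasparov product requires. The paper handles this by adding $i$ to the index set, $\mathcal{L}=\{(i,j,k):i\ge i_j,\ k\ge k_j\}$.
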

\begin{proof}
Let $(E_1,(F_{0i})_i)$ and $(E_2,(F_{2k})_k)$ be approximate Kasparov bimodules over $(A,B)$ and $(B,C)$ respectively.
Let $(E_1,(F_{1j})_j)$ be a finite down set reduction of $(E_1,(F_{0i})_i)$.
By definition of finite down set reductions, associated to this we have a direct system $A_j$ of separable C$^*$-subalgebras of $A$ such that $\{F_{1j}\}_{j\ge j_0}$ is Kasparov under $A_{j_0}$ for any $j_0$, and a monotone assignment $j\mapsto i_j$ such that $F_{1j}=F_{0i_j}$ and $\{F_{0i}\}_{i\ge i_j}$ is Kasparov under $A_j$.
For each fixed $j$,
\begin{enumerate}
\item taking a normalized operator homotopy $F_{1j_0j}$ under $A_{j_0}$ connecting $F_{1j_0}$ and $F_{1j}$ for each $j_0\le j$,
\item if we choose a separable fibering $(E_{1j},p_1,B_j)$ with respect to $\{(F_{1j_0j}(s),A_{j_0}):j_0\le j,\;s\in[0,1]\}$,
\item we can take a Kasparov family $\{F_{2k}\}_{k\ge k_j}$ under $B_j$ with some large $k_j$.
\end{enumerate}
We define a directed set $\mathcal{L}:=\{(i,j,k):i\ge i_j,\;k\ge k_j\}$ with the product order.
We will construct a net $F_{12l}\in B(E_{12})$ parametrized by $\mathcal{L}$.
For fixed $l=(i,j,k)\in\mathcal{L}$, let $\widehat F_{1j_0j}:=F_{1j_0j}\otimes1$ for $j_0\le j$, and define $\widehat F_{2k}$ as the Grassmann $F_{2k}$-connection along $p_1$.
Then, there exists a mediator $M_l\in B(E_{12})$ for all pairs $(\widehat F_{1j_0j}(s),\widehat F_{2k})$ with $s\in[0,1]$ and $j_0\le j$, under $A_{j_0}$ for each $j_0$.
With $\widehat F_{1j}:=F_{1j}\otimes1$, we define a Kasparov operator
\[F_{12l}:=M_l^{\frac12}\widehat F_{1j}+(1-M_l)^{\frac12}\widehat F_{2k}\in B(E_{12}),\]
which is $F_{1j}$-positive under $A_j$ and is an $F_{2k}$-connection on $E_{1j}$.

We first prove that the net $F_{12l}$ defines an approximate Kasparov bimodule over $(A,C)$.
The difficult part is the Kasparov family condition.
Choose any $l_0:=(i_0,j_0,k_0)\in\mathcal{L}$ and fix $l=(i,j,k)\ge l_0$.
It suffices to show that $F_{12l_0}$ and $F_{12l}$ are operator homotopic Kasparov operators on $E_{12}$ under $A_{j_0}$.
Observe that
\begin{enumerate}
\item
for a Kasparov operator $F_{1j_0}$ under $A_{j_0}$,
\item
using the separable fibering $(E_{1j_0},p_1,B_{j_0})$ we chosen in the above, which is with respect to $(F_{1j_0},A_{j_0})$,
\item
we can find an operator homotopy $F_{2k_0k}$ under $B_{j_0}$ connecting $F_{2k_0}$ and $F_{2k}$ by the assumption $k_{j_0}\le k_0\le k$.
\end{enumerate}
Be cautious that the path $F_{2k_0k}$ may depend on the choice of $j_0$ even though the endpoints $F_{2k_0}$ and $F_{2k}$ do not.
Take a norm continuous path $\widehat F_{2k_0k}\in C([0,1],B(E_{12}))$ as the Grassmann $F_{2k_0k}$-connection along $p_1$, so that $\widehat F_{2k_0k}(s)$ is an $F_{2k_0k}(s)$-connection on $E_{1j_0}$ for all $s\in[0,1]$.
Then, there is a mediator $M_{l_0l}\in B(E_{12})$ for all pairs $(\widehat F_{1j_0},\widehat F_{2k_0k}(s))$ with $s\in[0,1]$ under $A_{j_0}$.
Define a norm continuous path $F_{12l_0l}\in C([0,1],B(E_{12}))$ such that
\[F_{12l_0l}(s):=\begin{cases}
M_{l_0l}^{\frac12}\widehat F_{1j_0}+(1-M_{l_0l})^{\frac12}\widehat F_{2k_0k}(4s-1)&\text{ if }s\in[\frac14,\frac12],\\[5pt]
M_l^{\frac12}\widehat F_{1j_0j}(4s-3)+(1-M_l)^{\frac12}\widehat F_{2k}&\text{ if }s\in[\frac34,1],
\end{cases}\]
and on $s\in[0,\frac14]\cup[\frac12,\frac34]$ the path is defined by interpolating mediators linearly using the convexity of the set of mediators.
Then, we can check the path $F_{12l_0l}$ is Kasparov under $A_{j_0}$ by evaluating at each $s\in[0,1]$ to show the support algebras belong to $K(E_{12})$, so $F_{12l_0}$ and $F_{12l}$ are connected under $A_{j_0}$ by an operator homotopy $F_{12l_0l}$.

Now we prove that the net $F_{12l}$ defines a Kasparov product.
Let $l_0:=(i_0,j_0,k_0)\in\mathcal{L}$.
For each $l:=(i,j,k)\ge l_0$, consider the separable fibering $(E_{1l},p_1,B_l):=(E_{1j},p_1,B_j)$ we have chosen, which is with respect to a Kasparov operator $F_{1l}:=F_{1j}=F_{0i_j}$ under $A_j$.
First, $F_{1l}$ is operator homotopic to $F_{0i}$ under $A_j$ by $i\ge i_j$ with the trivial bound $\|F_{1l}\|\le\sup_i\|F_{0i}\|$.
Next, we know by its construction that $F_{12l}$ is a $F_{1j}$-positive $F_{2k}$-connection under $A_j$ and on $E_{1j}$ respectively.
Finally, the family $\{F_{2k'}\}_{k'\ge k}$ is Kasparov under $B_j$ also by $k\ge k_j$.
Therefore, the net $F_{12l}$ defines our desired Kasparov product.
\end{proof}

The above proof also shows the following narrowly tuned form for the case the finite down set reduction is already applied.
It proves the existence of Kasparov product with an additional assumption for a prescribed family of separable fiberings, and will be used in the well-definedness and associativity proofs.

\begin{corollary}\label{Corollary: Tuned form}
Let $A$, $B$, and $C$ be C$^*$-algebras.
Let $(E_1,(F_{1j})_j)$ and $(E_2,(F_{2k})_k)$ be approximate Kasparov bimodules over $(A,B)$ and $(B,C)$ respectively, already given by finite down set reductions equipped with the cofinal systems $A_j\subset A$ and $B_k\subset B$ of separable C$^*$-subalgebras respectively.
Suppose we are given for each $j$ a separable fibering $(E_{1j},p_1,B_j)$ with respect to the family $\{(F_{1j_0j}(s),A_{j_0}):j_0\le j,\ s\in[0,1]\}$, where $F_{1j_0j}$ is an operator homotopy under $A_{j_0}$ connecting $F_{1j_0}$ and $F_{1j}$ taken for each pair $j_0\le j$.
Then, there is a Kasparov product $(E_{12},(F_{12(j,k)})_{(j,k)})$ together with a monotone assignment $j\mapsto k_j$, where $(j,k)$ parametrizes $\{(j,k):k\ge k_j\}$, such that $\{F_{12(j,k)}\}_{(j,k)\ge(j_0,k_{j_0})}$ is Kasparov under $A_{j_0}$ for any $j_0$, and the four conditions in the definition of Kasparov product are satisfied by the quadruples $\{(F_{1j},E_{1j},p_1,B_j)\}_{(j,k)\ge (j_0,k_0)}$.
Furthermore, we may have $F_{12(j,k)}=\widehat F_{1j}$ on the complementary summand of $\overline{E_{1j}B^\sim}\otimes_BE_2\subset E_{12}$ for all $(j,k)$ with $k\ge k_j$.
\end{corollary}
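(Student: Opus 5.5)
The plan is to rerun the proof of the existence theorem for Kasparov products with two changes. First, the finite down set reduction step is skipped, and the given fiberings $(E_{1j},p_1,B_j)$ are used in place of the ones chosen there. Second, the mediators are chosen more carefully, so that they equal $1$ on the complement of $\overline{E_{1j}B^\sim}\otimes_BE_2$. Since $(E_1,(F_{1j})_j)$ is already a finite down set reduction, the index $i$ of the original proof is redundant. I would take $i_j$ to be the identity assignment, so that $F_{1j}$ plays the role of both $F_{0i}$ and $F_{1j}$.

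For each $j$, I would choose $k_j$ so that $\{F_{2k}\}_{k\ge k_j}$ is Kasparov under $B_j$. This uses only that $(E_2,(F_{2k})_k)$ is an approximate Kasparov bimodule and that $B_j$ is separable; cofinality of the $B_k$ is not needed for this. Because every down set $\{j_0:j_0\le j\}$ is finite, I can define $k_j$ by induction on the size of the down set. At each step I take $k_j$ to be an upper bound of the finitely many $k_{j_0}$ with $j_0<j$ and of an index that works for $B_j$, which makes $j\mapsto k_j$ monotone. The index set is then $\mathcal{L}=\{(j,k):k\ge k_j\}$, a directed subset of the product.

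For $l=(j,k)$, let $\widehat F_{2k}$ be the Grassmann connection along $p_1$. By the existence of connections lemma, it is an $F_{2k}$-connection on $E_{1j}$. By the three-step mediation lemma, applied to the norm compact family $\{\widehat F_{1j_0j}(s):j_0\le j,\ s\in[0,1]\}$ (finitely many paths), there is a mediator $M_l$. This gives $F_{12l}$, which is $F_{1j}$-positive under $A_j$ and an $F_{2k}$-connection on $E_{1j}$. The proof that $\{F_{12l}\}_{l\ge(j_0,k_{j_0})}$ is Kasparov under $A_{j_0}$ is then word for word the four-piece path $F_{12l_0l}$ from the existence proof. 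That argument uses the fibering $(E_{1j_0},p_1,B_{j_0})$, the operator homotopy $F_{2k_0k}$ under $B_{j_0}$ (available since $k_0\ge k_{j_0}$), and convexity of mediators. The four conditions in the definition of the Kasparov product then hold for the quadruples $(F_{1j},E_{1j},p_1,B_j)$ with the constant $\mathcal{C}:=\sup_j\|F_{1j}\|$.

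The step needing real care is the final claim that $F_{12(j,k)}=\widehat F_{1j}$ on the complement. Let $P_{1j}=p_1P_0p_1^*$ be the projection onto $\overline{E_{1j}B^\sim}\otimes_BE_2$. By the lemma following the definition of separable fibering, this submodule is complemented and $P_{1j}$ commutes with every Grassmann connection along $p_1$, in particular with $\widehat F_{2k}$ and $\widehat F_{2k_0k}(s)$. Since $C^*(F_{1j_0j}(s),A_{j_0})E_{1j}\subset E_{1j}$, the projection $P_{1j}$ also commutes with $\widehat F_{1j_0j}(s)$ and with $A_{j_0}$. I would therefore run the technical theorem inside $P_{1j}E_{12}$ and set $M_l:=M_l'+(1-P_{1j})$, where $M_l'$ is a mediator there. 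Here $J_1=C^*(T_{\xi_1}T_{\eta_1}^*:\xi_1,\eta_1\in E_{1j})$ satisfies $J_1=P_{1j}J_1P_{1j}$, so we can check directly that $M_l$ is still a mediator. All commutators with $1-P_{1j}$ vanish, we have $(1-P_{1j})J_1=0$, and $(1-M_l)J_2=(P_{1j}-M_l')J_2$, which is compact once $J_2$ is compressed by $P_{1j}$; this is allowed because $\Delta_2$ commutes with $P_{1j}$. On the complement $M_l=1$, so $F_{12l}=\widehat F_{1j}$ there.

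I expect the main obstacle to be the interpolation paths in the Kasparov family argument. These mix $M_l$ with $M_{l_0l}$, which is built from the fibering for $j_0$ rather than $j$, so "$=\widehat F_{1j}$ on the complement" is only claimed at the endpoints $F_{12l}$ and not along the paths. That is consistent with the statement, but it must be checked that convexity of mediators is still applied within the set of mediators for a common supporting triple.
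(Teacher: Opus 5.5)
Your proposal is correct and follows the paper's proof. The only new ingredient beyond the existence proof is forcing the mediator to equal $1$ on $(1-P_{1j})E_{12}$. This uses that $P_{1j}$ commutes with $A_{j_0}$, the $\widehat F_{1j_0j}(s)$, the Grassmann connections and $J_2$, while $J_1=P_{1j}J_1P_{1j}$.

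The paper obtains the corner mediator a bit more cheaply, by compressing an already constructed mediator to $P_{1j}M_lP_{1j}+(1-P_{1j})$ rather than rerunning the technical theorem on $\overline{E_{1j}B^\sim}\otimes_BE_2$. Your worry about the interpolation paths is settled by the paper's proposition that the mediators for a fixed pair form a convex set.
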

\begin{proof}
We only prove the last assertion $F_{12l}=\widehat F_{1j}$ on $(1-P_{1j})E_{12}$ can be obtained without loss of generality, where $P_{1j}\in B(E_{12})$ be the projection onto $\overline{E_{1j}B^\sim}\otimes_BE_2\subset E_{12}$.
The others clearly follow in the proof of the previous theorem.
Recall the definition $F_{12l}=M_l^{1/2}\widehat F_{1j}+(1-M_l)^{1/2}\widehat F_{2k}$ in the existence proof above, where $\widehat F_{1j}:=F_{1j}\otimes1\in B(E_{12})$ is the trivial extension and $\widehat F_{2k}\in B(E_{12})$ is the Grassmann $F_{2k}$-connection along $p_1$.
First, since $C^*(F_{1j},A_j)E_{1j}\subset E_{1j}$, we can easily see $P_{1j}$ commutes with $A_j$ and $F_{1j}$.
Next, we have a graded Hilbert subspace $H_0\subset H$ such that $\overline{E_{1j}B^\sim}=p_1(H_0\otimes\widetilde B)$, so since the Grassmann connection $\widehat F_{2k}$ along $p_1$ only acts on the right tensor factor $E_2$ in $p_1^*E_{12}=p_1^*p_1(H\otimes E_2)\subset H\otimes E_2$, we can also see $P_{1j}$, which only acts on the left tensor factor $H$ projecting to $H_0$, commutes with $\widehat F_{2k}$.

Then, by letting the triple $(\Delta,J_1,J_2)$ be minimal to support $(\widehat F_{1j},\widehat F_{2k})$ under $A_j$ so that $P_{1j}$ also commutes with $\Delta\cup J_2$ and $J_1\le P_{1j}$ by $\Sigma(\widehat F_{1j},A_j)\subset C^*(\Theta_{\xi_1,\eta_1}:\xi_1,\eta_1\in E_{1j})$, we can check that $M_l':=P_{1j}M_lP_{1j}+(1-P_{1j})$ is also a mediator by computing that $[\Delta,M']$, $M'J_1$, and $(1-M')J_2$ are all belonging to $K(E_{12})$.
The replacement
\[F_{12l}':=M_l'^{\frac12}\widehat F_{1j}+(1-M_l')^{\frac12}\widehat F_{2k}\]
of $F_{12l}$ then satisfies $F_{12l}'=\widehat F_{1j}$ on $(1-P_{1j})E_{12}$.
\end{proof}

\begin{theorem}[Uniqueness of Kasparov products]
Let $A$, $B$, and $C$ be C$^*$-algebras.
A Kasparov product of approximate Kasparov bimodules over $(A,B)$ and $(B,C)$ is unique up to operator homotopy.
\end{theorem}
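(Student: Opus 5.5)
The plan is to compare two given Kasparov products $(E_{12},(F_{12l})_l)$ and $(E_{12},(F'_{12l'})_{l'})$ of $(E_1,(F_{0i})_i)$ and $(E_2,(F_{2k})_k)$. We reindex both over a common directed set and then connect them termwise by the Connes--Skandalis operator homotopy (Theorem \ref{Theorem: Connes-Skandalis operator homotopy}), using an auxiliary ``third product'' built by the three-step mediation lemma as a bridge. Concretely, let $\mathcal{L}''$ consist of pairs $(l,l')$ with $i(l)$ and $i'(l')$ comparable enough that $k(l),k'(l')$ admit a common upper bound. For each separable $A_0\subset A$, take $l_0,l_0'$ as in the definition of Kasparov product. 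For $(l,l')\ge(l_0,l_0')$ we get quadruples $(F_{1l},E_{1l},p_1,B_l)$ and $(F'_{1l'},E'_{1l'},p_1',B'_{l'})$.

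The bridge is built as follows.
\begin{enumerate}
\item Since $F_{1l}$ and $F'_{1l'}$ are both operator homotopic to members of the tail of $(F_{0i})_i$, which is a Kasparov family under $A_0$, choose an operator homotopy $G_{ll'}$ under $A_0$ connecting them. Its norm can be bounded by $\max\{1,\mathcal{C},\mathcal{C}'\}$ via the normalization trick.
\item By Lemma \ref{Lemma: Existence of separable fiberings}, with the fixed coisometry $p_1$, choose a separable fibering $(E_{1}'',p_1,B'')$ with respect to all $(G_{ll'}(s),A_0)$ for $s\in[0,1]\cap\mathbb{Q}$. Arrange it so that $E''_1\supset E_{1l}\cup E'_{1l'}$ and $B''\supset B_l\cup B'_{l'}$ by enlarging the separable subspace before cropping.
\item Pick $k''\ge k(l),k'(l')$ with $\{F_{2k}\}_{k\ge k''}$ Kasparov under $B''$. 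Apply the three-step mediation lemma to $\{G_{ll'}(s)\otimes1\}_s$ and the Grassmann connection $\widehat F_{2k''}$ along $p_1$. This yields $F''_{s}$, a norm continuous path of $G_{ll'}(s)$-positive $F_{2k''}$-connections on $E_1''$. Norm continuity in $s$ comes from a single mediator serving the whole compact family.
\end{enumerate}

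To connect $F_{12l}$ to $F''_0$ we use the classical argument. Both are $F_{1l}$-positive under $A_0$. $F_{12l}$ is an $F_{2k(l)}$-connection on $E_{1l}$, and $F''_0$ is an $F_{2k''}$-connection on $E''_1\supset E_{1l}$. The first step is to replace $F_{12l}$ by an $F_{2k''}$-connection. The difference between Grassmann connections of $F_{2k(l)}$ and $F_{2k''}$ is handled by an operator homotopy $F_{2k(l)k''}$ under $B_l$ and its Grassmann connection along $p_1$, mediated as in the existence proof. After this, $F_{12l}-F''_0$ is, modulo compacts, a $0$-connection on $E_{1l}$, so $(F_{12l}-F''_0)T_{\xi_1}T^*_{\eta_1}\in K(E_{12})$. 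One then shows $a^*[F_{12l},F''_0]a\ge0$ in $Q(E_{12})$ for $a\in A_0$, in the standard way: $F_{1l}$-positivity of both operators plus the connection property give positivity of the commutator modulo compacts after cutting by the compacts $\Theta$'s on $E_{1l}$, and the complementary part is controlled by the $F_1$-positivity. Theorem \ref{Theorem: Connes-Skandalis operator homotopy} then gives an operator homotopy. The same argument applies to $F'_{12l'}$ and $F''_1$, and concatenating yields a path $F_{12l}\sim F''_0\sim F''_1\sim F'_{12l'}$ under $A_0$. The index maps $(l,l')\mapsto l$ and $(l,l')\mapsto l'$ are monotone cofinal, so this is an operator homotopy of nets in the sense of the definition, subject to the bound.

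The main obstacle is the positivity step. The connection conditions are only imposed on the separable subspace $E_{1l}$, not on all of $E_1$. The commutator $[F_{12l},F''_0]$ on the part of $E_{12}$ complementary to $\overline{E_{1l}B^\sim}\otimes_BE_2$ must therefore be controlled using only that $A_0$ acts there through $F_{1l}$, which is degenerate there modulo $\Sigma(F_{1l},A_0)\subset K(E_{1l})$. I would first try to reduce, via Corollary \ref{Corollary: Tuned form}, to representatives that equal $\widehat F_{1}$ on that complementary summand. Under that reduction both operators agree there with degenerate-type data, and positivity becomes the classical computation on the countably generated part.

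A second, milder issue is uniform boundedness of the net of paths. This is settled by the constant $\mathcal{C}$ in the definition and the normalization of the operator homotopies.
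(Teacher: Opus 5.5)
Your overall architecture, connecting both products to a common third operator, matches the paper's. The step that fails is the positivity step. The claim that $a^*[F_{12l},F''_0]a\ge0$ in $Q(E_{12})$ follows from $F_{1l}$-positivity of both operators together with the connection property is false.

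Away from the part of $E_{12}$ cut out by the $\Theta$'s on $E_{1l}$, each operator is only constrained by $a^*[\,\cdot\,,\widehat F_{1l}]a\ge0$. As with Clifford multiplication by unit vectors, two odd operators can each have a nonnegative anticommutator with a third while having a negative anticommutator with each other. Here is a concrete example.
\begin{itemize}
\item Take classical cycles over $(\mathbb{C},\mathbb{C})$ with $E_1=E_2=\ell^2(\mathbb{N})\otimes\mathbb{C}^{1|1}$ and $F_1,F_2$ the flips. Then $\widehat F_1,\widehat F_2$ are anticommuting odd self-adjoint unitaries.
\item Let $M$ be the projection onto $\overline{\mathrm{span}}\{e_m\otimes e_n:n\le m\}$, tensored with the Clifford factors. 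It commutes with $\widehat F_1$ and $\widehat F_2$, satisfies $M(K(E_1)\otimes1)\subset K(E_{12})$, and is not compact.
\item Hence $(1-2M)\widehat F_2$ and $\widehat F_2$ are both $F_1$-positive $F_2$-connections, so both are Kasparov products.
\item Moreover, $\widehat F_2$ is a legitimate output of the three-step mediation lemma, with mediator $0$, since $J_2=0$ here.
\item Yet $[(1-2M)\widehat F_2,\widehat F_2]=2(1-2M)$ is not positive in the Calkin algebra.
\end{itemize}

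This example lives on countably generated modules, so your reduction via Corollary~\ref{Corollary: Tuned form} does not help. In any case that corollary constructs one particular product. It says nothing about the arbitrary given $F_{12l}$, so replacing $F_{12l}$ by a tuned representative presupposes the uniqueness you are proving. Your preliminary step of ``replacing $F_{12l}$ by an $F_{2k''}$-connection'' is also unsupported. $F_{12l}$ is neither a Grassmann connection nor in mediated form, and adding $\widehat F^{p_1}_{2k(l)k''}(s)-\widehat F^{p_1}_{2k(l)}$ to it does not produce Kasparov operators.

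The missing idea is to let the mediator see $F_{12l}$ itself.
\begin{enumerate}
\item Choose a mediator $M_1$ for the pairs $(\widehat F_{1l},(1-s)F_{12l}+s\widehat F^{p_1}_{2k(l)})$, $s\in[0,1]$. One supporting triple serves all $s$, because convex combinations of $F_{2k(l)}$-connections on $E_{1l}$ are again such connections.
\item Positivity against the self-mediated operator is then automatic: $a^*[F_{12l},M_1^{1/2}\widehat F_{1l}+(1-M_1)^{1/2}F_{12l}]a\equiv M_1^{1/4}a^*[F_{12l},\widehat F_{1l}]aM_1^{1/4}+2(1-M_1)^{1/4}a^*a(1-M_1)^{1/4}\ge0$. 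So Theorem~\ref{Theorem: Connes-Skandalis operator homotopy} applies to this pair.
\item After that, no further positivity is needed. You move only along mediated operators: interpolate $F_{12l}$ linearly to $\widehat F^{p_1}_{2k(l)}$ inside the mediated form, then follow Grassmann connections of operator homotopies of the $F_2$'s and operator homotopies of the $F_1$'s, switching mediators by convexity, until you reach an endpoint symmetric in the two products. This is the paper's construction with $M_1,M_2,M_3$.
\end{enumerate}

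Separately, your net of paths is indexed by $(l,l')$, but each path is built from data depending on $A_0$. The separable subalgebra has to be part of the index. The paper uses $m=(j,l,l')$, with $F_{1j}$, $k_j$ and a common fibering fixed per $j$ independently of both products. Otherwise you do not obtain a single net that is eventually Kasparov under every separable $A_0$.
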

\begin{proof}
Let $(E_1,(F_{0i})_i)$ and $(E_2,(F_{2k})_k)$ be approximate Kasparov bimodules over $(A,B)$ and $(B,C)$ respectively.
Let $j$ be the index parameter for a cofinal system in the directed set of all separable C$^*$-subalgebras of $A$, and let $A_j\subset A$ be the corresponding separable C$^*$-subalgebra.
Let $(E_{12},(F_{12l})_l)$ and $(E_{12},(F_{12l'}')_{l'})$ be Kasparov products of $(E_1,(F_{0i})_i)$ and $(E_2,(F_{2k})_k)$.
For each $j$, we define $i_j$, $l_j$, $l_j'$, and $k_j$ as follows.
\begin{itemize}
\item
Choose $i_j$ such that $\{F_{0i}\}_{i\ge i_j}$ is Kasparov under $A_j$, and let $F_{1j}:=F_{0i_j}$.
It is important to note that the choices of $i_j$ and $F_{1j}$ do not depend on any of both Kasparov products.
\item
Find $l_j$ with the property that, for any $l\ge l_j$ we have a separable fibering $(E_{1l},p_1,B_l)$ with respect to $(F_{1l},A_j)$, such that $F_{1l}$ is operator homotopic to $F_{0i(l)}$ under $A_j$, $F_{12l}$ is $F_{1l}$-positive under $A_j$ and $F_{12l}$ is an $F_{2k(l)}$-connection on $E_{1l}$, and finally the family $\{F_{2k}\}_{k\ge k(l)}$ is Kasparov under $B_l$, just following the definition of Kasparov products.
Here we may assume the family $\{F_{12l}\}_{l\ge l_j}$ is Kasparov under $A_j$ by taking larger $l_j$.
We have $p_1'$ and $l_j'$ similarly.
\item
With respect to a single Kasparov operator $F_{1j}$ over $A_j$, take two separable fiberings $(E_{1j},p_1,B_j)$ and $(E_{1j}',p_1',B_j')$ with the given $p_1$ and $p_1'$, and introduce a new separable fibering $(E_{1j}'',p_1'',B_j'')$ such that $E_{1j}+E_{1j}'\subset E_{1j}''$ and $C^*(B_j,B_j')\subset B_j''$.
Take $k_j$ such that $\{F_{2k}\}_{k\ge k_j}$ is Kasparov under $B_j''$.
It is also important to note that $k_j$ is chosen symmetrically to the two Kasparov products.
\end{itemize}
Until the end of the proof, we consider a fixed index $m:=(j,l,l')$, which parametrizes the directed set
\[\mathcal{M}:=\{(j,l,l'):i(l),i(l')\ge i_j\text{ and }(l,l')\ge(l_j,l_j')\text{ and }k(l),k(l')\ge k_j\}.\]
We will also assume that when we say two Kasparov operators are just operator homotopic, or we say an operator or a family is just Kasparov, they are all under $A_j$, as well as every mediator is assumed to be under $A_j$.

Now, it suffices to construct an operator homotopy, depending on but fixed $m$, which connects $F_{12l}$ and $F_{12l'}'$.
We construct three mediators $M_2$, $M_3$, and $M_1$ in $B(E_{12})$ as follows.
First,
\begin{enumerate}
\item
if we let $F_{1li(l)}\in C([0,1],B(E_1))$ be a normalized operator homotopy connecting $F_{1l}$ and $F_{0i(l)}$, and let $F_{0i(l)i_j}\in C([0,1],B(E_1))$ be a normalized operator homotopy connecting $F_{0i(l)}$ and $F_{1j}$,
\item
then choosing a suitable separable fibering along $p_1$ with respect to these two paths under $A_j$,
\item
and finding sufficiently large $l_1\ge l$ to make $F_{2k(l_1)}$ Kasparov under $B_0$, where $B_0\subset B$ is the separable C$^*$-subalgebra taken in the fibering,
\end{enumerate}
we can construct a mediator $M_2$ for all pairs
\[(\widehat F_{1li(l)}(s),\widehat F_{2k(l_1)}^{p_1}),\qquad(\widehat F_{0i(l)i_j}(s),\widehat F_{2k(l_1)}^{p_1}),\qquad s\in[0,1].\]
Take $l_1'$ similarly.
Next,
\begin{enumerate}
\item
for a single Kasparov operator $F_{1j}$ under $A_j$,
\item
using the separable fiberings $(E_{1j},p_1,B_j)$, $(E_{1j}',p_1',B_j')$, and $(E_{1j}'',p_1'',B_j'')$ with respect to $(F_{1j},A_j)$,
\item
since $\{F_{2k}\}_{k\ge k_j}$ is Kasparov under $B_j''$ (and hence $C^*(B_j,B_j')$) by the assumption $k_j\le k(l)\le k(l_1),k(l_1')$,
\end{enumerate}
we can construct a mediator $M_3$ for all pairs
\[(\widehat F_{1j},\widehat F_{2k_jk(l_1)}^{p_1}(s)),\qquad(\widehat F_{1j},(1-s)\widehat F_{2k_j}^{p_1}+s\widehat F_{2k_j}^{p_1''}),\qquad s\in[0,1],\]
and another mediator $M_3'$ for all pairs
\[(\widehat F_{1j},\widehat F_{2k_jk(l_1')}^{p_1'}(s)),\qquad(\widehat F_{1j},(1-s)\widehat F_{2k_j}^{p_1'}+s\widehat F_{2k_j}^{p_1''}),\qquad s\in[0,1].\]
Finally,
\begin{enumerate}
\item
for a single Kasparov operator $F_{1l}$ under $A_j$,
\item
using the separable fibering $(E_{1l},p_1,B_{1l})$, which is with respect to $(F_{1l},A_j)$,
\item
because $F_{12l}$ and $\widehat F_{2k(l)}^{p_1}$ are both $F_{2k(l)}$-connections on $E_{1l}$, and because we have a Grassmann $F_{2k(l)k(l_1)}$-connection $\widehat F_{2k(l)k(l_1)}^{p_1}\in C([0,1],B(E_{12}))$ on $E_{1l}$,
\end{enumerate}
we can construct a mediator $M_1$ for all pairs
\[(F_{1l},(1-s)F_{12l}+s\widehat F_{2k(l)}^{p_1}),\qquad(F_{1l},\widehat F_{2k(l)k(l_1)}^{p_1}(s)),\qquad s\in[0,1].\]

Then, since $F_{12l}$ is $F_{1l}$-positive under $A_j$, we can check $F_{12l}$ is Connes-Skandalis operator homotopic (Theorem \ref{Theorem: Connes-Skandalis operator homotopy}) to a Kasparov operator $M_1^{1/2}\widehat F_{1l}+(1-M_1)^{1/2}F_{12l}$, which is connected to $M_3^{1/2}\widehat F_{1j}+(1-M_3)^{1/2}\widehat F_{2k_j}^{p_1''}$ by an operator homotopy $F''\in C([0,1],B(E_{12}))$ defined such that
\[F''(s):=\begin{cases}
M_1^{\frac12}\widehat F_{1l}+(1-M_1)^{\frac12}((1-8s)F_{12l}+8s\widehat F_{2k(l)}^{p_1})&\text{ if }s\in[0,\frac18],\\
M_1^{\frac12}\widehat F_{1l}+(1-M_1)^{\frac12}\widehat F_{2k(l)k(l_1)}^{p_1}(8s-1)&\text{ if }s\in[\frac18,\frac28],\\
M_2^{\frac12}\widehat F_{1li(l)}(8s-3)+(1-M_2)^{\frac12}\widehat F_{2k(l_1)}^{p_1}&\text{ if }s\in[\frac38,\frac48],\\
M_2^{\frac12}\widehat F_{0i(l)i_j}(8s-4)+(1-M_2)^{\frac12}\widehat F_{2k(l_1)}^{p_1}&\text{ if }s\in[\frac48,\frac58],\\
M_3^{\frac12}\widehat F_{1j}+(1-M_3)^{\frac12}\widehat F_{2k_jk(l_1)}^{p_1}(7-8s)&\text{ if }s\in[\frac68,\frac78],\\
M_3^{\frac12}\widehat F_{1j}+(1-M_3)^{\frac12}((8-8s)\widehat F_{2k_j}^{p_1}+(8s-7)\widehat F_{2k_j}^{p_1''})&\text{ if }s\in[\frac78,1],\\
\end{cases}\]
and on $s\in[\frac28,\frac38]\cup[\frac58,\frac68]$ the path is defined by connecting mediators linearly.
Then, since the operators $\widehat F_{1j}$ and $\widehat F_{2k_j}^{p_1''}$ used to represent the final endpoint is constructed symmetrically in the sense that it remains as it is even if we switch $l$ and $l'$ each other during the construction, and since both operators $M_3$ and $M_3'$ mediate $\widehat F_{1j}$ and $\widehat F_{2k_j}^{p_1''}$ so that these two mediators can be connected linearly, it is also operator homotopic to $F_{12l'}'$ in a symmetric way.
\end{proof}

\begin{theorem}[Well-definedness of Kasparov products]
Let $A$, $B$, and $C$ be C$^*$-algebras.
A Kasparov product of approximate Kasparov bimodules over $(A,B)$ and $(B,C)$ defines a monoid homomorphism $KK^\mathrm{fc}(A,B)\otimes KK^\mathrm{fc}(B,C)\to KK^\mathrm{fc}(A,C)$.
\end{theorem}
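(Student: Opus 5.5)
The plan has three parts: show that the class of a Kasparov product depends only on the homotopy classes of its factors, check additivity, and check that the zero class goes to zero. By the uniqueness theorem, any two Kasparov products of fixed cycles are operator homotopic, hence homotopic by Proposition \ref{Proposition: Operator homotopic homotopic}. So a map on representatives is defined, and it suffices to check invariance under the two generating moves in each variable separately.

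\textbf{Additivity and degeneracy moves.} Additivity is the first step. If $F_{12}$ and $F_{12}'$ are Kasparov products of $(E_1,F_1)$ with $(E_2,F_2)$ and of $(E_1',F_1')$ with $(E_2,F_2)$, then $F_{12}\oplus F_{12}'$ on $(E_1\oplus E_1')\otimes_BE_2$ is a Kasparov product of the direct sum. For this one takes the direct-sum coisometry and separable fiberings, and uses that positivity and the connection condition are checked summandwise. The second variable is handled symmetrically, with $E_1\otimes_B(E_2\oplus E_2')=E_{12}\oplus E_{12}'$ and the same coisometry $p_1$.

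Next, suppose the first factor is degenerate. I would use Corollary \ref{Corollary: Tuned form} to choose $F_{12(j,k)}$ equal to $\widehat F_{1j}$ off $P_{1j}E_{12}$, and shrink the fibering so that it only sees $\Sigma(F_{1j},A_j)=0$. Then on the cropped part the product is an $F_{1j}$-positive operator, and $F_{1j}$ is already degenerate. I expect a Connes--Skandalis rotation (Theorem \ref{Theorem: Connes-Skandalis operator homotopy}) to connect the product to $\widehat F_{1j}$, which is degenerate under $A_j$.

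If instead the second factor is degenerate, the Grassmann connection $\widehat F_{2k}$ satisfies $[\widehat F_{2k},a]\in K$ only on $E_{10}$. So $M^{1/2}\widehat F_1+(1-M)^{1/2}\widehat F_2$ is operator homotopic to a degenerate operator plus a compact perturbation, and this compact perturbation should be killed by the family condition. This step needs care, and the fallback is to prove instead that the product with a degenerate cycle is zero in $KK^\mathrm{fc}$. That weaker statement suffices together with additivity.

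\textbf{Homotopy moves.} This is the main obstacle. Let $(E,(F_l)_l)$ be a homotopy over $(A,C([0,1],B))$ in the first variable. I would form its Kasparov product with $(E_2\otimes_B C([0,1],B)^\sim$-pulled-back$,F_{2k})$; more precisely, the product of $E$ with the cycle $E_2$ made constant along $[0,1]$, namely $\mathrm{cyl}$-type $C([0,1],E_2)$ over $C([0,1],C)$. Then I would show that evaluation at $t=0,1$ commutes with taking Kasparov products: the evaluation of a product satisfies the four defining conditions, since fiberings, positivity and connections pass to quotients. Uniqueness then identifies the endpoints. The existence theorem applies verbatim because $E\otimes_{C([0,1],B)}C([0,1],E_2)$ is freely complementable.

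For a homotopy in the second variable, I would tensor $E_1$ with $C([0,1],B)$ and repeat the argument; this requires Grassmann connections that are norm continuous in $t$, which is the last assertion of the existence-of-connections lemma.

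The delicate points are the following:
\begin{itemize}
\item The nets along the homotopy are reindexed by cofinal maps $l\mapsto i(l)$.
\item Separable fiberings must be chosen compatibly with evaluation, so that $p_1$ evaluates to coisometries at the endpoints.
\end{itemize}
I would handle both by applying the finite down set reduction (Lemma \ref{Lemma: Finite down set reduction}) to the homotopy first, and then taking fiberings for the homotopy whose endpoint evaluations serve as fiberings for the endpoint cycles.
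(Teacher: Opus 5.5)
\paragraph{The gap: a degenerate second factor.} Your argument for this case does not go through, because there is no degenerate operator on $E_{12}$ for the product to be close to. The operator $1\otimes F_{2k}$ is not defined on $E_{12}$: $F_{2k}$ is degenerate only under a separable $B_l$, and the Grassmann connection $\widehat F_{2k}$ imitates $1\otimes F_{2k}$ only on the fibered part. So the claim ``operator homotopic to a degenerate operator plus a compact perturbation'' is unsupported, and the family condition cannot ``kill'' anything. Your fallback, that the product with a degenerate cycle is zero in $KK^\mathrm{fc}$, is exactly the statement to be proved.

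The missing idea, which the paper uses, is to reduce this case to the homotopy moves:
\begin{enumerate}
\item Add a trivial cycle $\varphi_2'$ with zero operator, so that the module of $\varphi_2\oplus\varphi_2'$ is $H\otimes\widetilde C$ with even and odd parts of equal infinite dimension.
\item Apply Lemma~\ref{Lemma: Escaping to zero left action} to get a degenerate homotopy over $(B,C([0,1],C))$ from $\varphi_2\oplus\varphi_2'$ to a cycle $\psi$ with zero left action.
\item Take the Kasparov product of this homotopy with $\varphi_1$.
\end{enumerate}
Any product with a zero-left-action cycle again has zero left action, hence is degenerate. Indeed $E_1\otimes_{\widetilde B}E_\psi=(E_1\otimes_{\widetilde B}\mathbb{C})\otimes E_\psi$, and $A$ annihilates $E_1\otimes_{\widetilde B}\mathbb{C}$ because $AE_1\subset E_1B$. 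Homotopy invariance and additivity then show the class of the product of $\varphi_1$ and $\varphi_2$ vanishes. So the homotopy moves must be proved \emph{before} the degeneracy moves, not after as in your outline.

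\paragraph{Degenerate first factor: correct, and a different route.} The paper uses the escaping lemma in both variables; your argument here is lighter. Let the finite down set reduction take each $F_{1j}$ in the degenerate tail. Then $\widehat F_{1j}=F_{1j}\otimes1$ is a degenerate Kasparov operator on $E_{12}$ under $A_j$: $\Sigma(\widehat F_{1j},A_j)=\Sigma(F_{1j},A_j)\otimes1=0$, and $C^*(\widehat F_{1j})E_{12}\subset E_{12}C$ by $BE_2\subset E_2C$. The constructed $F_{12(j,k)}$ is $F_{1j}$-positive under $A_j$. Theorem~\ref{Theorem: Connes-Skandalis operator homotopy} therefore connects the product by an operator homotopy to a degenerate approximate Kasparov bimodule. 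No escaping or cropping is needed; positivity holds on all of $E_{12}$, so the tuned form is superfluous here.

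\paragraph{Two corrections to the homotopy moves.} First, the constant extension of $\varphi_2$ must live on $\mathrm{cyl}\,E_2=E_2\otimes_CC([0,1],C)^\sim$, not on $C([0,1],E_2)$. The latter is a Hilbert module over $C([0,1],\widetilde C)$ rather than $C([0,1],C)^\sim$, so it is not an admissible cycle; the paper stresses exactly this. Second, for a homotopy in the second variable nothing needs to be done to $E_1$. You simply take the product of $\varphi_1$ over $(A,B)$ with the homotopy over $(B,C([0,1],C))$, using the same coisometry $p_1$, so no continuity of Grassmann connections in $t$ is required.
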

\begin{proof}
We prove that Kasparov products of homotopic approximate Kasparov bimodules over $(A,B)$ and $(B,C)$ are homotopic by dividing homotopy moves and degeneracy moves.
The biadditivity will be separately proved during the proof because it should be used for the addition in the degeneracy moves.

For the homotopy moves, similarly to the classical KK-theory for separable C$^*$-algebras, it follows from that the defining conditions of Kasparov product for approximate Kasparov bimodules are all invariant under the evaluation at each point $0,1\in[0,1]$.
Use the existence and uniqueness of a Kasparov product of approximate Kasparov bimodules over $(A,B)$ and $(B,C([0,1],C))$, and over $(A,C([0,1],B))$ and $(C([0,1],B),C([0,1],C))$, where the approximate Kasparov bimodule over $(C([0,1],B),C([0,1],C))$ is given on the cylinder augmented Hilbert bimodule, not on the external tensor product with $C([0,1])$.

Now we prove a Kasparov product with a degenerate approximate Kasparov bimodule is homotopic to zero.
We do not require the cancellation property for freely complementable monoids, so it does not depend on Theorem \ref{Theorem: General additive inverses}.
Let $\varphi_1$ and $\varphi_2$ be approximate Kasparov bimodules over $(A,B)$ and $(B,C)$ respectively.
Suppose first $\varphi_1$ is degenerate.
Choose a trivial Kasparov bimodule $\varphi_1'$ over $(A,B)$ with zero Kasparov operator such that the underlying Hilbert module of $\varphi_1\oplus\varphi_1'$ is unitarily equivalent to $H\otimes\widetilde B$ for $H$ a graded Hilbert space with the even and odd parts of the same infinite dimensions.
By considering escaping left actions to infinity (Lemma \ref{Lemma: Escaping to zero left action}), we can construct a degenerate approximate Kasparov bimodule $\varphi$ over $(A,C([0,1],B))$ connecting $\varphi_1\oplus\varphi_1'$ and a trivial Kasparov bimodule.
With an approximate Kasparov bimodule over $(C([0,1],B),C([0,1],C))$ defined on the cylinder augmented Hilbert bimodule from $\varphi_2$, consider a Kasparov product with $\varphi$.
Then, as we have seen in the above, it connects a Kasparov product of $\varphi_1$ and $\varphi_2$ and a trivial Kasparov bimodule after adding a trivial Kasparov bimodule.
We can prove the case that $\varphi_2$ is degenerate in the same way.

Finally, we prove the biadditivity.
Since we have seen that Kasparov products are well-defined modulo homotopy moves, and since finite down set reduction exists up to operator homotopy (Lemma \ref{Lemma: Finite down set reduction}), we may assume the approximate Kasparov bimodules in here are taken by finite down set reductions.
First to check the additivity in the first variable, fix approximate Kasparov bimodules $\varphi_1:=(E_1,(F_{1j})_j)$ and $\varphi_1':=(E_1',(F_{1j'}')_{j'})$ over $(A,B)$, and $\varphi_2:=(E_2,(F_{2k})_{k\in\mathcal{K}})$ over $(B,C)$.
We may assume $\varphi_1$ and $\varphi_1'$ have the same indexing set and there is a cofinal system of separable C$^*$-subalgebras of $A$ indexed by this, say $\mathcal{J}$.
For each $j\in\mathcal{J}$, if we consider separable fiberings $(E_{1j},p_j,B_j)$ and $(E_{1j}',p_j',B_j)$ as in Corollary \ref{Corollary: Tuned form} by enlarging to let them have same $B_j$, then by Corollary \ref{Corollary: Tuned form} we may consider Kasparov products $\varphi_{12}:=(E_{12},(F_{12l})_{l\in\mathcal{L}})$ and $\varphi_{12}':=(E_{12}',(F_{12l}')_{l\in\mathcal{L}})$ of $\varphi_1$, $\varphi_2$, and $\varphi_1'$, $\varphi_2$ respectively, where $\mathcal{L}\subset\mathcal{J}\times\mathcal{K}$.
Then, $E_{12}\oplus E_{12}'\cong(E_1\oplus E_1')\otimes_BE_2$ and the family of separable fiberings $(E_{1j}\oplus E_{1j}',p_1\oplus p_1',B_j)$ with respect to $F_{1j}\oplus F_{1j}'$ under $A_j$ show that $\varphi_{12}\oplus\varphi_{12}'$ with indexing set $\mathcal{L}$ is a Kasparov product of $\varphi_1\oplus\varphi_1'$ with indexing set $\mathcal{J}$ and $\varphi_2$.
For the additivity in the second variable, it is more simple because the same separable fibering $(E_{1j},p_1,B_j)$ can be used to prove the sum is indeed a Kasparov product.
\end{proof}

\begin{theorem}[Associativity of Kasparov products]\label{Theorem: Associativity of Kasparov products}
There exists a category whose objects are C$^*$-algebras and morphisms are homotopy classes of approximate Kasparov bimodules.
\end{theorem}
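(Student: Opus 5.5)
To prove Theorem \ref{Theorem: Associativity of Kasparov products}, we must check three things: the composition given by the well-defined Kasparov product $KK^\mathrm{fc}(A,B)\otimes KK^\mathrm{fc}(B,C)\to KK^\mathrm{fc}(A,C)$ is associative, it has identities, and the hom-classes are sets. The last point is the set theoretic well-definedness theorem. For identities, the class of $\mathrm{id}_A=(\widetilde A,0)$ works on both sides by the proposition on Kasparov products with $*$-homomorphisms, because $\widetilde A\otimes_AE=E$ and $E\otimes_B\widetilde B=E$ as augmented modules. So everything reduces to associativity.

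Let $\varphi_1=(E_1,(F_{1j})_j)$, $\varphi_2=(E_2,(F_{2k})_k)$, $\varphi_3=(E_3,(F_{3m})_m)$ be approximate Kasparov bimodules over $(A,B)$, $(B,C)$, $(C,D)$. Using Lemma \ref{Lemma: Finite down set reduction}, we may assume they are finite down set reductions, with cofinal systems $A_j$, $B_k$, $C_m$. Put $E_{123}:=E_1\otimes_BE_2\otimes_CE_3$. We would follow the classical Skandalis strategy: build a single net $F_{123}$ on $E_{123}$ that is simultaneously a Kasparov product of $\varphi_1\cdot\varphi_{23}$ and of $\varphi_{12}\cdot\varphi_3$, for suitable chosen products $\varphi_{12}$ and $\varphi_{23}$. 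The uniqueness theorem (up to operator homotopy) then identifies both iterated products, and Proposition \ref{Proposition: Operator homotopic homotopic} converts this to homotopy.

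The steps are as follows.
\begin{enumerate}
\item Construct $\varphi_{23}=(E_{23},(F_{23n})_n)$ by Corollary \ref{Corollary: Tuned form}, fixing coisometries $p_2$ for $E_2$ and $p_1$ for $E_1$. Using Lemma \ref{Lemma: Existence of separable fiberings}, choose separable fiberings $(E_{1j},p_1,B_j)$ of $E_1$ with respect to the operator homotopies $F_{1j_0j}$.
\item Construct $\varphi_{12}$ by the tuned corollary using these fiberings. Because of the last assertion of that corollary, $F_{12(j,k)}=\widehat F_{1j}$ off the complemented summand $\overline{E_{1j}B^\sim}\otimes_BE_2$. Hence a separable fibering of $E_{12}$ along the induced coisometry $p_1\otimes p_2$ can be chosen compatibly, with $E_{12,l}\supset E_{1j}\otimes E_{2k}$-type subspaces.
\item Define $F_{123}$ as $M^{1/2}(F_{12l}\otimes1)+(1-M)^{1/2}\widehat F_{3m}$, with $\widehat F_{3m}$ the Grassmann connection along $p_1\otimes p_2$. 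The mediator $M$ comes from the three-step mediation lemma, so $F_{123}$ is a Kasparov product of $\varphi_{12}$ and $\varphi_3$.
\item Verify that the same net is a Kasparov product of $\varphi_1$ and $\varphi_{23}$.
\end{enumerate}

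For (4), three things are needed. First, $F_{123}$ is an $F_{23n}$-connection on $E_{1j}$. This follows since a Grassmann connection along a composite coisometry is the composite of Grassmann connections, and connections compose modulo compacts on vectors from the fibering, using the $F_{2k}$-connection property of $F_{12}$ and the $F_{3m}$-connection property of $F_{23}$. Second, we need $F_{1j}$-positivity. Modulo $K(E_{123})$ we have $a^*[F_{123},F_{1j}\otimes1]a\equiv a^*M^{1/4}([F_{12l},F_{1j}]\otimes1)M^{1/4}a$ plus a term $a^*(1-M)^{1/2}[\widehat F_{3m},F_{1j}\otimes1]a$. The first term is $\ge0$ by positivity of $F_{12l}$. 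The second term is compact provided the mediator for the second stage is chosen so that $(1-M)$ kills $[\widehat F_{3m},F_{1j}\otimes1]a$; this is arranged by putting $[\widehat F_{3m},\widehat F_{1j}]$, which is $\widehat F_3$ against a $1$-side operator, into $J_2$. Third, the family $\{F_{23n}\}$ must be Kasparov under the $B_l$ of the fibering, which holds by choosing indices large, with a monotone index assignment built by induction over finite down sets.

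The main obstacle is this cross-compatibility: getting one net whose indices are cofinally related to both $(j,n)$ and $(l,m)$ in the sense of the definition of Kasparov products, with separable fiberings for $E_1$ and for $E_{12}$ chosen along the same coisometries, uniformly in the finite down sets. I would handle it by indexing by tuples $(j,k,m)$ with thresholds $k\ge k_j$ and $m\ge m_{(j,k)}$, chosen monotonically. I would enlarge every separable subalgebra (the $B_j$, and the corresponding $C$-subalgebras) to contain all data from both bracketings before invoking the technical theorem. The mediator $M$ must then be taken to handle all pairs $(\widehat F_{12}$-paths, $\widehat F_3)$ together with the positivity term at once, which is possible because each such pair is a compact family.
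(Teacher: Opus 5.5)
Your reductions (identities via $(\widetilde A,0)$, set-theoretic size, finite down set reductions) and step (3) are fine. Step (4), however, fails at its first item, and that item carries the whole argument. The claim that $F_{123}=M^{\frac12}(F_{12}\otimes1)+(1-M)^{\frac12}\widehat F_3$ is an $F_{23}$-connection on $E_{1j}$ does not follow from ``connections compose''. That principle applies to genuine connections such as Grassmann connections. Here $F_{123}$ is a mediated combination, and the operator it must match, $F_{23}=N^{\frac12}(F_2\otimes1)+(1-N)^{\frac12}\widehat F_3^{p_2}$, carries its own mediator $N\in B(E_{23})$, chosen independently of $M$. Grant that the defect of $F_{12}$ as an $F_2$-connection is killed by $M^{\frac12}$, and that $\widehat F_3^{p_{12}}T_{\xi_1}$ and $T_{\overline\xi_1}\widehat F_3^{p_2}$ agree where needed. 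You are still left having to show
\[\bigl(M^{\frac12}T_{\overline\xi_1}-T_{\overline\xi_1}N^{\frac12}\bigr)(F_2\otimes1)T_{\eta_1}^*+\bigl((1-M)^{\frac12}T_{\overline\xi_1}-T_{\overline\xi_1}(1-N)^{\frac12}\bigr)\widehat F_3^{p_2}T_{\eta_1}^*\in K(E_{123}).\]
This says that $M$ is itself a kind of $N$-connection, and nothing in the technical theorem produces that. Already for $\varphi_1=(\widetilde B,0)$, where $E_{123}=E_{23}$, the requirement says that two independently mediated Kasparov products of $\varphi_2$ and $\varphi_3$ agree modulo $K(E_{23})$ (after multiplying by elements of $B$). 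Uniqueness only identifies them up to operator homotopy, and for generic mediators they do not agree. So you have not produced a net that is a product for both bracketings, and the planned appeal to uniqueness has nothing to apply to. Your positivity check is closer to workable. Note, though, that $F_{12}$-positivity only holds modulo $K(E_{12})\otimes1$, so you also need the negative parts to lie in the compacts of the $E_{12}$-fibering; this is the paper's condition (iii).

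The paper goes in the opposite, classical direction.
\begin{itemize}
\item It starts from $F_{1(23)}$, a Kasparov product of $\varphi_1$ and $\varphi_{23}$ built on the fiberings $(E_{1j},p_1,B_j)$. For this operator the $F_3$-connection property over the $E_{12}$-fibering really does follow by composing connections. Its mediator kills $C^*(\Theta_{\xi_1,\eta_1}:\xi_1,\eta_1\in E_{1j})\otimes1$, and $F_{23}$ is itself an $F_3$-connection.
\item What is not automatic is $F_{12}$-positivity, and the paper repairs it rather than verifying it. Let $J_1,J_2$ be the compacts of the $E_1$- and $E_{12}$-fiberings, and let $J_3$ be generated by $[\widehat{\widehat F}_1,F_{1(23)}]$ and $a^*(-[\widehat F_{12},F_{1(23)}])^+a$. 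A mediator $M$ for $(\Delta_1\cup\Delta_2\cup\Delta_3,C^*(J_1,J_2),J_3)$ gives $F_{(12)3}=M^{\frac14}\widehat{\widehat F}_1M^{\frac14}+(1-M)^{\frac14}F_{1(23)}(1-M)^{\frac14}$.
\item This $F_{(12)3}$ is a product of $\varphi_{12}$ and $\varphi_3$ and is Connes--Skandalis homotopic to $F_{1(23)}$, which finishes the argument.
\item The hard estimate is $J_1J_{23}\subset K(E_{123})$, which uses the $F_2$-positivity of $F_{23}$.
\item The inclusions need a specific order of choices. First the tuned corollary makes $F_{12}=\widehat F_1$ off $P_{1j}E_{12}$. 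Then the $E_{12}$-fibering is chosen inside $E_{1j}\otimes_{B_j}E_2$ with (i)--(iii). Only after that is the $E_2$-fibering enlarged so that $E_{12(j,k)}\subset E_{1j}\otimes_{B_j}E_{2k}$, which is the opposite of the nesting you propose in step (2).
\end{itemize}
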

\begin{proof}
We need to check the two axioms for categories, the associativity and the identity.
The latter is clear as the identity element of $KK^\mathrm{fc}(A,A)$ can be chosen as a Kasparov bimodule $(\widetilde A,0)$ over $(A,A)$, for $A$ a C$^*$-algebra.
Now we prove the associativity.

Let $A$, $B$, $C$, and $D$ be C$^*$-algebras.
Take approximate Kasparov bimodules $(E_1,(F_{1j})_j)$, $(E_2,(F_{2k})_k)$, and $(E_3,(F_{3l})_l)$ over $(A,B)$, $(B,C)$, and $(C,D)$, respectively.
We assume that they are the results of the finite down set reductions.
By considering the product $\mathcal{I}:=\mathcal{J}\times\mathcal{K}\times\mathcal{L}$ of the three directed sets $\mathcal{J},\mathcal{K},\mathcal{L}$ parametrized by $j,k,l$ respectively, we also suppose the indices $j,k,l\in\mathcal{I}$ are in a same directed set in which every element has finite down set.
Pairs are in a subset of the power as $(j,k),(k,l)\in\mathcal{I}^2$ and $(j,k,l)\in\mathcal{I}^3$.
We may also assume that all Kasparov operators on every approximate Kasparov bimodule in the proof are self-adjoint.

Recall that, in Lemma 6.8, when we choose a separable fibering, say $(E_0,p,B_0)$, first we take a coisometry $p\in B(H\otimes\widetilde B,E)$ arbitrarily, next we take $E_0$ such that $C^*(\Theta_{\xi,\eta}:\xi,\eta\in E_0)$ contains a given separable subset of compact operators and a separable C$^*$-subalgebra is reduced by $E_0$, and at last we take $B_0$ such that $p^*E_0\subset H\otimes\widetilde B_0$.
Following this order, we may take $E_0$ and $B_0$ sufficiently large enough.
For the proof of associativity, we will choose three families of separable fiberings together with monotone assignments $\mathcal{I}\to\mathcal{I}:j\mapsto k_j$ and $\mathcal{I}\mapsto\mathcal{I}:k\mapsto l_k$, with $k_j\ge j$ and $l_k\ge k$, as follows.
First, fix even coisometries $p_1\in B(H_1\otimes\widetilde B,E_1)$ and $p_2\in B(H_2\otimes\widetilde C,E_2)$ for graded Hilbert spaces, and let $p_{12}:=(p_1\otimes_B1)(1\otimes_\mathbb{C}p_2)\in B(H_1\otimes H_2\otimes\widetilde C,E_{12})$.
\begin{enumerate}
\item
For each $j$, with respect to $\{(F_{1j_0j}(t),A_{j_0}):j_0\le j,\ t\in[0,1]\}$, we take $(E_{1j},p_1,B_j)$, where the net $B_j$ is increasing, and take $k_j$ such that $\{F_{2k}\}_{k\ge k_j}$ is Kasparov under $B_j$ and $B_j\subset B_{k_j}$.
\item
For each $(j,k)$ with $k\ge k_j$, with respect to $(F_{12(j,k)},A_j)$, we take $(E_{12(j,k)},p_{12},C_{(j,k)})$, with the conditions (i), (ii), (iii) in the below, such that
\[E_{12(j,k)}\subset E_{1j}\otimes_{B_j}E_2,\]
\item
For each $k$, with respect to $\{(F_{2k_0k}(t),B_{k_0}):k_0\le k,\ t\in[0,1]\}$, we take $(E_{2k},p_2,C_k)$, where the net $C_k$ is increasing, and take $l_k$ such that $\{F_{3l}\}_{l\ge l_k}$ is Kasparov under $C_k$ and
\[E_{12(j,k)}\subset E_{1j}\otimes_{B_j}E_{2k},\qquad C_{(j,k)}\subset C_k,\]
for all $j$ such that $k\ge k_j$.
\end{enumerate}
The paths $F_{1j_0j}$ and $F_{2k_0k}$ are suitably chosen.
Be cautious that $(B_j)_{j\in\mathcal{I}}$ and $(B_k)_{k\in\mathcal{I}}$ denote the different directed systems.
The first separable fibering can be freely taken just by using Lemma \ref{Lemma: Existence of separable fiberings} and induction on the size of $j$.
Once the second separable fibering is successfully chosen, the third separable fibering is easy to choose with the two additional inclusion assumptions because for each $k$ there are only finitely many $j$ such that $k\ge k_j$.

The choice of the second separable fibering is problematic.
Before this, we need to consider a Kasparov product $(E_{12},(F_{12(j,k)})_{(j,k)})$ of $(E_1,(F_{1j})_j)$ and $(E_2,(F_{2k})_k)$, constructed with the prescribed first separable fibering $(E_{1j},p_1,B_j)$, with the indexing set $\{(j,k):k\ge k_j\}\subset\mathcal{I}^2$.
Furthermore, if we let $P_{1j}\in B(E_{12})$ be the projection onto the complemented Hilbert submodule $E_{1j}\otimes_{B_j}E_2=E_{1j}\otimes_{B_j}\widetilde B\otimes_BE_2=\overline{E_{1j}B^\sim}\otimes_BE_2\subset E_{12}$, then it commutes with $A_j$ and $\widehat F_{1j}$, and we may assume $F_{12(j,k)}=\widehat F_{1j}$ in the corner $B((1-P_{1j})E_{12})$ for all $(j,k)$.
This is by Corollary \ref{Corollary: Tuned form}.
Then, when we crop to get $E_{12(j,k)}$ following the proof of \ref{Lemma: Existence of separable fiberings}, since $\Sigma(F_{12(j,k)},A_j)\subset B(P_{1j}E_{12})$ and $C^*(F_{12(j,k)},A_j)$ commutes with $P_{1j}$, by cropping inside $P_{1j}E_{12}=(p_1\otimes_B1)(H_0\otimes E_2)$ for some graded Hilbert subspace $H_0\subset H$, we may choose $E_{12(j,k)}\subset P_{1j}E_{12}=E_{1j}\otimes_{B_j}E_2$.
Furthermore, we need to assume that
\begin{enumerate}
\item[(i)]
$C^*(\widehat F_{1j},\widehat\Theta_{\xi_1,\eta_1}:\xi_1,\eta_1\in E_{1j})E_{12(j,k)}\subset E_{12(j,k)}$,
\item[(ii)]
$(F_{12(j,k)}T_{\xi_1}-T_{\overline\xi_1}F_{2k})T_{\eta_1}^*\in C^*(\Theta_{\xi_{12},\eta_{12}}:\xi_{12},\eta_{12}\in E_{12(j,k)})$ for all $\xi_1,\eta_1\in E_{1j}$,
\item[(iii)]
$(-a^*[F_{12(j,k)},\widehat F_{1j}]a)^+\in C^*(\Theta_{\xi_{12},\eta_{12}}:\xi_{12},\eta_{12}\in E_{12(j,k)})$ for all $a\in A_j$,
\end{enumerate}
where the hat notation denotes $-\otimes1$.
The justifications of the above three assumptions are as follows.
First, (i) follows by the commutation $[\widehat F_{1j},P_{1j}]=0$ and the inclusion $\widehat\Theta_{\xi_1,\eta_1}\in B(P_{1j}E_{12})$ for $\xi_1,\eta_1\in E_{1j}$, which makes the cropping inside $P_{1j}E_{12}$ possible.
Next, (ii) is by that $P_{1j}$ acts as a unit on $(F_{12(j,k)}T_{\xi_1}-T_{\overline\xi_1}F_{2k})T_{\eta_1}^*\in K(E_{12})$.
For the last (iii), this is because in the corner of $P_{1j}$ we have $a^*[F_{12(j,k)},\widehat F_{1j}]a\ge0$ modulo $K(E_{12})$, but in the corner of $1-P_{1j}$ the negative part literally vanishes as $a^*[F_{12(j,k)},\widehat F_{1j}]a=2a^*\widehat F_{1j}^2a\ge0$ without taking modulo $K(E_{12})$.
By this, we complete to choose the second separable fibering $(E_{12(j,k)},p_{12},C_{(j,k)})$, satisfying $E_{12(j,k)}\subset E_{1j}\otimes_{B_j}E_2$.

Now within the third and the first separable fiberings, we can consider a Kasparov product $(E_{23},(F_{23(k,l)})_{(k,l)})$ of $(E_2,(F_{2k})_k)$ and $(E_3,(F_{3l})_l)$ indexed by $\{(k,l):l\ge l_k\}$ such that $\{F_{23(k,l)}\}_{(k,l)\ge(k_j,l_{k_j})}$ is Kasparov under $B_j$ for all $j$, and a Kasparov product $(E_{123},(F_{1(23)(j,k,l)})_{(j,k,l)})$ of $(E_1,(F_{1j})_j)$ and $(E_{23},(F_{23(k,l)})_{(k,l)})$ indexed by $\{(j,k,l):k\ge k_j,\ l\ge l_k\}$ such that $\{F_{1(23)(j,k,l)}\}_{(j,k,l)\ge(j,k_j,l_{k_j})}$ is Kasparov under $A_j$.
We prove that $(E_{123},(F_{1(23)(j,k,l)})_{(j,k,l)})$ is operator homotopic to a Kasparov product of $(E_{12},(F_{12(j,k)})_{(j,k)})$ and $(E_3,(F_{3l})_l)$.

Fix $(j,k,l)$ in the directed set $\{(j,k,l):k\ge k_j,\ l\ge l_k\}$, and write $F_1:=F_{1j}$, $F_2:=F_{2k}$, $F_3:=F_{3l}$, $F_{12}:=F_{12(j,k)}$, $F_{23}:=F_{23(k,l)}$, and $F_{1(23)}:=F_{1(23)(j,k,l)}$.
Define separable C$^*$-subalgebras of $B(E_{123})$ by
\[J_1:=C^*(\widehat{\widehat\Theta}_{\xi_1,\eta_1}:\xi_1,\eta_1\in E_{1j})\subset K(E_1)\otimes1\otimes1,\qquad
J_2:=C^*(\widehat\Theta_{\xi_{12},\eta_{12}}:\xi_{12},\eta_{12}\in E_{12(j,k)})\subset K(E_{12})\otimes1,\]
and
\[J_{13}:=C^*([\widehat{\widehat F}_1,F_{1(23)}]),\qquad J_{23}:=C^*(a^*(-[\widehat F_{12},F_{1(23)}])^+a:a\in A_j),\qquad J_3:=C^*(J_{13},J_{23}).\]
For a dense sequence $a_n\in A_j\setminus\{0\}$ of $A_j$, let
\[\Delta_1:=\{(n\|a_n\|)^{-1}a_n\}_n\cup\{0\}\cup\{\widehat{\widehat F}_1\}\subset B(E_1)\otimes1\otimes1,\qquad
\Delta_2:=\{\widehat F_{12}\}\subset B(E_{12})\otimes1,\qquad
\Delta_3:=\{F_{1(23)}\}.\]
Then, using the connection conditions, we can compute
\[[\Delta_1,J_1]\subset J_1,\qquad[\Delta_2,J_1]\subset J_2,\qquad[\Delta_3,J_1]\subset K(E_{123}),\qquad[\Delta_1\cup\Delta_2,J_2]\subset J_2,\qquad[\Delta_3,J_2]\subset K(E_{123}),\]
where the conditions (i) and (ii) are used to show $[\Delta_1,J_2]\subset J_2$ and $[\Delta_2,J_1]\subset J_2$ respectively, as well as
\[J_1J_{13}\subset K(E_{123}),\qquad J_2J_{13}\subset K(E_{123}),\qquad J_2J_{23}\subset K(E_{123}),\qquad J_1J_{23}\subset K(E_{123}),\]
where the condition (i) is used again to show $J_2J_{13}\subset K(E_{123})$.
We only show the last inclusion $J_1J_{23}\subset K(E_{123})$ in detail.
Observing $F_{1(23)}$ and $F_{23}$ are an $F_{23}$-connection and an $F_3$-connection, we can check $F_{1(23)}$ commutes modulo $K(E_{123})$ with $(\widehat F_{12}T_{\xi_1}-T_{\overline\xi_1}\widehat F_2)T_{\eta_1}^*$ for all $\xi_1,\eta_1\in E_{1j}$ by (ii) and $E_{12(j,k)}\subset E_{1j}\otimes_{B_j}E_{2k}$.
Then,
\[T_{\xi_1}(T_{\xi_1}^*[\widehat F_{12},F_{1(23)}]-[\widehat F_2,F_{23}]T_{\xi_1}^*)\in K(E_{123}),\qquad a\in A_j,\ \xi_1\in E_{1j}.\]
If we multiply $(|\xi_1|^2+\varepsilon)^{-1}T_{\xi_1}^*\in B(E_{123},E_{23})$ on the left and let $\varepsilon\downarrow0$, then since $[\widehat F_2,F_{23}]$ commutes with $T_{\xi_1}^*T_{\xi_1}=|\xi_1|^2\in\widetilde B_j$ modulo $K(E_{23})$, the estimate $\|(1-|\xi_1|^2(|\xi_1|^2+\varepsilon)^{-1})T_{\xi_1}^*\|^2=\|\varepsilon^2|\xi_1|^2(|\xi_1|^2+\varepsilon)^{-2}\|\le\varepsilon/4$ implies that, for a trivial monomial $f$ of degree one and hence for any complex coefficient polynomial $f$ we get
\[T_{\xi_1}^*f([\widehat F_{12},F_{1(23)}])-f([\widehat F_2,F_{23}])T_{\xi_1}^*\in K(E_{123},E_{23}).\]
Modulo $K(E_{23})$, using the fact that $b\in B_j$ and $[\widehat F_2,F_{23}]$ commute, we can show $(-[\widehat F_2,F_{23}])^+b\equiv 0$ for all $b\in B_j$ by pure state evaluations for a commutative C$^*$-subalgebra of $Q(E_{23})$ generated by each single normal $b\in B_j$ and $[\widehat F_2,F_{23}]$.
Thus it follows by a polynomial approximation of $t\mapsto(-t)^+$ and by $a^*a\eta_1\in A_jE_{1j}\subset\overline{E_{1j}B_j}$ that
\[T_{\xi_1}T_{\eta_1}^*a^*(-[\widehat F_{12},F_{1(23)}])^+a\equiv T_{\xi_1}(-[\widehat F_2,F_{23}])^+T_{a^*a\eta_1}^*\equiv0\]
modulo $K(E_{123})$, so $J_1J_{23}\subset K(E_{123})$.

Since $E_{12(j,k)}\subset E_{1j}\otimes_BE_2$, we can see $C^*(J_1,J_2)E_{12}=J_1E_{12}$ is graded and complemented in $E_{12}$.
Therefore we can find a mediator $M$ for the triple $(\Delta_1\cup\Delta_2\cup\Delta_3,C^*(J_1,J_2),J_3)$.
Define
\[F_{(12)3(j,k,l)}:=M^{\frac14}\widehat{\widehat F}_1M^{\frac14}+(1-M)^{\frac14}F_{1(23)}(1-M)^{\frac14}.\]
Since $M$ mediates $(\Delta_1\cup\Delta_3,J_1,J_{13})$, it is a Kasparov operator, and since $M$ mediates $(\Delta_2,J_2,J_{23})$, it defines a Kasparov product because $[\Delta_2,M]$ and $(1-M)J_{23}\subset K$ together with (iii) imply the $F_{12}$-positivity, and $MJ_2\subset K$ implies the $F_3$-connection condition.
It is easy to see it is a Connes-Skandalis operator homotopic (Theorem \ref{Theorem: Connes-Skandalis operator homotopy}) to $F_{1(23)}$.
\end{proof}

\begin{definition}
The category defined in the previous theorem is called the \emph{freely complementable Kasparov category}, and is denoted by $\mathrm{KK}^\mathrm{fc}$.
\end{definition}

\begin{corollary}[Functoriality of freely complementable KK-theory]
There is a canonical functor $\mathrm{C^*Alg}\to\mathrm{KK}^\mathrm{fc}$.
\end{corollary}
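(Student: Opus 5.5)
We send a $*$-homomorphism $\varphi:A\to B$ to the class $[\varphi]\in KK^\mathrm{fc}(A,B)$ of its canonical Kasparov bimodule $(\widetilde B,0)$, with left action $A\to B\subset\widetilde B\cong B(\widetilde B)$ given by $\varphi$. This is legitimate. The module $\widetilde B$ is free over $\widetilde B$, hence freely complementable, and we grade it trivially. We have $\varphi(A)\widetilde B\subset B=\widetilde B B$, and $C^*(0)\widetilde B=0$. The support algebra is $\Sigma(0,A_0)=C^*(-\varphi(a):a\in A_0)\subset B=K(\widetilde B)$, since $K(\widetilde B)=\widetilde B$ as $\widetilde B$ is unital. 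So the constant net $0$ is Kasparov under every separable $A_0\subset A$.

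On objects the functor is the identity. By Theorem \ref{Theorem: Associativity of Kasparov products}, the identity morphism of $A$ is exactly the class of $(\widetilde A,0)$ with the inclusion as left action, which is the image of $\mathrm{id}_A$. So identities are preserved automatically.

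For composition, let $\varphi_1:A\to B$ and $\varphi_2:B\to C$. We invoke the proposition on Kasparov products with $*$-homomorphisms. It says that the Kasparov product of $(\widetilde B,0)$ over $(A,B)$ with $(\widetilde C,0)$ over $(B,C)$ may be taken to be $(\widetilde B\otimes_B\widetilde C,0\otimes1)$. Using the unital extension $\widetilde B\to\widetilde C$ of $\varphi_2$, this equals $(\widetilde C,0)$, and its left action is $\varphi_2\circ\varphi_1$. Equivalently, the product of $\varphi_1$ with any cycle $(E_2,F_2)$ is $(E_2,F_2)$ with its left action pulled back along $\varphi_1$. We should check directly that this satisfies the definition of Kasparov product, which we expect to be routine:
\begin{itemize}
\item Take $F_{1l}=0$ and $p_1=\mathrm{id}$ with $H=\mathbb{C}$.
\item Take a separable fibering $(E_{1l},p_1,B_l)$ of $\widetilde B$ containing $\varphi_1(A_0)$.
\item Positivity is trivial, since $[F_{12},0]=0$.
\item The connection condition reduces to $(F_2b-bF_2)b'\in K$ for $b,b'\in\widetilde B_l$. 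This holds because the tail of $F_{2k}$ is Kasparov under $B_l$.
\end{itemize}
Since the product is well defined on homotopy classes, this gives $[\varphi_2\circ\varphi_1]=[\varphi_1]\otimes_B[\varphi_2]$.

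We expect the only possible subtlety to be the identification $\widetilde B\otimes_B\widetilde C\cong\widetilde C$ as augmented Hilbert bimodules. This needs the convention $E_1\otimes_BE_2:=E_1\otimes_{\widetilde B}E_2$, together with the unital extension of the left action. The map $b\otimes c\mapsto\widetilde\varphi_2(b)c$ is a unitary intertwining the left actions of $A$. The functoriality then follows.
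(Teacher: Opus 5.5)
Your proposal is correct and takes the same approach as the paper. The paper assigns $(\widetilde B,0)$ to $\varphi$ and declares the functoriality conditions clear; your argument unpacks exactly that remark, checking identities via the choice of identity in Theorem \ref{Theorem: Associativity of Kasparov products} and composition via the proposition on Kasparov products with $*$-homomorphisms (with the direct check using $E_{1l}=\widetilde B_l$ and $p_1=\mathrm{id}$). One trivial slip: $\Sigma(0,A_0)=C^*(\varphi(A_0))\subset B\subset K(\widetilde B)=\widetilde B$, not $B=K(\widetilde B)$.
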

\begin{proof}
For given a $*$-homomorphism $A\to B$, we associate an element of $KK^\mathrm{fc}(A,B)$ by the Kasparov bimodule $(\widetilde B,0)$ over $(A,B)$, where the $*$-homomorphism is used to define the left action.
The functoriality conditions are clear.
\end{proof}

\section{Properties of freely complementable KK-theory}

\subsection{Additive structure}
We prove the freely complementable KK-theory is an additive category, and the canonical functor from the category of all C$^*$-algebras is additive in the sense that it preserves direct sums.
We first generalize Corollary \ref{Corollary: Additive inverses for para-unitals} from para-unital to general C$^*$-algebras.

\begin{theorem}[General additive inverses]\label{Theorem: General additive inverses}
Let $A$ and $B$ be C$^*$-algebras.
Then, the commutative monoid $KK^\mathrm{fc}(A,B)$ is always an abelian group.
\end{theorem}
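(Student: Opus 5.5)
The plan is to imitate the proof of Corollary \ref{Corollary: Additive inverses for para-unitals}, but without passing through the non-degenerate reduction. Given an approximate Kasparov bimodule $(E,(F_{0i})_i)$ over $(A,B)$ with $E=E_+\boxplus E_-$, the candidate inverse is $(E^{\mathrm{op}},(-F_{0i})_i)$ with $E^{\mathrm{op}}:=E_-\boxplus E_+$ and the same left action. The para-unital argument used the rotation
\[F_i(t)=\begin{pmatrix}F_{0i}c_t&s_t\\s_t&-F_{0i}c_t\end{pmatrix}.\]
For general $A$ this fails on $E\oplus E^{\mathrm{op}}$ because the off-diagonal identity $1$ need not satisfy $C^*(F_i(t))E\subset EB$. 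The fiber at infinity $E/EB$ is not killed by $1$, so the rotation is only allowed where the module vanishes at infinity.

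The first step is to split off the fiber at infinity using free complementability. Choose a graded complemented embedding $E\subset H\otimes\widetilde B$. Replacing $E$ by $E\oplus E'$ with $E'$ trivial (zero action, zero operator, hence degenerate) gives $E\oplus E'\cong H\otimes\widetilde B$. The class is unchanged by a degeneracy move, so we may assume $E=H\otimes\widetilde B$. Then consider the degenerate-like rotation
\[\begin{pmatrix}F_{0i}c_t&s_tX\\s_tX^*&-F_{0i}c_t\end{pmatrix},\]
where $X$ is not $1$ but an even ``unit vanishing at infinity'' for $A$ and the $F_{0i}$. Concretely, I would take $X=M^{1/2}$ for an even positive contraction $M\in B(E)$ with the following properties.
\begin{enumerate}
\item $ME\subset EB$.
\item $(1-M)a\in K(E)$ for $a$ in a given separable $A_0$.
\item $[M,F_{0i}]$ and $[M,a]$ are compact.
\end{enumerate}
Such an $M$ should come from a cropping argument as in the proof of the technical theorem (Theorem \ref{Theorem: Technical theorem}). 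Crop a countably generated complemented $S\subset E$ reducing $C^*(A_0,F_{0i})$ and containing the relevant support algebras. On $S$, use an approximate unit of $K(S)\cap B(SB)$ quasi-central for $A_0$ and $F_{0i}$, using $AE\subset EB$. On the complement of $S$, set $M=0$, since $A_0$ acts degenerately there modulo nothing.

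With this $M$, one checks that $t\mapsto F_i(t)$ is an operator homotopy under $A_0$. The checks are $(F_i(t)^2-1)a\equiv s_t^2(M-1)a\equiv0$, and the graded commutators with $a$ vanish modulo compacts. At $t=1$ the operator $\begin{pmatrix}0&M^{1/2}\\M^{1/2}&0\end{pmatrix}$ is not degenerate, but it is operator homotopic to a degenerate one on the image of the compression. Alternatively, one can escape to a trivial bimodule via Lemma \ref{Lemma: Escaping to zero left action} after adding a trivial summand.

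The main obstacle is making this uniform over the net. The choice of $M$ depends on $A_0$ and on the index $i$, so one needs a net $M_l$ indexed compatibly. The endpoint bimodules at $t=1$ must form a degenerate approximate Kasparov bimodule, which requires degenerate paths connecting the tail members. I would handle this as in the existence of Kasparov products. First apply a finite down set reduction (Lemma \ref{Lemma: Finite down set reduction}). Then choose $M_j$ mediating all paths $F_{0j_0j}(s)$ for $j_0\le j$, and connect consecutive $M_j$'s linearly using convexity, as in the convexity of mediators. The endpoint operators $\begin{pmatrix}0&M_j^{1/2}\\M_j^{1/2}&0\end{pmatrix}$ are then homotoped, via $M_j\mapsto$ a projection obtained by cropping, to degenerate operators that are pairwise connected through degenerate paths. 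This last degeneracy step is where I expect the real difficulty. If it resists, the fallback is to show directly that the $t=1$ endpoint is homotopic to a trivial bimodule by escaping its left action to infinity.
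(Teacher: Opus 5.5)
There is a genuine gap, at exactly the step you flag as the real difficulty. Whatever $M$ you use, your operator homotopy ends at a non-degenerate operator such as $\begin{pmatrix}0&M^{1/2}\\M^{1/2}&0\end{pmatrix}$ on $E\oplus E^{\mathrm{op}}$. In general that module carries no degenerate Kasparov operator at all, even after stabilizing and adding degenerate summands.

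Take $\varphi=\mathrm{id}_B=(\widetilde B,0)$. After your first step the action is $p\otimes b$ on $H\otimes\widetilde B$ with $p$ of rank one. Let $G$ be an odd operator on the total module $\mathcal{E}$ with $G\mathcal{E}\subset\mathcal{E}B$, as the Kasparov condition requires, and with $(G^2-1)a=0$ for $a\in A_0\subset B$. Apply $G^2$ to $\varepsilon\otimes b$ for a unit vector $\varepsilon\in pH$ and pair with $\varepsilon\otimes1$. This produces $u\in B$ with $ub=b$ for all $b\in A_0$, i.e.\ a local unit for $A_0$ inside $B$. Such a unit does not exist in general, e.g.\ for $B=K(H)$ and $A_0=K(H_0)$ with $\dim H_0=\infty$.

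Two consequences follow. There is no projection $Q$ with $QE\subset EB$ and $Qa=a$ to which your $M_j$ could be deformed. And Lemma \ref{Lemma: Escaping to zero left action} is unavailable, since it needs a degenerate input. So proving the endpoint is null is not a technicality; it is the whole content of the theorem. It needs a genuine homotopy over $C([0,1],B)$ in which a unit for $A_0$ is reached only in the limit while the left action leaves.

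Independently, your $M$ violates your own condition (2). In general $A_0$ acts non-compactly on the cropped complement $S^\perp$. With $M=0$ there, the diagonal $c_tF_{0i}$ gives $(F_i(t)^2-1)a=-s_t^2a\notin K(E)$ on $S^\perp$, so the complement must be left unrotated. On $S$, a single quasi-central approximate-unit element does not make $(1-M)a$ compact; you need a Kasparov-type series.

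The paper supplies the missing homotopy only once, for $\mathrm{id}_B$, and transports it with the Kasparov product. It works on $(H\otimes C([0,1],B)^\sim)^{\boxplus2}$ with:
the left action $p(t)\otimes b$ in both summands, where $p(t)$ is a strongly continuous path of rank-one projections tending strongly to $0$ as $t\to1$;
the off-diagonal operator $p(t)\otimes e_j(t)$, where $e_j$ is a continuous approximate unit of $B_j$ from a cofinal system of separable subalgebras, with $e_j(0)=0$ and $e_j(t)\uparrow1$ strictly.

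For $b\in B_{j_0}\subset B_j$, the norm convergence $\|(e_j(t)^2-1)b\|\to0$ compensates for the merely strong convergence of $p(t)$. That is what keeps the Kasparov conditions valid up to $t=1$, and the net in $j$ absorbs the non-separability. The homotopy connects $(\widetilde B\boxplus\widetilde B,0)$ plus a degenerate cycle to a zero-action cycle, so $[\mathrm{id}_B]+[\overline{\mathrm{id}}_B]=0$. Then $[\overline{\mathrm{id}}_B]\circ[\varphi]$ inverts any $[\varphi]\in KK^\mathrm{fc}(A,B)$ by biadditivity and associativity of the product.

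Your candidate $(E^{\mathrm{op}},(-F_{0i})_i)$ is the right inverse: up to the grading unitary it represents $[\overline{\mathrm{id}}_B]\circ[\varphi]$. But the proposal does not prove it is an inverse. The paper's route also avoids handling a general $E$, its fiber at infinity, and the uniformity of your $M_j$ over the net by hand.
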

\begin{proof}
Let $\mathrm{id}_B:=(\widetilde B\boxplus0,0)$ and $\overline{\mathrm{id}}_B:=(0\boxplus\widetilde B,0)$ be Kasparov bimodules over $(B,B)$, where the left action is given by the left multiplication of $B$.
We prove their direct sum $(\widetilde B\boxplus\widetilde B,0)$ is null-homotopic over $(B,B)$.
Let $p:[0,1)\to B(H)$ be a ($\sigma$-)strongly continuous path of rank-one projections on a separable infinite-dimensional Hilbert space such that $p(t)\to0$ as $t\to1$, obtained by translation in $H\cong L^2(\mathbb{R})$ for example.
Let $B_j\subset B$ be a cofinal system of separable C$^*$-subalgebras, and let $e_j\in C_b([0,1),B_j)$ be a continuous approximate unit of $B_j$ for each $j$ in the sense that $e_j(0)=0$ and $e_j(t)\uparrow1$ strictly in $M(B_j)$ as $t\to1$.
Define an approximate Kasparov bimodule $(E,(F_j)_j)$ over $(B,C([0,1],B))$ by
\[E:=(H\otimes C([0,1],B)^\sim)^{\boxplus2},\qquad\varphi(b)(t):=(p(t)\otimes b)\oplus(p(t)\otimes b),\qquad F_j(t):=\begin{bmatrix}0&p(t)\otimes e_j(t)\\p(t)\otimes e_j(t)&0\end{bmatrix}\]
with $\varphi(b)(1)=F_j(1)=0$ for $b\in B$ and $t\in[0,1)$, where $\varphi:B\to B(E)$ denotes the left action.
Then, it connects a direct sum of $(\widetilde B\boxplus\widetilde B,0)$ with a degenerate Kasparov bimodule at $t=0$ and a zero left action Kasparov bimodule at $t=1$.
Therefore, we have $[\mathrm{id}_B]+[\overline{\mathrm{id}}_B]=0$ in $KK^\mathrm{fc}(B,B)$.
By the Kasparov product, every $[\varphi]\in KK^\mathrm{fc}(A,B)$ has an additive inverse $[\overline{\mathrm{id}}_B]\circ[\varphi]$.
\end{proof}

\begin{proposition}[Additive structure]
The category $\mathrm{KK}^\mathrm{fc}$ is additive, and the functor $\mathrm{C^*Alg}\to\mathrm{KK}^\mathrm{fc}$ preserves direct sums.
\end{proposition}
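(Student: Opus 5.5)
We need to show three things: $\mathrm{KK}^\mathrm{fc}$ has a zero object, finite biproducts exist, and the functor from $\mathrm{C^*Alg}$ sends $A_1\oplus A_2$ to the biproduct. By Theorem \ref{Theorem: General additive inverses} and the well-definedness theorem, every hom-set is an abelian group and composition is biadditive. So $\mathrm{KK}^\mathrm{fc}$ is a preadditive category, and it suffices to exhibit biproducts and a zero object.

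\emph{Zero object.} We show that $0$ is a zero object. For any $B$, an approximate Kasparov bimodule over $(0,B)$ has zero left action, so its support algebra $\Sigma(F_i,\{0\})=\{0\}$ is trivial. Hence it is degenerate, and by the degeneracy move it is homotopic to the zero module; thus $KK^\mathrm{fc}(0,B)=0$. Dually, over $(A,0)$ the module $E$ is a Hilbert module over $\widetilde 0=\mathbb{C}$, and $AE\subset E\cdot0=0$ forces the left action to vanish. Also $K(E)$ consists of genuinely compact operators on a Hilbert space, but we do not need this: the left action is zero, so the cycle is degenerate again, and $KK^\mathrm{fc}(A,0)=0$.

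\emph{Biproducts.} Let $A=A_1\oplus A_2$, with inclusions $\iota_k:A_k\to A$ and projections $\pi_k:A\to A_k$. We have $\pi_k\iota_k=\mathrm{id}$, and $\pi_k\iota_{k'}=0$ for $k\ne k'$; here the zero $*$-homomorphism gives $(\widetilde A_k,0)$, which we must check is the zero class. Its left action is zero, so it is degenerate and hence homotopic to zero. It remains to prove $[\iota_1\pi_1]+[\iota_2\pi_2]=[\mathrm{id}_A]$ in $KK^\mathrm{fc}(A,A)$. By the Kasparov product with $*$-homomorphisms, $[\iota_k\pi_k]$ is represented by $(\widetilde A,0)$ with left action $a\mapsto\iota_k\pi_k(a)$. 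So the sum is $(\widetilde A\oplus\widetilde A,0)$ with action $\iota_1\pi_1\oplus\iota_2\pi_2$, and we must connect it to $(\widetilde A,0)$ plus degenerate summands.

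Let $q_k\in M(A)$ be the central projections, and set $p=(q_1,q_2)$ and $p'=(q_2,q_1)$. The module $\widetilde A\oplus\widetilde A$ splits orthogonally into $(q_1\widetilde A\oplus q_2\widetilde A)\oplus(q_2\widetilde A\oplus q_1\widetilde A)$. Here $q_k\widetilde A$ is complemented, since $q_k$ acts as an adjointable projection on $\widetilde A$. The left action lives on the first part, while the second part carries zero action and $F=0$, so it is a degenerate summand. The first part is $q_1\widetilde A\oplus q_2\widetilde A$. We compare it with $\widetilde A=q_1\widetilde A\oplus(1-q_1)\widetilde A$, where $1-q_1\ge q_2$ and the complement $(1-q_1-q_2)\widetilde A$ has zero action. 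So
\[(\widetilde A,0)\cong(q_1\widetilde A\oplus q_2\widetilde A,0)\oplus((1-q_1-q_2)\widetilde A,0),\]
and the second summand is degenerate. Thus both sides agree modulo degeneracy moves. Free complementability of all the summands holds because each is a complemented summand of $\widetilde A$ or $\widetilde A^{\oplus2}$, which are free.

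\emph{Expected obstacle.} The main subtlety is that $q_1+q_2$ is the unit of $M(A)$ but not of $\widetilde A$: the "fiber at infinity" $1-q_1-q_2$ is nonzero. The argument above handles this by splitting it off as a zero-action degenerate summand. One must also check that the graded structure (all even here) and the condition $AE\subset EB$ hold for each summand. Functoriality on morphisms then shows that $\mathrm{C^*Alg}\to\mathrm{KK}^\mathrm{fc}$ preserves these biproduct diagrams, which completes the proof.
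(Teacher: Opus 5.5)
Your overall reduction is sound. The hom-sets are abelian groups (Theorem~\ref{Theorem: General additive inverses}) and composition is biadditive, so it suffices to exhibit a zero object and the biproduct identities. Your arguments for $KK^\mathrm{fc}(0,B)=KK^\mathrm{fc}(A,0)=0$ and for $[\pi_k\iota_{k'}]=0$ are fine.

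The gap is in the key identity $[\iota_1\pi_1]+[\iota_2\pi_2]=[\mathrm{id}_A]$. The central projections $q_k\in M(A)$ do not act on $\widetilde A$ at all. Since $\widetilde A$ is unital, $B(\widetilde A)=K(\widetilde A)=\widetilde A$, and $q_k\notin\widetilde A$ as soon as $A_1$ and $A_2$ are both non-unital. Hence $q_k\widetilde A$ is not a complemented submodule of $\widetilde A$, and ``$1-q_1-q_2$'' is not an operator on $\widetilde A$; in $M(A)$ it is simply $0$.

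Concretely, for $A=C_0(\mathbb{R})\oplus C_0(\mathbb{R})$ one has $\widetilde A\cong C(S^1\vee S^1)$, which has no nontrivial projections. So the splittings of $\widetilde A$ and $\widetilde A^{\oplus2}$ you write down do not exist. In fact $(\widetilde A^{\oplus2},0)$ with action $\mathrm{id}_A\oplus0$ and with action $\iota_1\pi_1\oplus\iota_2\pi_2$ are not even unitarily equivalent. An intertwining unitary $U\in M_2(C(S^1\vee S^1))$ would satisfy $U(x)e_1\in\mathbb{T}e_1$ on one circle and $U(x)e_1\in\mathbb{T}e_2$ on the other, which is impossible by continuity at the wedge point. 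So the fiber at infinity you correctly identified cannot be split off and discarded by degeneracy moves; it has to be moved by a genuine homotopy.

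The missing ingredient is the rotation homotopy the paper uses. Its proof of injectivity of $KK^\mathrm{fc}(A,B\oplus C)\to KK^\mathrm{fc}(A,B)\oplus KK^\mathrm{fc}(A,C)$ consists exactly of proving your identity. On $\mathrm{cyl}(\widetilde A^{\oplus2})=(C([0,1],A)^\sim)^{\oplus2}$, take the zero operator and the left action
\[\varphi(a)(t):=\begin{pmatrix}\iota_1\pi_1(a)+c_t^2\,\iota_2\pi_2(a)&c_ts_t\,\iota_2\pi_2(a)\\s_tc_t\,\iota_2\pi_2(a)&s_t^2\,\iota_2\pi_2(a)\end{pmatrix},\qquad c_t:=\cos\frac{\pi t}2,\quad s_t:=\sin\frac{\pi t}2.\]
This is a $*$-homomorphism because $\iota_1\pi_1$ and $\iota_2\pi_2$ have orthogonal ranges and the scalar matrix is a projection. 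Its entries are norm continuous paths in $A\subset K(\widetilde A)$, so it is compact on the cylinder and gives a Kasparov bimodule over $(A,C([0,1],A))$. At $t=0$ it is $(\widetilde A,0)$ with identity action plus the degenerate zero-action summand $(\widetilde A,0)$. At $t=1$ it is the direct sum representing $[\iota_1\pi_1]+[\iota_2\pi_2]$.

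With this replacement your argument is complete. It is also slightly more economical than the paper's, which checks that both canonical comparison maps are isomorphisms and so needs a separate rotation in the first variable as well.
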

\begin{proof}
The zero object is given by the zero C$^*$-algebra, so to prove $\mathrm{KK}^\mathrm{fc}$ is semi-additive, we show for C$^*$-algebras $A$, $B$, and $C$ we have natural group homomorphisms
\[KK^\mathrm{fc}(A\oplus B,C)\to KK^\mathrm{fc}(A,C)\oplus KK^\mathrm{fc}(B,C),\qquad KK^\mathrm{fc}(A,B\oplus C)\to KK^\mathrm{fc}(A,B)\oplus KK^\mathrm{fc}(A,C)\]
determined by functoriality are isomorphisms.
Because each isomorphism implies $A\oplus B$ is a coproduct of $A$ and $B$ and $B\oplus C$ is a product of $B$ and $C$ in $\mathrm{KK}^\mathrm{fc}$ respectively, it follows that the finite direct sum of C$^*$-algebras defines a finitary biproduct in $\mathrm{KK}^\mathrm{fc}$.
The statement that the functor preserves direct sums is automatic.
Surjectivity of both homomorphisms are proved just by considering direct sum of approximate Kasparov bimodules.

(Injectivity of first homomorphism)
Let $(E_0,(F_{0i})_i)$ be an approximate Kasparov bimodule over $(A\oplus B,C)$ with the left action $\varphi_0:A\oplus B\to B(E_0)$.
Define an approximate Kasparov bimodule $(E,(F_i)_i)$ over $(A\oplus B,C([0,1],C))$ by
\[E:=\mathrm{cyl}(E_0\oplus E_0),\qquad\varphi(a,b)(t):=\begin{pmatrix}\varphi_0(a,0)+c_t^2\varphi_0(0,b)&c_ts_t\varphi_0(0,b)\\s_tc_t\varphi_0(0,b)&s_t^2\varphi_0(0,b)\end{pmatrix},\qquad F_i(t):=F_{0i}\oplus F_{0i},\]
for $(a,b)\in A\oplus B$ and $t\in[0,1]$, where $c_t:=\cos\frac{\pi t}2$ and $s_t:=\sin\frac{\pi t}2$.
Then, at $t=0$ it is a direct sum of $(E,(F_i)_i)$ with a degenerate approximate Kasparov bimodule, and at $t=1$ it is the direct sum of the restrictions of left actions to $A$ and $B$ respectively, so the injectivity follows.

(Injectivity of second homomorphism)
Even though we can prove directly a canonical approximate Kasparov bimodule with the underlying Hilbert module
\[(E\otimes_{B\oplus C}\widetilde B\otimes_B(B\oplus C)^\sim)\oplus(E\otimes_{B\oplus C}\widetilde C\otimes_C(B\oplus C)^\sim)\cong E\oplus((E/E(B\oplus C))\otimes(B\oplus C)^\sim)\]
is homotopic to a given approximate Kasparov bimodule $(E,(F_i)_i)$ over $(A,B\oplus C)$, because it needs a complicated argument using the free complementability of $E$ to establish the above unitary equivalence, we provide a simple proof by applying the Kasparov product as follows.
Let $\varphi_B:(b,c)\mapsto(b,0)$ and $\varphi_C:(b,c)\mapsto(0,c)$ be $*$-endomorphisms on $B\oplus C$.
If we prove $[\varphi_B]+[\varphi_C]=[\mathrm{id}_{B\oplus C}]$, then the proof is complete.
Let $D:=B\oplus C$ and define a Kasparov bimodule $((C([0,1],D)^\sim)^{\oplus2},0)$ over $(D,C([0,1],D))$ such that the left action is
\[\varphi(d)(t):=\begin{pmatrix}\varphi_B(d)+c_t^2\varphi_C(d)&c_ts_t\varphi_C(d)\\s_tc_t\varphi_C(d)&s_t^2\varphi_C(d)\end{pmatrix}\]
for $d\in D$ and $t\in[0,1]$, where $c_t:=\cos\frac{\pi t}2$ and $s_t:=\sin\frac{\pi t}2$.
Then, at $t=0$ it is a direct sum of $\mathrm{id}_D$ and a degenerate Kasparov bimodule, and at $t=1$ it represents $[\varphi_B]+[\varphi_C]$.
\end{proof}

The rotation trick in the above can be found for example in Lemma 4.1.9 of \cite{MR1124848}.
Instead of directly giving definition of direct sum for approximate Kasparov bimodules, the abelian group structure $KK^\mathrm{fc}(A,B)\oplus KK^\mathrm{fc}(A,B)\to KK^\mathrm{fc}(A,B)$ on $KK^\mathrm{fc}(A,B)$ also can be described by either a $*$-homomorphism $A\to A\oplus A:a\mapsto(a,a)$ or a Kasparov bimodule $(\widetilde B\oplus\widetilde B,0)$ over $(B\oplus B,B)$.
A canonical morphism $B\oplus B\to B$ does not exist in $\mathrm{C^*Alg}$, but exists in $\mathrm{KK}^\mathrm{fc}$ so that the direct sum becomes a biproduct.

\subsection{Symmetric monoidal structures}

Recall that $\mathrm{Corr}^\mathrm{ndg}$ and its subcategory $\mathrm{Corr}_\mathrm{para}^\mathrm{fc,ndg}$ are symmetric monoidal categories.
The monoidal functor can be both minimal or maximal tensor product.
Let $\mathrm{KK}_\mathrm{para}^\mathrm{fc}$ be the full subcategory of $\mathrm{KK}^\mathrm{fc}$ given by the image of the restriction of the freely complementable KK-theory functor $\mathrm{C^*Alg}_\mathrm{para}\to\mathrm{KK}_\mathrm{para}^\mathrm{fc}$.
If we restrict our theory to para-unital C$^*$-algebras, which allows us to remove the fiber at infinity, then the well-defined external tensor products provide symmetric monoidal structures on $\mathrm{KK}_\mathrm{para}^\mathrm{fc}$, as well as the category of non-degenerate Hilbert bimodule.
The fastest way to check the existence of the symmetric monoidal structures on the usual KK-theory is to use its universal property, but since freely complementable KK-theory has no universal property, we need an analytic proof involving the technical theorem.
This argument is necessary even in the classical KK-theory.

\begin{lemma}[Whiskering]
Let $D$ be a para-unital C$^*$-algebra and fix $*\in\{\mathrm{min},\mathrm{max}\}$.
Then, the operation $(E,(F_i)_i)\mapsto(E,(F_i)_i)\otimes_*D:=(E\otimes_*D,(F_i\otimes1)_i)$ on non-degenerate approximate Kasparov bimodules defines an additive functor on $\mathrm{KK}_\mathrm{para}^\mathrm{fc}$ by preserving Kasparov products.
\end{lemma}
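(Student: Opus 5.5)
We check three things in turn: that $E\otimes_*D$ is an admissible underlying module, that tensoring with $D$ respects the homotopy relation, and that it commutes with Kasparov products and the canonical identities.

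\emph{Well-definedness on cycles.} Let $(E,(F_i)_i)$ be a non-degenerate approximate Kasparov bimodule over $(A,B)$ with $A,B$ para-unital. By the non-degenerate reduction (Theorem \ref{Theorem: Non-degenerate reduction}) we may assume $E\subset EB$. Then $E$ is a direct sum of countably generated modules over $B$, by the Kaplansky corollary. Since $D$ is para-unital, $E\otimes_*D$ is freely complementable over $(B\otimes_*D)^\sim$, as in the monoidal proposition for $\mathrm{Corr}_\mathrm{para}^\mathrm{fc,ndg}$.

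The left action of $A\otimes_*D$ is the canonical map into $M(K(E)\otimes_*K(D))=B(E\otimes_*D)$, using Lemma \ref{Lemma: Compact operators on external tensor products}. We need the Kasparov conditions under each separable $A_0'\subset A\otimes_*D$. Such an $A_0'$ lies in $C^*(A_0\odot D_0)$ for some separable $A_0\subset A$ and $D_0\subset D$. For elementary tensors we have
\[[F_i\otimes1,a\otimes d]=[F_i,a]\otimes d\in K(E)\otimes_*D\subset K(E\otimes_*D),\]
and similarly for $(F-F^*)(a\otimes d)$ and $(F^2-1)(a\otimes d)$. The operator homotopies connecting the $F_i$ tensor up to operator homotopies $F_{ii'}\otimes1$. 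Hence the same tail index $i_0$ for $A_0$ works for $A_0'$.

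\emph{Homotopy invariance.} Degenerate cycles go to degenerate cycles, because the support algebra of $F\otimes1$ is generated by the elements $\Sigma(F,A_0)\otimes D$. Direct sums are preserved, which gives additivity once well-definedness is known. For a homotopy over $(A,C([0,1],B))$, first apply non-degenerate reduction so the module lies inside $E\,C([0,1],B)$. Then tensoring with $D$ gives a homotopy over $(A\otimes_*D,C([0,1],B\otimes_*D))$. This uses $C([0,1],B)\otimes_*D\cong C([0,1],B\otimes_*D)$, which holds by nuclearity of $C([0,1])$. Evaluation at the endpoints commutes with $-\otimes_*D$.

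\emph{Compatibility with Kasparov products.} This is the main obstacle. Let $(E_{12},(F_{12l})_l)$ be a Kasparov product of $(E_1,(F_{1j}))$ and $(E_2,(F_{2k}))$. We show that $(E_{12}\otimes_*D,(F_{12l}\otimes1)_l)$ is a Kasparov product of the whiskered cycles, using $(E_1\otimes_*D)\otimes_{B\otimes D}(E_2\otimes_*D)\cong E_{12}\otimes_*D$. There are three ingredients.
\begin{enumerate}
\item Positivity. $(a\otimes d)^*[F_{12l}\otimes1,F_{1l}\otimes1](a\otimes d)=a^*[F_{12l},F_{1l}]a\otimes d^*d$ is positive modulo $K(E_{12})\otimes D$. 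To pass from elementary tensors to general elements of $A_0'$, use the Calkin algebras: $A_0\odot D_0$ acts in $Q$ through a tensor product of commuting pieces.
\item Connection property. For generators $\xi_1\otimes d$, the difference $(F_{12}\otimes1)T_{\xi_1\otimes d}-T_{\xi_1\otimes d}(F_2\otimes1)$ factors as the old difference tensored with $d$, so it lies in $K\otimes D$.
\item Separable fibering. Build a new fibering $(E_{1l}\otimes D_0,\,p_1\otimes p_D,\,B_l\otimes D_0)$. Here $p_D$ is a coisometry onto $D$, which exists by para-unitality, and $D_0$ is chosen to contain the relevant separable data via cropping, as in Lemma \ref{Lemma: Existence of separable fiberings}.
\end{enumerate}
Uniqueness of Kasparov products then identifies the whiskered product with the product of the whiskered cycles, so the functor preserves composition.

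Identities are handled by the non-degenerate representative $(A,0)$ of $\mathrm{id}_A$: it maps to $(A\otimes_*D,0)$, which is the non-degenerate representative of $\mathrm{id}_{A\otimes_*D}$. This completes the functoriality.

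The delicate points are two. The first is the positivity statement for non-elementary elements of $A_0'$; we reduce it by noting that positivity only needs to be checked on a dense set of $a$, combined with the estimate $(-x)^+a^*a\equiv0$ from the Connes–Skandalis proof. The second is arranging the fibering so that $B_l\otimes D_0$ is separable.
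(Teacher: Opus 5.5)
Your proposal is correct and follows essentially the same route as the paper. Both use the inclusion $\Sigma(F_i\otimes1,A_0\otimes_*D_0)\subset\Sigma(F_i,A_0)\otimes_*D_0$ for the cycle condition, then tensor the given separable fibering with a fibering for $D$ along $p_1\otimes p_D$ to show the whiskered product is a Kasparov product of the whiskered cycles; your treatment of homotopy moves, identities, and positivity on non-elementary elements of $A_0\odot D_0$ fills in points the paper leaves implicit.

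One thing needs tightening. The $D$-side should be a genuine separable fibering $(E_0,p_D,D_1)$ of the identity bimodule $D$ with respect to the zero operator under $D_0$, with module piece $E_0$ and coefficient algebra $D_1$ chosen separately as in the paper, giving $(\overline{E_{1l}\odot E_0},p_1\otimes p_D,\overline{B_l\odot D_1})$. A single subalgebra $D_0$ in both slots need not satisfy $\overline{D_0\widetilde D}=p_D(H_0\otimes\widetilde D)$ and $p_D^*D_0\subset H_0\otimes\widetilde D_0$ at the same time. Once the slots are separated, separability of $\overline{B_l\odot D_1}$ is automatic, not a delicate point.
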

\begin{proof}
First, $(E\otimes_*D,(F_i\otimes1)_i)$ satisfies the condition for an approximate Kasparov bimodule because $\Sigma(F_i\otimes1,A_0\otimes_*D_0)\subset\Sigma(F_i,A_0)\otimes_*D_0$, where $A_0\subset A$ and $D_0\subset D$ are separable C$^*$-subalgebras, and $i$ is a suitable index.
We can also check the four conditions for Kasparov products are preserved by the external tensor product with the identity $*$-homomorphism on $D$, sketched as follows.
Suppose $\varphi_{12}=(E_{12},(F_{12l})_k)$ is a Kasparov product of $\varphi_1=(E_1,(F_{1j})_j)$ and $\varphi_2=(E_2,(F_{2k})_k)$, which are over $(A,B)$ and $(B,C)$ respectively.
Let $A_0\subset A$ and $D_0\subset D$ be separable C$^*$-subalgebras, and choose a separable fibering $(E_0,p_D,D_1)$ with respect to the zero Kasparov operator under $D_0$ on the identity Kasparov bimodule $D$ over $(D,D)$, for a fixed coisometry $p_D\in B(H\otimes\widetilde D,D)$.
If $(E_{1l},p_1,B_l)$ is a separable fibering used in the Kasparov product of $\varphi_1$ and $\varphi_2$, then the separable fibering $(\overline{E_{1l}\odot E_0},p_1\otimes p_D,\overline{B_l\odot D_1})$ satisfies the four conditions so that $\varphi_{12}\otimes_*D$ is a Kasparov product of $\varphi_1\otimes_*D$ and $\varphi_2\otimes_*D$.
\end{proof}

\begin{proposition}[Monoidal structures]
The category $\mathrm{KK}_\mathrm{para}^\mathrm{fc}$ is symmetric monoidal, and the functor $\mathrm{C^*Alg}_\mathrm{para}\to\mathrm{KK}_\mathrm{para}^\mathrm{fc}$ is strict symmetric monoidal, with respect to either the maximal or minimal tensor product.
\end{proposition}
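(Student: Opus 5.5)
We will build the monoidal product on $\mathrm{KK}_\mathrm{para}^\mathrm{fc}$ from the Whiskering lemma and the usual interchange argument. For para-unital $A_1,A_2,B_1,B_2$ and classes $x_1\in KK^\mathrm{fc}(A_1,B_1)$, $x_2\in KK^\mathrm{fc}(A_2,B_2)$, we first use the Non-degenerate reduction (Theorem \ref{Theorem: Non-degenerate reduction}) to represent $x_1$ and $x_2$ by non-degenerate approximate Kasparov bimodules. We then define
\[x_1\otimes_*x_2:=(x_1\otimes_*B_2)\circ(A_1\otimes_*x_2)\in KK^\mathrm{fc}(A_1\otimes_*A_2,B_1\otimes_*B_2).\]
Both whiskerings are available: the Whiskering lemma handles the right side, and the left side follows by the swap unitary of the external tensor product. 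Para-unitality of $A_1\otimes_*A_2$ and $B_1\otimes_*B_2$ comes from the proposition on monoidal structures of $\mathrm{Corr}_\mathrm{para}^\mathrm{fc,ndg}$. Well-definedness on homotopy classes is part of the Whiskering lemma. Homotopies pass through whiskering because $C([0,1],B)\otimes_*D\cong C([0,1],B\otimes_*D)$ and cylinders of non-degenerate modules are $C([0,1],E)$. Degenerate bimodules stay degenerate because $\Sigma(F\otimes1,A_0\otimes D_0)\subset\Sigma(F,A_0)\otimes D_0$.

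The key step, and the main obstacle, is the interchange law
\[(x_1\otimes_*B_2)\circ(A_1\otimes_*x_2)=(B_1\otimes_*x_2)\circ(x_1\otimes_*A_2).\]
Once it holds, bifunctoriality follows from functoriality of each whiskering, which is the ``preserving Kasparov products'' clause of the Whiskering lemma. To prove it, we will show that both sides are Kasparov products of the same pair of cycles. Both underlying modules are canonically $E_1\otimes_*E_2$. We will then construct one explicit approximate Kasparov bimodule on $E_1\otimes_*E_2$ and verify directly that it satisfies the four conditions of the Kasparov product definition for both factorizations. Uniqueness of Kasparov products then identifies the two sides.

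Concretely, we first pass to finite down set reductions (Lemma \ref{Lemma: Finite down set reduction}) $F_{1j}$ and $F_{2k}$ indexed by cofinal separable systems $A_{1j}$ and $A_{2k}$. For each $(j,k)$ we take the separable subalgebra $A_{1j}\otimes A_{2k}$ and separable fiberings of $E_1$ and $E_2$ obtained from Lemma \ref{Lemma: Existence of separable fiberings}. We then use the graded tensor operators $\widehat F_1=F_{1j}\otimes1$ and $\widehat F_2=1\otimes F_{2k}$, which are already genuine connections because the external tensor product involves no balancing. Applying the Technical theorem (Theorem \ref{Theorem: Technical theorem}) gives a mediator $M$ for the pair. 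With it we set
\[F_{(j,k)}:=M^{1/2}\widehat F_1+(1-M)^{1/2}\widehat F_2.\]
By the Three-step mediation lemma, $F_{(j,k)}$ is $\widehat F_1$-positive and a $\widehat F_2$-connection, which is exactly the Kasparov product of $x_1\otimes B_2$ with $A_1\otimes x_2$ after the appropriate swap. The symmetric conditions for the other order are also satisfied by the same operator: $\widehat F_2$-positivity holds since $[\widehat F_1,\widehat F_2]=0$, and the connection condition for $\widehat F_1$ holds because $MJ_1\subset K$ exactly mirrors $(1-M)J_2\subset K$. If the positivity condition for the second order fails directly, we will instead use the Connes-Skandalis operator homotopy (Theorem \ref{Theorem: Connes-Skandalis operator homotopy}) to connect $F_{(j,k)}$ with the operator built from $1-M$. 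The delicate bookkeeping is making the index sets and fiberings compatible across both orders; we handle it as in the proof of associativity, by choosing fiberings on a common product directed set with finite down sets.

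Finally, the coherence data come from the category of C$^*$-algebras. The associator, unitors (with unit $\mathbb{C}$) and symmetry are the images of the corresponding $*$-isomorphisms. They are natural in $\mathrm{KK}_\mathrm{para}^\mathrm{fc}$ because conjugating a cycle by the canonical unitary of external tensor products preserves all the Kasparov conditions. The pentagon, hexagon and triangle identities hold already in $\mathrm{C^*Alg}_\mathrm{para}$ and are preserved by the functor. Strictness of $\mathrm{C^*Alg}_\mathrm{para}\to\mathrm{KK}_\mathrm{para}^\mathrm{fc}$ then holds because, for $*$-homomorphisms $\varphi_1$ and $\varphi_2$ represented by $(A_1B_1,0)$ and $(A_2B_2,0)$, the product $x_1\otimes_*x_2$ is represented by $((A_1\otimes A_2)(B_1\otimes B_2),0)$. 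This is the canonical cycle of $\varphi_1\otimes\varphi_2$, by the Kasparov product with $*$-homomorphisms proposition.
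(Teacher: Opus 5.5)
Your plan is essentially the paper's proof: you define $x_1\otimes_*x_2$ as a composite of whiskerings, and you obtain the interchange law from a single net $M^{1/2}(F_1\otimes1)+(1-M)^{1/2}(1\otimes F_2)$ on $E_1\otimes_*E_2$ that is simultaneously a Kasparov product for both factorizations; coherence and strictness are then transported from $\mathrm{C^*Alg}_\mathrm{para}$.

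The one point to tighten is where $M$ comes from.
\begin{itemize}
\item It should come from the technical theorem applied on $(E_1\otimes_*A_2)\otimes_{B_1\otimes_*A_2}(B_1\otimes_*E_2)$ with the symmetric choice $J_1=C^*(\Theta_{\xi_1,\eta_1}\otimes1)$, $J_2=C^*(1\otimes\Theta_{\xi_2,\eta_2})$, taken over separable fiberings of $E_1$ and $E_2$. With this choice $[\Delta_2,J_1]=0$ and $J_1J_2\subset K$.
\item $\Delta$ must contain the connecting operator homotopies, so that the net satisfies the Kasparov family condition.
\item The Three-step mediation lemma is the wrong source: its $J_2$ is only the support algebra of $\widehat F_2$. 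It therefore does not yield the mirrored $F_1\otimes1$-connection condition.
\item Positivity holds in both orders automatically, because $[F_1\otimes1,1\otimes F_2]=0$. The Connes--Skandalis fallback is not needed.
\end{itemize}
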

\begin{proof}
We always assume the non-degenerate reduction.
The monoidal bifunctor on $\mathrm{KK}_\mathrm{para}^\mathrm{fc}$ given by the external tensor product is well-defined since we can show the external tensor product of homotopic non-degenerate approximate Kasparov bimodules by dividing the cases of homotopy moves and degeneracy moves.
For the interchange law, there is already a well-defined interchanging unitary equivalence between the underlying Hilbert bimodules, so the problem is about Kasparov operators of Kasparov products.
Let $(E_{12},(F_{12l})_l)$ be a Kasparov product of $(E_1,(F_{1j})_j)$ and $(E_2,(F_{2k})_k)$.
Fix $*\in\{\mathrm{min},\mathrm{max}\}$.
We prove the interchange of whiskering.

Let $A_1$, $A_2$, $B_1$, and $B_2$ be para-unital C$^*$-algebras.
Let $\mathcal{I}$ be the directed set of finite subsets of $(K(\ell^2(\mathbb{N}))\otimes\widetilde A_1)\times(K(\ell^2(\mathbb{N}))\otimes\widetilde A_2)$, and for each $i\in\mathcal{I}$ we can define cofinal systems of separable C$^*$-subalgebras $A_{1i}\subset A_1$ and $A_{2i}\subset A_2$, so that $A_i:=C^*(A_{1i}\odot A_{2i})\subset A_1\otimes_*A_2$ also defines a cofinal system of separable C$^*$-subalgebras.
Let $(E_1,(F_{1i})_i)$ and $(E_2,(F_{2i})_i)$ be non-degenerate approximate Kasparov bimodules over $(A_1,B_1)$ and $(A_2,B_2)$ respectively, whose Kasparov operators are indexed by a common directed set $\mathcal{I}$.
We claim that there is a net $F_{12i}\in B(E_1\otimes_*E_2)$ of odd adjointable operators implementing a Kasparov product of $(E_1,(F_{1i})_i)\otimes_*A_2$ and $B_1\otimes_*(E_2,(F_{2i})_i)$, and simultaneously a Kasparov product of $A_1\otimes_*(E_2,(F_{2i})_i)$ and $(E_1,(F_{1i})_i)\otimes_*B_2$.
This claim is nothing but a modification of Theorem 2.14 (8) of \cite{MR918241}, a commutation for the cup-product.

First, choose separable fiberings $(E_{2i}',p_{A_2},A_{2i}')$ with respect to zero under $A_{2i}$ on the identity Hilbert bimodule $A_2$ such that $E_{2i}'$ and $A_{2i}'$ are non-decreasing with $E_{2i}'\subset A_{2i}'$ by induction.
Take a subnet $F_{2i}':=F_{2i'}$ of $F_{2i}$ with a monotone cofinal $\mathcal{I}\to\mathcal{I}:i\mapsto i'$ such that $\{F_{2i}\}_{i\ge i_0'}$ is Kasparov under $A_{2i_0}'$ for all $i_0\in\mathcal{I}$, as well as $F_{1i}'$.
Similarly choose $(E_{1i}',p_{A_1},A_{1i}')$, but before taking $F_{2i}'$, and then take $F_{1i}'$ with the same map $i\mapsto i'$ as $F_{2i}'$. 
Choose separable fiberings $(E_{1i},p_1,B_{1i})$ and $(E_{2i},p_2,B_{2i})$ with respect to all pairs $(F_{1i_0i}'(s),A_{1i_0}')$ and $(F_{2i_0i}'(s),A_{2i_0}')$ for all pairs $i_0\le i$ in $\mathcal{I}$ and all $s\in[0,1]$, respectively, where $F_{1i_0i}'$ and $F_{2i_0i}'$ are suitable operator homotopies that exist by the Kasparov family conditions for $\{F_{1i}'\}_{i\ge i_0}$ and $\{F_{2i}'\}_{i\ge i_0}$ under $A_{1i_0}'$ and $A_{2i_0}'$ respectively.
For dense sequences $a_{1n}\in A_{1i}\setminus\{0\}$ and $a_{2n}\in A_{2i}\setminus\{0\}$, define norm compact subsets
\begin{align*}
\Delta_1&:=\{(n\|a_{1n}\|)^{-1}a_{1n}\otimes1\}\cup\{0\}\cup\{F_{1i_0i}'(s)\otimes1:i_0\le i,\ s\in[0,1]\},\\
\Delta_2&:=\{1\otimes(n\|a_{2n}\|)^{-1}a_{2n}\}\cup\{0\}\cup\{1\otimes F_{2i_0i}'(s):i_0\le i,\ s\in[0,1]\},
\end{align*}
and separable C$^*$-subalgebras
\[J_1:=C^*(\Theta_{\xi_1,\eta_1}\otimes1:\xi_1,\eta_1\in E_{1i}),\qquad J_2:=C^*(1\otimes\Theta_{\xi_2,\eta_2}:\xi_2,\eta_2\in E_{2i})\]
in $B(E_1\otimes_*E_2)$.
Then, $[\Delta_1,J_1]\subset J_1$, $[\Delta_2,J_1]=0$, and $J_1J_2\subset K(E_1\otimes_*E_2)$.
Since $J_1(E_1\otimes_*A_2)=\overline{E_{1i}B_1^\sim}\otimes_*A_2$ is complemented in $E_1\otimes_*A_2$, by applying the technical theorem (Theorem \ref{Theorem: Technical theorem}) on the internal tensor product $E_1\otimes_*E_2=(E_1\otimes_*A_2)\otimes_{B_1\otimes_*A_2}(B_1\otimes_*E_2)$, we can construct an even positive contraction $M_i\in B(E_1\otimes_*E_2)$ satisfying $[M_i,\Delta_1\cup\Delta_2]$, $M_iJ_1$, and $(1-M_i)J_2$ are all belonging to $K(E_1\otimes_*E_2)$.
Define
\[F_{12i}:=M_i^{\frac12}(F_{1i}'\otimes1)+(1-M_i)^{\frac12}(1\otimes F_{2i}').\]
Then, $\{F_{12i}\}_{i\ge i_0}$ is Kasparov under $A_{i_0}$ since we can connect from $F_{12i_0}$ and $F_{12i}$ for $i\ge i_0$ by interpolating first $M_{i_0}$ to $M_i$ linearly and then moving $F_{1i_0}'$ and $F_{2i_0}'$ along the operator homotopies $F_{1i_0i}'$ and $F_{2i_0i}'$ taken beforehand.

Now we verify that $(E_1\otimes_*E_2,(F_{12i})_i)$ is a Kasparov product of $(E_1,(F_{1i})_i)\otimes_*A_2$ and $B_1\otimes_*(E_2,(F_{2i})_i)$.
Independently of $i_0$ in the definition of Kasparov product, with $F_{1i}'\otimes1$, let $E_{1i}:=\overline{E_{1i}\odot E_{2i}'}\subset E_1\otimes_*A_2$, $p_1:=p_1\otimes p_{A_2}$, and $B_i:=C^*(B_{1i}\odot A_{2i}')$, where $p_1$ is abusing notations.
Then, it is routine to check $(E_{1i},p_1,B_i)$ is a separable fibering with respect to $(F_{1i}'\otimes1,A_i)$ such that the four conditions of Kasparov product are satisfied.
By symmetry, $(E_1\otimes_*E_2,(F_{12i})_i)$ is also a Kasparov product of $A_1\otimes_*(E_2,(F_{2i})_i)$ and $(E_1,(F_{1i})_i)\otimes_*B_2$, hence the claim follows.

Let $\varphi_1$, $\psi_1$, $\varphi_2$, and $\psi_2$ be approximate Kasparov bimodules over para-unital C$^*$-algebras $(A_1,B_1)$, $(B_1,C_1)$, $(A_2,B_2)$, and $(B_2,C_2)$ respectively.
By the claim we can write and define $([\varphi_1]\otimes_*B_2)\circ(A_1\otimes_*[\varphi_2])=(B_1\otimes_*[\varphi_2])\circ([\varphi_1]\otimes_*A_2)=:[\varphi_1]\otimes_*[\varphi_2]$.
Then, together with the functoriality of whiskering the interchange law follows formally by
\begin{align*}
([\psi_1]\otimes_*[\psi_2])\circ([\varphi_1]\otimes_*[\varphi_2])
&=(C_1\otimes_*[\psi_2])\circ([\psi_1]\otimes_*B_2)\circ(B_1\otimes_*[\varphi_2])\circ([\varphi_1]\otimes_*A_2)\\
&=(C_1\otimes_*[\psi_2])\circ(C_1\otimes_*[\varphi_2])\circ([\psi_1]\otimes_*A_2)\circ([\varphi_1]\otimes_*A_2)\\
&=(C_1\otimes_*([\psi_2]\circ[\varphi_2]))\circ(([\psi_1]\circ[\varphi_1])\otimes_*A_2)\\
&=([\psi_1]\circ[\varphi_1])\otimes_*([\psi_2]\circ[\varphi_2]).
\end{align*}
This proves that the minimal and maximal tensor product establish bifunctors $\mathrm{KK}_\mathrm{para}^\mathrm{fc}\times\mathrm{KK}_\mathrm{para}^\mathrm{fc}\to\mathrm{KK}_\mathrm{para}^\mathrm{fc}$.
Now then with the monoidal unit $\mathbb{C}$, the associators, two unitors, and swapper are canonically defined as usual as in the category $\mathrm{C^*Alg}$, so the category $\mathrm{KK}_\mathrm{para}^\mathrm{fc}$ is symmetric monoidal.

To show the functor $\mathrm{C^*Alg}_\mathrm{para}\to\mathrm{KK}_\mathrm{para}^\mathrm{fc}$ is strict symmetric monoidal, since it is an identity on objects, we need to check the coherence condition for the naturality between two bifunctors $\mathrm{C^*Alg}_\mathrm{para}\times\mathrm{C^*Alg}_\mathrm{para}\to\mathrm{KK}_\mathrm{para}^\mathrm{fc}$ comparing the order of external tensor and the KK-theory functor.
Consider $*$-homomorphisms between C$^*$-algebras $\varphi_1:A_1\to B_1$ and $\varphi_2:A_2\to B_2$, and use the same notation for corresponding non-degenerate Kasparov bimodules $\varphi_1=(\varphi_1(A_1)B_1,0)$ and $\varphi_2=(\varphi_2(A_2)B_2,0)$.
Since the Kasparov bimodule $\varphi_1\otimes_*B_2$ is represented by the $*$-homomorphism $\varphi_1\otimes_*\mathrm{id}_{B_2}$, it is easy to see the $*$-homomorphism $\varphi_1\otimes_*\varphi_2$ is a Kasparov product of Kasparov bimodules $\varphi_1\otimes_*B_2$ and $A_1\otimes_*\varphi_2$ because ordinary compositions of $*$-homomorphisms give Kasparov products.
This implies the coherence condition $[\varphi_1\otimes_*\varphi_2]=[\varphi_1]\otimes_*[\varphi_2]$.
The associativity, unitality, and symmetry are clear.
\end{proof}

\begin{theorem}[Bott periodicity]
Let $A$ and $B$ be para-unital C$^*$-algebras.
There exists a canonical invertible element of $KK^\mathrm{fc}(A,C_0(\mathbb{R}^2,A))$, and there exists a canonical group isomorphism $KK^\mathrm{fc}(A,B)\cong KK^\mathrm{fc}(C_0(\mathbb{R},A),C_0(\mathbb{R},B))$.
\end{theorem}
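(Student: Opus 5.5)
We will transport the classical Dirac–dual-Dirac argument into $\mathrm{KK}^\mathrm{fc}_\mathrm{para}$, using only the symmetric monoidal structure from the previous proposition and the separable reduction theorems. First we take the classical Bott element $\beta\in KK(\mathbb{C},C_0(\mathbb{R}^2))$ and the Dirac element $\alpha\in KK(C_0(\mathbb{R}^2),\mathbb{C})$. All algebras here are separable, so by Separable reduction I (Theorem \ref{Theorem: Separable reduction I}) they define classes in $KK^\mathrm{fc}$. The fact that $KK^\mathrm{fc}$ agrees with classical KK-theory for separable first variable, together with the Proposition on classical Kasparov products (a classical Kasparov product is a Kasparov product), shows that the compositions in $\mathrm{KK}^\mathrm{fc}$ agree with the classical ones. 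Hence $\alpha\circ\beta=1_\mathbb{C}$ and $\beta\circ\alpha=1_{C_0(\mathbb{R}^2)}$ hold in $\mathrm{KK}^\mathrm{fc}$. The second identity is the part that, classically, needs Atiyah's rotation trick, which uses the flip on $C_0(\mathbb{R}^2)\otimes C_0(\mathbb{R}^2)$. That is still a statement about separable algebras, so it transfers verbatim.

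Next we whisker by $A$. Since $\mathbb{C}$ and $C_0(\mathbb{R}^2)$ are para-unital, hence in $\mathrm{KK}^\mathrm{fc}_\mathrm{para}$, the Whiskering lemma gives elements $\beta_A:=\beta\otimes_*A\in KK^\mathrm{fc}(A,C_0(\mathbb{R}^2)\otimes A)=KK^\mathrm{fc}(A,C_0(\mathbb{R}^2,A))$ and $\alpha_A:=\alpha\otimes_*A$. The tensor product is nuclear, so $\min$ and $\max$ coincide. Functoriality of whiskering turns $\alpha\circ\beta=1$ and $\beta\circ\alpha=1$ into $\alpha_A\circ\beta_A=1_A$ and $\beta_A\circ\alpha_A=1_{C_0(\mathbb{R}^2,A)}$. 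Whiskering preserves identities because $(\widetilde{\mathbb{C}},0)\otimes A$ is homotopic to $\mathrm{id}_A$ by the non-degenerate reduction and its corollary for para-unital $A$. Thus $\beta_A$ is the canonical invertible element. For the second claim, composition with $\beta_A$ and $\beta_B$ gives $KK^\mathrm{fc}(A,B)\cong KK^\mathrm{fc}(C_0(\mathbb{R}^2,A),C_0(\mathbb{R}^2,B))$. Applying this to $C_0(\mathbb{R},A)$ and $C_0(\mathbb{R},B)$, which are para-unital by the tensor product proposition, yields $KK^\mathrm{fc}(C_0(\mathbb{R},A),C_0(\mathbb{R},B))\cong KK^\mathrm{fc}(C_0(\mathbb{R}^3,A),C_0(\mathbb{R}^3,B))$. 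So what remains is the one-step isomorphism $KK^\mathrm{fc}(A,B)\cong KK^\mathrm{fc}(C_0(\mathbb{R},A),C_0(\mathbb{R},B))$ itself. For this we will use the exterior product with $C_0(\mathbb{R})$, namely $x\mapsto C_0(\mathbb{R})\otimes x$, which is a functor by whiskering. We compose with its Bott-twisted analogue $KK^\mathrm{fc}(C_0(\mathbb{R},A),C_0(\mathbb{R},B))\to KK^\mathrm{fc}(C_0(\mathbb{R}^2,A),C_0(\mathbb{R}^2,B))\cong KK^\mathrm{fc}(A,B)$. Both composites should be identities by the interchange law together with the symmetry isomorphism on $C_0(\mathbb{R})\otimes C_0(\mathbb{R})$. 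That symmetry acts on $C_0(\mathbb{R}^2)$ by a reflection, which is homotopic to the identity only after the $\mathbb{R}^2$ rotation trick. The sign issue will be handled as in the classical proof, using the grading so that odd suspensions compose correctly.

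We expect the main obstacle to be the compatibility of whiskering with identity morphisms and with the symmetric structure, that is, justifying that the symmetric monoidal relations verified for $*$-homomorphisms also hold for the classes $\alpha$ and $\beta$, which are not $*$-homomorphisms. Our first approach is to reduce everything to the interchange law already proved. We will check that $(\beta\otimes_*1)\circ(1\otimes_*\alpha)$-type identities are formal consequences of bifunctoriality, so that only separable computations over $\mathbb{C}$ and $C_0(\mathbb{R}^n)$ are ever needed. Those computations are then imported through Separable reduction I.
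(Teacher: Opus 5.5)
Your proposal is essentially the paper's proof. The paper derives the statement in one line from classical Bott periodicity and the symmetric monoidal structure on $\mathrm{KK}^\mathrm{fc}_\mathrm{para}$; you have spelled out the same route, importing $\alpha$ and $\beta$ through separable reduction and the classical-product proposition and then transporting them to $A$ and $B$ by whiskering.

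For the suspension isomorphism you can drop the sign discussion, and the remark about the flip is wrong as stated: the flip on $C_0(\mathbb{R})\otimes C_0(\mathbb{R})$ is a reflection, equal to $-1$, and no trick makes it homotopic to the identity. What you need instead is that $C_0(\mathbb{R})\otimes C_0(\mathbb{R})\cong\mathbb{C}$ in $\mathrm{KK}^\mathrm{fc}_\mathrm{para}$, so the functor $C_0(\mathbb{R})\otimes-$ is its own quasi-inverse up to natural isomorphism and is therefore fully faithful. Your Bott-twisted map is a left inverse of suspension by the interchange law alone, so it is then automatically a two-sided inverse.
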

\begin{proof}
It clearly follows by the monoidal structure on $\mathrm{KK}^\mathrm{fc}_\mathrm{para}$ and the Bott periodicity of the classical KK-theory.
\end{proof}

\subsection{Homotopy invariance, stability, split exactness}
For para-unital C$^*$-algebras, the homotopy invariance and complete stability immediately follow from a formal reasoning for monoidal functors.
However, in fact, it is possible to prove them for general C$^*$-algebras.
Here by the complete stability we mean the freely complementable KK-theory functor inverts the corner embedding $A\to K(H)\otimes A$ for all C$^*$-algebras $A$ and all generally uncountable dimensional Hilbert spaces $H$.

\begin{proposition}[Homotopy invariance]
For a C$^*$-algebra $A$, the constant embedding $A\to C([0,1],A)$ induces an isomorphism in $\mathrm{KK}^\mathrm{fc}$.
\end{proposition}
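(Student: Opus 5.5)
Let $\iota:A\to C([0,1],A)$ be the constant embedding and $\mathrm{ev}_0:C([0,1],A)\to A$ the evaluation at $0$. Since $\mathrm{ev}_0\circ\iota=\mathrm{id}_A$ in $\mathrm{C^*Alg}$, functoriality gives $[\mathrm{ev}_0]\circ[\iota]=[\mathrm{id}_A]$ in $\mathrm{KK}^\mathrm{fc}$. It remains to prove $[\iota]\circ[\mathrm{ev}_0]=[\mathrm{id}_{C([0,1],A)}]$ in $KK^\mathrm{fc}(C([0,1],A),C([0,1],A))$. The composite $\iota\circ\mathrm{ev}_0$ is the $*$-endomorphism $f\mapsto f(0)$ (viewed as a constant function). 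It is homotopic to the identity through the $*$-homomorphisms $\varphi_s(f)(t):=f(st)$ for $s\in[0,1]$.

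So the statement reduces to the following claim: homotopic $*$-homomorphisms $\varphi_0,\varphi_1:D\to E$ define the same class in $KK^\mathrm{fc}(D,E)$. I would prove this directly, without assuming para-unitality, using a homotopy in the sense of the paper. Given a homotopy $\Phi:D\to C([0,1],E)$ with $\mathrm{ev}_s\circ\Phi=\varphi_s$, take the canonical Kasparov bimodule $(C([0,1],E)^\sim,0)$ over $(D,C([0,1],E))$ with left action $\Phi$. The underlying module must be $C([0,1],E)^\sim=\mathrm{cyl}\,\widetilde E$, the unitization of $C([0,1],E)$ rather than $C([0,1],\widetilde E)$. This is exactly the cylinder $\widetilde E\otimes_E C([0,1],E)^\sim$, and it is free of rank one over $C([0,1],E)^\sim$, hence freely complementable. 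The left action satisfies $D\cdot\mathrm{cyl}\,\widetilde E\subset C([0,1],E)$, so it is augmented. The zero operator is Kasparov under every separable subalgebra, with $\Sigma(0,D_0)=\{0\}$ since $(0^2-1)a=-a$ and that is not compact in general. This last point needs care.

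Indeed, for $F=0$ the support algebra contains $(F^2-1)a=-a$, so the canonical bimodule $(\widetilde E,0)$ of a $*$-homomorphism is Kasparov only because $\varphi(D)\subset E=K(\widetilde E)$. The same holds here: $\Phi(D)\subset C([0,1],E)=K(C([0,1],E)^\sim)$, so the pair is Kasparov, and $C^*(0)$ trivially maps into $E\,C([0,1],E)$. The fibers at $s=0,1$ are $(\widetilde E,0)$ with left actions $\varphi_0$ and $\varphi_1$, because evaluating $C([0,1],E)^\sim$ at an endpoint gives $\widetilde E$. This is therefore a homotopy in the sense of the definition, with constant nets and trivial index maps, and $[\varphi_0]=[\varphi_1]$.

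Applying this with $D=E=C([0,1],A)$ and $\Phi(f)(s)(t)=f(st)$ gives $[\iota\circ\mathrm{ev}_0]=[\mathrm{id}]$. We must check that $\Phi$ lands in $C([0,1],C([0,1],A))$; this holds by uniform continuity of $f$. Combined with functoriality (the Corollary on functoriality of $\mathrm{C^*Alg}\to\mathrm{KK}^\mathrm{fc}$), it follows that $[\iota]$ and $[\mathrm{ev}_0]$ are mutually inverse. The only step I expect to need attention is the fiber-at-infinity bookkeeping: confirming that evaluating the cylinder at the endpoints returns exactly $\widetilde E$ with the expected left action. This follows from the description of $\mathrm{cyl}$ as the functions with constant image in $E\otimes_E\mathbb{C}$.
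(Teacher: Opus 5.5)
Your argument is correct and follows the paper's proof, which only notes that $f\mapsto f(0)$ is point-norm homotopic to the identity and hence homotopic as Kasparov bimodules; you make explicit that the homotopy is the canonical cycle $(C([0,1],E)^\sim,0)$ of $\Phi$ and check its endpoint fibers. One small correction: $\Sigma(0,D_0)=C^*(\Phi(D_0))\neq\{0\}$, and $K(C([0,1],E)^\sim)=C([0,1],E)^\sim$ rather than $C([0,1],E)$, so the compactness condition is automatic and the real content is the augmentation condition $\Phi(D)\,C([0,1],E)^\sim\subset C([0,1],E)$, which you do verify.
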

\begin{proof}
Since the $*$-homomorphism $C([0,1],A)\to C([0,1],A):a\mapsto a(0)$ is homotopic to the identity in the point-norm topology, they are also homotopic as Kasparov bimodules over $(C([0,1],A),C([0,1],A))$.
\end{proof}

\begin{proposition}[Complete stability]
For a C$^*$-algebra $A$ and a Hilbert space $H$, the corner embedding $A\to K(H)\otimes A$ induces an isomorphism in $\mathrm{KK}^\mathrm{fc}$.
\end{proposition}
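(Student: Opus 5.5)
We prove complete stability for an arbitrary C$^*$-algebra $A$ by producing an explicit inverse in $\mathrm{KK}^\mathrm{fc}$ to the corner embedding $\iota:A\to K(H)\otimes A$, $a\mapsto e_{11}\otimes a$, for a fixed unit vector $\varepsilon_1\in H$. Since $A$ need not be para-unital, the augmented framework has to be kept throughout.

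\emph{The inverse cycle.} Take the Hilbert module $E:=H\otimes\widetilde A$ over $\widetilde A$, with even part $E$ and odd part zero, and zero operator. It is free, hence freely complementable. Let $K(H)\otimes A$ act on it by the canonical left action, identifying $K(H)\otimes A\cong K(H\otimes A)\subset K(E)$. Then $(K(H)\otimes A)E\subset EA$, so this is an augmented Hilbert bimodule. Write $\kappa:=(E,0)$ for the resulting Kasparov bimodule over $(K(H)\otimes A,A)$. It is a Kasparov bimodule because the zero operator has $\Sigma(0,K(H)\otimes A)\subset K(H)\otimes A\subset K(E)$ and $C^*(0)E=0$.

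\emph{The composite $\kappa\circ\iota$.} The Kasparov product with the $*$-homomorphism $\iota$ on the left is simply $\kappa$ with left action restricted along $\iota$ (Kasparov product with $*$-homomorphisms). This is $(H\otimes\widetilde A,0)$ with $a$ acting as $e_{11}\otimes a$. Decompose $E=(\varepsilon_1\otimes\widetilde A)\oplus(\varepsilon_1^\perp\otimes\widetilde A)$. The first summand is $(\widetilde A,0)=\mathrm{id}_A$. The second has zero left action and zero operator, so it is degenerate. By a degeneracy move, $[\kappa]\circ[\iota]=[\mathrm{id}_A]$.

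\emph{The composite $\iota\circ\kappa$.} The product with $\iota$ on the right is $(E\otimes_A(K(H)\otimes A)^\sim,0)$, again with zero operator. Write $D:=K(H)\otimes A$. I expect to identify this module with a complemented piece $(H\otimes H\otimes A)\oplus(\text{fiber at infinity part})$ over $\widetilde D$. Concretely, the left action of $k\otimes a\in D$ is $k\otimes e_{11}\otimes a$ on $H\otimes H\otimes A$ (the $A$-part) and zero on the fiber at infinity.

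\emph{Rotation to the identity.} The identity $\mathrm{id}_D=(\widetilde D,0)$ acts by $k\otimes a$ on $D\subset H\otimes H\otimes A$ after identifying $K(H)\otimes A$ with corners. I will connect the two cycles with a rotation homotopy, as in the proof of the additive structure and the non-degenerate reduction. Choose a norm continuous path of unitaries $u_t$ on $H\otimes H$ rotating $1\otimes e_{11}$ into a position where $k\otimes e_{11}$ is conjugate to $k\otimes 1$ restricted to a copy of $H$. The classical trick is the $2\times2$ rotation in $B(H\oplus H)$ between the isometries $\xi\mapsto\xi\otimes\varepsilon_1$ and a unitary $H\otimes H\cong H$ fixing nothing. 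Since $D$ acts by compact-valued operators, conjugating the left action by the rotation path gives a strictly continuous family of left actions. The operator stays zero and $\Sigma\subset D\subset K$ pointwise, so the Proposition on cylinders yields a homotopy on $\mathrm{cyl}$ of a direct sum. Extra summands with zero left action are handled by degeneracy moves. A standard variant is Lemma 4.1.9 of \cite{MR1124848} applied fiberwise.

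\emph{Main obstacle.} The hard part is the fiber at infinity. The augmented modules $\widetilde D$ and $E\otimes_A\widetilde D$ have different fibers at infinity, namely $\mathbb{C}$ versus $H$. A homotopy over $C([0,1],D)^\sim$ has a constant fiber at infinity, so these must be matched by adding trivial cycles $(H'\otimes\widetilde D,0)$. Such cycles have zero left action, so they are degenerate. The real difficulty is checking that, after this padding, the rotated module is still freely complementable over $C([0,1],D)^\sim$ and that the rotation path respects $C^*(F)E\subset EB$. I would first try to realise everything inside a single free module $H''\otimes C([0,1],D)^\sim$, with the rotation acting only on the $H''$ factor. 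This makes free complementability automatic, in the spirit of the escaping lemma (Lemma \ref{Lemma: Escaping to zero left action}).
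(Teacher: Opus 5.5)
Your skeleton matches the paper's proof. You use the same inverse cycle $\kappa=(H\otimes\widetilde A,0)$ over $(K(H)\otimes A,A)$. You get $[\kappa]\circ[\iota]=[\mathrm{id}_A]$ the same way, by splitting off $\varepsilon_1^\perp\otimes\widetilde A$ with zero left action. For the other composite you likewise rotate the left action.

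The gap is in that other composite, $[\iota]\circ[\kappa]=[\mathrm{id}_D]$ with $D:=K(H)\otimes A$. You never establish it, and the mechanism you propose for it fails.

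First, the module needs no identification. $(H\otimes\widetilde A)\otimes_A\widetilde D=H\otimes\widetilde D$ is already free over $\widetilde D$. In general it does not split into an ``$A$-part'' and a fiber-at-infinity summand: $H\otimes D$ is not complemented in $H\otimes\widetilde D$ when $D$ is non-unital.

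Second, your fix for the ``main obstacle'' cannot work. That fix is a rotation acting only on the Hilbert space factor of a free module $H''\otimes C([0,1],D)^\sim$. A unitary of the form $u\otimes1$ commutes with the coefficients. It therefore sends $\varphi(k\otimes a)=k\otimes(e_{11}\otimes a)$ to $uku^*\otimes(e_{11}\otimes a)$. The $D$-component is still $e_{11}\otimes a$, and it can never become $k\otimes a$ as $\mathrm{id}_D$ requires.

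The missing idea has two parts. The rotation must mix the module's copy of $H$ with the copy of $H$ inside the coefficient algebra $D$. And only the left action is conjugated, not the module.

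Here is how it goes:
\begin{enumerate}
\item The left action lands in compacts: $\varphi(D)\subset K(H)\otimes K(H)\otimes A=K(H\otimes H)\otimes A\subset K(H\otimes\widetilde D)$, and $B(H\otimes H)\otimes1$ acts on $K(H\otimes H)\otimes A$ by multipliers.
\item Take the flip $w$ on $H\otimes H$, let $u_t:=e^{\pi it(1-w)/2}$ (a norm continuous path from $1$ to $w$), and set $\varphi_t(x):=(u_t\otimes1)^*\varphi(x)(u_t\otimes1)$.
\item This is a norm continuous path of $*$-homomorphisms into $K(H)\otimes D$. It is therefore a left action by compacts on the free module $H\otimes C([0,1],D)^\sim=\mathrm{cyl}(H\otimes\widetilde D)$. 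With zero operator it is a homotopy, and the condition $C^*(F)E\subset EB$ is vacuous.
\item At $t=1$ you get $\varphi_1(k\otimes a)=e_{11}\otimes(k\otimes a)$. The module splits as $(\varepsilon_1\otimes\widetilde D)\oplus(\varepsilon_1^\perp\otimes\widetilde D)$, which is $\mathrm{id}_D=(\widetilde D,0)$ plus a summand with zero left action.
\end{enumerate}
The mismatch of fibers at infinity ($\mathbb{C}$ versus $H$) is absorbed automatically by this degenerate summand. No padding, gluing, or extra free-complementability check is needed. This is exactly the paper's argument.
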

\begin{proof}
Consider a Kasparov bimodule $(H\otimes\widetilde A,0)$ over $(K(H)\otimes A,A)$, where the left action is given such that $K(H)$ acts on $H$ and $A$ acts on $\widetilde A$.
For the corner embedding $A\to K(H)\otimes A$, let $p\in K(H)$ be the corresponding rank-one projection.
The pullback of $(H\otimes\widetilde A,0)$ along the corner embedding is $(H\otimes\widetilde A,0)\cong(\widetilde A,0)\oplus((1-p)H\otimes\widetilde A,0)$ over $(A,A)$, and here the latter factor $((1-p)H\otimes\widetilde A,0)$ has zero left action so that it is degenerate.

The difficult part is the pushforward by the corner embedding, $(H\otimes(K(H)\otimes A)^\sim,0)$ over $(K(H)\otimes A,K(H)\otimes A)$, where the left action is
\[\varphi(k\otimes a)=k\otimes p\otimes a\in K(H)\otimes K(H)\otimes A\subset K(H\otimes(K(H)\otimes A)^\sim),\qquad k\otimes a\in K(H)\otimes A.\]
Letting $w$ be the flipping self-adjoint unitary on $H\otimes H$, define $u_t:=e^{\pi it(1-w)/2}\in B(H\otimes H)$ a continuous path of unitaries parametrized by $t\in[0,1]$, which connects $u_0=1$ and $u_1=w$.
Then, by rotating with the adjoint of $u_t\otimes1$, the left action of the Kasparov product can be deformed to $\varphi'$ defined by
\[\varphi'(k\otimes a):=p\otimes k\otimes a,\qquad k\otimes a\in K(H)\otimes A.\]
Then, by cutting out the degenerate part $(1-p)H\otimes(K(H)\otimes A)^\sim$, the proof is complete.
\end{proof}

In the following theorem, the symbols $\iota$, $\pi$, $\sigma$, and $\rho$ in the following are chosen to mean `inclusion', `projection', `section', and `retract'.

\begin{theorem}[Split exactness for para-unital ideals]
Let
\[\begin{tikzcd}
0\rar&B\rar{\iota}&D\rar[swap]{\pi}&A\rar\lar[swap,bend right]{\sigma}&0
\end{tikzcd}\]
be a split exact sequence of C$^*$-algebras with $B$ para-unital.
Then, $D$ is KK$^\mathrm{fc}$-equivalent to $A\oplus B$.
\end{theorem}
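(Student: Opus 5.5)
We follow the classical proof of split exactness (Kasparov, Cuntz--Skandalis), adapted to para-unital ideals. We construct morphisms in $\mathrm{KK}^\mathrm{fc}$ in both directions and show they are mutually inverse. One direction is the $*$-homomorphism-induced class
\[\alpha:=[\iota]\circ\mathrm{pr}_B+[\sigma]\circ\mathrm{pr}_A\in KK^\mathrm{fc}(A\oplus B,D),\]
where $\mathrm{pr}_B,\mathrm{pr}_A$ are the coordinate projections. This uses the additive structure of $\mathrm{KK}^\mathrm{fc}$ from the proposition on additive structure, so that $A\oplus B$ is a biproduct. The other direction is
\[\beta:=([\pi],\rho)\in KK^\mathrm{fc}(D,A)\oplus KK^\mathrm{fc}(D,B).\]
Here $\rho\in KK^\mathrm{fc}(D,B)$ is a ``retract'' class, for which we take the standard difference cycle
\[\rho:=\bigl(B\boxplus B,\ \varphi_+=\mathrm{id}\oplus(\sigma\circ\pi),\ F=\begin{bmatrix}0&1\\1&0\end{bmatrix}\bigr).\]
In this cycle $D$ acts on the even copy of $B$ by left multiplication through $D\to M(B)$ and on the odd copy through $\sigma\pi$. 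Para-unitality of $B$ is exactly what makes this usable. It lets us take the underlying module to be $B\boxplus B$ itself rather than the unitization, and $B$ is freely complementable over $\widetilde B$ by the proposition characterizing para-unitality. The left action of $D$ is then defined via $D\to M(B)=B(B)$. The Kasparov condition holds with a constant net, since $[F,d]$ has off-diagonal entries $d-\sigma\pi(d)\in B=K(B)$. So $\rho$ is a genuine Kasparov bimodule, and no net is needed. We also have $AB\cdot$-type issues with the fiber at infinity vanishing, because $E\subset EB$.

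Next we verify the compositions. For $\beta\circ\alpha$ we need four identities:
\begin{enumerate}
\item $[\pi]\circ[\iota]=0$, which holds because $\pi\iota=0$ gives the trivial bimodule, homotopic to zero by the non-degenerate reduction corollary.
\item $[\pi]\circ[\sigma]=1_A$.
\item $\rho\circ[\sigma]=0$, since pulling back along $\sigma$ makes both actions equal to $\sigma$, so the cycle is degenerate.
\item $\rho\circ[\iota]=1_B$, since pulling back along $\iota$ kills the odd action, leaving $(B\boxplus B,\mathrm{id}\oplus0,F)$. Cutting off the odd copy is an operator homotopy to $(B,0)$ plus a degenerate piece, and $(B,0)$ represents $1_B$ by the corollary to non-degenerate reduction.
\end{enumerate}
All these Kasparov products are products with $*$-homomorphisms, hence immediate from the proposition on Kasparov products with $*$-homomorphisms.

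For $\alpha\circ\beta=[\iota]\circ\rho+[\sigma]\circ[\pi]$, we must show it equals $1_D$. This is the main obstacle, as in the classical case. The class $[\iota]\circ\rho$ is represented by pushing $\rho$ forward along $\iota$, which gives $(B\otimes_B\widetilde D\boxplus\cdots)$. By para-unitality we may take this as $(BD\boxplus BD,\ \mathrm{id}\oplus\sigma\pi,\ F)$, with the non-degenerate reduction theorem used to pass between $\widetilde D$ and $BD$. Adding $[\sigma\pi]$ on an even copy of $\widetilde D$, we want to show that the sum is homotopic to $(\widetilde D,0)$.

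My first attempt is the classical trick. Add to the odd part an extra copy carrying the action $\sigma\pi$, and use the rotation homotopy as in the proof of the additive structure proposition. This cancels $(BD,\sigma\pi)$ odd against $(\widetilde D,\sigma\pi)$ even. The idea is that the odd $\sigma\pi$-piece $BD$ sits inside $\widetilde D$, and $\widetilde D=BD\oplus(\text{complement})$ is complemented. The complement exists because $B$ is para-unital, so $BD\subset\widetilde D$ is complemented; this is where para-unitality is used a second time, via free complementability and the cropping lemma. What remains is $(BD,\mathrm{id})$ even plus $(\text{complement},\sigma\pi)$ even. The remaining homotopy to $(\widetilde D,\mathrm{id})$ is then obtained by a rotation-of-left-action homotopy that interpolates $\sigma\pi$ and $\mathrm{id}$ on the complement, using that they differ by elements of $B$ which vanish there up to compacts.

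I expect the delicate points to be two. The first is checking the compactness conditions along these rotations on non-countably generated modules. Here I would crop countably generated pieces relative to separable $D_0\subset D$ and verify the Kasparov conditions locally. The second is making sure every cycle stays freely complementable, for which I would use the free complementability of gluing lemma.
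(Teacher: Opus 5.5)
Your setup matches the paper's: the same difference cycle $\rho=(B\boxplus B,\ d\boxplus\sigma\pi(d),\ F)$ with $F$ the flip, freely complementable because $B$ is para-unital; the same candidate inverses $[\pi]+\rho$ and $[\sigma]+[\iota]$; and the same easy relations $\rho\circ[\iota]=[\mathrm{id}_B]$ and $\rho\circ[\sigma]=0$.

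The gap is in the main relation $[\sigma]\circ[\pi]+[\iota]\circ\rho=[\mathrm{id}_D]$. Your argument rests on the claim that $BD=B$ is a complemented submodule of $\widetilde D$ because $B$ is para-unital, and this claim is false.
\begin{itemize}
\item Complemented submodules of $\widetilde D$ are exactly $p\widetilde D$ with $p$ a projection in $B(\widetilde D)=\widetilde D$.
\item If $p\widetilde D=B$, then $p\in B$ is a unit for $B$. Since $pd,dp\in B$, we get $pd=pdp=dp$, so $p$ is central and $D\cong B\oplus A$ is already a direct sum.
\item Concretely, take $0\to C_0((0,1])\to C([0,1])\to\mathbb{C}\to0$, where $B$ is even $\sigma$-unital. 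Here $B^\perp$ in $\widetilde D$ is one-dimensional and $B\oplus B^\perp\ne\widetilde D$.
\end{itemize}
Para-unitality only says $B$ is complemented in some free module $H\otimes\widetilde B$. The cropping lemma only produces complemented pieces relative to such an embedding. Neither can supply a complement for a given submodule such as $B\subset\widetilde D$. As a result, cancelling the odd $(B,\sigma\pi)$ against the even $(\widetilde D,\sigma\pi)$, and the later interpolation ``on the complement'', have nothing to act on.

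The missing idea is that no complement is needed: rotate the left action on the even module $\widetilde D\oplus B$ itself and keep $F$ fixed.
\begin{itemize}
\item View $\widetilde D\oplus B\subset\widetilde D^{\oplus2}$ and set $\varphi_t(d):=P_t\,\sigma\pi(d)+(1-P_t)\,d$, where $P_t$ is the scalar rank-one projection onto $(c_t,s_t)$. This is a $*$-homomorphism into $M_2(\widetilde D)$.
\item Since $\varphi_t(d)=d+P_t(\sigma\pi(d)-d)$ and $\sigma\pi(d)-d\in B$ with $B$ an ideal, $\varphi_t(d)$ preserves $\widetilde D\oplus B$. Its off-diagonal entries $c_ts_t(\sigma\pi(d)-d)$ map $\widetilde D\to B$ and $B\to\widetilde D$.
\item Take the odd part to be $B$ with action $\sigma\pi$, and let $F$ be the flip between the second even summand and the odd $B$.
\end{itemize}
This gives a single Kasparov bimodule on $\mathrm{cyl}((\widetilde D\oplus B)\boxplus B)$ over $(D,C([0,1],D))$. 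Every entry of $[F,\varphi_t(d)]$ and $(F^2-1)\varphi_t(d)$ is either left multiplication by an element of $B=B\cdot B$ between $B$ and $\widetilde D$ or $B$, or an operator on $\widetilde D$, where $K(\widetilde D)=\widetilde D$; all of these are compact. At $t=0$ the bimodule is $(\widetilde D,\sigma\pi,0)\oplus(B\boxplus B,\,d\boxplus\sigma\pi(d),\,F)$, which represents $[\sigma]\circ[\pi]+[\iota]\circ\rho$. At $t=1$ it is $(\widetilde D,\mathrm{id},0)$ plus the degenerate $(B\boxplus B,\,\sigma\pi\boxplus\sigma\pi,\,F)$. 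No cropping, gluing, or nets are required; para-unitality enters only through the free complementability of $B\boxplus B$ and the identification of $[\mathrm{id}_B]$ with $(B,0)$.
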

\begin{proof}
Let $\rho:=(E_\rho,F_\rho)$ be a Kasparov bimodule over $(D,B)$ defined by
\[E_\rho:=B\boxplus B,\qquad\varphi_\rho(d):=d\boxplus\sigma(\pi(d)),\qquad F_\rho:=\begin{bmatrix}0&1\\1&0\end{bmatrix},\]
for all $d\in D$, where $\varphi_\rho$ denotes the left action on $E_\rho$.
It suffices to prove the three relations
\[[\rho]\circ[\iota]=[\mathrm{id}_B],\qquad[\rho]\circ[\sigma]=0,\qquad[\sigma]\circ[\pi]+[\iota]\circ[\rho]=[\mathrm{id}_D]\]
because
\[([\pi]+[\rho])\circ([\sigma]+[\iota])=[\pi]\circ[\sigma]+[\pi]\circ[\iota]+[\rho]\circ[\sigma]+[\rho]\circ[\iota]=[\mathrm{id}_A]+0+0+[\mathrm{id}_B]=[\mathrm{id}_{A\oplus B}]\]
and
\[([\sigma]+[\iota])\circ([\pi]+[\rho])=[\sigma]\circ[\pi]+[\iota]\circ[\rho]=[\mathrm{id}_D]\]
deduce that $[\pi]+[\rho]\in KK^\mathrm{fc}(D,A\oplus B)$ and $[\sigma]+[\iota]\in KK^\mathrm{fc}(A\oplus B,D)$ are mutually inverses of each other.

The first two relations can be shown as usual, so we only prove the third relation.
Let $(E_{\sigma\pi},F_{\sigma\pi})$ and $(E_{\iota\rho},F_{\iota\rho})$ be Kasparov bimodules over $(D,D)$ defined by
\[E_{\sigma\pi}:=\widetilde D\boxplus0,\qquad\varphi_{\sigma\pi}(d):=\sigma(\pi(d))\boxplus0,\qquad F_{\sigma\pi}:=0,\]
and
\[E_{\iota\rho}:=B\boxplus B,\qquad\varphi_{\iota\rho}(d):=d\boxplus\sigma(\pi(d)),\qquad F_{\iota\rho}:=\begin{bmatrix}0&1\\1&0\end{bmatrix},\]
for all $d\in D$, where $\varphi_{\sigma\pi}$ and $\varphi_{\iota\rho}$ denote the left actions on $E_{\sigma\pi}$ and $E_{\iota\rho}$ respectively.
Then, we have $[(E_{\sigma\pi},F_{\sigma\pi})]=[\sigma]\circ[\pi]$ and $[(E_{\iota\rho},F_{\iota\rho})]=[\iota]\circ[\rho]$ by construction.
By rotating the even part of the left action on the direct sum, we can define a Kasparov bimodule $(E,F)$ over $(D,C([0,1],D))$ by $E:=\mathrm{cyl}((\widetilde D\oplus B)\boxplus B)$,
\[\varphi(d)(t):=\begin{bmatrix}\begin{pmatrix}c_t^2\sigma(\pi(d))+s_t^2d&c_ts_t(\sigma(\pi(d))-d)\\s_tc_t(\sigma(\pi(d))-d)&s_t^2\sigma(\pi(d))+c_t^2d\end{pmatrix}&0\\0&\sigma(\pi(d))\end{bmatrix},\qquad F(t):=\begin{bmatrix}0&\begin{pmatrix}0\\1\end{pmatrix}\\\begin{pmatrix}0&1\end{pmatrix}&0\end{bmatrix},\]
for $d\in D$ and $t\in[0,1]$, where $c_t:=\cos\frac{\pi t}2$ and $s_t:=\sin\frac{\pi t}2$, and $\varphi$ denotes the left action on $E$.
Then, removing the degenerate part at $t=1$, we can verify $[\sigma]\circ[\pi]+[\iota]\circ[\rho]=[\mathrm{id}_D]$.
\end{proof}

\subsection{Homotopy reduction revisited}

\begin{proposition}[Operator homotopy reduction for para-unitals]\label{Proposition: Operator homotopy reduction}
Let $A$ and $B$ be C$^*$-algebras with $B$ para-unital.
Let $\varphi_0$ and $\varphi_1$ be approximate Kasparov bimodules over $(A,B)$ connected by a homotopy.
If $\varphi_0$ and $\varphi_1$ have zero fiber at infinity, then there is a degenerate Kasparov bimodule $\varphi'$ with zero fiber at infinity over $(A,B)$ such that $\varphi_0\oplus\varphi'$ and $\varphi_1\oplus\varphi'$ are operator homotopic, after unitary identification.
\end{proposition}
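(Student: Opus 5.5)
Let $\varphi=(E,(F_l)_l)$ be the homotopy over $(A,C([0,1],B))$ connecting $\varphi_0=(E_0,(F_{0i})_i)$ and $\varphi_1=(E_1,(F_{1j})_j)$, with cofinal maps $l\mapsto i(l)$ and $l\mapsto j(l)$. I would imitate the classical argument that a homotopy can be replaced by an operator homotopy after adding degenerate cycles (Section 18.5 of \cite{MR1656031}). The plan has three steps: trivialize the underlying module of the homotopy, stabilize the two endpoints, and transport the left action by a norm continuous path of unitaries.

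First I would put the homotopy module in a standard form. Since $\varphi_0$ and $\varphi_1$ have zero fiber at infinity and the fiber of $E$ at infinity is constant along $[0,1]$, I expect $E\subset EC([0,1],B)$ after passing to the non-degenerate part. Then $E$ is a freely complementable Hilbert module over $C([0,1],B)$. Because $B$ is para-unital, $C([0,1],B)$ is para-unital with $\sigma$-unital projections $1\otimes p_\alpha$. Using the Kaplansky theorem together with the Kasparov stabilization theorem on each $\sigma$-unital corner, I would find a graded Hilbert space $H$ with even and odd parts of equal infinite dimension such that $E\oplus(H\otimes C([0,1],B))\cong H\otimes C([0,1],B)=C([0,1],H\otimes B)$. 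The same argument with the same $H$ gives $E_0\oplus(H\otimes B)\cong H\otimes B\cong E_1\oplus(H\otimes B)$. I would take $\varphi':=(H\otimes B,0)$ with zero left action, which is degenerate and has zero fiber at infinity. Adding $\varphi'$ to both ends and to the homotopy, I may assume $E=C([0,1],H\otimes B)$ and $E_0=E_1=H\otimes B$.

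Next, the left action $\varphi(a)$ and each $F_l$ then become strictly continuous paths in $B(H\otimes B)$. I would reparametrize along a sequence $0=t_0<t_1<\dots$ or, more directly, use Kasparov's trick: replace the path $t\mapsto(\varphi_t,F_{l,t})$ by the single module $E_0\otimes C([0,1])$, conjugating by a continuous field of unitaries. The key lemma I would aim for is the following. For each separable $A_0\subset A$ and $l\ge l_0$, after cropping with the cropping lemma to a countably generated $S\subset C([0,1],H\otimes B)$ reducing $C^*(F_l,A_0)$, the classical result applies on $S$. A strictly continuous homotopy is operator homotopic to its endpoints up to addition of degenerates: the classical proof via the Kasparov product with the evaluation cycles and Connes-Skandalis (Theorem \ref{Theorem: Connes-Skandalis operator homotopy}) turns $F_{l}(0)$ into $F_l(1)$ modulo degenerate summands. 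The complementary part $(1-P_S)$ is degenerate, so it can be absorbed into $\varphi'$ because $H$ is large.

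The main obstacle is uniformity in $l$. The classical argument produces, for each $A_0$, an operator homotopy together with a degenerate summand that depends on $A_0$, whereas the statement needs one $\varphi'$ and a net of norm continuous paths $F'_l$ that are Kasparov under $A_0$ for all large $l$. I would handle this by fixing $\varphi'$ once as $(H\otimes B,0)$ with $\dim H\ge$ the density character bound from the set-theoretic well-definedness theorem. Every degenerate summand arising from cropping then embeds into $\varphi'$ with zero left action, using the absorbing property of $H\otimes B$. The path for index $l$ is then built inside the fixed module $(H\otimes B)^{\oplus2}$, and the Kasparov condition under $A_0$ for $l\ge l_0$ is inherited from the corresponding condition on the homotopy. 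A residual subtlety is the strict (rather than norm) continuity of the unitaries identifying the fibers $E(t)$. For this I would rely on the fact that the unitary $E\oplus\varphi'\cong C([0,1],H\otimes B)$ is fixed and independent of $l$.
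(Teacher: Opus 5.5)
\textbf{There is a genuine gap.} It sits exactly at the point you flagged as ``uniformity''.

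\emph{The zero-action $\varphi'$ cannot work.} An operator homotopy lives on a single augmented Hilbert bimodule. So $\varphi_0\oplus\varphi'$ and $\varphi_1\oplus\varphi'$ must carry unitarily equivalent left actions of all of $A$. If $\varphi'=(H\otimes B,0)$ has zero left action, this forces $\varphi_0\oplus0\cong\varphi_1\oplus0$ as representations of $A$, which is false in general. Take $A=C([0,1])$ and $B=\mathbb{C}$. The cycles $(\mathbb{C}\boxplus0,0)$ with left actions $\delta_0$ and $\delta_1$ are connected by the homotopy $(C([0,1])\boxplus0,0)$ with the multiplication action. Yet no zero-action summand can make them operator homotopic, since $\delta_0\oplus0\not\cong\delta_1\oplus0$.

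The degenerate pieces that actually arise carry nonzero left actions: the cropped-away part $(1-P_S)E$, and the degenerate summands in Kasparov's argument. So they cannot be unitarily absorbed into $(H\otimes B,0)$. Converting them to zero action requires a homotopy (escaping to infinity), which is precisely what is not allowed here.

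\emph{Step 2 has no mechanism for the varying left action.} The real difficulty is replacing the strictly continuous path $t\mapsto\varphi_t$ of left actions by a constant one.
\begin{itemize}
\item Trivializing the module to $C([0,1],H\otimes B)$ in Step 1 is fine but does not touch this difficulty.
\item Conjugating by a continuous field of unitaries is impossible when the $\varphi_t$ are inequivalent, as in the example above.
\item Cropping to an $S$ that reduces only $C^*(F_l,A_0)$ discards the $A$-module structure.
\item The ``classical result'' you then invoke on $S$ is the statement being proved, with degenerate summands and identifications depending on $(A_0,l)$.
\item Kasparov products with evaluation cycles only recover the endpoints; they do not connect them by an operator homotopy.
\end{itemize}

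\textbf{The missing idea is Kasparov's cycle that absorbs the parameter.} The paper uses the Hilbert transform/Toeplitz construction on $L^2([0,2\pi])$, whiskered by $B$. This is where para-unitality of $B$ enters: it makes $E_\omega=C([0,1],H_\omega\otimes B)$ freely complementable. The result is a Kasparov bimodule $\omega$ over $(C([0,1],B),C([0,1],B))$ with two key features:
\begin{itemize}
\item its left action $(\delta_0\oplus\omega_m\oplus\delta_1)\otimes b$ is constant along the parameter;
\item $F_\omega$ is a norm continuous path with $F_\omega(0)=0\oplus N_{\omega0}$ and $F_\omega(1)=N_{\omega1}\oplus0$.
\end{itemize}

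Taking the product with the homotopy gives $E\otimes_{C([0,1],B)}E_\omega=C([0,1],E_0\oplus E_m\oplus E_1)$. This is a genuine cylinder with constant $A$-action. On it, $F_{pi}(t)=M_i^{1/2}\widehat F_i+(1-M_i)^{1/2}\widehat F_\omega(t)$ is a norm continuous operator homotopy. Only the mediator $M_i$ depends on the index, so uniformity is automatic and no cropping is needed.

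Connes--Skandalis then identifies the endpoints with $F_{0i}\oplus\widehat N_{\omega0}$ and $\widehat N_{\omega1}\oplus F_{1i}$. This produces degenerate summands $(E_m\oplus E_1,\widehat N_{\omega0})$ and $(E_0\oplus E_m,\widehat N_{\omega1})$, both with nonzero left actions. Finally, the swindle $\varphi'=(\varphi_0'\oplus\varphi_1')^{\oplus\infty}$ makes the degenerate summand common to both sides.
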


\begin{proof}
Let $\mathbb{T}:=\mathbb{R}/2\pi\mathbb{Z}$ with the canonical embedding $C(\mathbb{T})\subset C([0,2\pi])$.
Let $H,P\in B(L^2([0,2\pi]))$ be the periodic Hilbert transform and the Hardy projection, precisely given by
\[H\varepsilon_n:=\left\{\begin{alignedat}{2}&-i\mathrm{sgn}(n)\varepsilon_n&\quad&\text{ if }n\ne0,\\&0&&\text{ if }n=0,\end{alignedat}\right.\qquad P\varepsilon_n:=\left\{\begin{alignedat}{2}&\varepsilon_n&\quad&\text{ if }n\ge0,\\&0&&\text{ if }n<0,\end{alignedat}\right.\]
where $\{\varepsilon_n:=(t\mapsto e^{int}):n\in\mathbb{Z}\}$.
Then, we can write $H\equiv-iP+i(1-P)$ modulo $K(L^2([0,2\pi]))$, and $H$ is an essentially unitary that essentially commutes with $C(\mathbb{T})$ since $[P,a]\equiv0$ for all $a\in C(\mathbb{T})$.
Let $C\in C([0,2\pi])$ be any self adjoint contraction such that $C(0)=1$ and $C(2\pi)=-1$, and let $S:=(1-C^2)^{1/2}\in C(\mathbb{T})$.
If we let $U:=C+iS\in C([0,2\pi])$, and that $U^2\in C(\mathbb{T})$ is a unitary of winding number one.
Since we have modulo compacts
\[C+S^{1/2}HS^{1/2}\equiv C+HS\equiv U^*P+U(1-P)\equiv U(PU^{*2}P+(1-P)),\]
which is essentially unitary that essentially commutes with $C(\mathbb{T})$, and since $PU^{*2}P$ is exactly the Toeplitz operator of $(U^2)^*$, it is always of index one.
Consider a norm continuous path $s\mapsto F_s:=C_s+S_s^{1/2}HS_s^{1/2}\in B(L^2([0,2\pi]))$ of such essentially unitary operators first defined for $s\in[\frac13,\frac23]$ such that $C_{1/3}(t)=-1$ for $t\in[\frac{2\pi}3,2\pi]$ and $C_{2/3}(t)=1$ for $[0,\frac{4\pi}3]$.
Then, $F_{1/3}$ and $F_{2/3}$ are reduced by $L^2([0,\frac{2\pi}3])$ and $L^2([\frac{4\pi}3,2\pi])$ respectively.
Since the space of essentially unitary operators of a fixed index is path-connected in norm, we can prolong the path $s\mapsto F_s$ to have the domain $s\in[0,1]$ such that $F_0,F_1$ are coisometries of one-dimensional kernel, and that $F_s$ is an essentially unitary operator reduced by $L^2([0,\frac{2\pi}3])$ with $F_s=-1$ on $L^2([\frac{2\pi}3,2\pi])$ for $s\in[0,\frac13]$ and by $L^2([\frac{4\pi}3,2\pi])$ with $F_s=1$ on $L^2([0,\frac{4\pi}3])$ for $s\in[\frac23,1]$.
Then, we can define a Kasparov bimodule $(E,F)$ over $(C([0,1]),C([0,1]))$ by
\begin{gather*}
E=C([0,1],L^2([0,2\pi])^{\boxplus2}),\qquad
\varphi(a)\xi(t):=\begin{cases}
a(0)\xi(t)&\text{ if }\xi\in L^2([0,\frac{2\pi}3])^{\boxplus2},\\
a(\frac{3t}{2\pi}-1)\xi(t)&\text{ if }\xi\in L^2([\frac{2\pi}3,\frac{4\pi}3])^{\boxplus2},\\
a(1)\xi(t)&\text{ if }\xi\in L^2([\frac{4\pi}3,2\pi])^{\boxplus2},
\end{cases}\qquad
F(s):=\begin{bmatrix}0&F_s^*\\F_s&0\end{bmatrix}
\end{gather*}
for $s\in[0,1]$ and $t\in[0,2\pi]$, where $\varphi:C([0,1])\to L^\infty([0,2\pi])\subset B(L^2([0,2\pi])^{\boxplus2})$ denotes the left action on $E$ constant along $s$.
On $L^2([0,\frac{2\pi}3])^{\boxplus2}\oplus L^2([\frac{2\pi}3,2\pi])^{\boxplus2}$ we have
\[\varphi(a)=\delta_0(a)\oplus\varphi(a)|_{L^2([\frac{2\pi}3,2\pi])^{\boxplus2}},\qquad F(0)=\left.\begin{bmatrix}0&F_0^*\\F_0&0\end{bmatrix}\right|_{L^2([0,\frac{2\pi}3])^{\boxplus2}}\oplus\begin{bmatrix}0&-1\\-1&0\end{bmatrix},\]
and on $L^2([0,\frac{4\pi}3])^{\boxplus2}\oplus L^2([\frac{4\pi}3,2\pi])^{\boxplus2}$ we have
\[\varphi(a)=\varphi(a)|_{L^2([0,\frac{4\pi}3])^{\boxplus2}}\oplus\delta_1(a),\qquad F(1)=\begin{bmatrix}0&1\\1&0\end{bmatrix}\oplus\left.\begin{bmatrix}0&F_1^*\\F_1&0\end{bmatrix}\right|_{L^2([\frac{4\pi}3,2\pi])^{\boxplus2}},\]
where $\delta_0,\delta_1:C([0,1])\to\mathbb{C}$ are the point masses at zero at one.
Then, let $H_0:=\ker F_0$ and $H_1:=\ker F_1$ with the orthogonal complement $H_m$ of $H_0\oplus H_1$ in $L^2([0,2\pi])^{\boxplus2}$.

The above construction was originally from Section 6 of \cite{MR582160}, and one may see 18.5.1 of \cite{MR1656031} and 2.2.17 \cite{MR1124848}.
Extending this by whiskering, we obtain a Kasparov bimodule $\omega=(E_\omega,F_\omega)$ over $(C([0,1],B),C([0,1],B))$ satisfying the following.
The underlying freely complementable augmented Hilbert bimodule is
\[E_\omega:=C([0,1],H_\omega\otimes B),\qquad H_\omega:=H_0\oplus H_m\oplus H_1\]
with $H_0\cong H_1\cong\mathbb{C}$ and $H_m\cong\ell^2(\mathbb{N})\boxplus\ell^2(\mathbb{N})$ as graded Hilbert spaces.
The left action $\omega:C([0,1],B)\to B(E_\omega)$ is constant along $C([0,1])$ in the right action, and has the form
\[\omega(fb):=(\delta_0\oplus\omega_m\oplus\delta_1)(f)\otimes b,\qquad f\in C([0,1]),\ b\in B,\]
where $\omega_m:C([0,1])\to B(H_m)$ is a unital representation.
For $F_\omega\in C([0,1],B(H_\omega))\subset B(E_\omega)$, it is a norm continuous path, and the endpoints $F_\omega(0),F_\omega(1)\in B(H_\omega)\subset B(E_\omega(0))$ are given by
\[F_\omega(0)=0\oplus N_{\omega0},\qquad F_\omega(1)=N_{\omega1}\oplus0,\]
where $N_{\omega0}\in B(H_m\oplus H_1)$ and $N_{\omega1}\in B(H_0\oplus H_m)$ are odd self-adjoint unitary commuting with $C([0,1],B)$.

Let $(E,(F_i)_i)$ be an approximate Kasparov bimodule over $(A,C([0,1],B))$ with its restrictions $(E_0,(F_{0i})_i)$ and $(E_1,F_{1i})_i)$ at zero and one.
The internal tensor product with $\omega$ is decomposed into
\[E\otimes_{C([0,1],B)}E_\omega=C([0,1],E_0\oplus E_m\oplus E_1),\qquad E_m:=E\otimes_{C([0,1])\otimes B}(H_m\otimes B).\]
On this, the left action of $A$ is constant and reduced by each summand of $E_0\oplus E_m\oplus E_1$.
Define $\widehat F_i:=F_i\otimes1$ and a Grassmann connection $\widehat F_\omega$ of $F_\omega$.
These operators $\widehat F_i$ and $\widehat F_\omega$ can be regarded as constant paths and a norm continuous path in $B(E_0\oplus E_m\oplus E_1)$ respectively.
For $\widehat F_i$, it is constant and reduced like the left action, and we can write
 \[\widehat F_i=F_{0i}\oplus F_{mi}\oplus F_{1i}\in B(E_0\oplus E_m\oplus E_1)\]
for some $F_{mi}\in B(E_m)$.
For $\widehat F_\omega$, we have the same formula for its endpoints $\widehat F_\omega(0)=0\oplus\widehat N_{\omega0}$ and $\widehat F_\omega(1)=\widehat N_{\omega1}\oplus0$ with degenerate Kasparov operators $\widehat N_{\omega0}\in B(E_m\oplus E_1)$ and $\widehat N_{\omega1}\in B(E_0\oplus E_m)$ commuting with the whole C$^*$-algebra $A$.
By the construction of Grassmann connections, $\widehat N_{\omega0}$ and $\widehat N_{\omega1}$ graded-commute with operators in $B(E)\otimes1$, especially $\widehat F_{0i}$ and $\widehat F_{1i}$ respectively.

For fixed $i$, with respect to $F_i$ under $A_i$, choosing a suitable separable fibering, because $F_\omega(t)$ is Kasparov under any separable C$^*$-subalgebra of $C([0,1],B)$ for all $t\in[0,1]$, we can construct a mediator $M_i\in B(E_0\oplus E_m\oplus E_1)$ for $\widehat F_i$ and $\{\widehat F_\omega(t):t\in[0,1]\}$.
Then, we have a Kasparov product $F_{pi}\in C([0,1],B(E_0\oplus E_m\oplus E_1))$ by
\[F_{pi}(t):=M_i^{\frac12}\widehat F_i+(1-M_i)^{\frac12}\widehat F_\omega(t),\qquad t\in[0,1].\]
It is an operator homotopy connecting its endpoints.
Then, since $P_0:=1-\widehat F_\omega(0)^2$ is the projection onto $E_0$ with $P_0\widehat F_i+\widehat F_\omega(0)=F_{0i}\oplus\widehat N_{\omega0}$, the computation
\[a^*[F_{pi}(0),P_0\widehat F_i+\widehat F_\omega(0)]a\equiv a^*2M_i^{\frac12}P_0\widehat F_i^2a+a^*2(1-M_i)^{\frac12}(1-P_0)a\ge0,\qquad a\in A_i,\]
modulo $K(E_0\oplus E_m\oplus E_1)$ shows that $F_{pi}(0)$ is Connes-Skandalis operator homotopic to $F_{0i}\oplus\widehat N_{\omega0}$.
Similarly $F_{pi}(1)$ is operator homotopic to $N_{\omega1}\oplus F_{1i}$.
If we define degenerate Kasparov bimodules $\varphi_0':=(E_m\oplus E_1,\widehat N_{\omega0})$ and $\varphi_1':=(E_0\oplus E_m,\widehat N_{\omega1})$, then $(E_0,(F_{0i})_i)\oplus\varphi_0'$ and $(E_1,(F_{1i})_i)\oplus\varphi_1'$ are operator homotopic, so $\varphi':=(\varphi_0'\oplus\varphi_1')^{\oplus\infty}$ completes the proof.
\end{proof}

\subsection{Complete additivity}
The classical KK-theory has a property called the countable additivity for the first variable, originally proved by Rosenberg, Theorem 1.12 of \cite{MR894590}.
We prove a generalization of this for uncountable sums.
Other than a subtlety in the addition of Kasparov operators that cannot be with norm addressed in \cite{MR894590}, we have a more critical issue on the norm topology.
Precisely speaking, when $F_i\in C([0,1],B(E_i))$ are uniformly bounded norm continuous paths of adjointable operators on Hilbert modules $E_i$ over a common C$^*$-algebra, the orthogonal sum $[0,1]\to B(\bigoplus_iE_i):t\mapsto\bigoplus_iF_i(t)$ is not a norm continuous path in general.
Contrary to ordinary Kasparov bimodules, approximate Kasparov bimodules need to satisfy Kasparov family conditions that Kasparov operators are mutually connected by norm continuous paths.
To construct a norm continuous sum of paths, their moduli of continuity must be controlled uniformly, meaning that the connecting operator homotopies in a Kasparov family cannot be arbitrary.
The following lemma is for this.

In the following lemma and theorem, for $E=E_+\boxplus E_-$ a graded Hilbert module, denote by $\overline E:=E_-\boxplus E_+$ the opposite of $E$, and let $\widehat E:=E^{\oplus\infty}\oplus\overline E{}^{\oplus\infty}$.

\begin{lemma}[Universally Lipschitz Kasparov family]\label{Lemma: Universally Lipschitz Kasparov family}
Let $A$ and $B$ be C$^*$-algebras.
Let $(E_0,(F_{0i})_i)$ be an approximate Kasparov bimodule over $(A,B)$ with $E_0=E_0B$ and $F_{0i}$ self-adjoint contractions.
Then, there is a homotopic approximate Kasparov bimodule $(\widehat E_0,(\widehat F_{0i})_i)$ over $(A,B)$ such that for every separable C$^*$-subalgebra $A_0\subset A$ there is $i_0$ satisfying that every two members of the Kasparov family $\{\widehat F_{0i}\}_{i\ge i_0}$ can be connected by an operator homotopy of Lipschitz constant at most $7\pi/3$ under $A_0$.
The construction commutes with direct sums and evaluation of a homotopy.
\end{lemma}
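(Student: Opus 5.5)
The plan is to prove the statement with an Eilenberg-swindle type construction on $\widehat E_0=E_0^{\oplus\infty}\oplus\overline E_0{}^{\oplus\infty}$. The idea is to replace each arbitrary connecting operator homotopy by a bounded number of "rotations between copies", since rotations have a modulus of continuity independent of the operators involved.

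\emph{Step 1: the new net and its homotopy class.} Regard $\widehat E_0\cong E_0\oplus(\overline E_0\oplus E_0)^{\oplus\infty}$. Define $\widehat F_{0i}$ to be $F_{0i}$ on the first summand and the flip $J=\begin{bmatrix}0&1\\1&0\end{bmatrix}$ on each pair $\overline E_0\oplus E_0$. Because $E_0=E_0B$, the operator $J$ satisfies $C^*(J)\widehat E_0\subset\widehat E_0B$, and it graded-commutes with $A$. Hence the pairs form a degenerate Kasparov bimodule, and one degeneracy move gives $(\widehat E_0,(\widehat F_{0i})_i)\sim(E_0,(F_{0i})_i)$. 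This definition visibly commutes with direct sums and with evaluation of a homotopy, since it is built from $F_{0i}$ and constant scalar matrices only.

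\emph{Step 2: Lipschitz moves.} I would use two elementary moves, each of which keeps the Kasparov property under any $A_0$ and has Lipschitz constant bounded by a multiple of $\pi$ independent of $i$ (recall $\|F_{0i}\|\le1$):
\begin{enumerate}
\item The rotation $c_tG\oplus(-c_tG)+s_tJ$ on a pair $E_0\oplus\overline E_0$, where $G$ is any self-adjoint Kasparov contraction. This is the argument of Corollary \ref{Corollary: Additive inverses for para-unitals}.
\item Conjugation by the scalar rotation unitaries $R_t=\exp\bigl(\tfrac{\pi t}{2}\begin{pmatrix}0&-1\\1&0\end{pmatrix}\bigr)$ between two copies of $E_0$, applied simultaneously to infinitely many disjoint pairs of copies. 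This stays norm continuous because the generator has norm one uniformly.
\end{enumerate}

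\emph{Step 3: connecting $\widehat F_{0i}$ and $\widehat F_{0i'}$.} Take $i,i'\ge i_0$, where $\{F_{0i}\}_{i\ge i_0}$ is Kasparov under $A_0$, and let $G$ be an operator homotopy from $F_{0i}$ to $F_{0i'}$ under $A_0$. Normalize $G$ to consist of self-adjoint contractions as in the paper. By norm continuity, choose nodes $g_0=F_{0i},g_1,\dots,g_N=F_{0i'}$ on $G$ with $\|g_k-g_{k+1}\|$ small. For nearby Kasparov self-adjoint contractions, the straight segment is Kasparov after applying the odd function $g$ used earlier; I expect the Connes-Skandalis estimate $[g_k,g_{k+1}]\ge0$ modulo compacts to be available when $\|g_k-g_{k+1}\|<1$. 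The path then has three stages:
\begin{enumerate}
\item Use move (1) on all pairs at once to turn $J$ on the $n$-th pair into $g_{\min(n,N)}\oplus(-g_{\min(n,N)})$. This costs $\pi/2$.
\item Use move (2) between copies to shift the staircase $\bigoplus_n g_{\min(n,N)}$ by one position. This relabels the first summand from $g_0$ to the node sequence shifted to end at $g_N$, and costs $\pi/2$ plus the small linear pieces between neighbouring nodes. Iterating is not allowed, so the shift must be done in one rotation. I would therefore arrange the staircase so that one shift carries $g_0$ to $g_N$: the first summand is replaced by the tail copy, and the intermediate copies absorb the differences.
\item Undo stage 1 for $i'$, costing $\pi/2$.
\end{enumerate}
Bookkeeping the constants should give the stated bound $7\pi/3$. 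One still has to check that each stage is Kasparov under $A_0$ pointwise, and that the infinite direct sums are norm continuous because all pieces are uniformly Lipschitz.

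The main obstacle is stage 2. An arbitrary operator homotopy $G$ has no controlled modulus of continuity, so the entire $G$-dependence must be pushed into a one-shot "Hilbert hotel" rotation among infinitely many copies. The delicate points are to arrange this so that the first summand really ends at $F_{0i'}$, and to show that compactness of the support algebras survives under the infinite direct sum, which needs only finitely many non-degenerate copies modulo degenerate ones. I would first try to make all copies beyond $N$ carry the degenerate $J$, so that only finitely many summands contribute to $\Sigma(\cdot,A_0)$.
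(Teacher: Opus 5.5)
\textbf{There is a genuine gap.} The decisive step, your Stage 2, is never carried out, and Stage 1 as written breaks the Kasparov condition. Putting $g_{\min(n,N)}$ on every pair leaves infinitely many summands carrying $g_N\oplus(-g_N)$. Then $\Sigma(\cdot,A_0)$ contains $\bigoplus_{n\ge N}\bigl([g_N,a]\oplus(-[g_N,a])\bigr)$, which is not compact. Only finitely many copies of $J$ may be touched. Once you accept that, no Hilbert-hotel shift is needed at all.

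The missing idea is a telescoping re-pairing, which is what the paper does. After rotating the first $N$ copies of $J$ to $(-g_k)\oplus g_k$, regroup
\[g_0\oplus\bigoplus_{k=1}^N\bigl((-g_k)\oplus g_k\bigr)=\bigoplus_{k=1}^N\bigl(g_{k-1}\oplus(-g_k)\bigr)\oplus g_N .\]
Then:
\begin{enumerate}
\item inside every pair simultaneously, deform $-g_k$ to $-g_{k-1}$ by a short path;
\item rotate every $g_{k-1}\oplus(-g_{k-1})$ back to $J$;
\item move the leftover $g_N$ into the first slot by one scalar flip that reverses the first $2N+1$ summands.
\end{enumerate}
The flip sends $E_0$-positions to $E_0$-positions, so it preserves the grading, and its Lipschitz constant is at most $\pi$ independently of $N$. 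Uniformity comes from running all block moves in parallel with block-independent constants, not from a one-shot shift of the staircase. Your closing remark about keeping $J$ beyond $N$ is the necessary fix, but the pairing of $g_{k-1}$ with $-g_k$ is what makes the differences "absorbable".

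\textbf{The short step between adjacent nodes is also not justified.} For Kasparov self-adjoint contractions, the straight segment $x_t=(1-t)g_k+tg_{k+1}$ satisfies $(x_t^2-1)a\equiv -t(1-t)(g_k-g_{k+1})^2a$ modulo $K(E)$, which is not compact in general. Applying the function $g$ to a contraction changes nothing. A Connes--Skandalis path between adjacent nodes is Kasparov, since $a^*[g_k,g_{k+1}]a\equiv a^*\bigl(2-(g_k-g_{k+1})^2\bigr)a\ge0$. However, you give no Lipschitz estimate for it, and such an estimate depends on the choice of the positive lift. So the constant $7\pi/3$ is asserted, not derived.

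The paper avoids this by first passing to self-adjoint unitaries. It replaces $F_{0i}$ by $\begin{pmatrix}F_{0i}&(1-F_{0i}^2)^{1/2}\\(1-F_{0i}^2)^{1/2}&-F_{0i}\end{pmatrix}$ on $E_0\oplus\overline E_0$, with left action $\varphi_0\oplus0$. This is homotopic to the original: add $(\overline E_0,(-F_{0i})_i)$ with zero left action and interpolate the off-diagonal entries linearly, which is where $E_0=E_0B$ is used. The connecting paths are transported the same way.

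For adjacent self-adjoint unitaries with $\|U_k-U_{k-1}\|<1$, set $h_k:=(2i)^{-1}\log(U_{k-1}U_k)$. This operator is even, commutes with $A_0$ modulo compacts, and has $\|h_k\|<\pi/6$. Hence $s\mapsto e^{ish_k}U_ke^{-ish_k}$ is a Kasparov path from $U_k$ to $U_{k-1}$ with Lipschitz constant below $\pi/3$. Adding $\pi/2$ for each of the two rotations and $\pi$ for the flip gives exactly $7\pi/3$.
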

\begin{proof}
As in Section 17.6 in \cite{MR1656031}, we can first reduce to the case that each Kasparov operator is a self-adjoint unitary, precisely as follows.
Consider an approximate Kasparov bimodule $(E_1,(F_{1i})_i)$ over $(A,B)$ such that
\[E_1:=E_0\oplus\overline E_0,\qquad\varphi_1(a):=\varphi_0(a)\oplus0,\qquad F_{1i}:=\begin{pmatrix}
F_{0i}&(1-F_{0i}^2)^{1/2}\\(1-F_{0i}^2)^{1/2}&-F_{0i}
\end{pmatrix},\]
for $a\in A$, where $\varphi_0$ and $\varphi_1$ denote the left actions of $(E_0,(F_{0i})_i)$ and $(E_1,(F_{1i})_i)$ respectively.
By adding to $(E_0,(F_{0i})_i)$ an approximate Kasparov bimodule $(\overline E_0,(-F_{0i})_i)$ with zero left action, and the linear interpolation of off-diagonal components justified by $E_0=E_0B$, we can see $(E_0,(F_{0i})_i)$ is homotopic to $(E_1,(F_{1i})_i)$ with $F_{1i}$ self-adjoint unitaries.
Since the linear path can also be defined to the operator homotopies between Kasparov operators which exist by the Kasparov family condition, so the Kasparov operators of $(E_1,(F_{1i})_i)$ can be connected by a self-adjoint unitary operator homotopy under each separable C$^*$-subalgebra of $A$.

Define an approximate Kasparov bimodule $(E_2,(F_{2i})_i)$ over $(A,B)$ by
\[E_2:=E_1\oplus\bigoplus_{n=1}^\infty(\overline E_1\oplus E_1),\qquad\varphi_2(a):=\varphi_1(a)^{\oplus\infty},\qquad F_{2i}:=F_{1i}\oplus J^{\oplus\infty},\qquad J:=\begin{pmatrix}0&1\\1&0\end{pmatrix}\in M_2(B(E_1)),\]
for $a\in A$, where $\varphi_2$ denotes the left action.
We have $E_2\cong\widehat E_0$.
It is homotopic to $(E_1,(F_{1i})_i)$, and hence to $(E_0,(F_{0i})_i)$.
To prove it satisfies the Lipschitz condition, fix a separable C$^*$-subalgebra $A_0\subset A$ and take a norm continuous path $F_{1ii'}$ of self-adjoint unitary Kasparov operators connecting $F_{1i}$ and $F_{1i'}$ under $A_0$.
Subdivide to choose a finite sequence $U_1,\cdots,U_n$ on this path such that $U_0=F_{1i}$, $U_n=F_{1i'}$, and $\|U_k-U_{k-1}\|<1$ for all $1\le k\le n$.
Then, we will connect $F_{2i}$ and $F_{2i'}$ by performing the following deformations on the first $2n+1$ summands of $E_2$ as
\[U_0\oplus J^{\oplus n}\sim U_0\oplus\bigoplus_{k=1}^n((-U_k)\oplus U_k)\sim\bigoplus_{k=0}^{n-1}(U_k\oplus(-U_k))\oplus U_n\sim J^{\oplus n}\oplus U_n\sim U_n\oplus J^{\oplus n},\]
where we use $\sim$ to mean that there is a connecting operator homotopy between the two Kasparov operators under $A_0$.
We will also see that the four paths have Lipschitz constants at most $\pi/2$, $\pi/3$, $\pi/2$, and $\pi$, respectively, and their sum is $7\pi/3$.

The first and third paths use the formula to connect a self-adjoint unitary $U\oplus(-U)$ to $J$ in $M_2(B(E_1))$ as
\[s\mapsto\begin{pmatrix}c_sU&s_s\\s_s&c_s(-U)\end{pmatrix},\qquad s\in[0,1],\]
where $c_s:=\cos\frac{\pi s}2$ and $s_s:=\sin\frac{\pi s}2$.
For the second path for each $k$, since $\|U_k-U_{k-1}\|<1$ so that $\|1-U_{k-1}U_k\|<1$, we can define $h_k:=(2i)^{-1}\log(U_{k-1}U_k)$ with $\|h_k\|<\pi/6$ by the principal logarithm defined on the positive real part, and this gives a path $s\mapsto e^{ish_k}U_ke^{-ish_k}$ for $s\in[0,1]$ that connects from $U_k$ to $U_{k-1}$.
For the fourth path $J^{\oplus n}\oplus U_n\sim U_n\oplus J^{\oplus n}$ in $M_{2n+1}(B(E_1))$, we can consider a flipping self-adjoint unitary $w\in M_{2n+1}(\mathbb{C})$ such that $w_{ij}=1$ if $i+j=2n+2$ and $w_{ij}=0$ otherwise for $1\le i,j\le 2n+1$, then the adjoint of $s\mapsto e^{ish}$ for $h:=(\pi/2)(1-w)$ on $s\in[0,1]$ gives desired path, with Lipschitz constant at most $\pi$ computed by its differentiation $[h,-]=-(\pi/2)[w,-]$.
\end{proof}

\begin{theorem}[Complete additivity]
Let $A_j$ be a family of para-unital C$^*$-algebras, and let $B$ be a para-unital C$^*$-algebra.
Then, there is a natural isomorphism
\[KK^\mathrm{fc}\Bigl(\bigoplus_jA_j,B\Bigr)\to\prod_jKK^\mathrm{fc}(A_j,B)\]
\end{theorem}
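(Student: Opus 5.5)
The map is induced by the corner inclusions $\iota_j:A_j\to\bigoplus_jA_j$, i.e. $[\varphi]\mapsto([\varphi]\circ[\iota_j])_j$. It is a group homomorphism by biadditivity of the Kasparov product. Write $A:=\bigoplus_jA_j$, which is para-unital because an orthogonal union of the para-unital decompositions of the $A_j$ strictly sums to $1$ in $M(A)$. I would build an inverse by orthogonal direct sums and then check injectivity by a rotation argument, mimicking Rosenberg's countable case. Throughout, the non-degenerate reduction (Theorem \ref{Theorem: Non-degenerate reduction}) lets us take every cycle with $E=A_jE$ and with zero fiber at infinity. This is needed for the orthogonal sum over $j$ to make sense as a Hilbert module over $\widetilde B$ (not over a product of unitizations), and to be freely complementable via the Kaplansky decomposition.

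\textbf{Surjectivity.} Given classes $x_j\in KK^\mathrm{fc}(A_j,B)$, I would apply Lemma \ref{Lemma: Universally Lipschitz Kasparov family} to representatives to obtain $(\widehat E_j,(\widehat F_{ji})_i)$ with self-adjoint contractive operators. For $A_0\subset A$ separable, these admit connecting operator homotopies of Lipschitz constant at most $7\pi/3$ under the projection of $A_0$ into $A_j$. Since every index set can be replaced by the directed set of separable subalgebras (set theoretic well-definedness), I index all of them by the separable C$^*$-subalgebras $A_0\subset A$. Put $F_{A_0}:=\bigoplus_j\widehat F_{j,\mathrm{pr}_j(A_0)}$ on $E:=\bigoplus_j\widehat E_j$.

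A separable $A_0$ has nonzero projection to only countably many $j$. For the remaining $j$, each summand is then Kasparov under the zero algebra, and its support algebra under $A_0$ vanishes. For the countably many relevant $j$, compactness of $[F,a]$ follows because $a$ is a norm limit of finite sums of elements of the $A_j$, so only finitely many summands matter up to $\varepsilon$.

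The uniform Lipschitz bound is exactly what makes the orthogonal sum of connecting paths norm continuous, so that the family $\{F_{A_0'}\}_{A_0'\ge A_0}$ is Kasparov under $A_0$. I expect this step to be the main obstacle. One must choose, for each $j$, the threshold index and path uniformly, and the uncountably many irrelevant summands need their paths to be degenerate. That degeneracy should follow because their left action under $A_0$ is zero. Restricting along $\iota_j$ then leaves the $j$-th summand plus summands with zero left action, which are degenerate. Hence $\bigl[(E,(F_{A_0}))\bigr]\circ[\iota_j]=x_j$.

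\textbf{Injectivity.} Let $\varphi=(E,(F_i)_i)$ over $(A,B)$ be non-degenerate with each $\varphi\circ\iota_j$ null-homotopic. Non-degeneracy gives $E=\bigoplus_jA_jE$ with projections $P_j$ commuting with the action. I would first show that $\varphi$ is operator homotopic to $\bigoplus_jP_jF_iP_j$: the off-diagonal parts $P_jF_iP_{j'}$ with $j\neq j'$ are compact after multiplying by elements of $A_0$, so the straight-line homotopy works, as in Kasparov's argument. Thus $[\varphi]$ equals the class of $\bigoplus_j(\varphi\circ\iota_j)$.

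Next, by Proposition \ref{Proposition: Operator homotopy reduction}, each null-homotopy may be replaced by an operator homotopy after adding a degenerate $\varphi_j'$ with zero fiber at infinity. Passing through Lemma \ref{Lemma: Universally Lipschitz Kasparov family}, this operator homotopy can be taken uniformly Lipschitz. Summing over $j$ then gives an operator homotopy from $\bigoplus_j\varphi\circ\iota_j\oplus\bigoplus_j\varphi_j'$ to $\bigoplus_j\varphi_j'$, which is degenerate. I would verify that the construction of that lemma commutes with the homotopy evaluations, as its statement asserts. This forces $[\varphi]=0$. Naturality in $B$ is clear from the construction.
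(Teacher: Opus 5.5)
Your architecture is the paper's. Surjectivity goes by summing non-degenerate representatives made uniformly Lipschitz by Lemma \ref{Lemma: Universally Lipschitz Kasparov family}; your treatment of compactness via $\|a_j\|\to0$ and of the Kasparov family condition there is fine. Injectivity goes by block-diagonalizing along the $P_j$, turning each null-homotopy into a single homotopy via Theorem \ref{Theorem: Non-degenerate reduction} and Proposition \ref{Proposition: Operator homotopy reduction}, and summing over $j$. The gap is in that last summation: your claim that the sum is an operator homotopy from $\bigoplus_j\varphi\circ\iota_j\oplus\bigoplus_j\varphi_j'$ to $\bigoplus_j\varphi_j'$ fails on two counts.

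First, the lemma only bounds the Lipschitz constants of paths joining members of a Kasparov family. It gives no control of the modulus of continuity in the homotopy variable $t$, so if $G_{jl}$ denotes the $j$-th homotopy net, $t\mapsto\bigoplus_jG_{jl_j}(t)$ need not be norm continuous over uncountably many $j$. What works, and what the paper does, is to apply the lemma to each homotopy regarded as an approximate Kasparov bimodule over $(A_j,C([0,1],B))$ with zero fiber at infinity. The direct sum is then a homotopy, i.e.\ an approximate Kasparov bimodule over $(\bigoplus_jA_j,C([0,1],B))$. It is only strictly continuous in $t$, which suffices, and its Kasparov family condition comes from the $j$-independent Lipschitz bound.

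Second, and this is the genuinely missing step, each per-$j$ homotopy carries its own index set $\mathcal{L}_j$ and its own monotone cofinal map $i_j:\mathcal{L}_j\to\mathcal{I}$. So the $t=0$ end of the sum is, up to the degenerate part, the net $\bigl(\bigoplus_j\widehat F_{j,i_j(l_j)}\bigr)_{(l_j)_j}$, where $F_{j,i}:=P_jF_iP_j$ and the hat denotes the output of the lemma. This net is not a reindexing of $\bigl(\bigoplus_j\widehat F_{j,i}\bigr)_i$ along one monotone cofinal map, as the definition of a homotopy requires. The paper indexes the sum by $\mathcal{L}:=\{(i,(l_j)_j):i_j(l_j)\ge i\text{ for all }j\}$. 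It then uses the uniform Lipschitz constant a second time to operator-homotope $\bigoplus_j\widehat F_{j,i_j(l_j)}$ to $\bigoplus_j\widehat F_{j,i}$; both lie in the tail past $i$, so they are joined under $A_{ji}$ uniformly in $j$. Only then does the fact that the lemma's construction commutes with direct sums identify this end with the hatted version of your block-diagonal cycle plus a degenerate one.

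You also pass too quickly from ``$\varphi\circ\iota_j$ is null-homotopic'' to a single operator homotopy. A null-homotopy is a finite chain of homotopy and degeneracy moves, so you need to do the following:
\begin{enumerate}
\item reduce the intermediate cycles non-degenerately;
\item turn degeneracy moves into homotopies via the cone;
\item apply Proposition \ref{Proposition: Operator homotopy reduction} move by move;
\item glue the resulting operator homotopies as in the proof of Proposition \ref{Proposition: Operator homotopic homotopic}.
\end{enumerate}
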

\begin{proof}
(Surjectivity)
Together with the non-degenerate reduction (Theorem \ref{Theorem: Non-degenerate reduction}), choose an approximate Kasparov bimodule over $(A_j,B)$ for each $j$ which satisfies the Lipschitz condition in the previous lemma (Lemma \ref{Lemma: Universally Lipschitz Kasparov family}).
Then, the direct sum of all chosen approximate Kasparov bimodules with setting the product directed set for the indexing set of Kasparov operators is an approximate Kasparov bimodule over $(\bigoplus_jA_j,B)$, because the direct sum of paths sharing a universal Lipschitz constant is still Lipschitz continuous to satisfy the Kasparov family condition.

(Injectivity)
Suppose a non-degenerate approximate Kasparov bimodule $(E_0,(F_{0i})_i)$ over $(\bigoplus_jA_j,B)$ has the pullback onto each $A_j$ that vanishes in $KK^\mathrm{fc}(A_j,B)$, and $F_{0i}$ is a self-adjoint contraction for each $i$.
Then, there is an orthogonal family $P_j\in B(E_0)$ of projections such that $A_jE_0=P_jE_0$ by the non-degeneracy of left action.
For every $j$, we may assume $F_{0i}=\bigoplus_jF_{0ji}$ with $F_{0ji}:=P_jF_{0i}P_j$ for all $i$ by deforming $F_{0i}$ linearly, and the indexing set $\mathcal{I}$ for $i$ is obtained by the finite down set reduction with a cofinal system of separable C$^*$-subalgebras $(\bigoplus_jA_{ji})_i$ of $\bigoplus_jA_j$ such that $\{F_{0ji}\}_{i\ge i_0}$ is Kasparov under $A_{ji_0}$.

For fixed $j$, we want to connect $(P_jE_0,(F_{0ji})_i)$ to zero with operator homotopies by adding a common degenerate Kasparov bimodule $(E_{0j}',F_{0j}')$ over $(A_j,B)$.
Since $B$ is para-unital, and since the non-degenerate reduction (Theorem \ref{Theorem: Non-degenerate reduction}) allows us to assume homotopies have zero fibers at infinity, we can use the operator homotopy reduction (Proposition \ref{Proposition: Operator homotopy reduction}) for each homotopy move.
We can also use the fact that a degenerate approximate Kasparov bimodule can be connected to zero by a single homotopy constructed by the cone on which we use again operator homotopy reduction (Proposition \ref{Proposition: Operator homotopy reduction}) for degeneracy moves.
Now observing that gluing operator homotopies is not difficult by doing the same thing as in the proof that operator homotopic approximate Kasparov bimodules are homotopic (Proposition \ref{Proposition: Operator homotopic homotopic}), we obtain a degenerate Kasparov bimodule $(E_{0j}',F_{0j}')$ over $(A_j,B)$ by taking the direct sum of the degenerate Kasparov bimodules constructed above, and a single homotopy $(E_j,(F_{jl})_{l\in\mathcal{L}_j})$ with zero fiber at infinity over $(A_j,C([0,1],B))$, whose Kasparov operators can be normalized to be self-adjoint contractions, together with a monotone cofinal map $\mathcal{L}_j\to\mathcal{I}:l\mapsto i_j(l)$ such that $F_{jl}(0)=F_{0ji_j(l)}\oplus F_{0j}'$ and $F_{jl}(1)=F_{0j}'$ for all $l$, connecting $(P_jE_0,(F_{0ji})_i)\oplus(E_{0j}',F_{0j}')$ and $(E_{0j}',F_{0j}')$.

Applying the previous lemma (Lemma \ref{Lemma: Universally Lipschitz Kasparov family}) to each homotopy $(E_j,(F_{jl})_{l\in\mathcal{L}_j})$ to get $(\widehat E_j,(\widehat F_{jl})_{l\in\mathcal{L}_j})$, construct their direct sum $(\widehat E,\widehat F_l)$ indexed by $\mathcal{L}:=\{(i,(l_j)_j):i_j(l_j)\ge i\text{ for all }j\}\subset\mathcal{I}\times\prod_j\mathcal{L}_j$, which is an approximate Kasparov bimodule over $(\bigoplus_jA_j,C([0,1],B))$ by the Lipschitz condition.
At one it is a degenerate cycle, say $(\widehat E_0',\widehat F_0')$, and at zero it is $(\widehat E_0,(\bigoplus_j\widehat F_{0ji_j(l_j)})_{l\in\mathcal{L}})\oplus(\widehat E_0',\widehat F_0')$, and it is operator homotopic to $(\widehat E_0,(\bigoplus_j\widehat F_{0ji})_i)\oplus(\widehat E_0',\widehat F_0')$ because $\widehat F_{0ji_j(l)}$ and $\widehat F_{0ji}$ are operator homotopic under $A_{ji}$ with a Lipschitz constant independent of $j$ by the condition $i_j(l_j)\ge i$.
\end{proof}

\bibliographystyle{alpha}
\bibliography{bib}

\end{document}